\newtheorem{theorem}{Theorem}
\newtheorem{definition}[theorem]{Definition}
\newtheorem{proposition}[theorem]{Proposition}
\newtheorem{lemma}[theorem]{Lemma}
\newtheorem{corollary}[theorem]{Corollary}
\newtheorem{example}[theorem]{Example}
\newtheorem{conjecture}[theorem]{Conjecture}
\newtheorem{remark}[theorem]{Remark}
\definecolor{darkgreen}{rgb}{0, 0.5, 0}
\tikzset{->-/.style={decoration={
  markings,
  mark=at position .5 with {\arrow{>}}},postaction={decorate}}}
\tikzset{-<-/.style={decoration={
  markings,
  mark=at position .5 with {\arrow{<}}},postaction={decorate}}}
\tikzset{-->-/.style={decoration={
  markings,
  mark=at position .75 with {\arrow{>}}},postaction={decorate}}}
\tikzset{->--/.style={decoration={
  markings,
  mark=at position .25 with {\arrow{>}}},postaction={decorate}}}
\tikzset{->>-/.style={decoration={
  markings,
  mark=at position .57 with {\arrow{>>}}},postaction={decorate}
}}
\tikzset{->>>-/.style={decoration={
  markings,
  mark=at position .58 with {\arrow{>>>}}},postaction={decorate}
}}
\tikzset{->>>>-/.style={decoration={
  markings,
  mark=at position .62 with {\arrow{>>>>}}},postaction={decorate}
}}
\newcommand{\Oa}{\mathcal O_A}
\newcommand{\Ob}{\mathcal O_B}
\newcommand{\R}{\mathcal R}
\newcommand{\bigslant}[2]{{\raisebox{.2em}{$#1$}\left/\raisebox{-.2em}{$#2$}\right.}}
\title{An Elliptic Generalization of $A_1$ Spherical DAHA at $K=2$}
\author{S.~Arthamonov}
\address{S.A.: Department of Mathematics, University of Toronto. Toronto ON, Canada.}
\email{semeon.artamonov@utoronto.ca}
\author{Sh.~Shakirov}
\address{Sh.Sh.: Institute for Information Transmission Problems. Moscow, Russia.}
\email{shakirov.work@gmail.com}
\begin{document}

\begin{abstract}
We construct an algebra that is an elliptic generalization of $A_1$ spherical DAHA acting on its finite-dimensional module at $t=-q^{-K/2}$ with $K=2$. We prove that $PSL(2,{\mathbb Z})$ acts by automorphisms of the algebra we constructed, and provide an explicit representation of automorphisms and algebra operators alike by $3 \times 3$ matrices of elliptic functions. A relation of this construction to the $K$-theory character of affine Laumon space is conjectured. We point out two potential applications, respectively to $SL(3,{\mathbb Z})$ symmetry of Felder-Varchenko functions and to new elliptic invariants of torus knots and Seifert manifolds.    
\end{abstract}
\maketitle

\section{Introduction}

Double Affine Hecke Algebra (DAHA) \cite{Cherednik'1992, Cherednik'2005} is an important algebra with applications in modern mathematics and mathematical physics. Famously it has been used to give a general proof of Macdonald conjectures for all root systems \cite{Cherednik'1995}. In algebraic geometry, DAHA is related to the study of the Hilbert scheme of points \cite{GorskyNegut'2015}. Lately, DAHA has been used to construct knot invariants for torus and iterated torus knots, the so called DAHA-Jones polynomials \cite{Cherednik'2013,CherednikDanilenko'2016}. This construction admits connections to physics, in particular knot invariants of refined Chern-Simons theory \cite{AganagicShakirov'2015}.

Action of the group $SL(2,{\mathbb Z})$ by automorphisms is an inherent feature of DAHA. It can be observed already in the simplest example of type $A_1$ \emph{spherical} DAHA, which can be presented as an associative algebra generated by two difference operators  

\begin{align}
\Oa = \dfrac{1 - t x^2}{1 - x^2} \delta + \dfrac{1 - t x^{-2}}{1 - x^{-2}} \delta^{-1}, \ \ \ \ \ \Ob = x + x^{-1}
\end{align}
\smallskip\\
acting on a reasonable class of functions of a single variable $x$ where $\delta f(x) = f( q^{1/2} x)$ and $q,t$ are complex parameters. In this  example, one may define automorphisms $\mathbf a,\mathbf b$ by polynomial formulas

\begin{align}
\mathbf a\big(\Oa\big) := \Oa, \ \ \ \ \ \mathbf a \big(\Ob\big) := \dfrac{q^{1/2} \Ob \Oa - q^{-1/2} \Oa \Ob}{q^{1/2} - q^{-1/2}}
\end{align}
\begin{align}
\mathbf b\big(\Oa\big) := \dfrac{q^{1/2} \Oa \Ob - q^{-1/2} \Ob \Oa}{q^{1/2} - q^{-1/2}}, \ \ \ \ \ \mathbf b\big(\Ob\big) := \Ob 
\end{align}
\smallskip\\
and it is easy to prove by direct check in above presentation that these automorphisms satisfy $\mathbf a\mathbf b\mathbf a = \mathbf b\mathbf a\mathbf b$ and $(\mathbf{aba})^2 = 1$, the defining relations of the group $PSL(2,{\mathbb Z})$.

This simple example admits generalizations. First and foremost, it can be generalized from spherical (i.e. where symmetry between $x$ and $x^{-1}$ is present) to generic DAHA that admits a little more involved difference operator presentation known as Dunkl operators. Second, it can be generalized from the root system $A_1$ to arbitrary root systems \cite{Cherednik'1995,Cherednik'1995-Inventiones,Cherednik'1995-IMRN,Sahi'1999}. There exist further generalizations of DAHA that preserve some of its interesting structure: a rich group of automorphisms, presentations by difference operators, and connections to knot theory and mathematical physics. For example, in \cite{ArthamonovShakirov'2019} we suggested an algebra with two complex parameters $q,t$ that admits a genus 2 mapping class group action by automorphisms, generalizing the action of $SL(2,{\mathbb Z})$, a genus 1 mapping class group. 

In this paper, we suggest yet another generalization. Spherical DAHA of type $A_1$ has finite-dimensional modules at particular values of parameters $q,t$ satisfying $t = - q^{-K/2}$ for $K = 1,2,3,\ldots$. For each $K$, this allows to define an algebra that is a quotient of the spherical DAHA of type $A_1$ by additional relations specific for the action on the corresponding module. We suggest a generalization of the $K=2$ case. The algebra that we suggest admits a representation of $SL(2,{\mathbb Z})$ with elliptic functions, justifying the title of present paper. At trivial value of the elliptic parameter, this representation reduces to the standard $K=2$ representation of $SL(2,{\mathbb Z})$. We show that the algebra that we construct is related to an essential part of modern algebraic geometry -- equivariant K-theory character of affine Laumon space. In the Conclusion section we  discuss potential applications of our construction. One is relation to the work \cite{FelderVarchenko'2001} on elliptic conformal blocks and $SL(3,{\mathbb Z})$ symmetry. Another is relation to knot theory. 

This paper is structured as follows. In Section \ref{sec:Preliminaries} we review the basic material about infinite products and $q$-Pochhammer symbols. Throughout the text we face the necessity to work with the extended range of elliptic parameters, so we use this section to introduce a slightly more general version of Plethystic exponent which serves as a major building block of explicit formulas in our text. A reader, who is well familiar with the objects involved in, say, Example \ref{ex:PlethysticTwoWays} can safely skip Section \ref{sec:Preliminaries} on the first reading and refer to it only when necessary.

The main body of this text then starts with a short Section \ref{sec:SL2ZActionOnMatR3} where we propose an elliptic deformation with two elliptic parameters $p=\mathrm e^{2\pi\mathrm i\tau},s=\mathrm e^{2\pi\mathrm i\sigma}$ of particular projective $SL(2,\mathbb Z)$-representation coming from the action of the modular group of the torus by automorphisms of DAHA. Namely, we define a pair of matrix-valued difference operators $\widehat D_A,\widehat D_B$ and show that these two difference operators satisfy defining relations of an $SL(2,\mathbb Z)$ (up to a scalar factor independent of $p,s$). Matrix part of the two operators deforms the $K=2$ projective $SL(2,\mathbb Z)$-representation from \cite{AganagicShakirov'2015}.

Section \ref{sec:AlgebraK2} introduces our main algebra $\mathcal A$ over the field $\mathbf k=\mathbb C(Q)$ of rational functions in one variable. This section apriori is completely independent of the preceding two sections as there are no elliptic parameters $p,s$ involved. We define the algebra $\mathcal A$ as an associative algebra with countably many generators $\Oa^{(g)},\Ob^{(g)},\;g\in Free_2$ subject to countably many relations and show that $PSL(2,\mathbb Z)$ acts by automorphisms of this algebra. We refer to $\mathcal A$ as an elliptic generalization of $A_1$ spherical DAHA due to the following property: The quotient of $\mathcal A$ by an ideal generated by $\Oa^{(g)}-\Oa^{(1)},\Ob^{(g)}-\Ob^{(1)}$ for all $g\in Free_2$ together with a Casimir element is isomorphic to a level $K=2$ finite-dimensional representation of an $A_1$ spherical DAHA.

In Section \ref{sec:MatrixRepresentation} we bring together the two preceding sections by constructing a matrix representation of the algebra $\mathcal A$ and showing that the $PSL(2,\mathbb Z)$-action by automorphisms of $\mathcal A$ on the representation level is nothing but the conjugation with matrix-valued difference operators $\widehat D_A,\widehat D_B$ introduced in Section \ref{sec:SL2ZActionOnMatR3}. This constitutes the statement of our key Theorem \ref{th:HomomorphismToMatrixRepresentation} which concludes the main body of the text.

In Section \ref{sec:RelationToLaumon} we briefly discuss the origin of our formulas and relation to Affine Laumon spaces. The K-theory characters of the latter are given by a system of functions which reduce to Macdonald polynomials in the zero limit of elliptic parameters $p,s\rightarrow 0$. Although we leave the questions related to Macdonald polynomials and their generalizations beyond the scope of the current manuscript, we use this section to highlight that matrix representation of our algebra was first obtained by replacing Macdonald polynomials by K-theory character of an Affine Laumon space.

At last, we finish our paper with a summary of future directions given in Section \ref{sec:Conclusion}. We also provide a Mathematica package \cite{ArthamonovShakirov'2023-GitHub-EllipticDAHA} which realizes the main construction of the current paper with some examples which might be of some relevance for future applications.

\section{Preliminaries}
\label{sec:Preliminaries}

\subsection{Iterated $q$-Pochhammer symbols}
\begin{definition}
For every $n$-tuple of parameters
\begin{equation}
p_1,\dots,p_n\;\in\; \mathbb D^\times:=\mathbb D\backslash\{0\}=\big\{w\in\mathbb C\;\big|\;0<|w|<1\big\}
\end{equation}
define iterated $q$-Pochhammer symbol as an infinite product
\begin{equation}
    (z;p_1,\dots,p_n):=\prod_{i_1,\dots,i_n\in\mathbb Z_{\geq0}}(1-z p_1^{i_1}\dots p_n^{i_n}).
    \label{eq:IteratedPochhammerLessThanOne}
\end{equation}
\label{def:IteratedPochhammerInfiniteProduct}
\end{definition}
Note that the infinite product  (\ref{eq:IteratedPochhammerLessThanOne}) converges absolutely for all $z\in\mathbb C$ and $p_1,\dots,p_n\in\mathbb D^\times$ and hence it converges uniformly on every compact subset $K\subset\mathbb C\times\big(\mathbb D^\times\big)^n$ of its domain. As a result, we get analytic properties:
\begin{lemma}
The infinite product (\ref{eq:IteratedPochhammerLessThanOne}) provides a nonconstant holomorphic function of $n+1$ variables on $\mathbb C\times\left(\mathbb D^\times\right)^n$.
\end{lemma}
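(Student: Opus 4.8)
The plan is to deduce holomorphy from the standard principle that a locally uniform limit of holomorphic functions is again holomorphic, applied to the partial products of \eqref{eq:IteratedPochhammerLessThanOne}, and then to establish nonconstancy by evaluating at two explicit points. First I would fix an arbitrary compact set $K\subset\mathbb C\times(\mathbb D^\times)^n$ and use compactness together with $0<|p_j|<1$ to choose constants $M>0$ and $r\in(0,1)$ with $|z|\le M$ and $|p_j|\le r$ for $1\le j\le n$ throughout $K$. Writing each factor as $1+g_{i_1,\dots,i_n}$ with $g_{i_1,\dots,i_n}=-z\,p_1^{i_1}\cdots p_n^{i_n}$, the geometric estimate
\begin{equation}
\sum_{i_1,\dots,i_n\in\mathbb Z_{\ge0}}\bigl|g_{i_1,\dots,i_n}\bigr|\;\le\;M\sum_{i_1,\dots,i_n\ge0}r^{\,i_1+\dots+i_n}\;=\;\frac{M}{(1-r)^n}\;<\;\infty
\end{equation}
shows that $\sum|g_{i_1,\dots,i_n}|$ converges uniformly on $K$, and since $K$ was arbitrary it converges locally uniformly on the whole domain. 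This is precisely the quantitative form of the convergence already noted after Definition \ref{def:IteratedPochhammerInfiniteProduct}.

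Next I would enumerate the countable index set $\mathbb Z_{\ge0}^n$ by any bijection with $\mathbb N$; because the convergence above is absolute and locally uniform, the resulting ordering is immaterial and the product is well defined. The partial products are finite products of the holomorphic functions $1-z\,p_1^{i_1}\cdots p_n^{i_n}$, hence holomorphic on $\mathbb C\times(\mathbb D^\times)^n$, and the locally uniform bound on $\sum|g_{i_1,\dots,i_n}|$ forces them to converge locally uniformly. By the several-variables version of the theorem on uniform limits of holomorphic functions (Osgood's lemma, via the iterated Cauchy integral formula), the limit $(z;p_1,\dots,p_n)$ is holomorphic in the $n+1$ variables $z,p_1,\dots,p_n$.

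Finally, for nonconstancy I would compare the values at two points. At $z=0$ every factor equals $1$, so $(0;p_1,\dots,p_n)=1$. At $z=1$ the factor indexed by $i_1=\dots=i_n=0$ equals $1-z=0$, and since every partial product containing this factor vanishes, the whole product equals $0$. The function therefore takes the two distinct values $1$ and $0$ and is nonconstant. The only point requiring genuine care is the passage from the unordered countable family to an ordered product commuting with the limit; the absolute and locally uniform convergence established above renders this harmless, so the remainder is a routine application of standard complex-analytic facts.
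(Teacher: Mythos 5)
Your argument is correct and matches the paper's (unwritten, but clearly indicated) reasoning: the paper simply notes that absolute convergence of the product gives uniform convergence on compact subsets of $\mathbb C\times(\mathbb D^\times)^n$, whence holomorphy, and your geometric estimate $\sum|g_{i_1,\dots,i_n}|\le M(1-r)^{-n}$ together with Osgood's lemma is exactly the quantitative version of that remark. The nonconstancy check via the values $1$ at $z=0$ and $0$ at $z=1$ is the natural one and completes the statement cleanly.
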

\begin{lemma}
For a fixed values of parameters $p_1,\dots,p_n\in\mathbb D^\times$, the iterated $q$-Pochhammer symbol (\ref{eq:IteratedPochhammerLessThanOne}) defines a nonconstant entire function of $z$.
\label{lemm:PochhammerNonConstantEntire}
\end{lemma}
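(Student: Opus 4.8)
The plan is to treat the two assertions—that the slice is entire and that it is nonconstant—separately, leaning on the absolute-convergence observation already recorded (and on the preceding lemma) for the analytic part, and reserving a short elementary argument for nonconstancy.

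First I would fix $p_1,\dots,p_n\in\mathbb D^\times$, abbreviate the monomials as $c_{i_1,\dots,i_n}:=p_1^{i_1}\cdots p_n^{i_n}$, and record the geometric estimate
\begin{equation}
\sum_{i_1,\dots,i_n\in\mathbb Z_{\geq0}}\big|c_{i_1,\dots,i_n}\big|=\prod_{j=1}^n\frac{1}{1-|p_j|}<\infty,
\end{equation}
which is finite because each $|p_j|<1$. On a disk $|z|\le R$ this bounds $\sum|z\,c_{i_1,\dots,i_n}|$ by $R\prod_j(1-|p_j|)^{-1}$, so by the Weierstrass theory of infinite products the product in (\ref{eq:IteratedPochhammerLessThanOne}) converges uniformly on compacta of $\mathbb C$ and defines a holomorphic function $f(z)$ there; that is, $f$ is entire. (Alternatively this is merely the restriction of the $(n+1)$-variable holomorphic function of the previous lemma to the slice with the $p_j$ frozen.)

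For nonconstancy I would simply exhibit two distinct values. At $z=0$ every factor is $1-0=1$, so $f(0)=1$. At $z=1$ the factor indexed by $(i_1,\dots,i_n)=(0,\dots,0)$ equals $1-1=0$, while the complementary product over all nonzero multi-indices remains absolutely convergent to a finite number; hence $f(1)=0$. Since $f(0)=1\neq 0=f(1)$, the function $f$ is nonconstant.

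The only step needing care is the claim that a single vanishing factor forces $f(1)=0$: this requires the product over the remaining multi-indices to converge to a finite value, which it does because deleting one term from an absolutely summable family leaves it absolutely summable. Everything else is immediate, so I expect no genuine obstacle. If one preferred to sidestep the ``single zero factor'' point entirely, an equally short route is to compute the logarithmic derivative at the origin, $f'(0)/f(0)=-\sum c_{i_1,\dots,i_n}=-\prod_j(1-p_j)^{-1}\neq0$, which again shows $f$ is not constant.
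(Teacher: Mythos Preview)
Your proof is correct. The paper does not spell out a proof of this lemma, treating it as an immediate consequence of the absolute convergence observation recorded just before; your argument is exactly the standard fleshing-out of that remark, and the nonconstancy step via $f(0)=1\neq 0=f(1)$ (or the logarithmic-derivative variant) is the natural way to complete it.
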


Throughout this paper it is often convenient for us to work with functions of several complex variables on general open subsets $U\subset\mathbb C^n$, not necessarily connected. For this reason, we have to abstain from using the word ``domain'' in the complex-analytic sense. In a few instances where we absolutely have to use the word ``domain'', we always refer to set-theoretic domains. Now let $U=\bigcup_{\alpha\in I} U_\alpha$ be a decomposition of an open set $U\subset\mathbb C^n$ into connected components $U_\alpha$. Because $\mathbb C^n$ is locally connected, each $U_\alpha$ must be open. This, in particular, allows us to extend the definition of meromorphic functions as follows
\begin{definition} We say that $f:W\rightarrow\mathbb C$ is meromorphic on the open set $U\supset W$, if for every connected component $U_\alpha\subset U$, the restriction $f|_{W\cap U_\alpha}$ is meromorphic on $U_\alpha$. (In the sense of standard Definition VI.2.1 from \cite{FischerLieb'2012})
\end{definition}

\begin{remark}
Note that meromorphic functions on general open set $U\subset\mathbb C^n$ still form a ring, which is isomorphic to the product of rings over all connected components. This ring, however, is no longer a field and not even an integral domain unless $U$ is connected.
\end{remark}

\begin{lemma}
Iterated $q$-Pochhammer symbol $(z;p_1,\dots,p_n)$ uniquely extends to a larger set of parameters
\begin{equation*}
p_1,\dots,p_n\;\in\;\widecheck{\mathbb C}:=\big\{w\in\mathbb C\;\big|\;|w|\neq0,1\big\}
\end{equation*}
as a meromorphic function of $n+1$ variables on $\mathbb C\times\big(\widecheck{\mathbb C}\big)^n$ satisfying
\begin{equation}
(z;p_1,\dots,p_{j-1},p_j^{-1},p_{j+1},\dots,p_n)=\frac1{(p_j z,p_1,\dots,p_{j-1},p_j,p_{j+1},\dots,p_n)}.
\label{eq:PochhammerParameterInversion}
\end{equation}
\end{lemma}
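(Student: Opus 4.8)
The plan is to prove the extension statement by treating the two assertions separately: first establishing the meromorphic continuation to the enlarged parameter range $\widecheck{\mathbb C}$, and then verifying the functional equation \eqref{eq:PochhammerParameterInversion} that the extension must satisfy. Since meromorphy is a condition checked on each connected component, and $\widecheck{\mathbb C}$ has two components $\mathbb D^\times$ and $\{|w|>1\}$, I would reduce to continuing across a single parameter $p_j$ at a time while holding the others fixed in $\mathbb D^\times$; the general statement then follows by iterating over $j=1,\dots,n$.

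First I would focus on the one-variable continuation in $p_j$. On the component $\mathbb D^\times$ the product already defines the holomorphic function of the earlier lemma, so the real content is to define it on $\{|p_j|>1\}$. The natural way is to take \eqref{eq:PochhammerParameterInversion} as the \emph{definition} on the new component: when $|p_j|>1$ we have $|p_j^{-1}|<1$, so the right-hand side is built from an iterated $q$-Pochhammer symbol whose $j$-th parameter lies in $\mathbb D^\times$ (together with a finite prefactor $1/(\,\cdots)$ whose parameters also lie in $\mathbb D^\times$), and hence is already a well-defined meromorphic function of $z$ and the remaining parameters by the convergence already established. Concretely, I would use the telescoping identity
\begin{equation}
(z;p_1,\dots,p_n)=(p_j z;p_1,\dots,p_n)\cdot\prod_{\substack{i_1,\dots,i_n\geq0\\ i_j=0}}(1-z p_1^{i_1}\cdots p_n^{i_n}),
\label{eq:PochhammerTelescope}
\end{equation}
valid when all $p_k\in\mathbb D^\times$, to read off how the full product factors through the index $i_j$; reindexing the finite $i_j=0$ slice and inverting gives exactly the shape of \eqref{eq:PochhammerParameterInversion} and tells me the continuation is forced. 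Uniqueness of the extension is then immediate from the identity theorem applied on each connected component, since the two descriptions agree on an open subset where both converge.

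The remaining steps are to confirm that the resulting function is genuinely meromorphic on all of $\mathbb C\times(\widecheck{\mathbb C})^n$ and not merely on each slice: the inner symbol on the right of \eqref{eq:PochhammerParameterInversion} is jointly holomorphic where it converges and the finite product $1/(p_j z;\dots)$ contributes only locally finite zeros, so the quotient is meromorphic in the joint variables on the relevant component, and patching over the $2^n$ components of $(\widecheck{\mathbb C})^n$ yields the claim. Finally I would check consistency of the definition, namely that applying the inversion twice returns the original function and that inversions in distinct parameters $p_j,p_k$ commute, so that the value on any component is independent of the order in which the $|p_\ell|>1$ parameters are flipped; this is a direct computation using \eqref{eq:PochhammerTelescope}. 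I expect the main obstacle to be bookkeeping rather than conceptual: one must verify that the finite ``correction'' products arising from different orders of inversion recombine correctly and that their poles and zeros match across the shared analytic continuations, so that \eqref{eq:PochhammerParameterInversion} holds simultaneously and unambiguously on the overlaps rather than just componentwise.
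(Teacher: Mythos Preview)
Your approach is essentially the paper's: use the functional equation \eqref{eq:PochhammerParameterInversion} as the \emph{definition} on each new connected component, then check meromorphy and uniqueness. The paper does this in one stroke by writing down a closed formula on each of the $2^n$ components $\Omega_{\boldsymbol\epsilon}$ of $\mathbb C\times(\widecheck{\mathbb C})^n$, namely
\[
(z;p_1,\dots,p_n)\;:=\;\Bigl(\bigl(p_1^{(\epsilon_1-1)/2}\cdots p_n^{(\epsilon_n-1)/2}z;\,p_1^{\epsilon_1},\dots,p_n^{\epsilon_n}\bigr)\Bigr)^{\epsilon_1\cdots\epsilon_n},
\]
which is exactly what your iterative one-parameter-at-a-time procedure produces; this sidesteps the consistency checks (double inversion, commuting inversions) you flag at the end.

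A couple of small wobbles to clean up. The ``$i_j=0$ slice'' in your telescoping identity is not finite for $n>1$; it is the $(n{-}1)$-fold Pochhammer $(z;p_1,\dots,\widehat{p_j},\dots,p_n)$. More to the point, that telescoping identity does not by itself produce \eqref{eq:PochhammerParameterInversion}; the functional equation is simply being imposed as the definition on the new component, not derived from telescoping. Finally, your uniqueness argument via the identity theorem is misphrased: the components of $(\widecheck{\mathbb C})^n$ are disjoint, so there is no open set on which two candidate extensions can be compared analytically. Uniqueness comes directly from the constraint \eqref{eq:PochhammerParameterInversion}, which determines the value on every component in terms of the original product on $\Omega_{(+1,\dots,+1)}$.
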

\begin{proof}
Note that $\mathbb C\times\big(\widecheck{\mathbb C}\big)^n$ has $2^n$ connected components
\begin{equation}
\Omega_{\boldsymbol\epsilon}:=\mathbb C\times\big(\mathbb D^\times\big)^{\epsilon_1}\times\dots\times\big(\mathbb D^{\times}\big)^{\epsilon_1}
\label{eq:ConnectedComponentsOmegaEpsilon}
\end{equation}
labelled by $n$-tuples $\boldsymbol\epsilon=(\epsilon_1,\dots,\epsilon_n)\in \{\pm1\}^n$. Consider a meromorphic function of $n+1$-variables defined piecewise
\begin{equation}
(z;p_1,\dots,p_n):=\Big(\big(p_1^{(\epsilon_1-1)/2}\dots p_n^{(\epsilon_n-1)/2}z;p_1^{\epsilon_1},\dots,p_n^{\epsilon_n}\big)\Big)^{\epsilon_1\dots\epsilon_n}\qquad\textrm{for}\qquad (z,p_1,\dots,p_n)\in\Omega_{\boldsymbol\epsilon}.
\label{eq:GeneralPochhammerExplicit}
\end{equation}
This function satisfies (\ref{eq:PochhammerParameterInversion}). On the contrary, (\ref{eq:PochhammerParameterInversion}) implies (\ref{eq:GeneralPochhammerExplicit}), so this solution is unique.
\end{proof}

Finally, combining formula (\ref{eq:GeneralPochhammerExplicit}) with Lemma \ref{lemm:PochhammerNonConstantEntire} we also get single-variable property
\begin{corollary}
    For each fixed $n$-tuple of parameters $p_1,\dots,p_n\in\widecheck{\mathbb C}$, the extended $q$-Pochhammer symbol defines nonzero meromorphic function of one complex variable $z$ on $\mathbb C$.
    \label{cor:qPochhammerNonzeroMeromorphicFunctionOfOneVariable}
\end{corollary}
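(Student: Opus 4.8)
The plan is to read the claim directly off the piecewise formula (\ref{eq:GeneralPochhammerExplicit}) together with Lemma \ref{lemm:PochhammerNonConstantEntire}. First I would fix the tuple $(p_1,\dots,p_n)\in(\widecheck{\mathbb C})^n$ and record which connected component it occupies: setting $\epsilon_j=+1$ when $|p_j|<1$ and $\epsilon_j=-1$ when $|p_j|>1$, the point $(p_1,\dots,p_n)$ lies in $\Omega_{\boldsymbol\epsilon}$. On this component the definition (\ref{eq:GeneralPochhammerExplicit}) reads $(z;p_1,\dots,p_n)=\big(g(z)\big)^{\sigma}$, where $\sigma=\epsilon_1\cdots\epsilon_n\in\{\pm1\}$ and $g(z):=(c\,z;\,p_1^{\epsilon_1},\dots,p_n^{\epsilon_n})$ with $c:=p_1^{(\epsilon_1-1)/2}\cdots p_n^{(\epsilon_n-1)/2}$. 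Thus the whole statement reduces to analyzing the single function $g$, possibly inverted.

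Next I would observe that $g$ falls squarely under the hypotheses of the original product definition. Indeed, each $p_j^{\epsilon_j}$ lies in $\mathbb D^\times$ by the very choice of $\epsilon_j$ (if $\epsilon_j=+1$ then $|p_j|<1$, and if $\epsilon_j=-1$ then $|p_j^{-1}|<1$), while the prefactor $c$ is a finite nonzero constant, each of its factors being either $1$ or $p_j^{-1}$. Hence by Lemma \ref{lemm:PochhammerNonConstantEntire} the base symbol $(w;\,p_1^{\epsilon_1},\dots,p_n^{\epsilon_n})$ is a nonconstant entire function of $w$, and since $w=cz$ is an invertible linear change of variable, $g$ is itself a nonconstant entire function of $z$. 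In particular $g$ is holomorphic on all of $\mathbb C$ and, being nonconstant, is not identically zero.

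Finally I would split on the sign $\sigma$. If $\sigma=+1$ there is nothing left to do: $g$ is entire, hence meromorphic, and not identically zero. If $\sigma=-1$ the symbol equals $1/g$; because $g$ is entire and not identically zero, its zeros are isolated, so $1/g$ is meromorphic on $\mathbb C$ with poles exactly at those zeros, and from the identity $g\cdot(1/g)=1$ it cannot be the zero element of the ring of meromorphic functions. Either way the extended symbol is a nonzero meromorphic function of $z$ on $\mathbb C$, which is the assertion.

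The argument is essentially bookkeeping, so I do not anticipate a genuine obstacle; the only point demanding care is the meaning of ``nonzero.'' Since $\mathbb C$ is connected, its meromorphic functions form a field, and ``nonzero'' must be read as ``not the zero element of this field'' rather than ``nowhere vanishing''; with this reading both regimes are covered, the $\sigma=+1$ case giving an honest entire function and the $\sigma=-1$ case giving a zero-free function carrying poles. Keeping these two regimes separate, and taking care that $c\neq0$ so the linear substitution preserves non-constancy, is all that the proof requires.
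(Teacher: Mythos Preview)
Your argument is correct and follows exactly the route the paper indicates: fix the sign vector $\boldsymbol\epsilon$ determined by the $|p_j|$, read the extended symbol on $\Omega_{\boldsymbol\epsilon}$ from formula (\ref{eq:GeneralPochhammerExplicit}) as $(g(z))^{\epsilon_1\cdots\epsilon_n}$ with $g$ an honest iterated Pochhammer in parameters lying in $\mathbb D^\times$, and then invoke Lemma \ref{lemm:PochhammerNonConstantEntire} to conclude $g$ is nonconstant entire, hence either $g$ or $1/g$ is a nonzero meromorphic function on $\mathbb C$. The paper records this corollary in a single sentence pointing to (\ref{eq:GeneralPochhammerExplicit}) and Lemma \ref{lemm:PochhammerNonConstantEntire}); you have simply unpacked that sentence, including the harmless observation that the prefactor $c$ is nonzero so the linear substitution preserves non-constancy.
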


In what follows it is also convenient for us to augment the definitions above with the case of $n=0$ parameters, where we manually set
\begin{equation*}
(z;\emptyset):=\frac1{1-z}.
\end{equation*}

\begin{lemma}
For all $n\in\mathbb Z_{\geq0}$, the iterated $q$-Pochhammer symbols satisfy the following identities 
\begin{subequations}
\begin{align}
&(z;p_1,\dots,p_n)=(z;p_{\nu(1),\dots,\nu(n)})\qquad\textrm{for all}\quad\nu\in S_n,
\label{eq:qPochhammerPermutationSymmetry}\\
&(z;p_1,\dots,p_n)=(z;p_2,\dots,p_n)(zp_1;p_1,\dots,p_n),
\label{eq:IteratedPochhammerProductRules}
\end{align}
\end{subequations}
as meromorphic functions on $\mathbb C\times\big(\widecheck{\mathbb C}\big)^n$. Note that $(z;p_2,\dots,p_n)$ is viewed here as a meromorphic function of $n+1$ variables which is independent of $p_1$.
\end{lemma}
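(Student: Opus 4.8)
The plan is to prove both identities first on the distinguished component $\Omega_{(1,\dots,1)}=\mathbb C\times(\mathbb D^\times)^n$, where the iterated symbol is the genuinely convergent product (\ref{eq:IteratedPochhammerLessThanOne}), and then to propagate them to the remaining $2^n-1$ components of $\mathbb C\times(\widecheck{\mathbb C})^n$. On $\Omega_{(1,\dots,1)}$ both claims are elementary manipulations of an absolutely convergent product. For (\ref{eq:qPochhammerPermutationSymmetry}) one notes that permuting the parameters $p_1,\dots,p_n$ merely relabels the multi-index $(i_1,\dots,i_n)$ in (\ref{eq:IteratedPochhammerLessThanOne}), and absolute convergence lets us reorder the factors freely. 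For (\ref{eq:IteratedPochhammerProductRules}) one splits the product over $i_1\geq0$ into the slice $i_1=0$, which contributes $(z;p_2,\dots,p_n)$, and the slice $i_1\geq1$, which after the shift $i_1\mapsto i_1+1$ contributes $(zp_1;p_1,\dots,p_n)$; the small cases $n=0,1$ are checked directly against the convention for $(z;\emptyset)$.

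The main point is the passage from $\Omega_{(1,\dots,1)}$ to a general component $\Omega_{\boldsymbol\epsilon}$. Because $\mathbb C\times(\widecheck{\mathbb C})^n$ is disconnected, the identity theorem gives no help in crossing between the $\Omega_{\boldsymbol\epsilon}$, and by the Remark following the definition of meromorphic functions an equality of meromorphic functions here is exactly an equality on each connected component separately. I would therefore verify each identity on each $\Omega_{\boldsymbol\epsilon}$ by substituting the explicit piecewise formula (\ref{eq:GeneralPochhammerExplicit}), which expresses every symbol occurring in the identity through a base symbol with all parameters $p_j^{\epsilon_j}\in\mathbb D^\times$ together with an explicit monomial prefactor and the overall sign exponent $\epsilon_1\cdots\epsilon_n$. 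Each identity on $\Omega_{\boldsymbol\epsilon}$ then reduces to the already-established base identity for those inner symbols. For (\ref{eq:qPochhammerPermutationSymmetry}) this is immediate: a permutation $\nu$ carries a point of $\Omega_{\boldsymbol\epsilon}$ to a point of the component with correspondingly permuted sign vector, the prefactor $p_1^{(\epsilon_1-1)/2}\cdots p_n^{(\epsilon_n-1)/2}$ and the exponent $\epsilon_1\cdots\epsilon_n$ are invariant under simultaneously permuting the pairs $(p_j,\epsilon_j)$, and the inner base symbol is permutation-symmetric by the base case.

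I expect the product rule (\ref{eq:IteratedPochhammerProductRules}) off the base component to be the main obstacle, since the recursion singles out $p_1$ whereas (\ref{eq:GeneralPochhammerExplicit}) introduces $p_1$-dependent prefactors and, when $\epsilon_1=-1$, the parameter inversion (\ref{eq:PochhammerParameterInversion}) that shifts the first argument $z\mapsto p_1 z$. I would split into the cases $\epsilon_1=+1$ and $\epsilon_1=-1$: in the former the three symbols live over the same sign data and the monomial prefactors cancel against the base product rule directly; in the latter I would use (\ref{eq:PochhammerParameterInversion}) on the $p_1$-slot to re-express all three symbols through parameters in $\mathbb D^\times$, apply the base identity, and track the resulting shifts of $z$ and the compensating prefactors until the bookkeeping closes. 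An alternative, perhaps cleaner, organization is an induction on the number of $-1$'s in $\boldsymbol\epsilon$, using (\ref{eq:PochhammerParameterInversion}) one coordinate at a time to reduce $\Omega_{\boldsymbol\epsilon}$ to a component with one fewer inverted parameter; the inductive step is again a finite prefactor computation rather than anything analytic.
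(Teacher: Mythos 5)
Your proposal is correct and follows essentially the same route as the paper: both identities are first verified on the base component $\Omega_{(+1,\dots,+1)}$ by direct manipulation of the absolutely convergent product, and then transported to the remaining components of $\mathbb C\times\big(\widecheck{\mathbb C}\big)^n$ via the piecewise definition. The paper compresses that transport step into a single sentence (reducing to one connected component via the permutation symmetry), whereas you spell out the case analysis in $\epsilon_1$ and the parameter-inversion bookkeeping; that extra detail is sound and the computation does close as you expect.
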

\begin{proof}
Due to the property (\ref{eq:qPochhammerPermutationSymmetry}) it is enough for us to prove the statement only on one connected component
\begin{equation*}
\Omega_{(+1,\dots,+1)}=\mathbb C\times\big(\mathbb D^\times\big)^n \quad\subset\quad \mathbb C\times\big(\widecheck{\mathbb C}\big)^n.
\end{equation*}
On this connected component, the iterated $q$-Pochhammer symbol is given by the infinite product formula (\ref{eq:IteratedPochhammerLessThanOne}) which makes identity (\ref{eq:qPochhammerPermutationSymmetry}) manifest. On the other hand (\ref{eq:IteratedPochhammerProductRules}) holds for partial products with rectangular bounds. But then since all three infinite products converge absolutely on $\Omega_{(+1,\dots,+1)}$ we conclude that (\ref{eq:IteratedPochhammerProductRules}) holds on $\Omega_{(+1,\dots,+1)}$.
\end{proof}

\subsection{Holomorphic functions on open dense subsets.}

Consider the following set of holomorphic functions of three variables
\begin{equation}
\mathbf S:=\left\{f:U\rightarrow\mathbb C\;\Big|\; f\;\textrm{is holomorphic on open dense}\;U\subset\big(\mathbb C^\times\big)^3\right\}.
\label{eq:HolomorphicFunctionsOnOpenDenseSubsets}
\end{equation}
Note that we do not require $U$ to be connected, in fact, in all our relevant examples we will always start with disconnected $U$, even when the function actually admits analytic continuation to a connected domain.
\begin{definition}
We say that the two functions $f_1:U_1\rightarrow\mathbb C$ and $f_2:U_2\rightarrow\mathbb C$ in $\mathbf S$ are \textit{equivalent} if there exists an open dense subset $V\subset U_1\cap U_2$ such that  $f_1|_V=f_2|_V$. In this case we write $f_1\sim f_2$.
\label{def:SimEquivalenceRelation}
\end{definition}
\begin{lemma}
The above relation $\sim$ is reflexive, symmetric, and transitive.
\end{lemma}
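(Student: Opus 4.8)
The plan is to verify the three properties directly from Definition \ref{def:SimEquivalenceRelation}, the only nontrivial ingredient being the elementary topological fact that a finite intersection of open dense subsets of $\big(\mathbb C^\times\big)^3$ is again open dense. I would record this fact first. If $A,B\subset\big(\mathbb C^\times\big)^3$ are open and dense, then $A\cap B$ is open, and for every nonempty open $W$ one has $W\cap A\neq\emptyset$ by density of $A$; since $W\cap A$ is then itself nonempty and open, density of $B$ gives $(W\cap A)\cap B\neq\emptyset$. Thus $A\cap B$ meets every nonempty open set and is dense. I emphasize that this uses only density, not Baire category, and that the same reasoning shows $U_1\cap U_2$ is itself open dense whenever $U_1,U_2$ are, which is what makes all the witnessing sets below admissible.

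Reflexivity and symmetry are immediate and require no topology. For reflexivity, given $f\colon U\to\mathbb C$ in $\mathbf S$ I would take $V=U$, which is open dense and contained in $U\cap U=U$, and $f|_V=f|_V$ holds trivially, so $f\sim f$. For symmetry, if $f_1\sim f_2$ is witnessed by an open dense $V\subset U_1\cap U_2$ with $f_1|_V=f_2|_V$, then because $U_2\cap U_1=U_1\cap U_2$ the very same $V$ witnesses $f_2|_V=f_1|_V$, so $f_2\sim f_1$.

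Transitivity is the one step that genuinely uses the preliminary lemma, and is where I expect the only real content to sit. Suppose $f_1\sim f_2$ is witnessed by open dense $V_{12}\subset U_1\cap U_2$ and $f_2\sim f_3$ by open dense $V_{23}\subset U_2\cap U_3$. I would set $V:=V_{12}\cap V_{23}$. By the lemma $V$ is open dense, and $V\subset(U_1\cap U_2)\cap(U_2\cap U_3)\subset U_1\cap U_3$, so $V$ is an admissible witnessing set for the pair $(f_1,f_3)$ and the two restrictions are defined on it. On $V$ we have $f_1|_V=f_2|_V$ since $V\subset V_{12}$, and $f_2|_V=f_3|_V$ since $V\subset V_{23}$, hence $f_1|_V=f_3|_V$, which is exactly $f_1\sim f_3$.

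The single subtlety worth flagging is that density must survive the finite intersection, which is precisely the preliminary lemma; beyond that the argument is purely set-theoretic. In particular no appeal to the identity theorem or to holomorphicity of the $f_i$ is required, since $\sim$ is phrased entirely in terms of pointwise agreement on an open dense set rather than on a connected neighborhood.
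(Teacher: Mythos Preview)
Your proof is correct and follows essentially the same approach as the paper: reflexivity and symmetry are dismissed as immediate, and transitivity is obtained by taking $V=V_{12}\cap V_{23}$ and invoking that a finite intersection of open dense sets is again open dense. Your version is simply more explicit, spelling out the density argument and the inclusion $V\subset U_1\cap U_3$ that the paper leaves implicit.
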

\begin{proof}
First two properties are satisfied by construction. For transitivity suppose that $f_1\sim f_2$ and $f_2\sim f_3$. Then there exists a pair of open dense subsets $V_{12},V_{13}\subset\big(\mathbb C^\times\big)^3$ such that
\begin{equation*}
    f_1\big|_{V_{12}}=f_2\big|_{V_{12}},\qquad f_2\big|_{V_{23}}=f_3\big|_{V_{23}}.
\end{equation*}
Because $V_{12}\cap V_{13}\subset\big(\mathbb C^\times\big)^3$ is open dense and
\begin{equation*}
f_1\big|_{V_{12}\cap V_{23}}=f_3\big|_{V_{12}\cap V_{23}}
\end{equation*}
we conclude that $f_1\sim f_3$.
\end{proof}
\begin{definition}
Hereinafter we denote the set of equivalence classes of functions in $\mathbf S$ by
\begin{equation*}
\mathbf R:=\bigslant{\mathbf S}{\sim}.
\end{equation*}
\end{definition}

\begin{lemma}
Let $f_1:U_1\rightarrow\mathbb C$ and $f_2:U_2\rightarrow\mathbb C$ be a pair of elements of $\mathbf S$ and suppose further that $f_1\sim f_2$, then
\begin{equation*}
f_1|_{U_1\cap U_2}=f_2|_{U_1\cap U_2}
\end{equation*}
\label{lemm:AgrreOnIntersection}
\end{lemma}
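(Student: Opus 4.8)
The plan is to reduce the statement to the elementary topological fact that a continuous function vanishing on a dense subset vanishes everywhere, applied to the difference $f_1 - f_2$ on the open set $U_1 \cap U_2$. The key observation is that the hypothesis $f_1 \sim f_2$ furnishes agreement on a \emph{dense} set, not merely on a nonempty open one; this is exactly what will let me avoid the identity theorem for holomorphic functions, which is crucial since $U_1 \cap U_2$ need not be connected.

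First I would record the easy structural facts. The set $U_1 \cap U_2$ is open, being the intersection of two open subsets of $\big(\mathbb C^\times\big)^3$, and the restrictions $f_1|_{U_1 \cap U_2}$ and $f_2|_{U_1 \cap U_2}$ are holomorphic, hence continuous. Consequently the difference $g := \big(f_1 - f_2\big)\big|_{U_1 \cap U_2}$ is a continuous function on $U_1 \cap U_2$. Next I would unwind Definition \ref{def:SimEquivalenceRelation}: there exists an open dense $V \subset U_1 \cap U_2$ with $f_1|_V = f_2|_V$, so that $g$ vanishes identically on $V$.

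The single point that deserves care — and which I regard as the only genuine step — is to confirm that $V$ is dense in the subspace $U_1 \cap U_2$, and not merely dense in the ambient space $\big(\mathbb C^\times\big)^3$. This follows precisely because $U_1 \cap U_2$ is open: given any $x \in U_1 \cap U_2$ and any neighborhood $N$ of $x$, the set $N \cap (U_1 \cap U_2)$ is a nonempty open subset of the ambient space, hence meets $V$ by density; therefore $x$ lies in the closure of $V$ taken within $U_1 \cap U_2$. Thus $\overline{V} \cap (U_1 \cap U_2) = U_1 \cap U_2$.

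Finally I would conclude by a closedness argument: the zero locus $\{g = 0\}$ is closed in $U_1 \cap U_2$, being the preimage of $\{0\}$ under the continuous map $g$, and it contains the dense set $V$, so it must equal all of $U_1 \cap U_2$. Hence $g \equiv 0$, i.e. $f_1|_{U_1 \cap U_2} = f_2|_{U_1 \cap U_2}$, as claimed. I do not anticipate a real obstacle here; the content is purely topological and the role of holomorphy is only to supply continuity. It is worth emphasizing in the writeup that using density — rather than just open overlap — is exactly what makes the possible disconnectedness of $U_1 \cap U_2$ irrelevant to the argument.
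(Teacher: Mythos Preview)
Your proof is correct and essentially the same as the paper's: both arguments reduce to continuity plus density, the paper phrasing it via a convergent sequence from $V$ to an arbitrary point of $U_1\cap U_2$, and you via the closedness of the zero locus of $f_1-f_2$. Your extra care in verifying that $V$ is dense in $U_1\cap U_2$ (not just in the ambient space) is a nice touch, though the paper's sequential formulation sidesteps the need to make this explicit.
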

\begin{proof}
Because $f_1\sim f_2$, there exists an open dense subset $V\subset U_1\cap U_2$ such that $f_1|_V=f_2|_V$. Fix $p\in U_1\cap U_2$, because $V$ is dense in $\big(\mathbb C^\times\big)^3$ we can choose a sequence of points $\{p_i\}_{i\in\mathbb N}\subset V$ converging to $p$. Since both $f_1$ and $f_2$ are continuous at $p$ we get
\begin{equation*}
f_1(p)=\lim f_1(p_i)=\lim f_2(p_i)=f_2(p).
\end{equation*}
\end{proof}
\begin{corollary}
Each equivalence class $\underline f\in\mathbf R$ has unique maximal analytic continuation $F\in\underline{f}$.
\end{corollary}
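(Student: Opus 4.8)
The plan is to build $F$ by gluing together \emph{all} representatives of the class over the union of their domains, using Lemma \ref{lemm:AgrreOnIntersection} to guarantee the gluing is consistent. Concretely, let $\{f_\alpha : U_\alpha \to \mathbb C\}_{\alpha \in I}$ be the family of all representatives of $\underline f$ (nonempty, since $\underline f$ is an equivalence class), and set $U := \bigcup_{\alpha \in I} U_\alpha$. I would define $F : U \to \mathbb C$ by $F(p) := f_\alpha(p)$ whenever $p \in U_\alpha$. Well-definedness is precisely the content of Lemma \ref{lemm:AgrreOnIntersection}: if $p \in U_\alpha \cap U_\beta$, then $f_\alpha \sim f_\beta$ since both lie in $\underline f$, so $f_\alpha|_{U_\alpha \cap U_\beta} = f_\beta|_{U_\alpha \cap U_\beta}$ and in particular $f_\alpha(p) = f_\beta(p)$.

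Next I would verify that $F \in \mathbf S$ and that $F$ represents $\underline f$. The set $U$ is open as a union of open sets, and it is dense because it contains the dense set $U_\alpha$ for any fixed $\alpha$; hence $U \subset (\mathbb C^\times)^3$ is open dense. Holomorphy of $F$ is local: every $p \in U$ lies in some $U_\alpha$ on which $F = f_\alpha$ is holomorphic, so $F$ is holomorphic on $U$ and $F \in \mathbf S$. To see $F \in \underline f$, fix any representative $f_\alpha$; then $U_\alpha \subset U \cap U_\alpha$ is itself open dense and $F|_{U_\alpha} = f_\alpha$ by construction, so Definition \ref{def:SimEquivalenceRelation} gives $F \sim f_\alpha$ and therefore $F \in \underline f$.

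It remains to address maximality and uniqueness. By construction the domain $U$ of $F$ contains the domain of every representative of $\underline f$, so $F$ is maximal in the sense that it is the analytic continuation with the largest possible domain. For uniqueness, suppose $G : W \to \mathbb C$ is another representative of $\underline f$ whose domain is also maximal, i.e. $W = U$. Then $G \sim F$, and Lemma \ref{lemm:AgrreOnIntersection} forces $G|_{U \cap W} = F|_{U \cap W}$, that is $G = F$ on all of $U = W$. Hence the maximal representative is unique, and we call it $F$.

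I expect the only load-bearing points to be the well-definedness of the glued function — which is exactly where Lemma \ref{lemm:AgrreOnIntersection} is indispensable, since without pointwise agreement on overlaps the gluing would collapse — and the elementary observation that density (like openness) is preserved under arbitrary unions, which is what keeps $F$ inside $\mathbf S$. Once these are in place, the holomorphy, maximality, and uniqueness arguments are all routine.
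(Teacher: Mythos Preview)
Your proof is correct and follows essentially the same approach as the paper: both construct the maximal representative by gluing all representatives over the union of their domains, invoking Lemma~\ref{lemm:AgrreOnIntersection} for well-definedness, and then argue uniqueness from the fact that any representative's domain is contained in this union. Your exposition is somewhat more detailed (e.g.\ spelling out why the union is open dense and why holomorphy is local), but the underlying argument is identical.
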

\begin{proof}
Denote by $W\subset\big(\mathbb C^\times\big)^3$ the set of all points $w\in\big(\mathbb C^\times\big)^3$ such that in $\underline{f}$ there exists at least one representative $g_w:U_w\rightarrow\mathbb C$ with $U_w\ni w$. Note that $W\subset\big(\mathbb C^\times\big)^3$ must be open and dense.

Now let $F:W\rightarrow\mathbb C$ be defined at each point as $F(w):=g_w(w)$. Note that by Lemma \ref{lemm:AgrreOnIntersection}, this definition doesn't depend on a particular choice of representative $g_w$. By construction $F$ is analytic on $W$ and, moreover, for every element $g:U\rightarrow\mathbb C$ of the equivalence class $\underline{f}$ we necessarily have $U\subset W$ and $g=F|_U$. Hence $F\in\underline{f}$ is a common analytic continuation. For uniqueness, note that any other maximal analytic continuation $F':U'\rightarrow\mathbb C$ must have a domain $U'\subset W$ included in $W$.
\end{proof}

\begin{proposition}
The set $\mathbf R$ of equivalence classes forms a ring with respect to addition and multiplication defined by
\begin{equation}
\underline{f_1}+\underline{f_2}:=\underline{f_1\big|_{U_1\cap U_2}+f_2\big|_{U_1\cap U_2}},\qquad \underline{f_1}*\underline{f_2}:=\underline{f_1\big|_{U_1\cap U_2}f_2\big|_{U_1\cap U_2}}.
\label{eq:AdditionMultiplicationR}
\end{equation}
Here $\underline{f_1},\underline{f_2}\in\mathbb R$ denote the equivalence classes of functions $f_1:U_1\rightarrow\mathbb C$ and $f_2:U_2\rightarrow\mathbb C$ respectively.
\end{proposition}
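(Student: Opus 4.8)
The plan is to reduce the statement to two ingredients: first, that the prescribed operations are well defined on equivalence classes, and second, that every ring axiom then holds because it already holds pointwise for $\mathbb C$-valued functions on a common open dense domain. The workhorse behind both ingredients is the elementary topological fact that a finite intersection of open dense subsets of $\big(\mathbb C^\times\big)^3$ is again open dense: if $A,B$ are open dense and $W$ is any nonempty open set, then $W\cap A$ is nonempty open by density of $A$, and $(W\cap A)\cap B$ is nonempty by density of $B$, so $A\cap B$ meets every nonempty open set; it is also open as a finite intersection of open sets. I would record this once at the start, since it is used repeatedly.

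First I would check that the operations land in $\mathbf S$. Given $f_1\colon U_1\to\mathbb C$ and $f_2\colon U_2\to\mathbb C$ in $\mathbf S$, the set $U_1\cap U_2$ is open dense by the observation above, and the restrictions $f_1|_{U_1\cap U_2}$ and $f_2|_{U_1\cap U_2}$ are holomorphic there, so their pointwise sum and product are holomorphic on $U_1\cap U_2$ and hence belong to $\mathbf S$. This shows the formulas in (\ref{eq:AdditionMultiplicationR}) at least define elements of $\mathbf S$ before passing to classes.

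The crux is independence of the chosen representatives. Suppose $f_1\sim g_1$ and $f_2\sim g_2$, with domains $U_1,U_1',U_2,U_2'$ respectively. By Definition \ref{def:SimEquivalenceRelation} there are open dense sets $V_1\subset U_1\cap U_1'$ and $V_2\subset U_2\cap U_2'$ on which $f_1=g_1$ and $f_2=g_2$. Then $V:=V_1\cap V_2$ is open dense, it is contained in $(U_1\cap U_2)\cap(U_1'\cap U_2')$, and on $V$ one has $f_1+f_2=g_1+g_2$ and $f_1 f_2=g_1 g_2$. Hence $f_1+f_2\sim g_1+g_2$ and $f_1 f_2\sim g_1 g_2$, so $+$ and $*$ descend to well-defined operations on $\mathbf R$. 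I expect this representative-independence to be the only genuinely nontrivial point; everything after it is bookkeeping.

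With well-definedness in hand, the ring axioms follow formally. For any three classes represented on $U_1,U_2,U_3$, both sides of associativity, commutativity and distributivity are honest $\mathbb C$-valued functions on the open dense set $U_1\cap U_2\cap U_3$ that agree pointwise, because these identities hold in $\mathbb C$; agreement on the common domain gives equivalence. The additive identity is the class of the zero function on all of $\big(\mathbb C^\times\big)^3$, the additive inverse of $\underline f$ is the class of $-f$, and the multiplicative identity is the class of the constant function $1$ on $\big(\mathbb C^\times\big)^3$; in each case the defining property is immediate from the pointwise computation on the relevant intersection. This completes the verification that $(\mathbf R,+,*)$ is a ring.
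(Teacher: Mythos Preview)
Your argument is correct and follows essentially the same approach as the paper: establish well-definedness of the operations on equivalence classes, then deduce the ring axioms from the pointwise ring structure of $\mathbb C$-valued functions on a common domain. The paper's proof is more terse, asserting representative-independence without spelling out the open-dense-intersection argument, whereas you make this explicit; but the strategy is identical.
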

\begin{proof}
First, note that (\ref{eq:AdditionMultiplicationR}) doesn't depend on the choice of representative of the equivalence class and thus indeed defines a pair of binary operations $+,*:\mathbf R\times\mathbf R\rightarrow\mathbf R$. All necessary properties of the two binary operations then follow from the properties of addition and multiplication of the $\mathbb C$-valued functions with the common domain.
\end{proof}

Note that ring $\mathbf R$ is not a field and not even an integral domain; it has zero divisors because we have allowed for disconnected domains in (\ref{eq:HolomorphicFunctionsOnOpenDenseSubsets}). However, the subset $\mathbf R^\times\subset\mathbf R$ of elements which have multiplicative inverse in $\mathbf R$ can be characterized by a simple criterion which, remarkably enough, can be formulated in terms of an arbitrary representative.
\begin{lemma}
An element $\underline f\in\mathbf R$ represented by a holomorphic function $f:U\rightarrow\mathbb C$ is a unit 
\begin{equation}
    \underline f\in\mathbf R^\times\qquad\textrm{iff}\qquad f\big|_{U_\alpha}\not\equiv0\quad\textrm{for every connected component}\quad U_\alpha\subset U.
\label{eq:UnitInRCriterion}
\end{equation}
\label{lemm:UnitInRCriterion}
\end{lemma}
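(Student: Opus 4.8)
The plan is to prove both implications directly, taking the multiplicative unit of $\mathbf R$ to be the class $\underline 1$ of the constant function $1$, which is holomorphic on all of $\big(\mathbb C^\times\big)^3$ and is readily seen from (\ref{eq:AdditionMultiplicationR}) to be the multiplicative identity.

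For the reverse implication ($\Leftarrow$), assume $f|_{U_\alpha}\not\equiv 0$ on every connected component $U_\alpha\subset U$. First I would set $Z:=\{w\in U\mid f(w)=0\}$ and $V:=U\setminus Z$, and claim $V$ is open and dense in $\big(\mathbb C^\times\big)^3$. Openness is immediate from continuity of $f$. For density I would invoke the identity theorem for holomorphic functions of several variables: on each component $U_\alpha$ the restriction $f|_{U_\alpha}$ is holomorphic and not identically zero, so its zero set $Z\cap U_\alpha$ is nowhere dense in $U_\alpha$, whence $V\cap U_\alpha$ is dense in $U_\alpha$; taking the union over $\alpha$ shows $V$ is dense in $U$, and since $U$ is itself dense in $\big(\mathbb C^\times\big)^3$ it follows that $V$ is dense. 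On $V$ the function $f$ is nowhere zero, so $1/f$ is holomorphic on $V$ and represents an element $\underline{1/f}\in\mathbf R$. By (\ref{eq:AdditionMultiplicationR}) the product $\underline f*\underline{1/f}$ is the class of $f\cdot(1/f)=1$ on $V$, that is $\underline 1$, so $\underline f\in\mathbf R^\times$.

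For the forward implication ($\Rightarrow$), I argue by contraposition. Suppose $f|_{U_{\alpha_0}}\equiv 0$ for some component $U_{\alpha_0}$, and suppose toward a contradiction that $\underline f$ has an inverse $\underline g$ represented by $g:U'\rightarrow\mathbb C$. Then $\underline f*\underline g=\underline 1$ means there is an open dense $V\subset U\cap U'$ with $(fg)|_V=1$. Since $U_{\alpha_0}$ is open and nonempty and $V$ is dense, the set $V\cap U_{\alpha_0}$ is nonempty; choosing any point $p$ in it, membership $p\in U_{\alpha_0}$ forces $f(p)=0$ while membership $p\in V$ forces $f(p)g(p)=1$, a contradiction. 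Hence no component can be a zero component, which completes the equivalence.

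The only genuine subtlety, and the step I expect to demand the most care, is the density of $V$ in the reverse implication: it is precisely here that the hypothesis is used, and it is essential that the criterion be tested component-by-component rather than on $U$ as a whole, since a function vanishing on one component but not another is exactly a zero divisor and hence cannot be a unit. All remaining verifications (that $\underline 1$ is the identity and that the stated products represent the claimed classes) are routine consequences of (\ref{eq:AdditionMultiplicationR}) and Definition \ref{def:SimEquivalenceRelation}.
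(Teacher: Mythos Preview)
Your proof is correct and follows essentially the same route as the paper. The reverse implication is handled identically: restrict away from the zero set, argue that on each component the zero set of a nonvanishing holomorphic function is nowhere dense, and take $1/f$ on the complement. For the forward direction the paper instead exhibits the characteristic function $\chi_{U_{\alpha_0}}$ as an explicit nonzero annihilator of $\underline f$ (so $\underline f$ is a zero divisor, hence not a unit), whereas you argue directly by evaluating $fg=1$ at a point of $V\cap U_{\alpha_0}$; both arguments are short and equivalent in strength.
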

\begin{proof}
Let $U=\bigcup_{\alpha\in I}U_{\alpha}$ be a decomposition of $U$ into connected components $U_\alpha$ labelled by the elements of the indexing set $I$. Because $\big(\mathbb C^\times\big)^3$ is locally connected, each $U_\alpha$ must be an open connected set.

First we show that condition in the statement of the Lemma is necessary. To this end, suppose that the right hand side of (\ref{eq:UnitInRCriterion}) is not satisfied, then there exists at least one connected component $U_{\alpha_0}$ such that $f|_{U_{\alpha_0}}\equiv0$ is identically zero on it.
But, on the other hand, characteristic function $\chi_{U_{\alpha_0}}:U\rightarrow\{0,1\}$ is a holomorphic function on $U$ which is not equivalent to zero $\underline{\chi_{U_{\alpha_0}}}\in\mathbf R\backslash\{0\}$.
Because $\chi_{U_{\alpha_0}}f\equiv0$ we conclude that $\underline{f}$ is a zero divisor in $\mathbf R$ and hence cannot have a multiplicative inverse.

It remains to show that the right hand side of (\ref{eq:UnitInRCriterion}) is sufficient. For each connected component, let
\begin{equation*}
V_\alpha:=\{u\in U_\alpha\;|\;f(u)=0\}
\end{equation*}
be a set of zeros of $f$ on this component. Then $V_\alpha$ must be a relatively closed nowhere dense subset of $U_\alpha$. As a result
\begin{equation*}
W:=\bigcup_{\alpha\in I}U_{\alpha}\backslash V_\alpha\;\subset\;\big(\mathbb C^\times\big)^3
\end{equation*}
is an open dense subset of $\big(\mathbb C^\times\big)^3$. Define a holomorphic function $g:W\rightarrow\mathbb C$ by $g(w)=1/f(w)$ and note that $\underline f*\underline g=1$ in $\mathbf R$.
\end{proof}

\begin{proposition}
    Let $U\subset\big(\mathbb C^\times\big)^3$ be an open dense subset. Suppose further that $f:W\rightarrow\mathbb C$ be a meromorphic function on $U\supset W$ such that
    \begin{equation}
    f\big|_{W\cap U_\alpha}\not\equiv0\quad\textrm{for every connected component}\quad U_\alpha\subset U.
    \label{eq:NonzeroMeromorphicFunctionOnEveryConnectedComponent}
    \end{equation}
    Then $\underline f\in\mathbf R^\times$ is a unit of the ring $\mathbf R$.
\label{prop:MeromorphicFunctionIsAUnitCriterion}
\end{proposition}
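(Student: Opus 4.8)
The plan is to reduce the statement about a meromorphic function $f$ to the already-proven criterion for genuinely holomorphic functions in Lemma \ref{lemm:UnitInRCriterion}. The key observation is that a meromorphic function is holomorphic away from its polar set, and that this polar set is ``thin'' in the sense that its complement remains open and dense. So first I would, on each connected component $U_\alpha$, write the pole set as $P_\alpha\subset U_\alpha$. By the standard theory of meromorphic functions of several complex variables (Definition VI.2.1 of \cite{FischerLieb'2012}), $P_\alpha$ is a relatively closed, nowhere dense analytic subset of $U_\alpha$, and $f$ restricts to a holomorphic function on $U_\alpha\setminus P_\alpha$.

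Next I would assemble the restriction of $f$ to the complement of all poles into a single genuinely holomorphic representative. Setting $\widetilde U:=\bigcup_{\alpha\in I}(U_\alpha\setminus P_\alpha)$, the set $\widetilde U$ is open and, since each $P_\alpha$ is nowhere dense in $U_\alpha$ and $U$ itself is dense in $(\mathbb C^\times)^3$, the set $\widetilde U$ is open dense in $(\mathbb C^\times)^3$. Therefore $\widetilde f:=f|_{\widetilde U}$ lies in the class $\mathbf S$ of holomorphic functions on open dense subsets, and it represents a well-defined element $\underline{\widetilde f}\in\mathbf R$. The point is that $\underline f$ and $\underline{\widetilde f}$ are the same element of $\mathbf R$: they agree on the open dense subset $\widetilde U\subset W\cap\widetilde U$, so $f\sim\widetilde f$ by Definition \ref{def:SimEquivalenceRelation}.

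The hypothesis (\ref{eq:NonzeroMeromorphicFunctionOnEveryConnectedComponent}) then translates directly into the hypothesis of Lemma \ref{lemm:UnitInRCriterion} for $\widetilde f$. Indeed, the connected components of $\widetilde U$ refine those of $U$: each $U_\alpha\setminus P_\alpha$ is connected (removing a nowhere dense analytic set from a connected open subset of $(\mathbb C^\times)^3$ keeps it connected, as the complement of a complex-analytic hypersurface cannot disconnect an open set). If $f|_{W\cap U_\alpha}\not\equiv0$, then since $f$ is meromorphic and not identically zero on the connected set $U_\alpha$, it is nonzero on a dense subset, so $\widetilde f|_{U_\alpha\setminus P_\alpha}\not\equiv0$ as well. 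Thus $\widetilde f$ satisfies the criterion (\ref{eq:UnitInRCriterion}), hence $\underline{\widetilde f}\in\mathbf R^\times$, and since $\underline f=\underline{\widetilde f}$ we conclude $\underline f\in\mathbf R^\times$.

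I expect the main obstacle to be the careful verification that the pole set behaves as claimed, in two respects: first, that removing the poles of a meromorphic function leaves an open \emph{dense} set (requiring that the polar locus be nowhere dense, which is where the several-variables theory and the reference to \cite{FischerLieb'2012} are essential, since naively one must rule out pathological polar sets); and second, that $U_\alpha\setminus P_\alpha$ stays connected, so that the nonvanishing hypothesis transfers component-by-component without splitting a single component into pieces on some of which $f$ might vanish identically. Both are standard facts about analytic subsets of codimension one, but they are the load-bearing inputs that let the purely holomorphic Lemma \ref{lemm:UnitInRCriterion} do the real work.
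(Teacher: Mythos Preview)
Your proposal is correct and follows essentially the same route as the paper: reduce to Lemma~\ref{lemm:UnitInRCriterion} by passing to the complement of the polar locus, argue that this complement is open dense in $(\mathbb C^\times)^3$, and check that its connected components are exactly the $U_\alpha$ minus poles so the nonvanishing hypothesis transfers. The paper's version is marginally more streamlined in that it works directly with the given domain $W$ (which already \emph{is} the complement of the poles by the meromorphic-function definition), so no auxiliary $\widetilde U$ or equivalence $\underline f=\underline{\widetilde f}$ is needed; for the connectedness of $W\cap U_\alpha$ it cites Proposition~1.3 of \cite{FischerLieb'2012}, which is the same standard fact you invoke.
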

\begin{proof}
Let $U=\bigcup_{\alpha\in I}U_\alpha$ be a decomposition of $U$ into connected components. On each connected component $U_\alpha$, the restriction $f|_{W\cap U_\alpha}$ is meromorphic in the sense of conventional Definition VI.2.1 from \cite{FischerLieb'2012}. In particular, conventional definition implies that $U_\alpha\backslash W$ is an open dense subset of $U_\alpha$ and hence
\begin{equation}
W=\bigcup_{\alpha\in I}W\cap U_\alpha\;\subset\; U\;\subset\;\big(\mathbb C^\times\big)^3
\label{eq:DecompositionWConnectedComponents}
\end{equation}
is an open dense subset of $U$. Next, because $U\subset\big(\mathbb C^\times\big)^3$ is dense in $\big(\mathbb C^\times\big)^3$, so is $W\subset\big(\mathbb C^\times\big)^3$ and hence $f\in\mathbf S$ is a holomorphic function on an open dense subset of $\big(\mathbb C^\times\big)^3$.

Now by Proposition 1.3 from \cite{FischerLieb'2012}, $U_{\alpha}\cap W$ must be connected, hence (\ref{eq:DecompositionWConnectedComponents}) is, in fact, a decomposition of $W$ into connected components. Taking this into account we conclude that (\ref{eq:NonzeroMeromorphicFunctionOnEveryConnectedComponent}) implies that $f:W\rightarrow\mathbb C$ satisfies condition of Lemma \ref{lemm:UnitInRCriterion} and thus represents a unit $\underline f\in\mathbf R^\times$ of the ring $\mathbf R$.
\end{proof}

A particular specialization of the $q$-Pochhammer symbol will serve as a major building block of formulas in this paper. Below we apply Proposition \ref{prop:MeromorphicFunctionIsAUnitCriterion} to show that it is, in fact, an invertible element of the ring $\mathbf R$.
\begin{corollary}
Fix an $n$-tuple of nonzero integer vectors
\begin{equation*}
(a_i,b_i)\in \mathbb Z\times\mathbb Z\backslash\{(0,0)\}\qquad \textrm{for}\quad 1\leq i\leq n.
\end{equation*}
The following specialization of the iterated $q$-Pochhammer symbol represents a unit of the ring $\mathbf R$
\begin{equation}
\underline{(Q^{m_1}p^{m_2}s^{m_3};p^{a_1}s^{b_1},\dots,p^{a_n}s^{b_n})}\in\mathbf R^\times\qquad\textrm{for all}\quad m_1\in\mathbb Z\backslash\{0\},\quad m_2,m_3\in\mathbb Z.
\label{eq:SpecializedPochhammerEquivalenceClassMultiplicative}
\end{equation}
\end{corollary}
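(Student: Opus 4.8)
The plan is to verify the hypotheses of Proposition \ref{prop:MeromorphicFunctionIsAUnitCriterion} for the specialized symbol, regarded as a meromorphic function on a suitable open dense subset of $\big(\mathbb C^\times\big)^3$ obtained by substitution. First I would fix the holomorphic substitution map
\[
\phi(Q,p,s):=\big(Q^{m_1}p^{m_2}s^{m_3},\,p^{a_1}s^{b_1},\dots,p^{a_n}s^{b_n}\big)
\]
and set
\[
U:=\big\{(Q,p,s)\in\big(\mathbb C^\times\big)^3\;:\;|p^{a_i}s^{b_i}|\neq 1\ \textrm{for all}\ 1\leq i\leq n\big\}.
\]
I would check that $U$ is open and dense: for each $i$ the excluded locus $|p^{a_i}s^{b_i}|=1$ is the condition $a_i\log|p|+b_i\log|s|=0$, which, since $(a_i,b_i)\neq(0,0)$, defines a real-codimension-one real-analytic hypersurface, hence a closed nowhere dense set; a finite union of such sets is still nowhere dense. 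By construction $\phi$ maps $U$ into $\mathbb C\times\big(\widecheck{\mathbb C}\big)^n$ (the first slot lands in $\mathbb C^\times\subset\mathbb C$, and the remaining slots avoid modulus $0$ and $1$), so the pullback $f:=(Q^{m_1}p^{m_2}s^{m_3};p^{a_1}s^{b_1},\dots,p^{a_n}s^{b_n})$ of the extended symbol is defined on $U$.

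Second, I would show that $f$ is meromorphic on $U$ in the sense of the paper. Since the moduli $|p^{a_i}s^{b_i}|$ are continuous and never equal $1$ on $U$, the sign vector $\boldsymbol\epsilon$ with $\epsilon_i=\mathrm{sign}\big(1-|p^{a_i}s^{b_i}|\big)$ is locally constant, hence constant on each connected component $U_\alpha$, so that $\phi(U_\alpha)\subset\Omega_{\boldsymbol\epsilon}$ for a single $\boldsymbol\epsilon$. On $\Omega_{\boldsymbol\epsilon}$ the piecewise formula (\ref{eq:GeneralPochhammerExplicit}) writes the symbol as an absolutely convergent infinite product when $\epsilon_1\cdots\epsilon_n=+1$, and as the reciprocal of one when $\epsilon_1\cdots\epsilon_n=-1$, all inversion parameters having modulus strictly below $1$. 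Pulling back through the holomorphic map $\phi$, the restriction $f|_{U_\alpha}$ is therefore either holomorphic or the reciprocal of a holomorphic function, hence meromorphic on $U_\alpha$ as soon as the relevant product is not identically zero, which is exactly the content of the next step.

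Third, and this is the heart of the matter, I would prove that $f|_{W\cap U_\alpha}\not\equiv 0$ on every component, which is where the hypothesis $m_1\neq 0$ is indispensable. Because the defining inequalities of $U$ do not constrain $Q$, we have $U=\mathbb C^\times\times U'$ with $U'=\{(p,s)\;:\;|p^{a_i}s^{b_i}|\neq 1\}$, so each component is $U_\alpha=\mathbb C^\times\times U'_\beta$ and its $Q$-slice over any fixed $(p_0,s_0)\in U'_\beta$ is all of $\mathbb C^\times$, a connected set. By Corollary \ref{cor:qPochhammerNonzeroMeromorphicFunctionOfOneVariable} the map $z\mapsto(z;p_0^{a_1}s_0^{b_1},\dots,p_0^{a_n}s_0^{b_n})$ is a nonzero meromorphic function of one variable; since $m_1\neq 0$ the map $Q\mapsto Q^{m_1}p_0^{m_2}s_0^{m_3}$ is a nonconstant holomorphic self-map of $\mathbb C^\times$, so the composite $Q\mapsto f(Q,p_0,s_0)$ is again a nonzero meromorphic function on the connected set $\mathbb C^\times$, vanishing on at most a discrete set. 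Consequently $f$ cannot vanish identically on the open set $U_\alpha$, and the same argument applied to the reciprocal representation shows the product appearing in step two is not identically zero, so $f$ is genuinely meromorphic there.

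Finally, having exhibited an open dense $U$, a function $f$ meromorphic on $U$, and the nonvanishing $f|_{W\cap U_\alpha}\not\equiv 0$ on every connected component, I would invoke Proposition \ref{prop:MeromorphicFunctionIsAUnitCriterion} to conclude $\underline f\in\mathbf R^\times$. I expect the main obstacle to be precisely this non-vanishing on each component: one must rule out the degenerate scenario in which a factor $1-Q^{m_1}p^{\ast}s^{\ast}$ collapses to the constant function $0$, which genuinely does happen when $m_1=0$ and the remaining exponents can be driven to zero, and the clean way to exclude it is the reduction to the one-variable statement supplied by Corollary \ref{cor:qPochhammerNonzeroMeromorphicFunctionOfOneVariable}.
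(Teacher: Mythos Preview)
Your proposal is correct and follows essentially the same route as the paper: define the open dense set $U=\{(Q,p,s):|p^{a_i}s^{b_i}|\neq1\}$, observe that its connected components factor as $\mathbb C^\times\times U'_\beta$, pull back the extended Pochhammer via the holomorphic substitution $\phi$, and verify the non-vanishing hypothesis of Proposition~\ref{prop:MeromorphicFunctionIsAUnitCriterion} by freezing $(p,s)$ and using that $Q\mapsto Q^{m_1}p_0^{m_2}s_0^{m_3}$ is nonconstant when $m_1\neq0$.

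The only cosmetic difference is in the non-vanishing step: you invoke Corollary~\ref{cor:qPochhammerNonzeroMeromorphicFunctionOfOneVariable} directly (the one-variable symbol is a nonzero meromorphic function of $z$), whereas the paper splits the symbol on $\Omega_{\boldsymbol\epsilon}$ into numerator and denominator $F_{\boldsymbol\epsilon},G_{\boldsymbol\epsilon}$, appeals to Lemma~\ref{lemm:PochhammerNonConstantEntire} to see that the relevant $z$-slice is a \emph{nonconstant} entire function, and then uses the Open Mapping Theorem to conclude its image is open and hence not $\{0\}$. Your packaging is slightly cleaner since Corollary~\ref{cor:qPochhammerNonzeroMeromorphicFunctionOfOneVariable} already absorbs that argument, but the underlying mechanism is identical.
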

\begin{proof}
For an arbitrary nonzero integer vector $(a,b)\in\mathbb Z\times\mathbb Z\backslash\{(0,0)\}$ define
\begin{equation*}
V_{a,b}:=\left\{(p,s)\in\big(\mathbb C^\times\big)^2\;\Big|\;|p^as^b|\neq1\right\}.
\end{equation*}
Now consider an open dense subset of $\big(\mathbb C^\times\big)^2$ given by
\begin{equation*}
W:=\big(\mathbb C^\times\big)^2\Big\backslash\bigcup_{i=1}^n V_{a_i,b_i}
\end{equation*}
and let $W=\bigcup_{\alpha\in I}W_\alpha$ be a decomposition of $W$ into connected components. Next, we define
\begin{equation*}
U:=\bigcup_{\alpha\in I}U_\alpha,\qquad\textrm{where}\quad U_\alpha:=\mathbb C^\times\times W_\alpha.
\end{equation*}
Because $\mathbb C^\times$ is connected, $U_\alpha$ are precisely the connected components of $U$. We will show that (\ref{eq:SpecializedPochhammerEquivalenceClassMultiplicative}) defines a meromorphic function of three variables on $U$ which is not identically zero on every connected component $U_\alpha\subset U$. By Proposition \ref{prop:MeromorphicFunctionIsAUnitCriterion} it is enough to conclude that (\ref{eq:SpecializedPochhammerEquivalenceClassMultiplicative}) is a unit of the ring $\mathbf R$.

Consider an auxiliary holomorphic map
\begin{equation*}
g:U\rightarrow\mathbb C\times\big(\widecheck{\mathbb C}\big)^n,\quad g(Q,p,s):=(Q^{m_1}p^{m_2}s^{m_3},p^{a_1}s^{b_1},\dots,p^{a_n}s^{b_n}).
\end{equation*}
Now fix an arbitrary connected component $U_\alpha\subset U$. Because $g$ is continuous, the image of the connected component
\begin{equation*}
g(U_\alpha)\subset \Omega_{\boldsymbol{\epsilon}},\qquad \textrm{for some}\quad  \boldsymbol{\epsilon}\in\{\pm1\}^n.
\end{equation*}
must be contained in exactly one of the connected components (\ref{eq:ConnectedComponentsOmegaEpsilon}) of the codomain $\mathbb C\times\big(\widecheck{\mathbb C}\big)^n$.

From (\ref{eq:GeneralPochhammerExplicit}) we know that on each connected component $\Omega_\epsilon$, the iterated $q$-Pochhammer is given by a ratio of two holomorphic functions $F_{\boldsymbol\epsilon},G_{\boldsymbol\epsilon}:\Omega_\epsilon\rightarrow\mathbb C$, where $F_{\boldsymbol\epsilon},G_{\boldsymbol\epsilon}\not\equiv0$ as a holomorphic function of $n+1$ variables. Precomposing the two functions with $g$ we define another pair of holomorphic functions, now of three variables
\begin{equation*}
\widetilde F_{\alpha},\widetilde G_{\alpha}: U_\alpha\rightarrow\mathbb C,\qquad \widetilde F_{\alpha}:=F_{\boldsymbol\epsilon}\circ \Big(g\big|_{U_\alpha}\Big),\qquad \widetilde G_{\alpha}:=G_{\boldsymbol\epsilon}\circ \Big(g\big|_{U_\alpha}\Big).
\end{equation*}

Note that in order to show that $\widetilde F_\alpha,\widetilde G_\alpha\not\equiv0$ is not identically zero it is enough to show that at least one $(p,s)$-slice of this functions is not identically zero. Fix further an arbitrary pair of parameters $(p,s)\in W_\alpha$ and let
\begin{align*}
&h_{p,s}:\mathbb C^\times\rightarrow\mathbb C,\qquad h_{p,s}=Q^{m_1}p^{m_2}s^{m_3},\qquad m_1\neq0,\\
&H_{p,s}:\mathbb C\rightarrow\mathbb C,\qquad H_{p,s}(z):=G_{\boldsymbol\epsilon}(z,p^{a_1}s^{b_1},\dots,p^{a_n}s^{b_n}).
\end{align*}
The corresponding $(p,s)$-slice of $\widetilde G_{\alpha}$ then factors as
\begin{equation}
    \widetilde G_\alpha(Q,p,s)=H_{p,s}(h_{p,s}(Q))\qquad\textrm{for all}\quad Q\in\mathbb C^\times.
    \label{eq:psSliceGEpsilon}
\end{equation}

By Lemma \ref{lemm:PochhammerNonConstantEntire} combined with formula (\ref{eq:GeneralPochhammerExplicit}) we know that $H_{p,s}$ must be nonconstant holomorphic function of a single complex variable. In a single variable case we have a luxury of the Open Mapping Theorem to conclude that $H_{p,s}(\mathbb C)\subset\mathbb C$ is open. Because $h_{p,s}(\mathbb C^\times)=\mathbb C$, for the fixed $(p,s)\in W_\alpha$ the image
\begin{equation*}
\left\{\widetilde{G}_\alpha(Q,p,s)\;\big|\;Q\in\mathbb C^\times\right\}=H_{p,s}(h_{p,s}(\mathbb C^\times))=H_{p,s}(\mathbb C)\quad\subset\quad\mathbb C
\end{equation*}
is an open subset of the complex plane. Which, in particular, implies that $\widetilde{G}_\alpha$ cannot be a zero function.
\end{proof}

\subsection{Plethystic exponent beyond formal power series}

Consider a ring of Laurent polynomials in two variables
\begin{equation*}
\mathbf P:=\mathbb Z[p^{\pm1},s^{\pm}].
\end{equation*}
Now let $\mathbf M$ be a $\mathbf P$-module
\begin{equation*}
\mathbf M:=\bigoplus_{m\in\mathbb Z\backslash\{0\}}Q^m\mathbb Z[p^{\pm1},s^{\pm1}].
\end{equation*}
which consists of Laurent polynomials in three variables $(Q,p,s)$ which have no zero degree terms in $Q$.

\begin{definition}
Let $n\in\mathbf Z_{\geq0}$ be a nonnegative integer, $\mu\in\mathbf M$ be a Laurent polynomial, and
\begin{equation*}
    (a_i,b_i)\in\mathbb Z\times\mathbb Z\backslash\{(0,0)\}\qquad\textrm{for}\quad i\in\{1,\dots,n\}
\end{equation*}
be an $n$-tuple of nonzero integer vectors. We refer to $n+1$-tuple of elements
\begin{equation*}
\left(\mu\,\mathbf{:}\,1-p^{a_1}s^{b_1}\,\mathbf{:}\,1-p^{a_2}s^{b_2}\,\mathbf{:}\,\dots\,\mathbf{:}\,1-p^{a_n}s^{b_n}\right)
\end{equation*}
as a formal fraction $(\mu\,\mathbf{:}\,\Delta)$ with numerator $\mu$ and denominator $\Delta=(1-p^{a_1}s^{b_1},\dots,1-p^{a_n}s^{b_n}).$ Hereinafter, we denote the set of all possible formal fractions by $\mathbf Q$.
\label{def:FormalFraction}
\end{definition}
Note that, for the moment we distinguish different ``factorizations'' of denominator.

\begin{definition}
Consider a function $\mathrm{pexp}:\mathbf Q\rightarrow\mathbf R^\times$, which is first defined on formal fractions with monomial numerators
\begin{equation}
\mathrm{pexp}\left(Q^{m_1}p^{m_2}s^{m_3}\,\mathbf{:}\, 1-p^{a_1}s^{b_1}\,\mathbf{:}\,\dots\,\mathbf{:}\,1-p^{a_n}s^{b_n}\right):=\underline{(Q^{m_1}p^{m_2}s^{m_3};p^{a_1}s^{b_1},\dots,p^{a_n}s^{b_n})}\qquad\in\quad\mathbf R^\times
\label{eq:pexpFormalFractionsMonomialDefinition}
\end{equation}
where $m_1\neq0$ and then uniquely extended to general formal fractions by the exponential property:
\begin{subequations}
\begin{align}
\mathrm{pexp}(0:\Delta)&=1,\\
\mathrm{pexp}(-\mu:\Delta)&=\frac1{\mathrm{pexp}(\mu:\Delta)},\\
\mathrm{pexp}(\mu+\mu':\Delta)&=\mathrm{pexp}(\mu:\Delta)\mathrm{pexp}(\mu':\Delta).
\label{eq:pexpFormalFractionsAdditivity}
\end{align}
\label{eq:ExponentialPropertyFormalFractions}
\end{subequations}
satisfied for all formal fractions $(\mu:\Delta)\in\mathbf Q$ and $(\mu':\Delta)\in\mathbf Q$.
\label{def:PExpFormalFraction}
\end{definition}

\begin{lemma}
The above function $\mathrm{pexp}:\mathbf Q\rightarrow\mathbf R^\times$ satisfies the following identities
\begin{subequations}
\begin{align}
&\mathrm{pexp}(\mu\,\mathbf{:}\,1-p^{a_1}s^{b_1}\,\mathbf{:}\,\dots\,\mathbf{:}\,1-p^{a_n}s^{b_n})=\mathrm{pexp}(\mu\,\mathbf{:}\,1-p^{a_{\nu(1)}}s^{b_{\nu(1)}}\,\mathbf{:}\,\dots\,\mathbf{:}\,1-p^{a_{\nu(n)}}s^{b_{\nu(n)}}),
\label{eq:pexpFormalFactorizationsOfDenominatorIdentity}\\
&\mathrm{pexp}(\mu (1-p^{a_0}s^{b_0})\,\mathrm{:}\,1-p^{a_0}s^{b_0}\,\mathbf{:}\,\dots\,\mathbf{:}\,1-p^{a_n}s^{b_n})=\mathrm{pexp}(\mu\,\mathbf{:}\,1-p^{a_1}s^{b_1}\,\mathbf{:}\,\dots,\,\mathbf{:}\,1-p^{a_n}s^{b_n}),
\label{eq:pexpCancellationFormalDenominator}
\end{align}
\end{subequations}
for all $\mu\in\mathbf M,\;\nu\in S_n,$ and $n+1$-tuples of nonzero vectors
\begin{equation*}
(a_i,b_i)\in\mathbb Z\times\mathbb Z\backslash\{(0,0)\}\qquad i\in\{0,\dots,n\}.
\end{equation*}
\label{lemm:pexpEquivalenceSingleFactor}
\end{lemma}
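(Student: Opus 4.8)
The plan is to reduce both identities to the case of a monomial numerator $\mu = Q^{m_1}p^{m_2}s^{m_3}$ with $m_1 \neq 0$, and then to read them off from the two $q$-Pochhammer identities (\ref{eq:qPochhammerPermutationSymmetry}) and (\ref{eq:IteratedPochhammerProductRules}) established earlier. This reduction is legitimate because every $\mu \in \mathbf M$ is a finite $\mathbb Z$-linear combination of such monomials, and by the exponential property (\ref{eq:ExponentialPropertyFormalFractions}) the assignment $\mu \mapsto \mathrm{pexp}(\mu:\Delta)$ is a group homomorphism from $(\mathbf M,+)$ to $(\mathbf R^\times,*)$ for each fixed denominator $\Delta$. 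Both sides of each identity are therefore multiplicative in the numerator (for (\ref{eq:pexpCancellationFormalDenominator}), note that $\mu(1-p^{a_0}s^{b_0})$ is additive in $\mu$), so it suffices to verify each identity on monomials.

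First I would treat the permutation identity (\ref{eq:pexpFormalFactorizationsOfDenominatorIdentity}). For a monomial numerator the definition (\ref{eq:pexpFormalFractionsMonomialDefinition}) turns both sides into specialized iterated $q$-Pochhammer symbols that differ only by a permutation $\nu$ of the parameters $p^{a_i}s^{b_i}$. The permutation symmetry (\ref{eq:qPochhammerPermutationSymmetry}), valid as an identity of meromorphic functions on $\mathbb C \times (\widecheck{\mathbb C})^n$, specializes under the substitution $z \mapsto Q^{m_1}p^{m_2}s^{m_3}$, $p_i \mapsto p^{a_i}s^{b_i}$ and descends to the corresponding classes in $\mathbf R$, giving the claim for monomials; the general case follows by multiplicativity.

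For the cancellation identity (\ref{eq:pexpCancellationFormalDenominator}), the key step expands the numerator as $\mu(1-p^{a_0}s^{b_0}) = \mu - \mu\,p^{a_0}s^{b_0}$ and applies additivity (\ref{eq:pexpFormalFractionsAdditivity}) to obtain
\begin{equation*}
\mathrm{pexp}\big(\mu(1-p^{a_0}s^{b_0}):\Delta\big) = \frac{\mathrm{pexp}(\mu:\Delta)}{\mathrm{pexp}(\mu\,p^{a_0}s^{b_0}:\Delta)},
\end{equation*}
where $\Delta = (1-p^{a_0}s^{b_0},1-p^{a_1}s^{b_1},\dots,1-p^{a_n}s^{b_n})$ and $\mu\,p^{a_0}s^{b_0}$ still lies in $\mathbf M$ because its $Q$-degree is unchanged. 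For $\mu = Q^{m_1}p^{m_2}s^{m_3}$ the two factors are $(Q^{m_1}p^{m_2}s^{m_3};p^{a_0}s^{b_0},\dots)$ and $(Q^{m_1}p^{m_2+a_0}s^{m_3+b_0};p^{a_0}s^{b_0},\dots)$; setting $z = Q^{m_1}p^{m_2}s^{m_3}$ and $p_1 = p^{a_0}s^{b_0}$ so that $zp_1 = Q^{m_1}p^{m_2+a_0}s^{m_3+b_0}$, the product rule (\ref{eq:IteratedPochhammerProductRules}) identifies this ratio with $(z;p^{a_1}s^{b_1},\dots,p^{a_n}s^{b_n})$, that is, with $\mathrm{pexp}(\mu:1-p^{a_1}s^{b_1}:\dots:1-p^{a_n}s^{b_n})$.

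The one point requiring genuine care is that (\ref{eq:qPochhammerPermutationSymmetry}) and (\ref{eq:IteratedPochhammerProductRules}) hold as equalities of meromorphic functions on the full parameter space, whereas we need them as equalities of units in $\mathbf R$ after the specialization $g$. This is exactly where the earlier corollary is indispensable: it guarantees that each specialized symbol (\ref{eq:SpecializedPochhammerEquivalenceClassMultiplicative}) is a bona fide unit of $\mathbf R$, so the displayed ratio is well-defined and the meromorphic identities descend to identities in $\mathbf R^\times$. I expect this bookkeeping --- checking that specialization commutes with both Pochhammer identities and with the passage to equivalence classes --- to be the only real obstacle, since the underlying algebra is a direct transcription of the two established $q$-Pochhammer identities.
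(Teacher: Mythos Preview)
Your proposal is correct and follows essentially the same approach as the paper: reduce to monomial numerators via the exponential property (\ref{eq:ExponentialPropertyFormalFractions}), then invoke the Pochhammer symmetry (\ref{eq:qPochhammerPermutationSymmetry}) for the first identity and the product rule (\ref{eq:IteratedPochhammerProductRules}) for the second after splitting $\mu(1-p^{a_0}s^{b_0})$ into a ratio. The paper's proof is a bit terser and does not pause over the descent to $\mathbf R^\times$, but your extra care there is harmless and the argument is otherwise identical.
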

\begin{proof}
First note that because of the exponential property (\ref{eq:ExponentialPropertyFormalFractions}) it is enough to prove the Lemma only for the case when $\mu=Q^{m_1}p^{m_2}s^{m_3}$ is a monomial with $m_1\in\mathbb Z\backslash\{0\},\;m_2,m_3\in\mathbb Z$. The first identity (\ref{eq:pexpFormalFactorizationsOfDenominatorIdentity}) then follows from the symmetry (\ref{eq:qPochhammerPermutationSymmetry}) of the iterated $q$-Pochhammer symbol. As for the second identity (\ref{eq:pexpCancellationFormalDenominator}), we have
\begin{align*}
&\mathrm{pexp}(Q^{m_1}p^{m_2}s^{m_3}(1-p^{a_0}s^{b_0})\,\mathbf{:}\,1-p^{a_0}s^{b_0}\,\mathbf{:}\,\dots\,\mathbf{:}\,1-p^{a_n}s^{b_n})\\[0.3em]
&\quad\overset{(\ref{eq:ExponentialPropertyFormalFractions})}{=} \frac{\mathrm{pexp}(Q^{m_1}p^{m_2}s^{m_3}\,\mathbf{:}\,1-p^{a_0}s^{b_0}\,\mathbf{:}\,\dots\,\mathbf{:}\,1-p^{a_n}s^{b_n})}{\mathrm{pexp}(Q^{m_1}p^{m_1+a_0}s^{m_2+b_0}\,\mathbf{:}\,1-p^{a_0}s^{b_0}\,\mathbf{:}\,\dots\,\mathbf{:}\,1-p^{a_n}s_{b_n})}\\[0.3em]
&\quad\overset{(\ref{eq:pexpFormalFractionsMonomialDefinition})}{=} \frac{\underline{(Q^{m_1}p^{m_2}s^{m_3};p^{a_0}s^{b_0},\dots,p^{a_n}s^{b_n})}}{\;\underline{(Q^{m_1}p^{m_1+a_0}s^{m_2+b_0};p^{a_0}s^{b_0},\dots,p^{a_n}s_{b_n}})\;}\\[0.3em]
&\quad\overset{(\ref{eq:IteratedPochhammerProductRules})}{=}\underline{(Q^{m_1}p^{m_2}s^{m_3};p^{a_1}s^{b_1},\dots,p^{a_n}s^{b_n})}\\[0.3em]
&\quad\overset{(\ref{eq:pexpFormalFractionsMonomialDefinition})}{=}\mathrm{pexp}(Q^{m_1}p^{m_2}s^{m_3}\,\mathbf{:}\,1-p^{a_1}s^{b_1}\,\mathbf{:}\,\dots\,\mathbf{:}\,1-p^{a_n}s^{a_n}).
\end{align*}
\end{proof}

Recall that the ring of Laurent polynomials in two variables $\mathbf P=\mathbb Z[p^{\pm1},s^{\pm1}]$ is a localization of an integral domain, so it must be an integral domain itself. Consider a multiplicative subset of $\mathbf P$ generated by two term polynomials of the special type
 \begin{equation*}
 \mathbb S_\times:=\left\langle1- p^as^b\;\big|\; (a,b)\in\mathbb Z\times\mathbb Z\backslash\{(0,0)\}\right\rangle\quad\subset\quad\mathbf P
 \end{equation*}
and let $\mathbb S_\times^{-1}\mathbf M$ be a $\mathbb S_\times^{-1}\mathbf P$-module obtained by localization of $\mathbf M$.

Every formal fraction $(\mu:\Delta)\in\mathbf Q$ represents an element of the localized module
\begin{equation*}
\overline{(\mu:\Delta)}=\overline{(\mu\,\mathbf{:}\,1-p^{a_1}s^{b_1}\,\mathbf{:}\,\dots\,\mathbf{:}\,1-p^{a_n}s^{b_n})}:=\frac{\mu}{\prod_{i=1}^n(1-p^{a_i}s^{b_i})}\qquad\in\quad\mathbb S_\times^{-1}\mathbf M.
\end{equation*}
Our next goal is to show that value of $\mathrm{pexp}$ does not depend on the particular choice of representative.

\begin{proposition}
    Suppose that two formal fractions $(\mu:\Delta)\in\mathbf Q$ and $(\mu':\Delta')\in\mathbf Q$ represent the same element of the localized module:
    \begin{equation}
    \frac{\mu}{\prod_{i=1}^n(1-p^{a_i}s^{b_i})}=\frac{\mu'}{\prod_{i=1}^{n'}(1-p^{a_i'}s^{b_i'})}\qquad\in\quad\mathbb S_\times^{-1}\mathbf M.
    \label{eq:FormalFractionsEquivalent}
    \end{equation}
    Then
    \begin{equation*}
    \mathrm{pexp}(\mu\,\mathbf{:}\,\Delta)=\mathrm{pexp}(\mu'\,\mathbf{:}\,\Delta')\qquad \in\quad \mathbf R^\times
    \end{equation*}
\label{prop:pexpFormalFractionsExquivalence}
\end{proposition}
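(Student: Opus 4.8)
The plan is to clear denominators directly at the level of $\mathrm{pexp}$, exploiting the two structural identities of Lemma \ref{lemm:pexpEquivalenceSingleFactor}. Write $D=\prod_{i=1}^n(1-p^{a_i}s^{b_i})$ and $D'=\prod_{i=1}^{n'}(1-p^{a_i'}s^{b_i'})$ for the two denominators, both lying in the multiplicative set $\mathbb S_\times\subset\mathbf P$. The first step is to translate the hypothesis (\ref{eq:FormalFractionsEquivalent}) into an honest equality of numerators in $\mathbf M$. Since $\mathbf P=\mathbb Z[p^{\pm1},s^{\pm1}]$ is an integral domain and $\mathbf M$ is a direct sum of copies of $\mathbf P$, hence a free and therefore torsion-free $\mathbf P$-module, the localization map $\mathbf M\to\mathbb S_\times^{-1}\mathbf M$ is injective and two fractions agree in $\mathbb S_\times^{-1}\mathbf M$ iff their cross-products agree in $\mathbf M$. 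Thus (\ref{eq:FormalFractionsEquivalent}) is equivalent to $\mu D'=\mu' D$ as elements of $\mathbf M$. Observe that $\mu D',\mu'D\in\mathbf M$ since multiplying by an element of $\mathbf P$, which carries no $Q$, preserves the absence of a constant $Q$-term.

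The heart of the argument is to bring $\mathrm{pexp}(\mu\,\mathbf{:}\,\Delta)$ and $\mathrm{pexp}(\mu'\,\mathbf{:}\,\Delta')$ to a common formal fraction. Reading the cancellation identity (\ref{eq:pexpCancellationFormalDenominator}) from right to left, I can append a single factor $1-p^{a_i'}s^{b_i'}$ to the denominator provided I simultaneously multiply the numerator by that same factor, without changing the value of $\mathrm{pexp}$. Iterating over all $n'$ factors of $D'$ yields
\begin{equation*}
\mathrm{pexp}(\mu\,\mathbf{:}\,\Delta)=\mathrm{pexp}\big(\mu D'\,\mathbf{:}\,\Delta,\Delta'\big),
\end{equation*}
where $\Delta,\Delta'$ denotes the concatenated denominator tuple. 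Symmetrically, appending the factors of $D$ to $(\mu'\,\mathbf{:}\,\Delta')$ gives $\mathrm{pexp}(\mu'\,\mathbf{:}\,\Delta')=\mathrm{pexp}(\mu'D\,\mathbf{:}\,\Delta',\Delta)$, and the permutation identity (\ref{eq:pexpFormalFactorizationsOfDenominatorIdentity}) lets me reorder the denominator so that $\mathrm{pexp}(\mu'D\,\mathbf{:}\,\Delta',\Delta)=\mathrm{pexp}(\mu'D\,\mathbf{:}\,\Delta,\Delta')$.

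Finally I would invoke well-definedness of $\mathrm{pexp}$ in the numerator for a fixed denominator: by the exponential property (\ref{eq:ExponentialPropertyFormalFractions}), for the fixed denominator $\Delta,\Delta'$ the assignment $\nu\mapsto\mathrm{pexp}(\nu\,\mathbf{:}\,\Delta,\Delta')$ is a homomorphism from $(\mathbf M,+)$ to $(\mathbf R^\times,\cdot)$, determined by its prescribed values on the monomial basis of $\mathbf M$; hence it depends only on $\nu$ as an element of $\mathbf M$. Since $\mu D'=\mu' D$ in $\mathbf M$ from the first step, the two common forms coincide,
\begin{equation*}
\mathrm{pexp}\big(\mu D'\,\mathbf{:}\,\Delta,\Delta'\big)=\mathrm{pexp}\big(\mu' D\,\mathbf{:}\,\Delta,\Delta'\big),
\end{equation*}
and chaining the three equalities gives $\mathrm{pexp}(\mu\,\mathbf{:}\,\Delta)=\mathrm{pexp}(\mu'\,\mathbf{:}\,\Delta')$ in $\mathbf R^\times$.

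The only genuinely delicate point is the passage from the localization equality to the cross-product equality $\mu D'=\mu'D$ in $\mathbf M$; everything downstream is a bookkeeping exercise with the two identities of Lemma \ref{lemm:pexpEquivalenceSingleFactor} together with the homomorphism property in the numerator. I would therefore take care to justify torsion-freeness of $\mathbf M$ explicitly, since this is exactly what rules out the need for an auxiliary element of $\mathbb S_\times$ to witness the equality and guarantees that the monomial expansions of $\mu D'$ and $\mu'D$ literally agree.
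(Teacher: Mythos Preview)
Your proof is correct and follows essentially the same approach as the paper, which simply says to use Lemma~\ref{lemm:pexpEquivalenceSingleFactor} to bring both formal fractions to a common denominator. You have spelled out the details more carefully than the paper does, in particular the torsion-freeness of $\mathbf M$ needed to pass from equality in the localization to the cross-product identity $\mu D'=\mu' D$ in $\mathbf M$.
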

\begin{proof}
Use Lemma \ref{lemm:pexpEquivalenceSingleFactor} to bring both formal fractions to the common denominator.
\end{proof}

In other words, the plethystic exponential map $\mathrm{pexp}:\mathbf Q\rightarrow\mathbf R^\times$ factors through the map
\begin{equation*}
\mathrm{pexp}:\;\mathbb S_\times^{-1}\mathbf M\rightarrow\mathbf R^\times
\end{equation*}
which we denote by the same symbol throughout the rest of the text.

\begin{example}
Consider two equivalent ways of computing the following Plethystic exponent
\begin{equation*}
\mathrm{pexp}\left(\frac Q{p+p^{-1}}\right)=\mathrm{pexp}\left(\frac{Qp^{-1}(1-p^{-2})}{1-p^{-4}}\right)=\frac{\underline{(Qp^{-1};p^{-4})}}{\;\underline{(Qp^{-3};p^{-4})}\;} \overset{(\ref{eq:PochhammerParameterInversion})}{=} \frac{\underline{(Qp;p^4)}}{\;\underline{(Qp^3;p^4)}\;} =\mathrm{pexp}\left(\frac{Qp(1-p^2)}{(1-p^4)}\right)
\end{equation*}
\label{ex:PlethysticTwoWays}
\end{example}

\section{Action of $SL(2,\mathbb Z)$}
\label{sec:SL2ZActionOnMatR3}
\subsection{Operators $\widehat D_A$ and $\widehat D_B$}

\begin{definition}
Consider a family of holomorphic automorphisms of $\big(\mathbb C^\times\big)^3$ labelled by elements of $SL(2,\mathbb Z)$:
\begin{align}
&\delta_{(p,s)\mapsto (p^as^b,p^cs^d)}:\big(\mathbb C^\times\big)^3\rightarrow\big(\mathbb C^\times\big)^3,\quad (Q,p,s)\mapsto(Q,p^as^b,p^cs^d),\qquad\textrm{where}\quad \left(\begin{array}{cc}a&b\\c&d\end{array}\right)\in SL(2,\mathbb Z).
\label{eq:DeltaC3Definiton}
\end{align}
\end{definition}
\begin{lemma}
The induced action of (\ref{eq:DeltaC3Definiton}) on functions gives rise to an automorphism of the ring $\mathbf R$
\begin{equation}
\widehat{\delta}_{(p,s)\mapsto(p^as^b,p^cs^d)}:\mathbf R\rightarrow\mathbf R,\qquad \underline{f}\mapsto\underline{f\circ\delta_{(p,s)\mapsto(p^as^b,p^cs^d)}}.
\label{eq:DeltaHatRDefinition}
\end{equation}
\end{lemma}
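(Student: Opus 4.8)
The plan is to show that precomposition with the biholomorphism $\delta:=\delta_{(p,s)\mapsto(p^as^b,p^cs^d)}$ descends to a well-defined ring endomorphism of $\mathbf R$ and then to exhibit its two-sided inverse. The starting observation is purely topological: since $\left(\begin{smallmatrix}a&b\\c&d\end{smallmatrix}\right)\in SL(2,\mathbb Z)$, the map $\delta$ is a biholomorphic automorphism of $(\mathbb C^\times)^3$ whose inverse is $\delta_{(p,s)\mapsto(p^ds^{-b},p^{-c}s^a)}$, the automorphism attached to the inverse matrix $\left(\begin{smallmatrix}d&-b\\-c&a\end{smallmatrix}\right)$; this is checked directly using $ad-bc=1$. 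In particular $\delta$ is a homeomorphism of $(\mathbb C^\times)^3$, so both $\delta$ and $\delta^{-1}$ carry open sets to open sets and dense sets to dense sets, hence open dense sets to open dense sets.

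First I would verify that $f\mapsto f\circ\delta$ preserves $\mathbf S$. Given $f:U\to\mathbb C$ holomorphic on an open dense $U\subset(\mathbb C^\times)^3$, the preimage $\delta^{-1}(U)$ is again open dense by the previous remark, and $f\circ\delta$ is holomorphic there as a composition of holomorphic maps; thus $f\circ\delta\in\mathbf S$. Next I would check that this operation respects the equivalence relation of Definition \ref{def:SimEquivalenceRelation}: if $f_1\sim f_2$, so that $f_1|_V=f_2|_V$ on some open dense $V\subset U_1\cap U_2$, then $(f_1\circ\delta)|_{\delta^{-1}(V)}=(f_2\circ\delta)|_{\delta^{-1}(V)}$, and $\delta^{-1}(V)$ is open dense and contained in $\delta^{-1}(U_1)\cap\delta^{-1}(U_2)=\delta^{-1}(U_1\cap U_2)$. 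Hence $f_1\circ\delta\sim f_2\circ\delta$, and the assignment descends to a well-defined map $\widehat\delta:\mathbf R\to\mathbf R$, $\underline f\mapsto\underline{f\circ\delta}$.

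That $\widehat\delta$ is a ring homomorphism is then immediate from the fact that precomposition commutes with pointwise addition and multiplication and with restriction to intersections of domains: since $\delta^{-1}(U_1\cap U_2)=\delta^{-1}(U_1)\cap\delta^{-1}(U_2)$, the domain bookkeeping built into the operations of $\mathbf R$ is preserved, so $\widehat\delta(\underline{f_1}+\underline{f_2})=\widehat\delta(\underline{f_1})+\widehat\delta(\underline{f_2})$ and likewise for $*$, while $\widehat\delta(\underline 1)=\underline 1$. Finally, to see that $\widehat\delta$ is an automorphism I would exhibit a two-sided inverse, namely the map $\widehat\delta_{(p,s)\mapsto(p^ds^{-b},p^{-c}s^a)}$ attached to $\delta^{-1}$: since $(f\circ\delta)\circ\delta^{-1}=f\circ(\delta\circ\delta^{-1})=f$ and symmetrically, the two composites are the identity on $\mathbf R$. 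The argument involves no genuine obstacle; the only point requiring care is the repeated use of the elementary but essential fact that a biholomorphism of $(\mathbb C^\times)^3$ sends open dense subsets to open dense subsets, which is exactly what guarantees that $\mathbf S$ and the relation $\sim$ are preserved under precomposition.
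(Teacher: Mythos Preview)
Your proof is correct and follows essentially the same approach as the paper: use that $\delta$ is a biholomorphism of $(\mathbb C^\times)^3$ to see that precomposition preserves $\mathbf S$, check that it respects the equivalence relation $\sim$, and conclude that it descends to $\mathbf R$. The paper's version is terser and leaves the ring-homomorphism property and the existence of the inverse implicit, whereas you spell these out explicitly; your extra care in noting that it is $\delta^{-1}(U)$ (rather than $\delta(U)$) that serves as the domain of $f\circ\delta$ is a welcome clarification.
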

\begin{proof}
Because (\ref{eq:DeltaC3Definiton}) is a holomorphic authomorphism of $\big(\mathbb C^\times\big)^3$, it maps open dense subsets to open dense subsets, so we have a map
\begin{equation}
\widehat\delta_{(p,s)\mapsto(p^as^b,p^cs^d)}:\mathbf S\rightarrow\mathbf S,\qquad \widehat\delta_{(p,s)}(f):=f\circ \delta_{(p,s)\mapsto(p^as^b,p^cs^d)}.
\label{eq:DeltaHatSDefinition}
\end{equation}
At the same time, note that (\ref{eq:DeltaHatSDefinition}) preserves equivalence relation $\sim$ from Definition \ref{def:SimEquivalenceRelation} and hence descends to an automorphism (\ref{eq:DeltaHatRDefinition}) of the quotient space $\mathbf R=\mathbf S\big/\sim$.
\end{proof}

\begin{definition}
Consider a pair of invertible matrices $D_A,D_B\in Mat_{3\times 3}(\mathbf R)$ given by
\begin{align*}
D_A := 
\left( 
\begin{array}{ccc} 
1 & 0 & 0 \\
0 & -i Q & 0 \\
0 & 0 & -1
\end{array} 
\right)
\end{align*}

\begin{gather*}
\begin{aligned} 
D_B:=\left( 
\begin{array}{ccc} 
\mathrm{pexp}\left( \dfrac{-(Q^8 - Q^{-8})p s^{-1}(s^2 + 2 ps + p)}{(1-s^2)(1-p^2/s^2)} \right) & \dfrac{2Q}{1-Q^4} \mathrm{pexp}\left( \dfrac{-(Q^8 - Q^{-8})p s^{-1}(s + 2 ps + p)}{(1-s^2)(1-p^2/s^2)} \right) & \dfrac{2Q^4}{(1-Q^4)^2} \mathrm{pexp}\left( \dfrac{-(Q^8 - Q^{-8})p s^{-1}(s^2 + 2 ps + p)}{(1-s^2)(1-p^2/s^2)} \right) \\
-\dfrac{(1-Q^4)}{Q^{3}} \mathrm{pexp}\left( \dfrac{-(Q^8 - Q^{-8})p(s + 2 p + 1)}{(1-s^2)(1-p^2/s^2)} \right)  & 0 & \dfrac{2Q}{1-Q^4} \mathrm{pexp}\left( \dfrac{-(Q^8 - Q^{-8})p(s + 2 p + 1)}{(1-s^2)(1-p^2/s^2)} \right) \\
\dfrac{(1-Q^4)^2}{2Q^4} \mathrm{pexp}\left( \dfrac{-(Q^8 - Q^{-8})p s^{-1}(s^2 + 2 ps + p)}{(1-s^2)(1-p^2/s^2)} \right) & \dfrac{-(1-Q^4)}{Q^{3}} \mathrm{pexp}\left( \dfrac{-(Q^8 - Q^{-8})p s^{-1}(s + 2 ps + p)}{(1-s^2)(1-p^2/s^2)} \right) & \mathrm{pexp}\left( \dfrac{-(Q^8 - Q^{-8})p s^{-1}(s^2 + 2 ps + p)}{(1-s^2)(1-p^2/s^2)} \right)
\end{array} 
\right)
\end{aligned}
\end{gather*}
and let $\widehat D_A,\widehat D_B:\mathbf R^3\rightarrow\mathbf R^3$ be a pair of operators defined as
\begin{equation*}
\widehat D_A:=D_A\circ \widehat{\delta}_{(p,s)\mapsto (ps,s)},\qquad \widehat D_B:=D_B\circ\widehat\delta_{(p,s)\mapsto(p,s/p)}.
\end{equation*}
\end{definition}
In what follows, it will also be convenient for us to introduce the following notation
\begin{equation}
\widehat S:=\widehat D_A\widehat D_B\widehat D_A.
\label{eq:SHatDefinition}
\end{equation}

\begin{theorem}
    Operators $\widehat D_A,\widehat D_B:\mathbf R^3\rightarrow\mathbf R^3$ define a projective representation of $SL(2,\mathbb Z)$, namely
    \begin{equation}
    \widehat D_A\widehat D_B\widehat D_A\;\propto\; \widehat D_B\widehat D_A\widehat D_B,\qquad \left(\widehat D_A\widehat D_B\widehat D_A\right)^4\;\propto\;\mathrm{Id}.
    \label{eq:SL2ZRelationsHatD}
    \end{equation}
\label{th:SL2ZRelationsHatD}
\end{theorem}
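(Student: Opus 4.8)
The plan is to exploit the ``matrix times substitution'' structure of $\widehat D_A$ and $\widehat D_B$ and to split each relation in (\ref{eq:SL2ZRelationsHatD}) into two independent checks: one purely at the level of the group $SL(2,\mathbb Z)$ and one purely at the level of $3\times3$ matrices over $\mathbf R$. For $M_1,M_2\in Mat_{3\times 3}(\mathbf R)$ and any two of the automorphisms $\widehat\delta$ from (\ref{eq:DeltaHatRDefinition}), the defining composition obeys
\[
\big(M_1\circ\widehat\delta_1\big)\big(M_2\circ\widehat\delta_2\big)=\big(M_1\cdot\widehat\delta_1(M_2)\big)\circ\big(\widehat\delta_1\widehat\delta_2\big),
\]
where $\widehat\delta_1(M_2)$ denotes the entrywise action on the matrix, because each $\widehat\delta$ is a ring automorphism and hence distributes over matrix--vector multiplication. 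Writing $A=\left(\begin{smallmatrix}1&1\\0&1\end{smallmatrix}\right)$ and $B=\left(\begin{smallmatrix}1&0\\-1&1\end{smallmatrix}\right)$ for the elements of $SL(2,\mathbb Z)$ governing $\widehat\delta_A:=\widehat\delta_{(p,s)\mapsto(ps,s)}$ and $\widehat\delta_B:=\widehat\delta_{(p,s)\mapsto(p,s/p)}$, a one-line computation gives $ABA=BAB=\left(\begin{smallmatrix}0&1\\-1&0\end{smallmatrix}\right)=:S$ and $S^4=\mathrm{Id}$. Since $g\mapsto\widehat\delta_g$ is a homomorphism (up to contravariance) and the action of $SL(2,\mathbb Z)$ on the parameters $(p,s)$ is faithful, the substitution parts of the two sides of the braid relation already coincide, and the substitution part of the fourth power is trivial.

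Consequently (\ref{eq:SL2ZRelationsHatD}) reduces to two matrix identities over $\mathbf R$. The braid relation becomes
\[
D_A\cdot\widehat\delta_A(D_B)\cdot\widehat\delta_A\widehat\delta_B(D_A)\;\propto\;D_B\cdot\widehat\delta_B(D_A)\cdot\widehat\delta_B\widehat\delta_A(D_B),
\]
while, with $\widehat S$ as in (\ref{eq:SHatDefinition}), the order-four relation reduces to showing that the accumulated matrix coefficient of $\widehat S^4$ is a scalar multiple of $\mathrm{Id}_3$. (Throughout, ``$\propto$'' means proportionality by a scalar independent of $p,s$.) Because $D_A=\mathrm{diag}(1,-iQ,-1)$ is diagonal and independent of $p,s$, every factor $\widehat\delta(D_A)$ equals $D_A$ itself, so all the substitution work is concentrated on the entries of $D_B$.

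The computational engine is the interaction of $\widehat\delta$ with $\mathrm{pexp}$. Each entry of $D_B$ is a rational prefactor in $Q$ times $\mathrm{pexp}$ of an element of $\mathbb S_\times^{-1}\mathbf M$; for instance the argument $\tfrac{-(Q^8-Q^{-8})ps^{-1}(s^2+2ps+p)}{(1-s^2)(1-p^2/s^2)}$ lies in $\mathbb S_\times^{-1}\mathbf M$ since its numerator carries no $Q^0$ term and its denominator is a product of factors $1-p^as^b$. As each $\widehat\delta$ is a ring automorphism of $\mathbf R$ and $\mathrm{pexp}$ is built from the iterated $q$-Pochhammer symbol, applying $\widehat\delta$ commutes with $\mathrm{pexp}$: it substitutes $(p,s)$ in numerator and denominator of the formal fraction, and the result again lies in $\mathbb S_\times^{-1}\mathbf M$ because the substitutions are invertible on the $(p,s)$-lattice. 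Multiplying out the three $3\times3$ matrices, each entry of the product is a sum of $Q$-rational prefactors times products of $\mathrm{pexp}$'s, which by additivity (\ref{eq:pexpFormalFractionsAdditivity}) and the well-definedness on the localized module (Proposition \ref{prop:pexpFormalFractionsExquivalence}) collapse to a single $\mathrm{pexp}$ of a sum of formal fractions. Matching the two sides then amounts to a finite list of identities in $\mathbb S_\times^{-1}\mathbf M$, together with elementary identities among the rational $Q$-prefactors.

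The main obstacle is the size and delicacy of these $3\times3$ computations rather than any conceptual point. One must verify entry by entry that the $\mathrm{pexp}$ arguments on the two sides of the braid relation differ by a single global shift (which is absorbed into the proportionality scalar), and that in the fourth power the off-diagonal entries cancel while the three diagonal entries coincide. These cancellations are produced by combining the substitution rules above with the parameter-inversion identity (\ref{eq:PochhammerParameterInversion}) and the cancellation rule (\ref{eq:pexpCancellationFormalDenominator}) to bring all arguments to a common denominator. Because of the bookkeeping involved, the most reliable route is to record every $\mathrm{pexp}$ argument as a vector in the lattice underlying $\mathbb S_\times^{-1}\mathbf M$ and check the resulting linear identities directly, which is precisely the verification implemented in the accompanying package \cite{ArthamonovShakirov'2023-GitHub-EllipticDAHA}.
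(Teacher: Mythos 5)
Your proposal is correct and follows essentially the same route as the paper: both reduce the operator identities to the matrix identity $D_A D_B(ps,s) D_A \propto D_B(p,s)\, D_A\, D_B(s,s/p)$ (using that the substitution parts match because the braid relation already holds in $SL(2,\mathbb Z)$ and that $D_A$ is constant), and to the statement that the matrix coefficient of $\widehat S^4$ is scalar, both of which are then verified by direct computation with the exponential properties of $\mathrm{pexp}$. The only cosmetic difference is that the paper passes through the explicit diagonal form of $\widehat S^2$ before squaring, whereas you go directly to $\widehat S^4$.
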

\begin{proof}
Equation (\ref{eq:SL2ZRelationsHatD}) is equivalent to the following condition on $3\times 3$-matrices
\begin{equation*}
D_A D_B(ps,s) D_A\;\propto\; D_B(p,s) D_A D_B(s,s/p).
\label{eq:usefulrelation}
\end{equation*}
This identity is proved by direct computation, where we expand both sides and use exponential properties (\ref{eq:ExponentialPropertyFormalFractions}).

To prove the second identity, we first take the square of the element (\ref{eq:SHatDefinition}), which reads
\begin{equation}
\Big(\widehat S\Big)^2=\left(\begin{array}{ccc}
 4 \textrm{pexp}\left(\frac{\left(Q^8-Q^{-8}\right) (2 s^2+s)}{\left(1-s^2\right)}\right) & 0 & 0 \\
 0 & 4 \textrm{pexp}\left(\frac{\left(Q^8-Q^{-8}\right) 2s^2}{\left(1-s^2\right)}\right) & 0 \\
 0 & 0 & 4 \textrm{pexp}\left(\frac{\left(Q^8-Q^{-8}\right) (2 s^2+s)}{Q^8 \left(1-s^2\right)}\right)
\end{array}\right)\circ\widehat\delta_{(p,s)\mapsto(p^{-1},s^{-1})}.
\label{eq:SHat2Explicit}
\end{equation}
Now taking the square of the above we get $\big(\widehat S\big)^4=16\mathrm{pexp}\left(2(Q^8-Q^{-8})\right)\mathrm{Id}\;\propto\;\mathrm{Id}$ which concludes the proof.
\end{proof}

\begin{corollary}
    We have an action of $SL(2,\mathbf Z)$ by automorphisms of $Mat_{3\times 3}(\mathbf R)$
    \begin{equation*}
    \varphi:SL(2,\mathbb Z)\rightarrow \mathrm{Aut}\left(Mat_{3\times3}(\mathbf R)\right)
    \end{equation*}
    defined on generators of $SL(2,\mathbb Z)$
    \begin{equation*}
        d_A:=\left(\begin{array}{cc}1&1\\0&1\end{array}\right),\qquad d_B:=\left(\begin{array}{cc}1&0\\-1&1\end{array}\right)
    \end{equation*}
    as
    \begin{equation*}
    \varphi(d_A):M\mapsto \widehat D_A^{-1}M\widehat D_A,\qquad \varphi(d_B):M\mapsto \widehat D_B^{-1}M\widehat D_B,\qquad\textrm{for all}\quad M\in Mat_{3\times3}(\mathbf R).
    \end{equation*}
\end{corollary}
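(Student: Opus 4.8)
The plan is to derive the Corollary directly from Theorem \ref{th:SL2ZRelationsHatD}, exploiting the fact that passing to the adjoint (conjugation) action annihilates the projective scalar factors in (\ref{eq:SL2ZRelationsHatD}). I would begin by fixing the presentation of $SL(2,\mathbb Z)$ adapted to the generators $d_A,d_B$: a direct $2\times 2$ computation gives $d_Ad_Bd_A=d_Bd_Ad_B$ and $(d_Ad_Bd_A)^4=1$, and these two relations constitute the standard presentation of $SL(2,\mathbb Z)$ (equivalently $SL(2,\mathbb Z)\cong B_3/\langle(aba)^4\rangle$ with $B_3=\langle a,b\mid aba=bab\rangle$). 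By von Dyck's theorem it then suffices to produce two automorphisms of $Mat_{3\times3}(\mathbf R)$ satisfying exactly these two relations and to declare them to be $\varphi(d_A),\varphi(d_B)$.

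Next I would check that conjugation by $\widehat D_A$ and by $\widehat D_B$ preserves $Mat_{3\times3}(\mathbf R)$ and acts there by ring automorphisms. A matrix $M\in Mat_{3\times3}(\mathbf R)$ acts on $\mathbf R^3$ as a shift-free multiplication operator, so in $\widehat D_A=D_A\circ\widehat\delta_{(p,s)\mapsto(ps,s)}$ the matrix and shift parts separate:
\begin{equation*}
\widehat D_A^{-1}M\widehat D_A=\widehat\delta^{-1}_{(p,s)\mapsto(ps,s)}\big(D_A^{-1}MD_A\big),
\end{equation*}
where $\widehat\delta^{-1}$ is applied entrywise. Since $D_A$ is invertible and $\widehat\delta$ is a ring automorphism of $\mathbf R$, the right-hand side again lies in $Mat_{3\times3}(\mathbf R)$; the resulting map is a ring automorphism whose inverse is conjugation by $\widehat D_A^{-1}$, and the same holds verbatim for $\widehat D_B$.

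It remains to verify the two relations. Writing $c_X\colon M\mapsto X^{-1}MX$ for conjugation, one has $c_Xc_Y=c_{YX}$, so $\varphi(d_A)\varphi(d_B)\varphi(d_A)$ is conjugation by $\widehat D_A\widehat D_B\widehat D_A=\widehat S$, the composite $\varphi(d_B)\varphi(d_A)\varphi(d_B)$ is conjugation by $\widehat D_B\widehat D_A\widehat D_B$, and $\big(\varphi(d_A)\varphi(d_B)\varphi(d_A)\big)^4$ is conjugation by $\widehat S^4$. The decisive point is that conjugation is blind to a central scalar factor: if $X'=\lambda X$ with $\lambda$ a scalar multiplication operator, then $c_{X'}(M)=X^{-1}\lambda^{-1}M\lambda X=X^{-1}MX=c_X(M)$, because $\lambda$ commutes with the multiplication operator $M$ over the commutative ring $\mathbf R$. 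Feeding the two proportionalities of Theorem \ref{th:SL2ZRelationsHatD} — namely $\widehat D_A\widehat D_B\widehat D_A\propto\widehat D_B\widehat D_A\widehat D_B$ and $\widehat S^4=16\,\mathrm{pexp}\big(2(Q^8-Q^{-8})\big)\,\mathrm{Id}\propto\mathrm{Id}$ — into this observation yields at once $\varphi(d_A)\varphi(d_B)\varphi(d_A)=\varphi(d_B)\varphi(d_A)\varphi(d_B)$ and $\big(\varphi(d_A)\varphi(d_B)\varphi(d_A)\big)^4=\mathrm{id}$, so von Dyck's theorem supplies the homomorphism $\varphi$.

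The hard part is not any single computation but the bookkeeping around this scalar ambiguity: one must confirm that the constants hidden in the symbol $\propto$ in (\ref{eq:SL2ZRelationsHatD}) are genuine central scalars — multiplications by elements of $\mathbf R$ times the identity matrix, with no residual shift — so that they truly commute past the matrix entries and cancel under conjugation. For the fourth-power relation this is immediate from the explicit diagonal-times-shift form (\ref{eq:SHat2Explicit}) of $\widehat S^2$, whose square is a pure scalar operator because the shift $\widehat\delta_{(p,s)\mapsto(p^{-1},s^{-1})}$ is an involution; for the braid relation it is exactly the content of the matrix identity established in Theorem \ref{th:SL2ZRelationsHatD}. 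Once this is secured, the two $SL(2,\mathbb Z)$ relations are inherited verbatim and the Corollary follows.
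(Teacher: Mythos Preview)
Your argument is correct and is exactly the reasoning the paper leaves implicit: the Corollary is stated without proof immediately after Theorem \ref{th:SL2ZRelationsHatD}, and the intended derivation is precisely that passing to the conjugation action on $Mat_{3\times3}(\mathbf R)$ kills the central scalar factors in (\ref{eq:SL2ZRelationsHatD}). Your additional care in checking that conjugation by $\widehat D_A,\widehat D_B$ preserves $Mat_{3\times3}(\mathbf R)$ (via the separation of matrix and shift parts) and that the proportionality constants are shift-free scalars is a welcome elaboration of points the paper takes for granted.
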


\section{The Algebra}
\label{sec:AlgebraK2}
\subsection{Generators and Relations}
Denote by $Free_2=\langle a,b\rangle$ a free group with two generators and let $\sigma$ be an order two automorphism
\begin{equation*}
\sigma:Free_2\rightarrow Free_2,\qquad \sigma:\left\{\begin{array}{l}a\mapsto b,\\b\mapsto a.\end{array}\right.
\end{equation*}

Fix a ground field $\mathbf k:=\mathbb C(Q)$ to be the field of rational functions in variable $Q$ and let
\begin{align*}
\mathcal F=\mathbf k\big\langle\Oa^{(g)},\Ob^{(g)}\;\big|\;g\in Free_2\big\rangle.
\end{align*}
be a free $\mathbf k$-algebra with countably many generators. Consider countably many polynomials in $\mathcal F$ labelled by various tuples of elements $g,g_1,g_2,g_3\in Free_2$
\begin{subequations}
\begin{align}
&\R_{0,A}^{(g)}:=\Oa^{(ag)}-\Oa^{(g)},\qquad\R_{0,B}^{(g)}:=\Ob^{(bg)}-\Ob^{(g)},
\label{eq:Relation0Xg}\\[1em]
&\R_{1,X}^{(g)}:=\mathcal O_X^{(g)}\mathcal O_X^{(g)}-\mathcal O_X^{(1)}\mathcal O_X^{(1)},\qquad X\in\{A,B\},
\label{eq:Relation1Xg}\\[1em]
&\R_2^{(g_1,g_2,g_3)}:=\Ob^{(g_1)}\Oa^{(g_2)}\Ob^{(g_3)},
\label{eq:Relation2g1g2g3}\\
&\R_3^{(g_1,g_2,g_3)}:=\Oa^{(g_1)}\Ob^{(g_2)}\Oa^{(g_3)},
\label{eq:Relation3g1g2g3}\\[1em]
&\R_{4,X}^{(g)}:=\mathcal O_X^{(1)}\mathcal O_X^{(1)}\mathcal O_X^{(g)}+\big(Q^2-Q^{-2}\big)^2\mathcal O_X^{(g)},\qquad X\in\{A,B\},
\label{eq:Relation4Xg}\\
&\R_{5,X}^{(g)}:=\mathcal O_X^{(g)}\mathcal O_X^{(1)}\mathcal O_X^{(1)}+\big(Q^2-Q^{-2}\big)^2\mathcal O_X^{(g)},\qquad X\in\{A,B\},
\label{eq:Relation5Xg}\\[1em]
&\R_6^{(g)}:=\Oa^{(1)}\Oa^{(1)}\Ob^{(g)}+\big(Q^2-Q^{-2}\big)^2\Ob^{(g)}+\Ob^{(g)}\Oa^{(1)}\Oa^{(1)},
\label{eq:Relation6g}\\
&\R_7^{(g)}:=\Ob^{(1)}\Ob^{(1)}\Oa^{(g)}+\big(Q^2-Q^{-2}\big)^2\Oa^{(g)}+\Oa^{(g)}\Ob^{(1)}\Ob^{(1)},
\label{eq:Relation7g}\\[1em]
&\R_8^{(g_1,g_2)}:=\Oa^{(g_2)}\Oa^{(g_1abag_2)}\Ob^{(ag_2)}+\Ob^{(ag_2)}\Oa^{(g_1abag_2)}\Oa^{(g_2)}-\Ob^{(g_2)}\Ob^{(\sigma(g_1)g_2)}\Ob^{(g_2)},\\
&\R_9^{(g_1,g_2)}:=\Ob^{(g_2)}\Ob^{(g_1babg_2)}\Oa^{(bg_2)}+\Oa^{(bg_2)}\Ob^{(g_1babg_2)}\Ob^{(g_2)}-\Oa^{(g_2)}\Oa^{(\sigma(g_1)g_2)}\Oa^{(g_2)},\\[1em]
&\R_{10}^{(g_1,g_2)}:=\Oa^{(g_2)}\Oa^{(g_1a^{-1}b^{-1}a^{-1}g_2)}\Ob^{(a^{-1}g_2)}+\Ob^{(a^{-1}g_2)}\Oa^{(g_1a^{-1}b^{-1}a^{-1}g_2)}\Oa^{(g_2)}-\Ob^{(g_2)}\Ob^{(\sigma(g_1)g_2)}\Ob^{(g_2)},\\
&\R_{11}^{(g_1,g_2)}:=\Ob^{(g_2)}\Ob^{(g_1b^{-1}a^{-1}b^{-1}g_2)}\Oa^{(b^{-1}g_2)}+\Oa^{(b^{-1}g_2)}\Ob^{(g_1b^{-1}a^{-1}b^{-1}g_2)}\Ob^{(g_2)}-\Oa^{(g_2)}\Oa^{(\sigma(g_1)g_2)}\Oa^{(g_2)}.
\end{align}
\label{eq:DefiningRelationsK2Full}
\end{subequations}

\begin{definition}
Let $\mathcal I\subset\mathcal F$ be an ideal generated by all relations (\ref{eq:DefiningRelationsK2Full}), consider the quotient algebra
\begin{equation*}
\mathcal A:=\mathcal F/\mathcal I.
\end{equation*}
\end{definition}
This algebra is the main object of the current section. We start by observing that relation (\ref{eq:Relation1Xg}) along with relations (\ref{eq:Relation6g}) and (\ref{eq:Relation7g}) imply that for all $g\in Free_2$, the generators $\Oa^{(g)},\Ob^{(g)}\in\mathcal A$, up to a scalar factor, are nothing but the square roots of one of the two idempotents:
\begin{align}
e_A:=-\frac1{\big(Q^2-Q^{-2}\big)^2}\Oa^{(1)}\Oa^{(1)},\qquad e_B:=-\frac1{\big(Q^2-Q^{-2}\big)^2}\Ob^{(1)}\Ob^{(1)},\qquad e_A^2=e_A,\qquad e_B^2=e_B\quad\textrm{in}\;\mathcal A.
\label{eq:IdempotentsDef}
\end{align}

Relations (\ref{eq:Relation6g}) and (\ref{eq:Relation7g}), also known as $q$-Serre relations, allow one to simplify expressions involving idempotents.
\begin{lemma}
For all $n\in\mathbb Z_{\geq0}$ and elements $g,h_1,\dots,h_n\in Free_2$ we have the following identities in $\mathcal A$
\begin{subequations}
\begin{align}
\Oa^{(1)}\Oa^{(1)}\Ob^{(h_1)}\dots\Ob^{(h_n)}\Oa^{(g)}=&\left\{\begin{array}{cl}
-(Q^2-Q^{-2})^2\,\Ob^{(h_1)}\dots\Ob^{(h_n)}\Oa^{(g)},&n\;\textrm{is even},\\
0,&n\;\textrm{is odd},
\end{array}\right.
\label{eq:IdempotentALeftCancellation}\\[0.25em]
\Oa^{(g)}\Ob^{(h_1)}\dots\Ob^{(h_n)}\Oa^{(1)}\Oa^{(1)}=&\left\{\begin{array}{cl}
-(Q^2-Q^{-2})^2\,\Oa^{(g)}\Ob^{(h_1)}\dots\Ob^{(h_n)},&n\;\textrm{is even},\\
0,&n\;\textrm{is odd}.
\end{array}\right.
\label{eq:IdempotentARightCancellation}
\end{align}
\end{subequations}
\end{lemma}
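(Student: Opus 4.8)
The plan is to prove both identities \eqref{eq:IdempotentALeftCancellation} and \eqref{eq:IdempotentARightCancellation} by induction on $n$, using as the engine the $q$-Serre relation \eqref{eq:Relation6g}, which in $\mathcal A$ I rewrite as the twisted anticommutation rule
\begin{equation*}
\Oa^{(1)}\Oa^{(1)}\Ob^{(h)}=-\Ob^{(h)}\Oa^{(1)}\Oa^{(1)}-\big(Q^2-Q^{-2}\big)^2\Ob^{(h)},\qquad h\in Free_2.
\end{equation*}
This rule lets me transport the idempotent block $\Oa^{(1)}\Oa^{(1)}$ past one neighbouring $\Ob$-letter at the cost of a single sign flip together with a lower-order remainder, and it is exactly this sign flip that generates the parity dependence in the statement.

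For \eqref{eq:IdempotentALeftCancellation} the base case $n=0$ is relation \eqref{eq:Relation4Xg} with $X=A$, namely $\Oa^{(1)}\Oa^{(1)}\Oa^{(g)}=-\big(Q^2-Q^{-2}\big)^2\Oa^{(g)}$, which agrees with the even branch. For the inductive step I apply the twisted anticommutation rule to the leftmost letter $\Ob^{(h_1)}$ and obtain
\begin{equation*}
\Oa^{(1)}\Oa^{(1)}\Ob^{(h_1)}\cdots\Ob^{(h_n)}\Oa^{(g)}=-\big(Q^2-Q^{-2}\big)^2\,\Ob^{(h_1)}\cdots\Ob^{(h_n)}\Oa^{(g)}-\Ob^{(h_1)}\Big[\Oa^{(1)}\Oa^{(1)}\Ob^{(h_2)}\cdots\Ob^{(h_n)}\Oa^{(g)}\Big].
\end{equation*}
The bracketed factor is precisely the left-hand side of \eqref{eq:IdempotentALeftCancellation} with the shorter list $(h_2,\dots,h_n)$, so I invoke the induction hypothesis on it. When $n$ is even the bracket has an odd number of $\Ob$-letters and hence vanishes, leaving only the first term, which is the even branch; when $n$ is odd the bracket equals $-\big(Q^2-Q^{-2}\big)^2\,\Ob^{(h_2)}\cdots\Ob^{(h_n)}\Oa^{(g)}$, the two contributions cancel, and the product vanishes, as required by the odd branch.

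Identity \eqref{eq:IdempotentARightCancellation} is handled by the mirror induction. Here the base case $n=0$ is relation \eqref{eq:Relation5Xg} with $X=A$, and in the inductive step I move the idempotent leftward past the rightmost letter $\Ob^{(h_n)}$, which yields
\begin{equation*}
\Oa^{(g)}\Ob^{(h_1)}\cdots\Ob^{(h_n)}\Oa^{(1)}\Oa^{(1)}=-\big(Q^2-Q^{-2}\big)^2\,\Oa^{(g)}\Ob^{(h_1)}\cdots\Ob^{(h_n)}-\Big[\Oa^{(g)}\Ob^{(h_1)}\cdots\Ob^{(h_{n-1})}\Oa^{(1)}\Oa^{(1)}\Big]\Ob^{(h_n)}.
\end{equation*}
The bracketed factor is the left-hand side of \eqref{eq:IdempotentARightCancellation} with the shorter list $(h_1,\dots,h_{n-1})$, and the identical parity analysis delivers the two branches.

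There is no substantive obstacle in this argument; the only point demanding care is the parity bookkeeping in the inductive step, where one must keep track of the fact that transporting the idempotent past a single $\Ob$-letter changes the parity of the remaining $\Ob$-block and thereby switches between the vanishing and non-vanishing branches supplied by the induction hypothesis.
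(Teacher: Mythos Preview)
Your proof is correct and follows essentially the same approach as the paper: induction on $n$ with the $q$-Serre relation \eqref{eq:Relation6g} driving the inductive step. The only cosmetic difference is that the paper lists both $n=0$ and $n=1$ as base cases (invoking \eqref{eq:Relation3g1g2g3} for $n=1$), whereas you correctly observe that $n=0$ alone suffices since the $n=1$ case already follows from your inductive step applied to the $n=0$ case.
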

\begin{proof}
We will use induction in $n$. The base cases for $n=0,1$ are given by relations (\ref{eq:Relation4Xg}), (\ref{eq:Relation5Xg}) and (\ref{eq:Relation3g1g2g3}) respectively. The step of induction then follows by relation (\ref{eq:Relation6g}).
\end{proof}

\subsection{Action of the automorphisms}

\begin{lemma}
The following defines an order two automorphism of the algebra $\mathcal A$:
\begin{equation}
\mathbf s:\mathcal A\rightarrow\mathcal A,\qquad \mathbf s:\left\{\begin{array}{l}
\Oa^{(g)}\mapsto \Ob^{(\sigma(g))},\\[0.25em]
\Ob^{(g)}\mapsto \Oa^{(\sigma(g))}.
\end{array}\right.
\label{eq:sActionK2Algebra}
\end{equation}
\end{lemma}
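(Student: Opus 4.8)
The plan is to verify that the map $\mathbf s$ defined in (\ref{eq:sActionK2Algebra}) is a well-defined algebra homomorphism and that it squares to the identity. Since $\mathcal A=\mathcal F/\mathcal I$ is presented by generators and relations, I would proceed in two stages. First, I note that the assignment on generators extends uniquely to a $\mathbf k$-algebra endomorphism $\widetilde{\mathbf s}$ of the \emph{free} algebra $\mathcal F$, because $\mathcal F$ is free: any set-map on generators determines an algebra map. The content of the lemma is therefore the claim that $\widetilde{\mathbf s}$ descends to the quotient, i.e.\ that $\widetilde{\mathbf s}(\mathcal I)\subseteq\mathcal I$. For this it suffices to check that $\widetilde{\mathbf s}$ sends each generating relation in (\ref{eq:DefiningRelationsK2Full}) into the ideal $\mathcal I$ --- in fact, I expect each relation to be mapped, up to sign, to another relation of the same presentation with the group-element labels transformed by $\sigma$.

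The key structural observation driving every case is that $\widetilde{\mathbf s}$ acts by swapping $\Oa\leftrightarrow\Ob$ while simultaneously applying $\sigma$ to the superscript labels, and that $\sigma$ is an automorphism of $Free_2$ interchanging $a$ and $b$. So I would go through the list (\ref{eq:DefiningRelationsK2Full}) relation-family by relation-family, tracking what the swap does. For the families that come in $A/B$ pairs --- namely (\ref{eq:Relation0Xg}), (\ref{eq:Relation1Xg}), (\ref{eq:Relation4Xg}), (\ref{eq:Relation5Xg}) --- applying $\widetilde{\mathbf s}$ to the $X=A$ relation yields the $X=B$ relation (and vice versa) with $g$ replaced by $\sigma(g)$; for instance $\widetilde{\mathbf s}\big(\R_{0,A}^{(g)}\big)=\Ob^{(\sigma(ag))}-\Ob^{(\sigma(g))}=\Ob^{(b\sigma(g))}-\Ob^{(\sigma(g))}=\R_{0,B}^{(\sigma(g))}\in\mathcal I$, using $\sigma(ag)=b\sigma(g)$. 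Similarly the two $q$-Serre families (\ref{eq:Relation6g}) and (\ref{eq:Relation7g}) are interchanged, and the cubic families (\ref{eq:Relation2g1g2g3}), (\ref{eq:Relation3g1g2g3}) are swapped with each other since $\R_2$ is a $B\,A\,B$ monomial and $\R_3$ is an $A\,B\,A$ monomial. The only genuinely bookkeeping-heavy step is the four families $\R_8,\R_9,\R_{10},\R_{11}$: here I would check that $\widetilde{\mathbf s}$ carries $\R_8^{(g_1,g_2)}$ to $\R_9^{(\sigma(g_1),\sigma(g_2))}$ and $\R_{10}^{(g_1,g_2)}$ to $\R_{11}^{(\sigma(g_1),\sigma(g_2))}$ (and conversely), the point being that swapping $A\leftrightarrow B$ turns the $aba$-type label shifts into $bab$-type shifts, precisely matching the defining data of the partner relation because $\sigma(aba)=bab$ and $\sigma(a^{-1}b^{-1}a^{-1})=b^{-1}a^{-1}b^{-1}$.

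Finally, to see that $\mathbf s$ is an order-two automorphism rather than merely an endomorphism, I would compute $\mathbf s\circ\mathbf s$ on generators: applying the swap twice returns $\Oa^{(g)}\mapsto\Ob^{(\sigma(g))}\mapsto\Oa^{(\sigma^2(g))}=\Oa^{(g)}$, using that $\sigma^2=\mathrm{id}$ on $Free_2$; likewise for $\Ob^{(g)}$. Hence $\mathbf s^2=\mathrm{id}_{\mathcal A}$, which simultaneously shows $\mathbf s$ is bijective and has order exactly two (it is not the identity, as it exchanges the two families of generators). The main obstacle I anticipate is purely combinatorial: correctly verifying the label arithmetic in the $\R_8$--$\R_{11}$ families, making sure the $\sigma$ applied to the internal words such as $g_1abag_2$ lands on the correct conjugating word $\sigma(g_1)bab\,\sigma(g_2)$ expected by the partner relation. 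No analytic input is needed --- the whole argument is a finitary check that a symmetry of the defining relations is respected, so once the labeling is organized carefully the proof is complete.
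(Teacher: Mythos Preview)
Your proposal is correct and follows essentially the same approach as the paper: lift $\mathbf s$ to the free algebra $\mathcal F$, then verify that each defining relator in (\ref{eq:DefiningRelationsK2Full}) is sent to another relator with $\sigma$-transformed labels, so that $\mathcal I$ is $\mathbf s$-stable and the map descends to $\mathcal A$. The paper records exactly the same pairing of relation families you describe (including $\R_8\leftrightarrow\R_9$ and $\R_{10}\leftrightarrow\R_{11}$); your additional explicit check that $\mathbf s^2=\mathrm{id}$ on generators is a welcome completeness point that the paper leaves implicit.
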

\begin{proof}
First, consider an order two automorphism of the free algebra $\mathbf s:\mathcal F\rightarrow\mathcal F$ defined by the same action on generators (\ref{eq:sActionK2Algebra}) and denoted by the same letter. We will show that ideal $\mathcal I\subset\mathcal F$ is invariant under the action of $\mathbf s$:
\begin{equation*}
\mathbf s(\mathcal I)\subset\mathcal I.
\end{equation*}
Indeed, for the action of $\mathbf s$ on the generators of ideal $\mathcal I$ we get
\begin{align}
\mathbf s:\left\{\begin{array}{ccc}
\R_{0,A}^{(g)}&\leftrightarrow& \R_{0,B}^{(\sigma(g))}\\
\R_{1,A}^{(g)}&\leftrightarrow& \R_{1,B}^{(\sigma(g))}\\
\R_2^{(g_1,g_2,g_3)}&\leftrightarrow& \R_3^{(\sigma(g_1),\sigma(g_2),\sigma(g_3))}\\
\R_{4,A}^{(g)}&\leftrightarrow& \R_{4,B}^{\sigma(g)}\\
\R_{5,A}^{(g)}&\leftrightarrow& \R_{5,B}^{\sigma(g)}\\
\R_6^{(g)}&\leftrightarrow& \R_7^{(\sigma(g))}\\
\R_8^{(g_1,g_2)}&\leftrightarrow& \R_9^{(\sigma(g_1),\sigma(g_2))}\\
\R_{10}^{(g_1,g_2)}&\leftrightarrow& \R_{11}^{(\sigma(g_1),\sigma(g_2))}
\end{array}\right.
\label{eq:Z2SymmetryDefiningIdeal}
\end{align}
Hence the action of the automorphism $\mathbf s:\mathcal F\rightarrow\mathcal F$ descends to the automorphism of the quotient algebra $\mathcal A=\mathcal F/\mathcal I$.
\end{proof}

\begin{proposition}
The following defines an automorphism $\mathbf a:\mathcal A\rightarrow\mathcal A$ of the algebra $\mathcal A=\mathcal F/\mathcal I$:
\begin{subequations}
\begin{equation}
\mathbf a\big(\Oa^{(g)}\big)= -\frac{\Oa^{(1)}\Oa^{(ga^{-1})}\Oa^{(1)}}{\big(Q^2-Q^{-2}\big)^2},\qquad
\mathbf a\big(\Ob^{(g)}\big)= \frac{Q\Oa^{(1)}\Ob^{(ga^{-1})}-Q^{-1}\Ob^{(ga^{-1})}\Oa^{(1)}}{Q^2-Q^{-2}}.
\label{eq:aActionK2Algebra}
\end{equation}
The action of its inverse $a^{-1}:\mathcal A\rightarrow\mathcal A$ on generators is given by
\begin{equation}
\mathbf a^{-1}\big(\Oa^{(g)}\big)=-\frac{\Oa^{(1)}\Oa^{(ga)}\Oa^{(1)}}{\big(Q^2-Q^{-2}\big)^2},\qquad
\mathbf a^{-1}\big(\Ob^{(g)}\big)=\frac{-Q^{-1}\Oa^{(1)}\Ob^{(ga)}+Q\Ob^{(ga)}\Oa^{(1)}}{Q^2-Q^{-2}}.
\label{eq:aInverseActionK2Algebra}
\end{equation}
\label{eq:aaInverseActionK2Algebra}
\end{subequations}
\label{prop:ATwistAlgebrAutomorphism}
\end{proposition}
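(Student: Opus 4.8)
The plan is to build $\mathbf a$ first as a homomorphism of the free algebra and then show it descends to $\mathcal A$ and is invertible. Since $\mathcal F$ is free on the generators $\Oa^{(g)},\Ob^{(g)}$, the formulas \eqref{eq:aActionK2Algebra} extend uniquely to a $\mathbf k$-algebra homomorphism $\mathbf a:\mathcal F\to\mathcal F$, and likewise \eqref{eq:aInverseActionK2Algebra} defines a homomorphism which we provisionally denote $\mathbf a^{-1}:\mathcal F\to\mathcal F$. To prove the Proposition it then suffices to establish three things: (i) $\mathbf a(\mathcal I)\subset\mathcal I$, so that $\mathbf a$ descends to an endomorphism of $\mathcal A=\mathcal F/\mathcal I$; (ii) the same for $\mathbf a^{-1}$; and (iii) that $\mathbf a\circ\mathbf a^{-1}$ and $\mathbf a^{-1}\circ\mathbf a$ act as the identity on the generators of $\mathcal A$, whence the two induced endomorphisms are mutually inverse automorphisms.

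For (i) it is enough to show $\mathbf a(\R)=0$ in $\mathcal A$ for each generator $\R$ of $\mathcal I$ listed in \eqref{eq:DefiningRelationsK2Full}, so throughout I would compute modulo $\mathcal I$, freely using all defining relations together with the idempotent identities \eqref{eq:IdempotentsDef} and the cancellation identities \eqref{eq:IdempotentALeftCancellation}--\eqref{eq:IdempotentARightCancellation}. Writing $\kappa:=Q^2-Q^{-2}$, the first useful reduction is $\mathbf a(\Oa^{(1)})=\Oa^{(1)}$ in $\mathcal A$: since $\Oa^{(a^{-1})}=\Oa^{(1)}$ by \eqref{eq:Relation0Xg}, we get $\mathbf a(\Oa^{(1)})=-\kappa^{-2}\Oa^{(1)}\Oa^{(1)}\Oa^{(1)}=\Oa^{(1)}$ by \eqref{eq:Relation4Xg}. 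The relations $\R_{0,A},\R_{0,B}$ vanish immediately from \eqref{eq:Relation0Xg} after the right shift $g\mapsto ga^{-1}$. For each remaining relation the mechanism is uniform: substitute \eqref{eq:aActionK2Algebra}, expand, and let the resulting monomials collapse. The vanishing relations \eqref{eq:Relation2g1g2g3}, \eqref{eq:Relation3g1g2g3} kill every monomial containing a subword of the form $\Ob\,\Oa\,\Ob$ or $\Oa\,\Ob\,\Oa$, while \eqref{eq:Relation4Xg}, \eqref{eq:Relation5Xg} and \eqref{eq:IdempotentALeftCancellation}--\eqref{eq:IdempotentARightCancellation} absorb flanking squares $\Oa^{(1)}\Oa^{(1)}$ at the cost of a factor $-\kappa^2$. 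As a representative instance, expanding $\mathbf a(\R_2^{(g_1,g_2,g_3)})$ leaves a single surviving term equal to $\Ob^{(g_1a^{-1})}\Oa^{(g_2a^{-1})}\Ob^{(g_3a^{-1})}=\R_2^{(g_1a^{-1},g_2a^{-1},g_3a^{-1})}\in\mathcal I$, while the analogous cancellation gives $\mathbf a(\R_6^{(g)})=0$ outright; in general each relation is carried either to zero or to a scalar multiple of a relation of the same family with indices shifted by $a^{-1}$.

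For (iii) I would verify the composition on generators with the same toolkit. For example $\mathbf a^{-1}\big(\mathbf a(\Oa^{(g)})\big)=-\kappa^{-2}\,\mathbf a^{-1}(\Oa^{(1)})\,\mathbf a^{-1}(\Oa^{(ga^{-1})})\,\mathbf a^{-1}(\Oa^{(1)})$, and since $\mathbf a^{-1}(\Oa^{(1)})=\Oa^{(1)}$ and $\mathbf a^{-1}(\Oa^{(ga^{-1})})=-\kappa^{-2}\Oa^{(1)}\Oa^{(g)}\Oa^{(1)}$, this reduces via \eqref{eq:Relation4Xg}, \eqref{eq:Relation5Xg} to $\kappa^{-4}\,\Oa^{(1)}\Oa^{(1)}\Oa^{(g)}\Oa^{(1)}\Oa^{(1)}=\Oa^{(g)}$; the checks for $\Ob^{(g)}$ and for $\mathbf a\circ\mathbf a^{-1}$ are entirely parallel. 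Granting (i) and (ii), this exhibits the two induced endomorphisms as two-sided inverses, so $\mathbf a$ is an automorphism with inverse \eqref{eq:aInverseActionK2Algebra}.

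I expect the main obstacle to be verifying (i) for the cubic relations $\R_8^{(g_1,g_2)},\dots,\R_{11}^{(g_1,g_2)}$, where the right shift $g\mapsto ga^{-1}$ interacts with the internal group words $aba$, $a^{-1}b^{-1}a^{-1}$, etc., and with the swap $\sigma$; there the expansion produces the largest number of monomials and one must carefully track how words such as $g_1abag_2a^{-1}$ rewrite under \eqref{eq:Relation0Xg} before the $q$-Serre relations \eqref{eq:Relation6g}, \eqref{eq:Relation7g} and the cancellation identities can reassemble the image as a scalar multiple of another cubic relation (plausibly trading $\R_8$ against $\R_{10}$ and $\R_9$ against $\R_{11}$, given the opposite orientation of their internal words). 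The triple-vanishing and mixed $q$-Serre relations are comparatively mechanical by the pattern above, but $\R_8$--$\R_{11}$ carry the real bookkeeping and form the heart of the computation.
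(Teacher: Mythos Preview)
Your strategy matches the paper's proof: lift $\mathbf a$ and the candidate $\mathbf a^{-1}$ to endomorphisms of $\mathcal F$, check $\mathbf a(\mathcal I)\subset\mathcal I$ relator by relator (the paper carries this out explicitly in Appendix~\ref{sec:IdealIInvariateAutomorphismA}), do the same for $\mathbf a^{-1}$, then verify the two compositions on generators modulo $\mathcal I$.

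Two corrections. First, your guess that $\mathbf a$ trades $\R_8$ against $\R_{10}$ and $\R_9$ against $\R_{11}$ is wrong. In the paper's computation each of $\R_8,\R_9,\R_{10},\R_{11}$ maps under $\mathbf a$ to a combination involving the \emph{same} relator type, with second index shifted $g_2\mapsto g_2a^{-1}$ and sandwiched between copies of $\Oa^{(1)}$, plus terms in the sub-ideal generated by $\R_1,\ldots,\R_7$; for instance
\[
\mathbf a\big(\R_8^{(g_1,g_2)}\big)\;\equiv\;\frac{1}{Q^2-Q^{-2}}\Big(Q\,\Oa^{(1)}\R_8^{(g_1,\,g_2a^{-1})}-Q^{-1}\R_8^{(g_1,\,g_2a^{-1})}\Oa^{(1)}\Big)\pmod{\langle\R_1,\ldots,\R_7\rangle}.
\]
Second, the paper avoids redoing step~(ii) from scratch: the formulas \eqref{eq:aInverseActionK2Algebra} are obtained from \eqref{eq:aActionK2Algebra} by the substitution $Q\mapsto -Q^{-1}$, $a\mapsto a^{-1}$, $b\mapsto b^{-1}$ (together with $\R_8\leftrightarrow\R_{10}$, $\R_9\leftrightarrow\R_{11}$ on the relation side), so every Appendix computation for $\mathbf a$ transfers verbatim to $\widetilde{\mathbf a}=\mathbf a^{-1}$. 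Your description of the $\R_1$--$\R_7$ reductions as leaving ``a single surviving term'' is also optimistic; the actual rewrites mix several relation types (e.g.\ $\mathbf a(\R_2)$ uses $\R_2,\R_3,\R_4,\R_6$), though the mechanism you describe is correct.
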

\begin{proof}
Consider a homomorphism $\mathbf a:\mathcal F\rightarrow\mathcal F$ of a free algebra defined by the same action on generators as in (\ref{eq:aActionK2Algebra}) and denoted by the same letter. First observe that all parts of (\ref{eq:aaInverseActionK2Algebra}) are invariant with respect to left translations $g\mapsto a^{\pm1}g$ and $g\mapsto b^{\pm1}g$, hence the ideal generated by relations $\R_{0,A}^{(g)},\R_{0,B}^{(g)},\;g\in Free_2$ is preserved by $\mathbf a$ and $\mathbf a^{-1}$. Throughout the rest of the text we simply identify the corresponding generators without mentioning:
\begin{equation*}
\Oa^{(a^kg)}=\Oa^{(g)},\qquad\Ob^{(b^kg)}=\Ob^{(g)},\qquad\textrm{for all}\quad k\in\mathbb Z,\quad g\in Free_2.
\end{equation*}

Using the above conventions we show that ideal $\mathcal I\subset\mathcal F$ is preserved by $\mathbf a$, hence the latter descends to a homomorphism $\mathbf a$ of the quotient algebra $\mathcal A=\mathcal F/\mathcal I$. For the sake of brevity we omit this tedious but straightforward calculations from the main text and present them in Appendix \ref{sec:IdealIInvariateAutomorphismA}.

Next, consider a homomorphism $\widetilde{\mathbf a}:\mathcal F\rightarrow\mathcal F$ defined by the same action on generators as (\ref{eq:aInverseActionK2Algebra}). Replacing $Q\mapsto -Q^{-1},a\mapsto a^{-1},b\mapsto b^{-1}, \R_8\leftrightarrow \R_{10}, \R_9\leftrightarrow\R_{11}$ in all formulas of Appendix \ref{sec:IdealIInvariateAutomorphismA} we show that $\widetilde{\mathbf a}$ preserves ideal $\mathcal I$ and thus descends to a homomorphism of the quotient algebra $\widetilde{\mathbf a}:\mathcal F/\mathcal I\rightarrow\mathcal F/\mathcal I$.

It remains to show that $\mathbf a$ and $\widetilde{\mathbf a}$ are mutually inverse modulo $\mathcal I$. Indeed, on generators of $\mathcal A$ we get:
\begin{align*}
\widetilde{\mathbf a}\big(\mathbf a(\Oa^{(g)})\big)=&\mathbf a\big(\widetilde{\mathbf a}\big(\Oa^{(g)}\big)\big)\\
=&\frac1{\big(Q^2-Q^{-2}\big)^8}\Oa^{(1)}\Oa^{(1)}\Oa^{(1)}\Oa^{(1)}\Oa^{(g)}\Oa^{(1)}\Oa^{(1)}\Oa^{(1)}\Oa^{(1)}\\
=&\Oa^{(g)}-\frac1{\big(Q^2-Q^{-2}\big)^2}\R_{4,A}^{(g)}-\frac1{\big(Q^2-Q^{-2}\big)^4}\Oa^{(1)}\Oa^{(1)}\R_{4,A}^{(g)}-\frac1{\big(Q^2-Q^{-2}\big)^6}\Oa^{(1)}\Oa^{(1)}\Oa^{(1)}\Oa^{(1)}\R_{5,A}^{(g)}\\
&-\frac1{\big(Q^2-Q^{-2}\big)^8}\Oa^{(1)}\Oa^{(1)}\Oa^{(1)}\Oa^{(1)}\Oa^{(g)}\Oa^{(1)}\R_{4,A}^{(1)}\\
\equiv&\Oa^{(g)}\;\bmod\;\mathcal I,\\[1.25em]
\widetilde{\mathbf a}\big(\mathbf a\big(\Ob^{(g)}\big)\big)=&\frac1{\big(Q^2-Q^{-2}\big)^4}\Big(\Oa^{(1)}\Oa^{(1)}\Oa^{(1)}\Oa^{(1)}\Ob^{(g)}-Q^2\Oa^{(1)}\Oa^{(1)}\Oa^{(1)}\Ob^{(g)}\Oa^{(1)}\\
&\qquad-Q^{-2}\Oa^{(1)}\Ob^{(g)}\Oa^{(1)}\Oa^{(1)}\Oa^{(1)}+\Ob^{(g)}\Oa^{(1)}\Oa^{(1)}\Oa^{(1)}\Oa^{(1)}\Big)\\
=&\Ob^{(g)}+\frac1{\big(Q^2-Q^{-2}\big)^2}\Big(-\R_{6}^{(g)}+Q^{-2}\R_{3}^{(1,g,1)}\Big)+\frac1{\big(Q^2-Q^{-2}\big)^4}\Big(\Oa^{(1)}\Oa^{(1)}\R_{6}^{(g)}-Q^2\Oa^{(1)}\Oa^{(1)}\R_{3}^{(1,g,1)}\\
&\qquad-Q^{-2}\Oa^{(1)}\Ob^{(g)}\R_{4,A}^{(1)}-\Oa^{(1)}\R_{3}^{(1,g,1)}\Oa^{(1)}+\Ob^{(g)}\Oa^{(1)}\R_{4,A}^{(1)}\Big)\\
\equiv&\Ob^{(g)}\;\bmod\;\mathcal I,\\[1.25em]
\mathbf a\big(\widetilde{\mathbf a}\big(\mathcal O_B^{(g)}\big)\big)=&\frac1{\big(Q^2-Q^{-2}\big)^4}\Big(\Oa^{(1)}\Oa^{(1)}\Oa^{(1)}\Oa^{(1)}\Ob^{(g)}-Q^{-2}\Oa^{(1)}\Oa^{(1)}\Oa^{(1)}\Ob^{(g)}\Oa^{(1)}\\
&\qquad-Q^2\Oa^{(1)}\Ob^{(g)}\Oa^{(1)}\Oa^{(1)}\Oa^{(1)}+\Ob^{(g)}\Oa^{(1)}\Oa^{(1)}\Oa^{(1)}\Oa^{(1)}\Big)\\
=&\Ob^{(g)}+\frac1{\big(Q^2-Q^{-2}\big)^2}\Big(-\R_{6}^{(g)}+Q^2\R_{3}^{(1,g,1)}\Big)+\frac1{\big(Q^2-Q^{-2}\big)^4}\Big(\Oa^{(1)}\Oa^{(1)}\R_{6}^{(g)}-Q^{-2}\Oa^{(1)}\Oa^{(1)}\R_{3}^{(1,g,1)}\\
&\qquad-Q^2\Oa^{(1)}\Ob^{(g)}\R_{4,A}^{(1)}-\Oa^{(1)}\R_{3}^{(1,g,1)}\Oa^{(1)}+\Ob^{(g)}\Oa^{(1)}\R_{4,A}^{(1)}\Big)\\
\equiv&\Ob^{(g)}\bmod\mathcal I.
\end{align*}
\end{proof}

\begin{theorem}
Automorphisms $\mathbf a,\mathbf s:\mathcal A\rightarrow\mathcal A$ satisfy defining relations of $PSL(2,\mathbb Z)$
\begin{equation*}
(\mathbf a\mathbf s)^3=\mathrm{Id}=\mathbf s^2
\end{equation*}
\label{th:AlgebraPSL2ZActionSA}
\end{theorem}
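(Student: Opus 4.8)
The relation $\mathbf s^2=\mathrm{Id}$ is already contained in the preceding lemma, so the entire content of the theorem is the braid-type relation $(\mathbf a\mathbf s)^3=\mathrm{Id}$. The plan is to trade this for a more symmetric statement. Introduce the automorphism $\mathbf b:=\mathbf s\mathbf a\mathbf s$; since $\mathbf s^2=\mathrm{Id}$, we may expand $(\mathbf a\mathbf s)^3=\mathbf a\,(\mathbf s\mathbf a\mathbf s)\,\mathbf a\mathbf s=\mathbf a\mathbf b\mathbf a\mathbf s$, so that $(\mathbf a\mathbf s)^3=\mathrm{Id}$ is equivalent to the single identity $\mathbf a\mathbf b\mathbf a=\mathbf s$ (using $\mathbf s^{-1}=\mathbf s$). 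This is the familiar braid relation $\mathbf a\mathbf b\mathbf a=\mathbf b\mathbf a\mathbf b=S$ from the $A_1$ example of the introduction, now realized inside $\mathcal A$.

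Next I would make $\mathbf b$ explicit. Conjugating the formulas of Proposition~\ref{prop:ATwistAlgebrAutomorphism} by $\mathbf s$ and using $\sigma^2=\mathrm{id}$, $\sigma(a)=b$, one obtains the $A\leftrightarrow B$ mirror of $\mathbf a$:
\begin{equation*}
\mathbf b\big(\Ob^{(g)}\big)=-\frac{\Ob^{(1)}\Ob^{(gb^{-1})}\Ob^{(1)}}{\big(Q^2-Q^{-2}\big)^2},\qquad \mathbf b\big(\Oa^{(g)}\big)=\frac{Q\,\Ob^{(1)}\Oa^{(gb^{-1})}-Q^{-1}\Oa^{(gb^{-1})}\Ob^{(1)}}{Q^2-Q^{-2}}.
\end{equation*}
Because $\mathbf a,\mathbf b,\mathbf s$ are algebra automorphisms, the identity $\mathbf a\mathbf b\mathbf a=\mathbf s$ need only be verified on the generators, i.e. it reduces to the two families
\begin{equation*}
\mathbf a\mathbf b\mathbf a\big(\Oa^{(g)}\big)=\Ob^{(\sigma(g))},\qquad \mathbf a\mathbf b\mathbf a\big(\Ob^{(g)}\big)=\Oa^{(\sigma(g))},\qquad g\in Free_2.
\end{equation*}

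The core of the proof is then a direct expansion, applying the three automorphisms in turn and reducing modulo $\mathcal I$ after each step so that the intermediate expressions stay bounded. Relations $\R_2,\R_3$ annihilate every alternating triple $\Ob\Oa\Ob$ and $\Oa\Ob\Oa$, which eliminates the bulk of the monomials produced; the $q$-Serre relations $\R_6,\R_7$ together with $\R_4,\R_5$ and the idempotent cancellations~(\ref{eq:IdempotentALeftCancellation})--(\ref{eq:IdempotentARightCancellation}) collapse every surviving product containing a factor $\Oa^{(1)}\Oa^{(1)}$ or $\Ob^{(1)}\Ob^{(1)}$. What remains are exactly the cubic combinations of the shape $\Oa^{(g_2)}\Oa^{(\cdots)}\Ob^{(\cdots)}+\Ob^{(\cdots)}\Oa^{(\cdots)}\Oa^{(g_2)}$ and their mirrors, whose reduction to a single generator $\Ob^{(\sigma(g))}$ (resp. $\Oa^{(\sigma(g))}$) is precisely the assertion of relations $\R_8,\R_{10}$ (resp. $\R_9,\R_{11}$). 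The two families of identities are interchanged by $A\leftrightarrow B$, mirroring the pairing $\R_8\leftrightarrow\R_9$, $\R_{10}\leftrightarrow\R_{11}$ under $\mathbf s$ recorded in~(\ref{eq:Z2SymmetryDefiningIdeal}), so the two computations are formally parallel and only one need be carried out in full.

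I expect the main obstacle to be bookkeeping rather than anything conceptual. One must track the group-element labels through each composition — each application of $\mathbf a$ or $\mathbf b$ appends $a^{-1}$ or $b^{-1}$ on the right and each application of $\mathbf s$ inserts a $\sigma$ — and then recognize, after applying the $\R_0$ left-translation identifications $\Oa^{(a^kg)}=\Oa^{(g)}$ and $\Ob^{(b^kg)}=\Ob^{(g)}$, that the labels appearing in the leftover cubic terms are exactly the words $g_1abag_2$, $g_1a^{-1}b^{-1}a^{-1}g_2$, and their $\sigma$-images that occur in $\R_8$--$\R_{11}$. Making these labels line up, together with checking that the accompanying powers of $Q$ coming from the $Q^{\pm1}$ weights in $\mathbf a$ and $\mathbf b$ match the coefficients forced by relations $\R_4$--$\R_7$, is the delicate point; once the pattern is identified the remaining reduction is mechanical.
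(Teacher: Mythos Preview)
Your plan is essentially the paper's, but with two differences worth flagging.

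First, the paper rewrites $(\mathbf a\mathbf s)^3=\mathrm{Id}$ not as $\mathbf a\mathbf b\mathbf a=\mathbf s$ but as $(\mathbf a\mathbf s)^2=\mathbf s\mathbf a^{-1}$, and then compares $\mathbf a\mathbf s\mathbf a\mathbf s(X)$ with $\mathbf s\mathbf a^{-1}(X)$ on generators. This is the same identity as yours, but the bookkeeping is lighter: the right-hand side uses the explicit formula for $\mathbf a^{-1}$ from Proposition~\ref{prop:ATwistAlgebrAutomorphism} and is already a single degree-$3$ monomial, while the left-hand side needs only two applications of $\mathbf a$ rather than three. In your formulation $\mathbf a\mathbf b\mathbf a(\Oa^{(g)})$ one first produces a degree-$3$ word, then $\mathbf b$ blows it up to eight degree-$6$ terms before any reductions, and a further $\mathbf a$ still awaits. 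The paper's route reaches the key step---recognising the leftover as $-\tfrac1{(Q^2-Q^{-2})^2}\R_{10}^{(ga,1)}$---with noticeably fewer intermediate monomials.

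Second, and this is a genuine gap in your outline: the claim that the $\Ob^{(g)}$-check follows from the $\Oa^{(g)}$-check by the $A\leftrightarrow B$ symmetry is not correct. Conjugating the identity $\mathbf a\mathbf b\mathbf a(\Oa^{(g)})=\mathbf s(\Oa^{(g)})$ by $\mathbf s$ yields $\mathbf b\mathbf a\mathbf b(\Ob^{(h)})=\mathbf s(\Ob^{(h)})$, i.e.\ the \emph{other} braid word, not $\mathbf a\mathbf b\mathbf a$ on $\Ob$. The two checks are genuinely independent, and the paper carries out both: the $\Oa^{(g)}$-case reduces to a single $\R_{10}$, while the $\Ob^{(g)}$-case is substantially longer and closes via $\R_8$ together with the idempotent cancellation~(\ref{eq:IdempotentARightCancellation}). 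So you should expect to perform two separate (and unequal) computations rather than one.
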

\begin{proof}
The second identity is satisfied by construction (\ref{eq:sActionK2Algebra}), so we will only have to prove the first one. With that in mind, the equivalent form of the remaining identity reads
\begin{equation}
(\mathbf a\mathbf s)^2=\mathbf s\mathbf a^{-1}
\label{eq:EquivalentFormSA3}
\end{equation}
Let $g\in Free_2$ be an arbitrary element of the free group. First, we apply both sides of (\ref{eq:EquivalentFormSA3}) to $\Oa^{(g)}$, this gives
\begin{equation}
\begin{aligned}
\mathbf a\big(\mathbf s\big(\mathbf a\big(\mathbf s\big(\Oa^{(g)}\big)\big)\big)\big)=&\mathbf a\big(\mathbf s\big(\mathbf a\big(\Ob^{(\sigma(g))}\big)\big)\big)\\
=&\frac1{Q^2-Q^{-2}}\mathbf a\Big(\mathbf s\Big(Q\Oa^{(1)}\Ob^{(\sigma(g)a^{-1})}-Q^{-1}\Ob^{(\sigma(g)a^{-1})}\Oa^{(1)}\Big)\Big)\\
=&\frac1{Q^2-Q^{-2}}\mathbf a\Big(Q\Ob^{(1)}\Oa^{(gb^{-1})}-Q^{-1}\Oa^{(gb^{-1})}\Ob^{(1)}\Big)\\
=&\frac1{\big(Q^2-Q^{-2}\big)^4}\Big(\Oa^{(1)}\Oa^{(gb^{-1}a^{-1})}\Oa^{(1)}\Oa^{(1)}\Ob^{(a^{-1})}-Q^{-2}\Oa^{(1)}\Oa^{(gb^{-1}a^{-1})}\Oa^{(1)}\Ob^{(a^{-1})}\Oa^{(1)}\\
&\qquad-Q^2\Oa^{(1)}\Ob^{(a^{-1})}\Oa^{(1)}\Oa^{(gb^{-1}a^{-1})}\Oa^{(1)}+\Ob^{(a^{-1})}\Oa^{(1)}\Oa^{(1)}\Oa^{(gb^{-1}a^{-1})}\Oa^{(1)}\Big)\\
=&\frac1{\big(Q^2-Q^{-2}\big)^2}\Big(-\Oa^{(1)}\Oa^{(gb^{-1}a^{-1})}\Ob^{(a^{-1})}-\Ob^{(a^{-1})}\Oa^{(gb^{-1}a^{-1})}\Oa^{(1)}\Big)\\
&+\frac1{\big(Q^2-Q^{-2}\big)^4}\Big(\Oa^{(1)}\Oa^{(gb^{-1}a^{-1})}\R_{6}^{(a^{-1})}-Q^{-2}\Oa^{(1)}\Oa^{(gb^{-1}a^{-1})}\R_{3}^{(1,a^{-1},1)}\\
&\qquad-\Oa^{(1)}\R_{3}^{(gb^{-1}a^{-1},a^{-1},1)}\Oa^{(1)}+\Ob^{(a^{-1})}\R_{4,A}^{(gb^{-1}a^{-1})}\Oa^{(1)}-Q^2\R_{3}^{(1,a^{-1},1)}\Oa^{(gb^{-1}a^{-1})}\Oa^{(1)}\Big)\\
\equiv&\frac1{\big(Q^2-Q^{-2}\big)^2}\Big(-\Oa^{(1)}\Oa^{(gb^{-1}a^{-1})}\Ob^{(a^{-1})}-\Ob^{(a^{-1})}\Oa^{(gb^{-1}a^{-1})}\Oa^{(1)}\Big)\quad\bmod\mathcal I,
\end{aligned}
\label{eq:asasOAg}
\end{equation}
and
\begin{equation}
\mathbf s\big(\mathbf a^{-1}\big(\Oa^{(g)}\big)\big)=-\frac1{\big(Q^2-Q^{-2}\big)^2}\mathbf s\Big(\Oa^{(1)}\Oa^{(ga)}\Oa^{(1)}\Big)=-\frac1{\big(Q^2-Q^{-2}\big)^2}\Ob^{(1)}\Ob^{(\sigma(g)b)}\Ob^{(1)}.
\label{eq:saInverseOAg}
\end{equation}
Combining (\ref{eq:asasOAg}) with (\ref{eq:saInverseOAg}) we get
\begin{equation}
\begin{aligned}
\mathbf a\big(\mathbf s\big(\mathbf a\big(\mathbf s\big(\Oa^{(g)}\big)\big)\big)\big)-\mathbf s\big(\mathbf a^{-1}\big(&\Oa^{(g)}\big)\big)\\
\equiv& \frac1{\big(Q^2-Q^{-2}\big)^2}\Big(-\Oa^{(1)}\Oa^{(gb^{-1}a^{-1})}\Ob^{(a^{-1})}+\Ob^{(1)}\Ob^{(\sigma(g)b)}\Ob^{(1)}-\Ob^{(a^{-1})}\Oa^{(gb^{-1}a^{-1})}\Oa^{(1)}\Big)\\
=&- \frac1{\big(Q^2-Q^{-2}\big)^2}\R_{10}^{(ga,1)}\equiv 0\quad\bmod\mathcal I.
\label{eq:R10ToProveMCG}
\end{aligned}
\end{equation}

Next, we apply both sides of (\ref{eq:EquivalentFormSA3}) to $\Ob^{(g)}$, this gives
\begin{equation}
\begin{aligned}
\mathbf a\big(\mathbf s\big(\mathbf a&\big(\mathbf s\big(\Ob^{(g)}\big)\big)\big)\big)\\
=&\mathbf a\big(\mathbf s\big(\mathbf a\big(\Oa^{(\sigma(g))}\big)\big)\big)\\
=&-\frac1{\big(Q^2-Q^{-2}\big)^2}\mathbf a\Big(\mathbf s\Big(\Oa^{(1)}\Oa^{(\sigma(g)a^{-1})}\Oa^{(1)}\Big)\Big)\\
=&-\frac1{\big(Q^2-Q^{-2}\big)^2}\mathbf a\Big(\Ob^{(1)}\Ob^{(gb^{-1})}\Ob^{(1)}\Big)\\
=&\frac1{\big(Q^2-Q^{-2}\big)^5}\Big(-Q^3\Oa^{(1)}\Ob^{(a^{-1})}\Oa^{(1)}\Ob^{(gb^{-1}a^{-1})}\Oa^{(1)}\Ob^{(a^{-1})}+Q\Oa^{(1)}\Ob^{(a^{-1})}\Oa^{(1)}\Ob^{(gb^{-1}a^{-1})}\Ob^{(a^{-1})}\Oa^{(1)}\\
&\qquad+Q\Oa^{(1)}\Ob^{(a^{-1})}\Ob^{(gb^{-1}a^{-1})}\Oa^{(1)}\Oa^{(1)}\Ob^{(a^{-1})}-Q^{-1}\Oa^{(1)}\Ob^{(a^{-1})}\Ob^{(gb^{-1}a^{-1})}\Oa^{(1)}\Ob^{(a^{-1})}\Oa^{(1)}\\
&\qquad+Q\Ob^{(a^{-1})}\Oa^{(1)}\Oa^{(1)}\Ob^{(gb^{-1}a^{-1})}\Oa^{(1)}\Ob^{(a^{-1})}-Q^{-1}\Ob^{(a^{-1})}\Oa^{(1)}\Oa^{(1)}\Ob^{(gb^{-1}a^{-1})}\Ob^{(a^{-1})}\Oa^{(1)}\\
&\qquad-Q^{-1}\Ob^{(a^{-1})}\Oa^{(1)}\Ob^{(gb^{-1}a^{-1})}\Oa^{(1)}\Oa^{(1)}\Ob^{(a^{-1})}+Q^{-3}\Ob^{(a^{-1})}\Oa^{(1)}\Ob^{(gb^{-1}a^{-1})}\Oa^{(1)}\Ob^{(a^{-1})}\Oa^{(1)}\Big),
\end{aligned}
\label{eq:asasOBg}
\end{equation}
and
\begin{equation}
\begin{aligned}
\mathbf s\Big(\mathbf a^{-1}\Big(\Ob^{(g)}\Big)\Big)=&\frac1{Q^2-Q^{-2}}\mathbf s\Big(Q\Ob^{(ga)}\Oa^{(1)}-Q^{-1}\Oa^{(1)}\Ob^{(ga)}\Big)\\
=&\frac1{Q^2-Q^{-2}}\Big( Q\Oa^{(\sigma(g)b)}\Ob^{(1)}-Q^{-1}\Ob^{(1)}\Oa^{(\sigma(g)b)}\Big).
\end{aligned}
\label{eq:saInverseOBg}
\end{equation}
To show that the right hand sides of (\ref{eq:asasOBg}) and (\ref{eq:saInverseOBg}) are equal to each other modulo $\mathcal I$ we compute the following difference
\begin{equation*}
\begin{aligned}
\mathbf a\big(\mathbf s\big(\mathbf a\big(\mathbf s\big(&\Ob^{(g)}\big)\big)\big)\big)-\mathbf s\big(\mathbf a^{-1}\big(\Ob^{(g)}\big)\big)+\frac1{\big(Q^2-Q^{-2}\big)^3}\Big(-Q\Oa^{(1)}\R_{8}^{(\sigma(g)a^{-1},a^{-1})}+Q^{-1}\R_{8}^{(\sigma(g)a^{-1},a^{-1})}\Oa^{(1)}\Big)\\
=&\frac1{\big(Q^2-Q^{-2}\big)^3}\Big(Q^{-1}\Oa^{(1)}\R_{3}^{(\sigma(g)b,1,1)}+Q^{-1}\Ob^{(1)}\R_{5,A}^{(\sigma(g)b)}-Q\R_{4,A}^{(\sigma(g)b)}\Ob^{(1)}\\
&\qquad+Q^{-1}\R_{2}^{(a^{-1},1,gb^{-1}a^{-1})}\Ob^{(a^{-1})}-Q\R_{3}^{(1,1,\sigma(g)b)}\Oa^{(1)}\Big)\\
&+\frac1{\big(Q^2-Q^{-2}\big)^5}\Big(-Q^3\Oa^{(1)}\Ob^{(a^{-1})}\Oa^{(1)}\R_{2}^{(gb^{-1}a^{-1},1,a^{-1})}+Q\Oa^{(1)}\Ob^{(a^{-1})}\Ob^{(gb^{-1}a^{-1})}\R_{6}^{(a^{-1})}\\
&\qquad-Q^{-1}\Oa^{(1)}\Ob^{(a^{-1})}\Ob^{(gb^{-1}a^{-1})}\R_{3}^{(1,a^{-1},1)}+Q\Oa^{(1)}\R_{2}^{(a^{-1},1,gb^{-1}a^{-1})}\Ob^{(a^{-1})}\Oa^{(1)}\\
&\qquad+Q\Ob^{(a^{-1})}\Oa^{(1)}\Oa^{(1)}\R_{2}^{(gb^{-1}a^{-1},1,a^{-1})}-Q^{-1}\Ob^{(a^{-1})}\Oa^{(1)}\Ob^{(gb^{-1}a^{-1})}\R_{6}^{(a^{-1})}\\
&\qquad+Q^{-3}\Ob^{(a^{-1})}\Oa^{(1)}\Ob^{(gb^{-1}a^{-1})}\R_{3}^{(1,a^{-1},1)}+Q^{-1}\Ob^{(a^{-1})}\Ob^{(gb^{-1}a^{-1})}\Oa^{(1)}\R_{3}^{(1,a^{-1},1)}\\
&\qquad-Q^{-1}\Ob^{(a^{-1})}\R_{6}^{(gb^{-1}a^{-1})}\Ob^{(a^{-1})}\Oa^{(1)}+Q^{-1}\R_{2}^{(a^{-1},1,gb^{-1}a^{-1})}\Ob^{(a^{-1})}\Oa^{(1)}\Oa^{(1)}\\
&\qquad-Q\Oa^{(1)}\Ob^{(a^{-1})}\Ob^{(gb^{-1}a^{-1})}\Ob^{(a^{-1})}\Oa^{(1)}\Oa^{(1)}\Big)\\
\equiv&-\frac1{\big(Q^2-Q^{-2}\big)^5}Q\Oa^{(1)}\Ob^{(a^{-1})}\Ob^{(gb^{-1}a^{-1})}\Ob^{(a^{-1})}\Oa^{(1)}\Oa^{(1)}\;\;\overset{(\ref{eq:IdempotentARightCancellation})}{\equiv}\;\;0\;\;\bmod\mathcal I.
\end{aligned}
\end{equation*}

We have thus shown that action of both automorphisms $\mathbf a\mathbf s\mathbf a\mathbf s:\mathcal A\rightarrow\mathcal A$ and $\mathbf s\mathbf a^{-1}:\mathcal A\rightarrow\mathcal A$ agree on generators $\Oa^{(g)},\Ob^{(g)}\in\mathcal A$ for all $g\in Free_2$. This implies (\ref{eq:EquivalentFormSA3}) and, as a corollary, the statement of the Theorem.
\end{proof}

Now we can introduce the following composition of the two automorphisms 
\begin{equation*}
\mathbf b:=\mathbf s\mathbf a\mathbf s:\mathcal A\rightarrow\mathcal A.
\end{equation*}
The action of $b$ and its inverse on generators is given by
\begin{subequations}
\begin{equation}
\mathbf b\big(\Oa^{(g)}\big)=\frac{Q\Ob^{(1)}\Oa^{(gb^{-1})}-Q^{-1}\Oa^{(gb^{-1})}\Ob^{(1)}}{Q^2-Q^{-2}},\qquad \mathbf b\big(\Ob^{(g)}\big)=-\frac{\Ob^{(1)}\Ob^{(gb^{-1})}\Ob^{(1)}}{\big(Q^2-Q^{-2}\big)^2},
\end{equation}
\begin{equation}
\mathbf b^{-1}\big(\Oa^{(g)}\big)=\frac{Q\Oa^{(gb)}\Ob^{(1)}-Q^{-1}\Ob^{(1)}\Oa^{(gb)}}{Q^2-Q^{-2}},\qquad\mathbf b^{-1}\big(\Ob^{(g)}\big)=-\frac{\Ob^{(1)}\Ob^{(gb)}\Ob^{(1)}}{\big(Q^2-Q^{-2}\big)^2}.
\end{equation}
\end{subequations}
Theorem \ref{th:AlgebraPSL2ZActionSA} implies that we have an alternative presentation of the $PSL(2,\mathbb Z)$-action by automorphisms of $\mathcal A$
\begin{corollary}
Authomorphisms $\mathbf a,\mathbf b:\mathcal A\rightarrow\mathcal A$ satisfy defining relations of equivalent presentation of $PSL(2,\mathbb Z)$
\begin{equation*}
\mathbf a\mathbf b\mathbf a=\mathbf b\mathbf a\mathbf b,\qquad (\mathbf a\mathbf b\mathbf a)^2=\mathrm{Id}.
\end{equation*}
\end{corollary}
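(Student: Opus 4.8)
The plan is to derive both relations purely group-theoretically from Theorem~\ref{th:AlgebraPSL2ZActionSA}, without reopening the defining relations of $\mathcal A$ at all: all the genuine algebra has already been absorbed into $\mathbf s^2=\mathrm{Id}$ and $(\mathbf a\mathbf s)^3=\mathrm{Id}$, and the corollary is then just the observation that the change of generators $\mathbf b:=\mathbf s\mathbf a\mathbf s$ converts the $(\mathbf s,\mathbf a\mathbf s)$-presentation of $PSL(2,\mathbb Z)$ into the braid presentation. So I would work entirely inside the group of automorphisms generated by $\mathbf a$ and $\mathbf s$.

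First I would record the expanded form of the cubic relation, $\mathbf a\mathbf s\mathbf a\mathbf s\mathbf a\mathbf s=\mathrm{Id}$. Substituting $\mathbf b=\mathbf s\mathbf a\mathbf s$ gives
\[
\mathbf a\mathbf b\mathbf a=\mathbf a(\mathbf s\mathbf a\mathbf s)\mathbf a=\mathbf a\mathbf s\mathbf a\mathbf s\mathbf a,
\]
and right-multiplying the identity $\mathbf a\mathbf s\mathbf a\mathbf s\mathbf a\mathbf s=\mathrm{Id}$ by $\mathbf s^{-1}=\mathbf s$ yields $\mathbf a\mathbf s\mathbf a\mathbf s\mathbf a=\mathbf s$. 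Hence $\mathbf a\mathbf b\mathbf a=\mathbf s$.

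Next I would compute $\mathbf b\mathbf a\mathbf b$ the same way:
\[
\mathbf b\mathbf a\mathbf b=(\mathbf s\mathbf a\mathbf s)\mathbf a(\mathbf s\mathbf a\mathbf s)=\mathbf s\,(\mathbf a\mathbf s\mathbf a\mathbf s\mathbf a\mathbf s)=\mathbf s\cdot\mathrm{Id}=\mathbf s,
\]
so that $\mathbf a\mathbf b\mathbf a=\mathbf b\mathbf a\mathbf b=\mathbf s$, which is exactly the braid relation. The second relation is then immediate from the already-established involutivity of $\mathbf s$: $(\mathbf a\mathbf b\mathbf a)^2=\mathbf s^2=\mathrm{Id}$.

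There is essentially no obstacle here, since every step is a one-line manipulation in $\mathrm{Aut}(\mathcal A)$; the content of the corollary is entirely carried by Theorem~\ref{th:AlgebraPSL2ZActionSA}. The only point worth spelling out is the bookkeeping that $\{\mathbf a,\mathbf b\}$ and $\{\mathbf a,\mathbf s\}$ generate the same subgroup of $\mathrm{Aut}(\mathcal A)$, since $\mathbf b=\mathbf s\mathbf a\mathbf s$ and conversely $\mathbf s=\mathbf a\mathbf b\mathbf a$; together with the standard fact that $\langle\mathbf a,\mathbf b\mid \mathbf a\mathbf b\mathbf a=\mathbf b\mathbf a\mathbf b,\,(\mathbf a\mathbf b\mathbf a)^2=1\rangle$ and $\langle\mathbf s,\mathbf a\mathbf s\mid \mathbf s^2=(\mathbf a\mathbf s)^3=1\rangle$ are two presentations of the same group, this justifies calling it an \emph{equivalent} presentation of $PSL(2,\mathbb Z)$ rather than merely two coincidental relations.
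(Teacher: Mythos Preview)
Your proof is correct and is exactly the intended argument: the paper states this as an immediate corollary of Theorem~\ref{th:AlgebraPSL2ZActionSA} without writing out the details, and your derivation of $\mathbf a\mathbf b\mathbf a=\mathbf s=\mathbf b\mathbf a\mathbf b$ from $(\mathbf a\mathbf s)^3=\mathbf s^2=\mathrm{Id}$ is precisely the standard change of presentation the authors have in mind.
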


\subsection{Casimir element}

Consider the following combination of idemponents
\begin{equation*}
C:=e_Be_A-e_A-e_B+1=\frac1{\big(Q^2-Q^{-2}\big)^4}\Ob^{(1)}\Ob^{(1)}\Oa^{(1)}\Oa^{(1)}+\frac1{\big(Q^2-Q^{-2}\big)^2}\Big(\Oa^{(1)}\Oa^{(1)}+\Ob^{(1)}\Ob^{(1)}\Big)+1\qquad\in\quad\mathcal A.
\end{equation*}
\begin{proposition}
Element $C$ has the following properties:
\begin{itemize}
    \item It is an idempotent
    \begin{equation*}
    C^2=C.
    \end{equation*}
    \item It is fixed by the action of $PSL(2,\mathbb Z)$ on $\mathcal A$, namely
    \begin{equation*}
        \mathbf a(C)=C=\mathbf s(C).
    \end{equation*}
    \item It is annihilated by multiplication with any other generator
    \begin{equation*}
        \Oa^{(g)}C=C\Oa^{(g)}=\Ob^{(g)}C=C\Ob^{(g)}=0\qquad\textrm{for all}\quad g\in Free_2.
    \end{equation*}
\end{itemize}
\end{proposition}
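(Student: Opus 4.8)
The plan is to reduce everything to a handful of quadratic and cubic identities among the idempotents $e_A,e_B$ of (\ref{eq:IdempotentsDef}), and then to exploit the factorization $C=(1-e_B)(1-e_A)$, which is just a rewriting of the defining formula. First I would record the cubic identities $e_Ae_Be_A=e_Ae_B$ and $e_Be_Ae_B=e_Be_A$: writing $e_Ae_Be_A$ as a scalar multiple of $\Oa^{(1)}\Oa^{(1)}\Ob^{(1)}\Ob^{(1)}\Oa^{(1)}\Oa^{(1)}$ and peeling off the trailing $\Oa^{(1)}\Oa^{(1)}$ with the right-cancellation identity (\ref{eq:IdempotentARightCancellation}) at $n=2$ collapses it to $e_Ae_B$, and the $B$-version is its image under $\mathbf s$. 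Next I would rewrite the $q$-Serre relations (\ref{eq:Relation6g}), (\ref{eq:Relation7g}) in the compact anticommutation form $e_A\Ob^{(g)}+\Ob^{(g)}e_A=\Ob^{(g)}$ and $e_B\Oa^{(g)}+\Oa^{(g)}e_B=\Oa^{(g)}$, valid for all $g$. Multiplying the first of these at $g=1$ by $\Ob^{(1)}$ on the left and on the right and subtracting, the mixed terms cancel, and since $\Ob^{(1)}\Ob^{(1)}=-(Q^2-Q^{-2})^2 e_B$ the difference becomes the commutation relation $e_Ae_B=e_Be_A$.

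With these in hand, idempotency is a short expansion: grouping $C^2=(e_Be_A-e_A-e_B+1)^2$ by its left factor and repeatedly using $e_A^2=e_A$, $e_B^2=e_B$ together with the two cubic identities, the contributions of the first three left factors vanish and the term $1\cdot C$ survives, so $C^2=C$. (Commutativity is not needed here.)

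For the annihilation property I would use $C=(1-e_B)(1-e_A)$. From (\ref{eq:Relation4Xg})–(\ref{eq:Relation5Xg}) one gets $e_A\Oa^{(g)}=\Oa^{(g)}=\Oa^{(g)}e_A$ and $e_B\Ob^{(g)}=\Ob^{(g)}=\Ob^{(g)}e_B$, so $(1-e_A)\Oa^{(g)}=0$ and $\Ob^{(g)}(1-e_B)=0$ give $C\Oa^{(g)}=0$ and $\Ob^{(g)}C=0$ at once. The remaining two need one extra step each: for $\Oa^{(g)}C$ I would use (\ref{eq:IdempotentARightCancellation}) to get $\Oa^{(g)}e_Be_A=\Oa^{(g)}e_B$, after which $\Oa^{(g)}(1-e_A-e_B+e_Be_A)$ telescopes to $0$; for $C\Ob^{(g)}$ I would feed the anticommutation relations into $(1-e_B)(1-e_A)\Ob^{(g)}$ and observe that $e_A\Ob^{(g)}-e_Be_A\Ob^{(g)}$ vanishes.

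Invariance is where the real work sits. Since $\mathbf s$ swaps $e_A\leftrightarrow e_B$ by (\ref{eq:sActionK2Algebra}), one has $\mathbf s(C)=e_Ae_B-e_A-e_B+1$, so $\mathbf s(C)=C$ is exactly the commutation relation established above. For $\mathbf a$ I would first note, using $\Oa^{(a^{-1})}=\Oa^{(1)}$ and the cubic relation on $\Oa^{(1)}$, that $\mathbf a(\Oa^{(1)})=\Oa^{(1)}$ and hence $\mathbf a(e_A)=e_A$. The delicate point is that $\mathbf a$ does \emph{not} fix $e_B$: expanding $\mathbf a(e_B)$ from (\ref{eq:aActionK2Algebra}), killing the $\Oa\Ob\Oa$ strings by (\ref{eq:Relation3g1g2g3}), reducing $\Ob^{(a^{-1})}\Ob^{(a^{-1})}$ by (\ref{eq:Relation1Xg}), and applying the anticommutation relations, I expect $\mathbf a(e_B)=e_A+e_B-e_Be_A-e_Ae_B$. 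This computation is the main obstacle. The payoff is that even though $\mathbf a(e_B)\neq e_B$, substituting into $\mathbf a(C)=\mathbf a(e_B)e_A-e_A-\mathbf a(e_B)+1$ and using $e_Ae_Be_A=e_Ae_B$ makes every extra term cancel, leaving $\mathbf a(C)=e_Be_A-e_A-e_B+1=C$. Invariance under $\mathbf s$ and $\mathbf a$, hence under all of $PSL(2,\mathbb Z)$, then follows.
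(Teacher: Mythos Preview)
Your proof is correct and broadly parallels the paper's, but the organisation is different enough to be worth noting. The paper verifies each of the four annihilation identities and the two invariance identities by writing out the relevant element explicitly in terms of the relators $\R_3,\R_4,\R_5,\R_6,\R_7$; for instance it shows $\mathbf s(C)-C=e_Ae_B-e_Be_A$ equals $\frac{1}{(Q^2-Q^{-2})^4}\big(\Oa^{(1)}\R_7^{(1)}-\R_7^{(1)}\Oa^{(1)}\big)$, and it computes $\mathbf a(e_B)=e_A+e_B-2e_Be_A$ directly from the relators. You instead isolate a small toolkit first --- the anticommutation form of the $q$-Serre relations, the commutation $e_Ae_B=e_Be_A$ (your sandwich argument is exactly equivalent to the paper's $\R_7$ identity), and the cubic identities $e_Ae_Be_A=e_Ae_B$, $e_Be_Ae_B=e_Be_A$ --- and then exploit the factorisation $C=(1-e_B)(1-e_A)$. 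This makes idempotency and the four annihilation identities one- or two-line observations rather than four separate relator computations. For $\mathbf a(e_B)$ your expected formula $e_A+e_B-e_Ae_B-e_Be_A$ is indeed what comes out (and agrees with the paper's $e_A+e_B-2e_Be_A$ once you use the commutation you already proved); the computation goes exactly as you outlined, killing the $\Oa\Ob\Oa$ strings by $\R_3$, replacing $\Ob^{(a^{-1})}\Ob^{(a^{-1})}$ via $\R_{1,B}^{(a^{-1})}$, and then applying the anticommutation relations twice. So both arguments hinge on the same computation of $\mathbf a(e_B)$; yours just packages the surrounding bookkeeping more economically.
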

\begin{proof}
The first property is immediate from the definition through idempotents $e_A,e_B$. To get the second property we first note that
\begin{align*}
s(C)-C=&e_Ae_B-e_Be_A=\frac1{\big(Q^2-Q^{-2}\big)^4}\Big(\Oa^{(1)}\Oa^{(1)}\Ob^{(1)}\Ob^{(1)}-\Ob^{(1)}\Ob^{(1)}\Oa^{(1)}\Oa^{(1)}\Big)\\
=&\frac1{\big(Q^2-Q^{-2}\big)^4}\Big(\Oa^{(1)}\R_{7}^{(1)}-\R_{7}^{(1)}\Oa^{(1)}\Big)\equiv 0\qquad\bmod\quad\mathcal I.
\end{align*}
Next, we consider the action of automorphism $\mathbf a$ on individual idempotents
\begin{equation}
\begin{aligned}
\mathbf a(e_A)-e_A=&\frac1{\big(Q^2-Q^{-2}\big)^2}\Oa^{(1)}\Oa^{(1)}-\frac1{\big(Q^2-Q^{-2}\big)^6}\Oa^{(1)}\Oa^{(1)}\Oa^{(1)}\Oa^{(1)}\Oa^{(1)}\Oa^{(1)}\\
=&\frac1{\big(Q^2-Q^{-2}\big)^4}\Oa^{(1)}\R_{4,A}^{(1)}-\frac1{\big(Q^2-Q^{-2}\big)^6}\Oa^{(1)}\Oa^{(1)}\Oa^{(1)}\R_{4,A}^{(1)}\equiv0\qquad\bmod\quad\mathcal I,
\end{aligned}
\label{eq:aeAAction}
\end{equation}
\begin{equation}
\begin{aligned}
\mathbf a(e_B)+2e_Be_A-e_A-e_B=&\frac1{\big(Q^2-Q^{-2}\big)^2}\Big(\Oa^{(1)}\Oa^{(1)}+\Ob^{(1)}\Ob^{(1)}\Big)\\
&+\frac1{\big(Q^2-Q^{-2}\big)^4}\Big(-Q^2\Oa^{(1)}\Ob^{(a^{-1})}\Oa^{(1)}\Ob^{(a^{-1})}+\Oa^{(1)}\Ob^{(a^{-1})}\Ob^{(a^{-1})}\Oa^{(1)}\\
&\qquad+2\Ob^{(1)}\Ob^{(1)}\Oa^{(1)}\Oa^{(1)}+\Ob^{(a^{-1})}\Oa^{(1)}\Oa^{(1)}\Ob^{(a^{-1})}-Q^{-2}\Ob^{(a^{-1})}\Oa^{(1)}\Ob^{(a^{-1})}\Oa^{(1)}\Big)\\
=&-\frac1{\big(Q^2-Q^{-2}\big)^2}\R_{1,B}^{(a^{-1})}+\frac1{\big(Q^2-Q^{-2}\big)^4}\Big(-Q^2\Oa^{(1)}\R_{2}^{(a^{-1},1,a^{-1})}+\Ob^{(a^{-1})}\R_{6}^{(a^{-1})}\\
&\qquad-Q^{-2}\Ob^{(a^{-1})}\R_{3}^{(1,a^{-1},1)}+\R_{7}^{(1)}\Oa^{(1)}+\Oa^{(1)}\R_{1,B}^{(a^{-1})}\Oa^{(1)}-\R_{1,B}^{(a^{-1})}\Oa^{(1)}\Oa^{(1)}\Big)\\
\equiv&\;0\qquad\bmod\quad\mathcal I.
\end{aligned}
\label{eq:aeBAction}
\end{equation}
Combining (\ref{eq:aeAAction}) with (\ref{eq:aeBAction}) we get
\begin{equation*}
\mathbf a(C)=2e_Be_A^2-e_Be_A+e_A^2-2e_A-e_B+1=e_Be_A-e_A-e_B+1=C.
\end{equation*}

Finally, to prove the last statement fix an arbitrary element $g\in Free_2$ and compute
\begin{align*}
\Oa^{(g)}C=&\Oa^{(g)}+\frac1{\big(Q^2-Q^{-2}\big)^2}\Big(\Oa^{(g)}\Oa^{(1)}\Oa^{(1)}+\Oa^{(g)}\Ob^{(1)}\Ob^{(1)}\Big)+\frac1{\big(Q^2-Q^{-2}\big)^4}\Oa^{(g)}\Ob^{(1)}\Ob^{(1)}\Oa^{(1)}\Oa^{(1)}\\
=&\frac1{\big(Q^2-Q^{-2}\big)^2}\R_{7}^{(g)}+\frac1{\big(Q^2-Q^{-2}\big)^4}\Big(-\Ob^{(1)}\Ob^{(1)}\R_{5,A}^{(g)}+\R_{7}^{(g)}\Oa^{(1)}\Oa^{(1)}\Big)\equiv0\qquad\bmod\quad\mathcal I,\\[0.7em]
C\Oa^{(g)}=&\Oa^{(g)}+\frac1{\big(Q^2-Q^{-2}\big)^2}\Big(\Oa^{(1)}\Oa^{(1)}\Oa^{(g)}+\Ob^{(1)}\Ob^{(1)}\Oa^{(g)}\Big)+\frac1{\big(Q^2-Q^{-2}\big)^4}\Ob^{(1)}\Ob^{(1)}\Oa^{(1)}\Oa^{(1)}\Oa^{(g)}\\
=&\frac1{\big(Q^2-Q^{-2}\big)^2}\R_{4,A}^{(g)}+\frac1{\big(Q^2-Q^{-2}\big)^4}\Ob^{(1)}\Ob^{(1)}\R_{4,A}^{(g)}\equiv0\qquad\bmod\quad\mathcal I,\\[0.7em]
\Ob^{(g)}C=&\Ob^{(g)}+\frac1{\big(Q^2-Q^{-2}\big)^2}\Big(\Ob^{(g)}\Oa^{(1)}\Oa^{(1)}+\Ob^{(g)}\Ob^{(1)}\Ob^{(1)}\Big)+\frac1{\big(Q^2-Q^{-2}\big)^4}\Ob^{(g)}\Ob^{(1)}\Ob^{(1)}\Oa^{(1)}\Oa^{(1)}\\
=&\frac1{\big(Q^2-Q^{-2}\big)^2}\R_{5,B}^{(g)}+\frac1{\big(Q^2-Q^{-2}\big)^4}\R_{5,B}^{(g)}\Oa^{(1)}\Oa^{(1)}\equiv0\qquad\bmod\quad\mathcal I,\\[0.7em]
C\Ob^{(g)}=&\Ob^{(g)}+\frac1{\big(Q^2-Q^{-2}\big)^2}\Big(\Oa^{(1)}\Oa^{(1)}\Ob^{(g)}+\Ob^{(1)}\Ob^{(1)}\Ob^{(g)}\Big)+\frac1{\big(Q^2-Q^{-2}\big)^4}\Ob^{(1)}\Ob^{(1)}\Oa^{(1)}\Oa^{(1)}\Ob^{(g)}\\
=&\frac1{\big(Q^2-Q^{-2}\big)^2}\R_{6}^{(g)}+\frac1{\big(Q^2-Q^{-2}\big)^4}\Big(\Ob^{(1)}\Ob^{(1)}\R_{6}^{(g)}-\R_{4,B}^{(g)}\Oa^{(1)}\Oa^{(1)}\Big)\equiv0\qquad\bmod\quad\mathcal I.
\end{align*}
\end{proof}

\subsection{Relation to the undeformed algebra}

We conclude this section with an observation which will not be used elsewhere in the text. It is, however, necessary to clarify the relation between the algebra $\mathcal A$ introduced in this section and the $K=2$ finite dimensional module of an $A_1$ spherical Double Affine Hecke algebra.

Consider a quotient algebra
\begin{equation*}
\mathcal A_0:=\;\bigslant{\mathcal A}{\Big(\Oa^{(g)}-\Oa^{(1)},\Ob^{(g)}-\Ob^{(1)},C\Big)}
\end{equation*}
obtained by identifying generators for different elements $g\in Free_2$ and setting Casimir element to zero.

\begin{proposition}
The algebra $\mathcal A_0$ is isomorphic to a finite-dimensional representation of the $A_1$ spherical Double Affine Hecke algebra at the level $K=2$. Moreover, this isomorphism is equivariant with respect to the $PSL(2,\mathbb Z)$-action by automorphisms.
\end{proposition}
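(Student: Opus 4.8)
The plan is to realize $\mathcal A_0$ explicitly as a $3\times 3$ matrix algebra and to identify that matrix algebra with the image of the $A_1$ spherical DAHA acting on its $K=2$ module. First I would compute the images of the defining relations (\ref{eq:DefiningRelationsK2Full}) in the quotient. Writing $A:=\Oa^{(1)}$, $B:=\Ob^{(1)}$ and $\kappa:=(Q^2-Q^{-2})^2$, once all $\Oa^{(g)}$ are identified with $A$ and all $\Ob^{(g)}$ with $B$, the relations $\R_{0,X}^{(g)}$ and $\R_{1,X}^{(g)}$ become trivial, while $\R_8,\dots,\R_{11}$ collapse to consequences of $\R_{4,X},\R_6,\R_7$. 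One is left with the finite presentation $A^3=-\kappa A$, $B^3=-\kappa B$, $ABA=BAB=0$, $A^2B+BA^2=-\kappa B$, $B^2A+AB^2=-\kappa A$, together with the relation $B^2A^2+\kappa(A^2+B^2)+\kappa^2=0$ coming from $C=0$.

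Next I would fix the parameter dictionary $q=Q^4$, $t=-q^{-1}$ (chosen so that the spectrum $\{0,\pm i(Q^2-Q^{-2})\}$ forced by $A^3=-\kappa A$ matches that of $\Oa$ on the $K=2$ module), and write down the standard three-dimensional $K=2$ module $V$ of the $A_1$ spherical DAHA with explicit $3\times 3$ matrices for $\Oa,\Ob$ (equivalently the $p,s\to0$ limit of $D_A,D_B$). I would then define $\phi:\mathcal A_0\to\mathrm{Mat}_{3\times3}(\mathbf k)$ by $A\mapsto\Oa|_V$, $B\mapsto\Ob|_V$; checking the short list of relations above against these explicit matrices shows $\phi$ is a well-defined homomorphism, i.e. that the module $V$ factors through $\mathcal A_0$.

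To prove that $\phi$ is an isomorphism I would bound $\dim_{\mathbf k}\mathcal A_0$ from above and match it from below. Reducing words in $A,B$ — any run of a single letter collapses by $A^3=-\kappa A$ and $B^3=-\kappa B$; the substrings $ABA$ and $BAB$ annihilate; $\R_6,\R_7$ trade $BA^2,AB^2$ for $A^2B,B^2A$; and every monomial of length $\ge 4$ reduces through the Casimir identity $B^2A^2=-\kappa(A^2+B^2)-\kappa^2$ (for instance $A^2B^2=-\kappa(A^2+B^2)-\kappa^2$) — one shows that $\{1,A,B,A^2,B^2,AB,BA,A^2B,B^2A\}$ spans $\mathcal A_0$, whence $\dim_{\mathbf k}\mathcal A_0\le 9$. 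Verifying that the nine matrices $\phi$ of these elements are linearly independent in $\mathrm{Mat}_{3\times3}(\mathbf k)$ then forces $\dim_{\mathbf k}\mathcal A_0=9$ and makes $\phi$ a bijection onto all of $\mathrm{Mat}_{3\times3}(\mathbf k)$; in particular $V$ is absolutely irreducible, so $\mathrm{Mat}_{3\times3}(\mathbf k)$ is exactly the image of the spherical DAHA modulo the annihilator of $V$, which is the asserted isomorphism.

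For $PSL(2,\mathbb Z)$-equivariance I would first note that the ideal defining $\mathcal A_0$ is preserved by $\mathbf a$ and $\mathbf s$: the Casimir is $PSL(2,\mathbb Z)$-fixed and the formulas (\ref{eq:aaInverseActionK2Algebra}), (\ref{eq:sActionK2Algebra}) are compatible with the identifications, so both automorphisms descend to $\mathcal A_0$. On generators they act by $\mathbf a(A)=-\kappa^{-1}A^3=A$ and $\mathbf a(B)=(QAB-Q^{-1}BA)/(Q^2-Q^{-2})$ (a balanced $q$-commutator, of the same shape as the standard spherical-DAHA automorphism fixing $\Oa$), together with $\mathbf s:A\leftrightarrow B$. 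By Skolem--Noether each descended automorphism of $\mathrm{Mat}_{3\times3}(\mathbf k)$ is conjugation by some $g\in GL_3(\mathbf k)$, and it remains to check that under $\phi$ these conjugating matrices coincide, up to scalar, with the $K=2$ modular matrices $D_A|_{p,s\to0}$ and $\widehat S|_{p,s\to0}$ governing the standard projective $SL(2,\mathbb Z)$-action on $V$; this is a two-generator computation. The main obstacle I expect is the upper bound $\dim_{\mathbf k}\mathcal A_0\le 9$: one must ensure the word reduction is genuinely exhaustive, so that the rather weak-looking relations of $\mathcal A_0$ really collapse every long monomial instead of leaving an infinite-dimensional residue — the Casimir relation is indispensable here and is where I would be most careful. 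A secondary subtlety is pinning down the precise parameter dictionary so that the two $PSL(2,\mathbb Z)$-actions are intertwined on the nose rather than merely up to an inner twist.
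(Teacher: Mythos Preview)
Your proposal is correct and follows essentially the same route as the paper: collapse the relations to a finite presentation in two generators, exhibit a nine-element spanning set, define the explicit $3\times3$ matrix homomorphism, and verify that the nine images are linearly independent. Your treatment is in fact more thorough than the paper's own proof on two points: you explain the word-reduction leading to $\dim_{\mathbf k}\mathcal A_0\le 9$ rather than simply asserting the spanning set, and you address the $PSL(2,\mathbb Z)$-equivariance (via Skolem--Noether and a generator check), which the paper's proof leaves implicit.
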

\begin{proof}
Denote the natural image of the generators of $\mathcal A$ in $\mathcal A_0$ by $\Oa,\Ob$. We get an equivalent presentation of $\mathcal A_0$ in terms of the above generators and relations
\begin{equation}
\begin{aligned}
\Oa\Ob\Oa=0&=\Ob\Oa\Ob,\\
\Oa^3+\big(Q^2-Q^{-2}\big)^2\Oa=0&=\Ob^3+\big(Q^2-Q^{-2}\big)^2\Ob,\\
\Oa^2\Ob+\Ob\Oa^2+\big(Q^2-Q^{-2}\big)^2\Ob=0&=\Ob^2\Oa+\Oa\Ob^2+\big(Q^2-Q^{-2}\big)^2\Oa,\\
\Ob^2\Oa^2+\big(Q^2-Q^{-2}\big)^2\big(\Oa^2+\Ob^2\big)+\big(Q^2-Q^{-2}\big)^4=0&.
\end{aligned}
\label{eq:A0DefiningRelations}
\end{equation}
This implies that $\mathcal A_0$ is finite-dimensional and spanned by the following nine elements
\begin{equation}
1,\quad \Oa,\quad \Ob,\quad \Oa^2,\quad \Oa\Ob,\quad \Ob\Oa,\quad \Ob^2,\quad \Oa^2\Ob,\quad \Oa\Ob^2.
\label{eq:A0SpanningSet}
\end{equation}

On the other hand, matrices of the action of $A_1$ spherical DAHA on $K=2$ finite dimensional module \cite{Cherednik'2005} satisfy relations (\ref{eq:A0DefiningRelations}). In other words, we have a homomorphism $\Psi_0:\mathcal A_0\rightarrow\mathrm{Mat}_{3\times3}(\mathbb C(Q))$ defined on generators as
\begin{equation*}
\Psi_0(\Oa)=\left(\begin{array}{ccc}
-\mathrm i \big(Q^{2}-Q^{-2}\big) & 0 & 0 \\
0 & 0 & 0 \\
0 & 0 & -\mathrm i \big(Q^2-Q^{-2}\big)
\end{array}\right),
\qquad
\Psi_0(\Ob)=\left(\begin{array}{ccc}
0 & 1 & 0 \\
-\frac12\big(Q^2-Q^{-2}\big)^2  & 0 & 1 \\
0 & -\frac12\big(Q^2-Q^{-2}\big)^2 & 0
\end{array}\right).
\end{equation*}

After verifying that images under $\Psi_0$ of elements (\ref{eq:A0SpanningSet}) are linearly independed we conclude that $\Psi_0$ is injective and hence $\mathcal A_0\simeq\Psi_0(\mathcal A_0)$.
\end{proof}

\section{Matrix Representation of the algebra $\mathcal A$}
\label{sec:MatrixRepresentation}

In this section we show that algebra $\mathcal A$ has a finite dimensional representation in $3\times3$ matrices over $\mathbf R$. To this end we recursively define a family of matrices $O_A^{(g)},O_B^{(g)}\in\mathrm{Mat}_{3\times 3}(\mathbf R)$ labelled by elements $g\in Free_2$ of the free group with two generators and show that homomorphism
\begin{equation*}
\Psi:\mathcal F\rightarrow\mathrm{Mat}_{3\times 3}(\mathbf R),\qquad \Psi(\mathcal O_A^{(g)})=O_A^{(g)},\qquad \Psi(\mathcal O_B^{(g)})=O_B^{(g)},\qquad \textrm{for all}\quad g\in Free_2
\end{equation*}
annihilates defining ideal $\mathcal I$ of the quotient algebra $\mathcal A=\mathcal F/\mathcal I$. Or, in other words, that matrices $O_A^{(g)},O_B^{(g)}$ satisfy all relations (\ref{eq:DefiningRelationsK2Full}). Moreover, we show that the $PSL(2,\mathbb Z)$-action by automorphisms of $\mathcal A$ on the level of its matrix representation coincides with conjugation by operators $\widehat D_A,\widehat D_B$ introduced earlier in Section \ref{sec:SL2ZActionOnMatR3}.

Our point of departure will be the following pair of elements $O_A^{(1)},O_B^{(1)}\in\mathrm{Mat}_{3\times3}(\mathbf R)$ given by explicit formulas
\begin{subequations}
\begin{align}
O_A^{(1)} := 
\left( 
\begin{array}{ccc} 
-\mathrm i \big(Q^2 - Q^{-2}\big) & 0 & 0 \\
0 & 0 & 0 \\
0 & 0 & i \big(Q^2 - Q^{-2}\big)
\end{array} 
\right)
\label{eq:Oa1}
\end{align}
\begin{gather}
\begin{aligned} 
O_B^{(1)}:=\left( 
\begingroup\arraycolsep=0pt
\begin{array}{ccc} 
0 & \mathrm{pexp}\left( \dfrac{(Q^8 - Q^{-8})p s (1-p)}{(1-s^2)(1-p^2)} \right) & 0 \\
-\dfrac{\big(Q^2-Q^{-2}\big)^2}2 \mathrm{pexp}\left( \dfrac{-(Q^8 - Q^{-8})p s (1-p)}{(1-s^2)(1-p^2)} \right)  & 0 & \mathrm{pexp}\left( \dfrac{-(Q^8 - Q^{-8})p s (1-p)}{(1-s^2)(1-p^2)} \right) \\
0 & -\dfrac{\big(Q^2-Q^{-2}\big)^2}2 \mathrm{pexp}\left( \dfrac{(Q^8 - Q^{-8})p s (1-p)}{(1-s^2)(1-p^2)} \right) & 0
\end{array}
\endgroup
\right).
\end{aligned}
\label{eq:Ob1}
\end{gather}
\label{eq:MatricesOa1Ob1}
\end{subequations}

\begin{lemma}
    Pair of matrices $O_A^{(1)},O_B^{(1)}$ satisfies particular case of relations (\ref{eq:Relation0Xg})--(\ref{eq:Relation7g}) for $g=g_1=g_2=g_3=1$, namely
\begin{subequations}
\begin{align}
O_A^{(1)}O_B^{(1)}O_A^{(1)}=\;&0=O_B^{(1)}O_A^{(1)}O_B^{(1)},
\label{eq:MatrixRelations23Forg1}\\
O_A^{(1)}O_A^{(1)}O_A^{(1)}+\big(Q^2-Q^{-2}\big)^2 O_A^{(1)}=\;&0=O_B^{(1)}O_B^{(1)}O_B^{(1)}+\big(Q^2-Q^{-2}\big)^2 O_B^{(1)},
\label{eq:MatrixRelations45Forg1}\\
O_A^{(1)}O_A^{(1)}O_B^{(1)}+\big(Q^2-Q^{-2}\big)^2O_B^{(1)}+O_B^{(1)}O_A^{(1)}O_A^{(1)}=\;&0=O_B^{(1)}O_B^{(1)}O_A^{(1)}+\big(Q^2-Q^{-2}\big)^2O_A^{(1)}+O_A^{(1)}O_B^{(1)}O_B^{(1)}
\label{eq:MatrixRelations67Forg1}
\end{align}
\label{eq:MatrixRelations234567Forg1}
\end{subequations}
\label{lemm:MatrixRelations234567Forg1}
\end{lemma}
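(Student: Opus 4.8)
The plan is to prove the lemma by direct computation with the explicit matrices (\ref{eq:Oa1})--(\ref{eq:Ob1}), exploiting their sparse structure. Write $c:=Q^2-Q^{-2}$ and abbreviate the single elliptic factor appearing in $O_B^{(1)}$ by $u:=\mathrm{pexp}\!\left(\frac{(Q^8-Q^{-8})ps(1-p)}{(1-s^2)(1-p^2)}\right)\in\mathbf R^\times$; by the exponential property (\ref{eq:ExponentialPropertyFormalFractions}) the entries built from the negated argument are exactly $u^{-1}$, so that
\[
O_A^{(1)}=\mathrm{diag}(-\mathrm i c,\,0,\,\mathrm i c),\qquad
O_B^{(1)}=\begin{pmatrix} 0 & u & 0 \\ -\tfrac{c^2}{2}u^{-1} & 0 & u^{-1} \\ 0 & -\tfrac{c^2}{2}u & 0 \end{pmatrix}.
\]
I would record the two structural facts that drive every cancellation: $O_A^{(1)}$ is diagonal with middle entry $0$ and outer entries satisfying $(-\mathrm i c)+(\mathrm i c)=0$ and $(\pm\mathrm i c)^2=-c^2$; and $O_B^{(1)}$ is tridiagonal with vanishing diagonal.

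First I would dispose of the relations in which $O_A^{(1)}$ is the middle factor. Since $O_A^{(1)}$ is diagonal, $(O_A^{(1)}O_B^{(1)}O_A^{(1)})_{ij}=(O_A^{(1)})_{ii}(O_B^{(1)})_{ij}(O_A^{(1)})_{jj}$, and every nonzero entry of $O_B^{(1)}$ lies in a row or column of index $2$; as $(O_A^{(1)})_{22}=0$, this kills $O_A^{(1)}O_B^{(1)}O_A^{(1)}$ outright. For $O_B^{(1)}O_A^{(1)}O_B^{(1)}$ the only surviving entry is the $(2,2)$ one, which collapses to $-\tfrac{c^2}{2}\big((-\mathrm i c)+(\mathrm i c)\big)=0$ once the two elliptic factors pair off as $u\,u^{-1}=1$; this gives the second half of (\ref{eq:MatrixRelations23Forg1}). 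The identity $O_A^{(1)}O_A^{(1)}O_A^{(1)}+c^2 O_A^{(1)}=0$ is then immediate from $(\pm\mathrm i c)^3=-c^2(\pm\mathrm i c)$.

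The heart of the computation is the claim that $\big(O_B^{(1)}\big)^2$ is independent of $p,s$. Indeed, in forming this square every nonzero summand is a product of two off-diagonal entries sharing a common index, and in each such product the elliptic factors occur as $u\cdot u^{-1}$ and cancel by (\ref{eq:ExponentialPropertyFormalFractions}); one finds $\big(O_B^{(1)}\big)^2=\begin{pmatrix} -c^2/2 & 0 & 1 \\ 0 & -c^2 & 0 \\ c^4/4 & 0 & -c^2/2 \end{pmatrix}$, a matrix over $\mathbb C(Q)$. From here the remaining $O_B$-relations are finite checks over the field $\mathbb C(Q)$: the identity $O_B^{(1)}\big(\big(O_B^{(1)}\big)^2+c^2 I\big)=0$ yields the $O_B$ half of (\ref{eq:MatrixRelations45Forg1}), while for (\ref{eq:MatrixRelations67Forg1}) one computes $O_A^{(1)}O_A^{(1)}=\mathrm{diag}(-c^2,0,-c^2)$, so that $O_A^{(1)}O_A^{(1)}O_B^{(1)}$ and $O_B^{(1)}O_A^{(1)}O_A^{(1)}$ merely rescale the rows, respectively columns, of $O_B^{(1)}$; here the elliptic factors survive but appear linearly and cancel entrywise against $c^2 O_B^{(1)}$ by matching coefficients. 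The symmetric statements with $O_B^{(1)}$ in the middle follow in the same way.

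I do not expect a genuine obstacle: the content of the lemma is precisely that the elliptic entries of $O_B^{(1)}$ were chosen so that all $\mathrm{pexp}$ factors either cancel pairwise (wherever two copies of $O_B^{(1)}$ are adjacent) or appear linearly with matching coefficients. The only point requiring care is to invoke the exponential property (\ref{eq:ExponentialPropertyFormalFractions}) rather than manipulating the infinite products directly; this is what legitimizes the identity $u\cdot u^{-1}=1$ inside $\mathbf R^\times$ and explains the deliberate sign pattern in the arguments of the $\mathrm{pexp}$'s in (\ref{eq:Ob1}).
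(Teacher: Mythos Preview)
Your proposal is correct and is exactly the approach the paper takes: the paper's own proof is the single sentence ``All relations are verified by direct computation in $\mathbf R$ using exponential properties (\ref{eq:ExponentialPropertyFormalFractions}) of the $\mathrm{pexp}$,'' and you have simply spelled that computation out, correctly isolating the one nontrivial observation that every product of two adjacent copies of $O_B^{(1)}$ pairs the $\mathrm{pexp}$ factors as $u\cdot u^{-1}=1$ so that $\big(O_B^{(1)}\big)^2\in\mathrm{Mat}_{3\times3}(\mathbb C(Q))$. The only cosmetic slip is the phrase ``symmetric statements with $O_B^{(1)}$ in the middle'' for the right half of (\ref{eq:MatrixRelations67Forg1}); there it is $O_A^{(1)}$ that sits between two copies of $\big(O_B^{(1)}\big)^2$, but your intended argument (a finite check over $\mathbb C(Q)$ using the constant matrix $\big(O_B^{(1)}\big)^2$ you already computed) goes through verbatim.
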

\begin{proof}
All relations are verified by direct computation in $\mathbf R$ using exponential properties (\ref{eq:ExponentialPropertyFormalFractions}) of the $\mathrm{pexp}$.
\end{proof}

\begin{lemma} We have the following identities
\begin{equation}
\widehat S^{-1}O_A^{(1)}\widehat S=O_B^{(1)},\qquad \widehat S^{-1}O_B^{(1)}\widehat S=O_A^{(1)}.
\label{eq:SConjugationg1}
\end{equation}
\label{lemm:SConjugationg1}
\end{lemma}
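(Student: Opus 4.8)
The plan is to deduce both identities in \eqref{eq:SConjugationg1} from a single intertwining relation. First I reduce the second identity to the first. Suppose $\widehat S^{-1}O_A^{(1)}\widehat S=O_B^{(1)}$ has been established; conjugating once more gives
\[
\widehat S^{-1}O_B^{(1)}\widehat S=\widehat S^{-1}\big(\widehat S^{-1}O_A^{(1)}\widehat S\big)\widehat S=\widehat S^{-2}O_A^{(1)}\widehat S^{2},
\]
so it is enough to check that conjugation by $\widehat S^{2}$ fixes $O_A^{(1)}$. By \eqref{eq:SHat2Explicit} the operator $\widehat S^{2}$ is a \emph{diagonal} matrix over $\mathbf R$ composed with the shift $\widehat\delta_{(p,s)\mapsto(p^{-1},s^{-1})}$. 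Since $O_A^{(1)}$ in \eqref{eq:Oa1} is itself diagonal and independent of $p,s$, it commutes with the diagonal matrix part and is unaffected by the shift (which is an involution), so $\widehat S^{-2}O_A^{(1)}\widehat S^{2}=O_A^{(1)}$ and the second identity follows.

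For the remaining identity I rewrite $\widehat S^{-1}O_A^{(1)}\widehat S=O_B^{(1)}$ as the operator equality $O_A^{(1)}\widehat S=\widehat S\,O_B^{(1)}$ on $\mathbf R^{3}$, and bring $\widehat S$ to the normal form $\widehat S=M_S\circ\widehat\delta_{(p,s)\mapsto(s,p^{-1})}$ for a single matrix $M_S\in\mathrm{Mat}_{3\times3}(\mathbf R)$. The shift part is forced: composing the parameter maps $(p,s)\mapsto(ps,s)$, $(p,s)\mapsto(p,s/p)$, $(p,s)\mapsto(ps,s)$ attached to the three factors $\widehat D_A,\widehat D_B,\widehat D_A$ in \eqref{eq:SHatDefinition} multiplies to the element $\left(\begin{smallmatrix}0&1\\-1&0\end{smallmatrix}\right)\in SL(2,\mathbb Z)$, which acts as $(p,s)\mapsto(s,p^{-1})$ (consistent with its square \eqref{eq:SHat2Explicit}). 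The matrix $M_S$ is obtained by collecting $D_A$, $D_B$ and $D_A$ evaluated at the successively shifted arguments and simplifying the resulting products of $\mathrm{pexp}$ factors by means of the exponential properties \eqref{eq:ExponentialPropertyFormalFractions}.

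Once $\widehat S$ is in this normal form, both sides of $O_A^{(1)}\widehat S=\widehat S\,O_B^{(1)}$ carry the \emph{same} shift operator $\widehat\delta_{(p,s)\mapsto(s,p^{-1})}$: the left side equals $\big(O_A^{(1)}M_S\big)\circ\widehat\delta_{(p,s)\mapsto(s,p^{-1})}$ because $O_A^{(1)}$ is constant in $(p,s)$, while the right side equals $\big(M_S\,\widetilde O_B\big)\circ\widehat\delta_{(p,s)\mapsto(s,p^{-1})}$, where $\widetilde O_B$ denotes the matrix $O_B^{(1)}$ of \eqref{eq:Ob1} with $(p,s)$ replaced by $(s,p^{-1})$. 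Since $\widehat\delta_{(p,s)\mapsto(s,p^{-1})}$ is invertible, matching the matrix parts reduces the claim to the purely algebraic relation
\[
O_A^{(1)}\,M_S=M_S\,\widetilde O_B\qquad\text{in }\ \mathrm{Mat}_{3\times3}(\mathbf R),
\]
which I then verify entry by entry.

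The main obstacle is this final entrywise check together with the explicit computation of $M_S$. Both steps require repeatedly merging and factoring the $\mathrm{pexp}$ terms produced after the shifts $(p,s)\mapsto(ps,s)$ and $(p,s)\mapsto(p,s/p)$, and in particular handling the parameter inversions $p\mapsto p^{-1}$ introduced when the $(p,s)\mapsto(s,p^{-1})$ shift acts on the arguments of $O_B^{(1)}$, which is where the inversion rule \eqref{eq:PochhammerParameterInversion} enters. This is the same flavor of $\mathrm{pexp}$ bookkeeping already carried out in the proof of Theorem \ref{th:SL2ZRelationsHatD}, so no new idea is needed beyond careful algebra; the normalization constants appearing in $O_A^{(1)},O_B^{(1)}$ are exactly the ones made mutually consistent by Lemma \ref{lemm:MatrixRelations234567Forg1}.
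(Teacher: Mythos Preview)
Your proposal is correct and follows essentially the same route as the paper. The paper likewise factors $\widehat S=S(p,s)\circ\widehat\delta_{(p,s)\mapsto(s,p^{-1})}$, reduces the first identity to the matrix equation $O_A^{(1)}S(1/s,p)=S(1/s,p)\,O_B^{(1)}(p,s)$ (which is your $O_A^{(1)}M_S=M_S\widetilde O_B$ after the substitution $(p,s)\to(1/s,p)$) and checks it by direct computation with the exponential rules \eqref{eq:ExponentialPropertyFormalFractions}; it then deduces the second identity from the first exactly as you do, using the explicit diagonal form \eqref{eq:SHat2Explicit} of $\widehat S^{2}$ together with the fact that $O_A^{(1)}$ is diagonal and independent of $p,s$. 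The only cosmetic difference is the order of presentation.
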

\begin{proof}
The first identity is equivalent to the following equation on matrices
\begin{equation}
O_A^{(1)}S(1/s,p)=S(1/s,p)O_B^{(1)}(p,s),
\label{eq:Ob1ConsitencyMatrixCheck}
\end{equation}
where
\begin{equation*}
S(p,s):=D_AD_B(ps,s)D_A^{-1}\quad\in\quad\mathrm{Mat}_{3\times3}(\mathbf R),\qquad \widehat S=S(p,s)\circ\widehat\delta_{(p,s)\mapsto(s,p^{-1})}
\end{equation*}
is a matrix of an $\widehat S$ operator. Identity (\ref{eq:Ob1ConsitencyMatrixCheck}) is proved by direct computation in $\mathbf R$ using exponential properties (\ref{eq:ExponentialPropertyFormalFractions}).

To prove the second identity we conjugate the first one with $\widehat S$ and show that
\begin{equation}
O_A^{(1)}=\widehat S^{-2}O_A^{(1)}\widehat S^2.
\label{eq:SHat2ActsTriviallyOnOa1}
\end{equation}
To this end recall that $O_A^{(1)}$ is diagonal and independent of $p,s$, combining this fact with expression (\ref{eq:SHat2Explicit}) for $\widehat S^2$ we conclude that (\ref{eq:SHat2ActsTriviallyOnOa1}) holds.
\end{proof}

\subsection{Recursive definition of the image of generators}

Consider a homomorphism from the free group with two generators to $SL(2,\mathbb Z)$:
\begin{align*}
\varphi: \mathrm{Free}_2 \rightarrow SL(2,{\mathbb Z}), \ \ \ \ \ \varphi(a) = \left( \begin{array}{cc} 1 & -1 \\ 0 & 1 \end{array} \right), \ \ \ \ \ \varphi(b) = \left( \begin{array}{cc} 1 & 0 \\ 1 & 1 \end{array} \right).
\end{align*}
For a pair of elements $g,h\in Free_2$ we introduce two auxiliary parameters
\begin{subequations}
\begin{align}
c_+^{(h,g)} =& \mathrm{pexp}\left( \dfrac{(Q^8-Q^{-8}) }{(1 + s^{-\varphi(g)_{2,2}} p^{\varphi(g)_{2,1}})(1 + s^{-\varphi(hg)_{1,2}} p^{\varphi(hg)_{1,1}})(1 + s^{\varphi(hg)_{2,2}} p^{-\varphi(hg)_{2,1}})} \right)\qquad\in\quad\mathbf R,
\end{align}
\begin{align}
c_-^{(h,g)} =& \mathrm{pexp}\left(- \dfrac{(Q^8-Q^{-8}) }{(1 + s^{-\varphi(g)_{2,2}} p^{\varphi(g)_{2,1}})(1 + s^{\varphi(hg)_{1,2}} p^{-\varphi(hg)_{1,1}})(1 + s^{\varphi(hg)_{2,2}} p^{-\varphi(hg)_{2,1}})} \right)\qquad\in\quad\mathbf R.
\end{align}
\label{eq:cpluscminus}
\end{subequations}
Note that both constants behave well with respect to the right multiplication of $g$ by generators $a,b\in Free_2$:
\begin{equation}
c_{\pm}^{(h,ga)}(p,s)=c_{\pm}^{(h,g)}(p s,s),\qquad c_{\pm}^{(h,gb)}(p,s)=c_{\pm}^{(h,g)}(p,s/p)\qquad\textrm{for all}\quad h,g\in Free_2.
\label{eq:cpcmRightMultiplication}
\end{equation}

\begin{definition}
We define a family of matrices $O_A^{(g)}\in\mathrm{Mat}_{3\times 3}(\mathbf R)$ labelled by elements $g\in Free_2$ of the free group with two generators as a solution to the recursive relation
\begin{subequations}
\begin{align}
O_A^{(ba^k g)} =&\, c_+^{(ba^k,g)} O_A^{(g)} + \frac{c_+^{(ba^k,g)} - \left(c_+^{(ba^k,g)}\right)^{-1}}{\big(Q^2-Q^{-2}\big)^2} O_B^{(1)}O_B^{(1)} O_A^{(g)},
\label{eq:RecursiveRelationOab}\\
O_A^{(b^{-1} a^k g)} =&\, c_-^{(b^{-1}a^k,g)} O_A^{(g)} + \frac{c_-^{(b^{-1}a^k,g)} - \left(c_-^{(b^{-1}a^k,g)}\right)^{-1}}{\big(Q^2-Q^{-2}\big)^2} O_B^{(1)}O_B^{(1)} O_A^{(g)},
\label{eq:RecursiveRelationOabInverse}
\end{align}
\label{eq:RecursiveRelationOa}
\end{subequations}
satisfied for all $k\in\mathbb Z$ and reduced words $g$ of the special form
\begin{equation}
g=b^{\epsilon_1}a^{k_1}\dots a^{k_{n-1}}b^{\epsilon_n}a^{k_n},\qquad \epsilon_1,\dots,\epsilon_n\in\{\pm1\},\qquad k_1,\dots,k_n\in\mathbb Z,\qquad n\in\mathbb Z_{\geq0},
\label{eq:BReducedWordFree2}
\end{equation}
with initial condition (\ref{eq:Oa1}).

\label{def:Oag}
\end{definition}
Note that recursive relation (\ref{eq:RecursiveRelationOa}) does not necessarily hold for general words $g\in Free_2$, it must hold only for the ones which in reduced form start with $b^{\pm1}$.

\begin{lemma}
Recursive relation (\ref{eq:RecursiveRelationOa}) is self-consistent and admits a unique solution. Moreover, we have
\begin{equation}
O_A^{(a^kg)}=O_A^{(g)},\qquad\textrm{for all}\quad k\in\mathbb Z,\quad g\in Free_2.
\label{eq:MatrixOaLeftInvariance}
\end{equation}
\end{lemma}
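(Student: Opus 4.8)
The plan is to treat the lemma as a statement about normal forms in the free group combined with an induction on the number of $b$-syllables. First I would record the unique reduced-form decompositions: every $w\in Free_2$ can be written uniquely as $w=a^{m}g$ with $m\in\mathbb Z$ and $g$ either trivial or beginning with $b^{\pm1}$, i.e. $g$ of the special form (\ref{eq:BReducedWordFree2}); and every special-form word with at least one $b$-letter decomposes uniquely as $b^{\epsilon}a^{k}g'$ with $\epsilon\in\{\pm1\}$, $k\in\mathbb Z$, and $g'$ again of special form. Here I would introduce the grading $\nu(w)=$ the total number of $b^{\pm1}$ letters in the reduced word $w$, and note that the recursion (\ref{eq:RecursiveRelationOa}) strips exactly one leading $b^{\pm1}$, so $\nu(g')=\nu(b^{\epsilon}a^{k}g')-1$, while left multiplication by a power of $a$ leaves $\nu$ unchanged.

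Next I would establish existence and uniqueness on special-form words by induction on $\nu$. The base case $\nu=0$ is the single word $g=1$, fixed by the initial condition (\ref{eq:Oa1}). For the inductive step, a special-form word $w$ with $\nu(w)\geq1$ has, by the uniqueness above, exactly one admissible decomposition $w=b^{\epsilon}a^{k}g'$ for which $w$ genuinely begins with $b^{\epsilon}$ (this is precisely the restriction emphasized in the observation immediately following Definition \ref{def:Oag}: one never applies the recursion in a situation where $b^{\epsilon}a^{k}g'$ would cancel and fail to start with $b^{\pm1}$). Hence exactly one instance of (\ref{eq:RecursiveRelationOa}) has $w$ on its left-hand side, and its right-hand side involves only $O_A^{(g')}$ with $\nu(g')<\nu(w)$, together with the fixed matrix $O_B^{(1)}$ and the scalars $c_{\pm}^{(b^{\epsilon}a^{k},g')}$. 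Since each $c_{\pm}$ is a $\mathrm{pexp}$, hence a unit of $\mathbf R$ by the corollary showing that specialized $q$-Pochhammer symbols represent units of $\mathbf R$, the coefficient $\big(c_{\pm}-c_{\pm}^{-1}\big)/(Q^2-Q^{-2})^2$ lies in $\mathbf R$ and the right-hand side is a well-defined element of $\mathrm{Mat}_{3\times3}(\mathbf R)$. This simultaneously defines $O_A^{(w)}$ and shows it is forced, giving self-consistency (no word receives two conflicting defining equations) and uniqueness on special-form words.

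Finally I would extend to all of $Free_2$ and deduce (\ref{eq:MatrixOaLeftInvariance}). Using the decomposition $w=a^{m}g$ with $g$ special form, set $O_A^{(w)}:=O_A^{(g)}$; since for any $k$ one has $a^{k}w=a^{k+m}g$ with the same special-form part $g$, the identity $O_A^{(a^{k}w)}=O_A^{(g)}=O_A^{(w)}$ is immediate, which is exactly (\ref{eq:MatrixOaLeftInvariance}). Uniqueness of the full family then follows because its values on special-form words are forced by the induction, while its values on the remaining ($a$-leading) words are forced by left invariance.

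I expect the only real subtlety — the main obstacle — to be the self-consistency bookkeeping: one must verify that the family of equations (\ref{eq:RecursiveRelationOa}), indexed by all $k\in\mathbb Z$ and all special-form tails $g'$, never assigns two different right-hand sides to the same group element. This is where the restriction to decompositions in which $b^{\epsilon}a^{k}g'$ genuinely begins with $b^{\pm1}$ is essential; the potentially dangerous collisions are exactly the cancelling cases $k=0$ with $g'$ beginning with $b^{-\epsilon}$, and these are excluded from the recursion, so that the admissible decomposition of each $b$-leading word is unique. Everything else is routine: the grading $\nu$ guarantees the induction is well-founded, and the invertibility of the $c_{\pm}$ in $\mathbf R$ guarantees the recursive formulas stay inside $\mathrm{Mat}_{3\times3}(\mathbf R)$.
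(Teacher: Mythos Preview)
Your argument has a genuine gap, and it stems from a misreading of the remark following Definition \ref{def:Oag}. That remark restricts the \emph{tail} $g$ appearing in (\ref{eq:RecursiveRelationOa}) to be of the special form (\ref{eq:BReducedWordFree2}); it does \emph{not} say that the product $b^{\epsilon}a^{k}g$ must remain $b$-leading after reduction. In particular, the instances with $k=0$ and $g$ beginning with $b^{-\epsilon}$ are \emph{included} in the system: for such $g$ one has $b^{\epsilon}a^{0}g=a^{k'}h$ in $Free_2$, and (\ref{eq:RecursiveRelationOa}) then imposes a relation between $O_A^{(a^{k'}h)}$ and $O_A^{(g)}$. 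These cancelling instances are precisely what pin down $O_A^{(w)}$ for $a$-leading words $w$, and they are the entire content of the self-consistency claim. Under your reading, nothing in the recursion determines $O_A^{(a^{m}h)}$ for $m\neq0$, so your extension $O_A^{(a^{m}g)}:=O_A^{(g)}$ is an extra choice, not a consequence; uniqueness fails and (\ref{eq:MatrixOaLeftInvariance}) is assumed rather than proved.

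What the paper actually does---and what your proposal is missing---is the computation that makes the cancelling instances consistent. One checks the identity $c_+^{(ba^{k},g)}=\big(c_-^{(b^{-1},\,ba^{k}g)}\big)^{-1}$ from (\ref{eq:cpluscminus}), sets $\lambda:=c_+^{(ba^{k},g)}$, and then applies (\ref{eq:RecursiveRelationOab}) followed by (\ref{eq:RecursiveRelationOabInverse}) to obtain
\[
O_A^{(b^{-1}a^{0}(ba^{k}g))}=O_A^{(g)}-\frac{(\lambda-\lambda^{-1})^{2}}{(Q^{2}-Q^{-2})^{2}}\,O_B^{(1)}O_B^{(1)}O_A^{(g)}-\frac{(\lambda-\lambda^{-1})^{2}}{(Q^{2}-Q^{-2})^{4}}\,O_B^{(1)}O_B^{(1)}O_B^{(1)}O_B^{(1)}O_A^{(g)},
\]
which collapses to $O_A^{(g)}$ only because of the matrix relation $O_B^{(1)}O_B^{(1)}O_B^{(1)}+(Q^{2}-Q^{-2})^{2}O_B^{(1)}=0$ from Lemma \ref{lemm:MatrixRelations234567Forg1}. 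This step is not bookkeeping; it is where the specific form of the constants and of $O_B^{(1)}$ enters, and without it neither self-consistency nor (\ref{eq:MatrixOaLeftInvariance}) is established. Your normal-form induction correctly handles existence and uniqueness on the non-cancelling decompositions, but you still owe this verification for the cancelling ones.
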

\begin{proof}
Because in recursive relation (\ref{eq:RecursiveRelationOa}) we start with a reduced word $g$ of the form (\ref{eq:BReducedWordFree2}), the only two possibilities for the cancellation in $Free_2$ are described by the following identity
\begin{equation}
b^{-1}a^0(ba^k(g))=a^k(g)=ba^{0}(b^{-1}a^k(g)),
\label{eq:Free2CancellationForConsistencyProof}
\end{equation}
where without loss of generality we can assume that $g$ is a reduced word of the form (\ref{eq:BReducedWordFree2}). In order to prove self-consistency of the recursion we have to show that two-step application of recursion (\ref{eq:RecursiveRelationOa}) for the left hand side of (\ref{eq:Free2CancellationForConsistencyProof}) provides the same result as the one corresponding to the right hand side of (\ref{eq:Free2CancellationForConsistencyProof}).

Denote the matrix elements of $\varphi(g)$ by
\begin{equation*}
\varphi(g)=\left(\begin{array}{cc}
g_{11}&g_{12}\\
g_{21}&g_{22}
\end{array}\right),
\end{equation*}
the relevant constants (\ref{eq:cpluscminus}) then read
\begin{equation}
c_+^{(ba^k,g)}=\mathrm{pexp}\left(\frac{Q^8-Q^{-8}}{\left(1+p^{g_{2,1}} s^{-g_{2,2}}\right) \left(1+p^{-(1-k) g_{2,1}-g_{1,1}} s^{(1-k) g_{2,2}+g_{1,2}}\right) \left(1+p^{g_{1,1}-k g_{2,1}} s^{k g_{2,2}-g_{1,2}}\right)}\right)=\frac1{c_-^{(b^{-1},ba^kg)}}=:\lambda
\label{eq:cpRelevantFirstArgumentExplicitFormula}
\end{equation}
The left hand side of (\ref{eq:Free2CancellationForConsistencyProof}) corresponds to the following two-step application of the recursion
\begin{align*}
O_A^{(b^{-1}a^0(ba^{k}(g)))}\;\overset{(\ref{eq:RecursiveRelationOabInverse})}{=}&\;\lambda^{-1}O_A^{(ba^kg)}-\frac{\lambda-\lambda^{-1}}{\big(Q^2-Q^{-2}\big)^2}O_B^{(1)}O_B^{(1)}O_A^{(ba^kg)}\\
\overset{(\ref{eq:RecursiveRelationOab})}{=}&\;O_A^{(g)}-\frac{\left(\lambda-\lambda^{-1}\right)^2}{\big(Q^2-Q^{-2}\big)^2}O_B^{(1)}O_B^{(1)}O_A^{(g)}-\frac{\left(\lambda-\lambda^{-1}\right)^2}{\big(Q^2-Q^{-2}\big)^4}O_B^{(1)}O_B^{(1)}O_B^{(1)}O_B^{(1)}O_B^{(1)}O_A^{(g)}\\
\overset{(\ref{eq:MatrixRelations45Forg1})}{=}&\;O_A^{(g)}.
\end{align*}
We omit similar calculation corresponding to the right hand side of (\ref{eq:Free2CancellationForConsistencyProof}) which provides identical result.

Note that as a byproduct of the above calculations we have shown that (\ref{eq:MatrixOaLeftInvariance}) holds for special case, when $g$ is represented by a reduced word of the form (\ref{eq:BReducedWordFree2}). Because the latter assumption can be made in (\ref{eq:MatrixOaLeftInvariance}) without loss of generality this concludes the proof of the lemma.
\end{proof}

\begin{definition}
    We define a second family of matrices labelled by elements $g\in Free_2$:
    \begin{equation}
    O_B^{(g)}:=\widehat S^{-1}O_A^{(\sigma(g))}\widehat S
    \label{eq:MatrixOBgDef}
    \end{equation}
\label{def:Obg}
\end{definition}
Note that in particular case of $g=1$, the above definition is consistent with an explicit formula (\ref{eq:Ob1}) for $O_B^{(1)}$ given earlier; this was shown in Lemma \ref{lemm:SConjugationg1}. Also, as an immediate corollary of Definition \ref{def:Obg} we get the dual version of property (\ref{eq:MatrixOaLeftInvariance}):
\begin{equation}
O_B^{(b^kg)}=O_B^{(g)},\qquad\textrm{for all}\quad k\in\mathbb Z,\qquad g\in Free_2.
\label{eq:MatrixObLeftInvariance}
\end{equation}

\begin{lemma}
Family of matrices $O_B^{(g)},\;g\in Free_2$ satisfies the dual version of recursive relations (\ref{eq:RecursiveRelationOa}), namely
\begin{subequations}
\begin{align}
O_B^{(ab^k,g)}=&\,\widetilde c_+^{(ab^k,g)}O_B^{(g)}+\frac{\widetilde c_+^{(ab^k,g)}-\left(\widetilde c_+^{(ab^k,g)}\right)^{-1}}{\big(Q^2-Q^{-2}\big)^2}O_A^{(1)}O_A^{(1)}O_B^{(g)},\\
O_B^{(a^{-1}b^k,g)}=&\,\widetilde c_-^{(a^{-1}b^k,g)}O_B^{(g)}+\frac{\widetilde c_-^{a^{-1}b^k,g}-\left(\widetilde c_-^{a^{-1}b^k,g}\right)^{-1}}{\big(Q^2-Q^{-2}\big)^2}O_A^{(1)}O_A^{(1)}O_B^{(g)}.
\end{align}
\label{eq:RecursiveRelationOb}
\end{subequations}
for reduced words $g$ of the dual special type
\begin{equation}
g=a^{\epsilon_1}b^{k_1}\dots a^{\epsilon_n}b^{k_n},\qquad \epsilon_1,\dots,\epsilon_n\in\{\pm1\},\qquad k_1,\dots,k_n\in\mathbb Z,\qquad n\in\mathbb Z_{\geq0},
\label{eq:AReducedWordFree2}
\end{equation}
where the dual constants are given by
\begin{equation}
\widetilde c_{\pm}^{(h,g)}:=\delta_{(p,s)\mapsto(s,p^{-1})}\left(c_{\pm}^{(\sigma(h),\sigma(g))}\right)\qquad\in\quad\mathbf R,\qquad h,g\in Free_2.
\label{eq:ctildeDef}
\end{equation}
\end{lemma}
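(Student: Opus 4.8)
The plan is to obtain the $O_B$-recursions (\ref{eq:RecursiveRelationOb}) by transporting the defining recursions (\ref{eq:RecursiveRelationOa}) for the $O_A$-family through the conjugation $M\mapsto\widehat S^{-1}M\widehat S$ that defines $O_B$ in Definition \ref{def:Obg}. Since the involution $\sigma$ interchanges the two special word shapes (\ref{eq:BReducedWordFree2}) and (\ref{eq:AReducedWordFree2}), a reduced word $g$ of the dual type (\ref{eq:AReducedWordFree2}) has $\sigma(g)$ of type (\ref{eq:BReducedWordFree2}), and $\sigma(ab^kg)=ba^k\sigma(g)$. Hence the recursion (\ref{eq:RecursiveRelationOab}) is applicable to $O_A^{(ba^k\sigma(g))}$, and I would start from
\[
O_A^{(ba^k\sigma(g))}=c_+^{(ba^k,\sigma(g))}O_A^{(\sigma(g))}+\frac{c_+^{(ba^k,\sigma(g))}-\big(c_+^{(ba^k,\sigma(g))}\big)^{-1}}{\big(Q^2-Q^{-2}\big)^2}O_B^{(1)}O_B^{(1)}O_A^{(\sigma(g))},
\]
together with the analogous expansion from (\ref{eq:RecursiveRelationOabInverse}) for the $b^{-1}$ case.

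Next I would use Definition \ref{def:Obg} to write $O_B^{(ab^kg)}=\widehat S^{-1}O_A^{(ba^k\sigma(g))}\widehat S$, substitute the expansion above, and distribute the conjugation over the sum. The matrix $O_A^{(\sigma(g))}$ conjugates to $O_B^{(g)}$ by Definition \ref{def:Obg} applied with $h=g$; for the cubic term I would insert $\widehat S\widehat S^{-1}$ between consecutive factors and apply both identities of Lemma \ref{lemm:SConjugationg1}, which gives $\widehat S^{-1}O_B^{(1)}O_B^{(1)}O_A^{(\sigma(g))}\widehat S=O_A^{(1)}O_A^{(1)}O_B^{(g)}$. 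The constant $\big(Q^2-Q^{-2}\big)^2$ is inert under the conjugation, since the shift in $\widehat S$ acts only on $p,s$ and fixes $Q$. This already reproduces the exact shape of the claimed recursion (\ref{eq:RecursiveRelationOb}), leaving only the identification of the scalar coefficient.

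The one genuinely delicate point — and the step I expect to be the main obstacle — is the transformation of the coefficient $c_+^{(ba^k,\sigma(g))}\in\mathbf R$. Because $\widehat S=S(p,s)\circ\widehat\delta_{(p,s)\mapsto(s,p^{-1})}$ carries a nontrivial modular substitution, conjugation does \emph{not} leave $c_+$ fixed: it transports it by the embedded $\widehat\delta$. I would therefore have to verify, starting from the explicit formula (\ref{eq:cpluscminus}) and the entries of $\varphi(g)$ and $\varphi(ba^k\sigma(g))$, that the transported coefficient is exactly $\widetilde c_+^{(ab^kg)}$ as defined in (\ref{eq:ctildeDef}). This is a bookkeeping computation in which the behaviour of the plethystic exponential under inversion of the elliptic parameters (Section \ref{sec:Preliminaries}, in particular the parameter-inversion identity (\ref{eq:PochhammerParameterInversion}) and the equivalence of formal fractions) must be used to reconcile the substitution produced by the conjugation with the one appearing in the definition of $\widetilde c_\pm$; the covariance (\ref{eq:cpcmRightMultiplication}) of $c_\pm$ under right multiplication by $a,b$ streamlines the matching. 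The $b^{-1}$ case is identical after replacing $c_+$ by $c_-$ throughout.
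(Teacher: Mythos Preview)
Your approach is exactly the paper's: the authors' entire proof is the single line ``Conjugate both sides of (\ref{eq:RecursiveRelationOa}) with $\widehat S$ and use Lemma \ref{lemm:SConjugationg1}.'' You carry this out correctly, including the use of $\sigma$ to match word types and the insertion of $\widehat S\widehat S^{-1}$ to convert $O_B^{(1)}O_B^{(1)}$ into $O_A^{(1)}O_A^{(1)}$.

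The only place you go astray is in labeling the scalar step ``the main obstacle'': it is essentially tautological. The quantity $\widetilde c_\pm^{(h,g)}$ in (\ref{eq:ctildeDef}) is \emph{defined} to be the image of $c_\pm^{(\sigma(h),\sigma(g))}$ under the shift carried by $\widehat S$, so once you conjugate, the transported scalar \emph{is} $\widetilde c_\pm$ by definition---no plethystic bookkeeping, no appeal to (\ref{eq:PochhammerParameterInversion}) or (\ref{eq:cpcmRightMultiplication}) is needed. (If one worries about the direction of the shift, $(p,s)\mapsto(s,p^{-1})$ versus its inverse $(p,s)\mapsto(s^{-1},p)$, the $\mathbb Z_2$-symmetry $c_\pm(p,s)=c_\pm(p^{-1},s^{-1})$ visible from (\ref{eq:cpRelevantFirstArgumentExplicitFormula}) makes the two agree; the paper does not even pause on this.)
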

\begin{proof}
Conjugate both sides of (\ref{eq:RecursiveRelationOa}) with $\widehat S$ and use Lemma \ref{lemm:SConjugationg1}.
\end{proof}

\subsection{Equivariance of $\Psi$ and annihilation of elementary relations.}

\begin{lemma}
\begin{subequations}
\begin{align}
\Psi(\mathbf a(\Oa^{(1)}))&=\widehat D_A^{-1}O_A^{(1)}\widehat D_A,\qquad \Psi(\mathbf s(\Oa^{(1)}))=\widehat S^{-1} O_A^{(1)}\widehat S,\\
\Psi(\mathbf a(\Ob^{(1)}))&=\widehat D_A^{-1}O_B^{(1)}\widehat D_A,\qquad \Psi(\mathbf s(\Ob^{(1)}))=\widehat S^{-1}O_B^{(1)}\widehat S.
\end{align}
\label{eq:HomomorphismPropertyg1}
\end{subequations}
\label{lemm:MCGEquivarianceBaseOfInduction}
\end{lemma}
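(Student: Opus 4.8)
The plan is to verify each of the four identities in (\ref{eq:HomomorphismPropertyg1}) by reducing it to an identity among $3\times 3$ matrices over $\mathbf R$ and then appealing to the explicit formulas for $O_A^{(1)}$, $O_B^{(1)}$, $D_A$, $D_B$. The right-hand sides of (\ref{eq:HomomorphismPropertyg1}) are already matrices in $\mathrm{Mat}_{3\times3}(\mathbf R)$, so the main task is to compute the left-hand sides explicitly. Using the definition (\ref{eq:aActionK2Algebra}) of the automorphism $\mathbf a$ together with $\Psi(\Oa^{(g)})=O_A^{(g)}$, $\Psi(\Ob^{(g)})=O_B^{(g)}$, I would first write
\[
\Psi\big(\mathbf a(\Oa^{(1)})\big)=-\frac{O_A^{(1)}O_A^{(a^{-1})}O_A^{(1)}}{\big(Q^2-Q^{-2}\big)^2},\qquad
\Psi\big(\mathbf a(\Ob^{(1)})\big)=\frac{Q\,O_A^{(1)}O_B^{(a^{-1})}-Q^{-1}O_B^{(a^{-1})}O_A^{(1)}}{Q^2-Q^{-2}},
\]
since $\mathbf a$ shifts the argument $g\mapsto ga^{-1}$. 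Here $O_A^{(a^{-1})}=O_A^{(1)}$ by the left-invariance (\ref{eq:MatrixOaLeftInvariance}), and $O_B^{(a^{-1})}$ is computed either directly from the recursion (\ref{eq:RecursiveRelationOb}) for $O_B$ (with $g=1$, one step of the $a^{\pm1}$-recursion) or from Definition \ref{def:Obg} via $\widehat S$-conjugation. Thus the $\mathbf s$-identities are immediate: $\Psi(\mathbf s(\Oa^{(1)}))=\Psi(\Ob^{(\sigma(1))})=O_B^{(1)}=\widehat S^{-1}O_A^{(1)}\widehat S$ by (\ref{eq:MatrixOBgDef}), and symmetrically for $\Ob^{(1)}$, so those two reduce to Lemma \ref{lemm:SConjugationg1}.

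For the two $\mathbf a$-identities, I would convert the desired equalities into conjugation statements and clear the $\widehat D_A$ factor. Since $\widehat D_A=D_A\circ\widehat\delta_{(p,s)\mapsto(ps,s)}$, the claim $\Psi(\mathbf a(\Oa^{(1)}))=\widehat D_A^{-1}O_A^{(1)}\widehat D_A$ becomes, after multiplying through by $\widehat D_A$ on the left and noting that $O_A^{(1)}$ is diagonal and $(p,s)$-independent,
\[
D_A\,\Psi\big(\mathbf a(\Oa^{(1)})\big)=O_A^{(1)}\,D_A,
\]
an equation between explicit matrices over $\mathbf R$ (the difference-operator shift acts trivially because both sides, once the $D_A$ is stripped, are built from $(p,s)$-independent data for the $\Oa$-row). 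The $\Ob^{(1)}$ case is analogous but genuinely involves the elliptic $\mathrm{pexp}$ entries of $O_B^{(1)}$, $O_B^{(a^{-1})}$, and $D_A$, so the shift $\widehat\delta_{(p,s)\mapsto(ps,s)}$ acts nontrivially and must be tracked carefully through the $\mathrm{pexp}$ arguments.

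The computation then proceeds by expanding both sides entrywise and collecting the $\mathrm{pexp}$ factors. The key tool is the exponential property (\ref{eq:ExponentialPropertyFormalFractions}) of $\mathrm{pexp}$, which lets me combine products and quotients of $\mathrm{pexp}$'s into a single $\mathrm{pexp}$ of a sum of formal fractions; matching the two sides then amounts to checking equality of rational arguments of $\mathrm{pexp}$ after applying the difference-operator substitutions, together with the right-multiplication rules (\ref{eq:cpcmRightMultiplication}) for the constants $c_\pm$. The scalar prefactors $Q,Q^{-1},(Q^2-Q^{-2})^{\pm2}$ and the idempotent relations (\ref{eq:MatrixRelations45Forg1}) from Lemma \ref{lemm:MatrixRelations234567Forg1} handle the polynomial-in-$Q$ bookkeeping, in particular simplifying $O_A^{(1)}O_A^{(1)}O_A^{(1)}=-(Q^2-Q^{-2})^2 O_A^{(1)}$ wherever triple products of $O_A^{(1)}$ appear.

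The \textbf{main obstacle} I anticipate is the $\Ob^{(1)}$ identity under $\mathbf a$: unlike the diagonal, elliptic-parameter-free $O_A^{(1)}$, here the matrix $O_B^{(a^{-1})}$ must first be produced correctly (either from the recursion or from $\widehat S$-conjugation, and the two routes must agree), and then the nontrivial shift $\widehat\delta_{(p,s)\mapsto(ps,s)}$ embedded in $\widehat D_A$ transforms the $\mathrm{pexp}$ arguments in a way that has to cancel precisely against the shifts hidden in $O_B^{(a^{-1})}$. Verifying that the elliptic arguments of all $\mathrm{pexp}$ entries match after these substitutions — rather than the purely $Q$-dependent scalar part — is where the real content lies; everything else is routine $3\times3$ matrix multiplication combined with the formal-fraction algebra of Section \ref{sec:Preliminaries}.
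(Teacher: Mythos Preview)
Your proposal is correct and follows essentially the same route as the paper: the two $\mathbf s$-identities are reduced to Lemma~\ref{lemm:SConjugationg1}, the $\mathbf a(\Oa^{(1)})$ identity is handled by the diagonality and $(p,s)$-independence of $O_A^{(1)}$ and $D_A$, and the $\mathbf a(\Ob^{(1)})$ identity is reduced to the explicit matrix equation~(\ref{eq:PSiaOb1EqualsConjugation}) after producing $O_B^{(a^{-1})}$ from the recursion (the paper does this via $O_A^{(b^{-1})}$ and $\widehat S$-conjugation, which is one of the two routes you name) and then verifying the $\mathrm{pexp}$ identities using~(\ref{eq:ExponentialPropertyFormalFractions}). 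The only cosmetic difference is that for the $\Oa^{(1)}$ case the paper first simplifies $\mathbf a(\Oa^{(1)})=\Oa^{(1)}$ at the level of the algebra $\mathcal A$ before applying $\Psi$, whereas you apply $\Psi$ first and then use the matrix relation~(\ref{eq:MatrixRelations45Forg1}); the content is identical.
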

\begin{proof}
The top left identity follows from $\mathbf a(\Oa^{(1)})\overset{(\ref{eq:Relation0Xg}),(\ref{eq:Relation4Xg})}{=}\Oa^{(1)}$ and the fact that matrices $O_A^{(1)},D_A\in\mathrm{Mat}_{3\times3}(\mathbf R)$ are both diagonal and independent of $p,s$. The two right identities follow by (\ref{eq:SConjugationg1}).

Finally, the bottom left identity is equivalent to the following equation on matrices
\begin{equation}
\frac{Q O_A^{(1)}O_B^{(a^{-1})}-Q^{-1}O_B^{(a^{-1})}\Oa^{(1)}}{Q^2-Q^{-2}}=D_A^{-1}O_B^{(1)}(p/s,s)D_A
\label{eq:PSiaOb1EqualsConjugation}
\end{equation}
To compute $O_B^{(a^{-1})}$, recall that particular case of recursive relation (\ref{eq:RecursiveRelationOab}) for $k=0$ and $g=1$ reads
\begin{equation}
O_A^{(b^{-1})}=\rho O_A^{(1)}+\frac{\rho-\rho^{-1}}{\big(Q^2-Q^{-2}\big)^2} O_B^{(1)}O_B^{(1)}O_A^{(1)},\qquad\textrm{where}\quad \rho=\mathrm{pexp}\left(-\frac{Q^8-Q^{-8}}{(1+p^{-1})(1+s^{-1})(1+ps)}\right).
\label{eq:MatrixOabExplicit}
\end{equation}
Conjugating both sides of (\ref{eq:MatrixOabExplicit}) with $\widehat S$ and using (\ref{eq:SHat2ActsTriviallyOnOa1}) we get
\begin{equation}
O_B^{(a^{-1})}=\widetilde\rho O_B^{(1)}+\frac{\widetilde\rho-\widetilde\rho^{-1}}{\big(Q^2-Q^{-2}\big)^2}O_A^{(1)}O_A^{(1)}O_B^{(1)},\qquad\textrm{where}\quad\widetilde\rho=\mathrm{pexp}\left(-\frac{Q^8-Q^{-8}}{(1+p^{-1})(1+s)(1+ps^{-1})}\right).
\label{eq:MatrixObaExplicit}
\end{equation}
Finally, substituting (\ref{eq:MatrixObaExplicit}) into (\ref{eq:PSiaOb1EqualsConjugation}) and simplifying the product of matrices over $\mathbf R$ using exponential properties (\ref{eq:ExponentialPropertyFormalFractions}) we conclude the proof.
\end{proof}

\begin{proposition}
For all elements $g\in Free_2$
\begin{subequations}
\begin{align}
\Psi(\mathbf a(\Oa^{(g)}))&=\widehat D_A^{-1}O_A^{(g)}\widehat D_A,\qquad \Psi(\mathbf s(\Oa^{(g)}))=\widehat S^{-1} O_A^{(g)}\widehat S,\\
\Psi(\mathbf a(\Ob^{(g)}))&=\widehat D_A^{-1}O_B^{(g)}\widehat D_A,\qquad \Psi(\mathbf s(\Ob^{(g)}))=\widehat S^{-1}O_B^{(g)}\widehat S.
\end{align}
\label{eq:PsiEquivariance}
\end{subequations}
\label{prop:MCGEquivariancePsi}
\end{proposition}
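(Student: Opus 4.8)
The plan is to unfold each of the four identities in (\ref{eq:PsiEquivariance}) into a concrete equation between $3\times 3$ matrices over $\mathbf R$, using the explicit action of $\mathbf a,\mathbf s$ on generators from (\ref{eq:aActionK2Algebra}) and (\ref{eq:sActionK2Algebra}) together with $\Psi(\Oa^{(g)})=O_A^{(g)}$ and $\Psi(\Ob^{(g)})=O_B^{(g)}$, and then to verify these matrix equations by induction on the number of $b^{\pm1}$-syllables of the reduced word $g$, normalizing away leading $a$-powers via $O_A^{(a^kg)}=O_A^{(g)}$. The base case $g=1$ is exactly Lemma \ref{lemm:MCGEquivarianceBaseOfInduction}. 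One of the four identities, the $\mathbf s$-equivariance of $O_A^{(g)}$, needs no induction: since $\mathbf s(\Oa^{(g)})=\Ob^{(\sigma(g))}$ and $\sigma^2=\mathrm{id}$, Definition \ref{def:Obg} gives $\Psi(\mathbf s(\Oa^{(g)}))=O_B^{(\sigma(g))}=\widehat S^{-1}O_A^{(g)}\widehat S$ directly. So the work splits into the $\mathbf a$-equivariance (the identities with $\widehat D_A$) and the remaining $\mathbf s$-equivariance of $O_B^{(g)}$.

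For the $\mathbf a$-equivariance I would argue as follows. Write a reduced $g$ with at least one $b$-syllable as $g=b^{\pm1}a^k g'$ with $g'$ shorter, and use the defining recursion (\ref{eq:RecursiveRelationOa}) to express both $O_A^{(g)}=O_A^{(b^{\pm1}a^kg')}$ and $O_A^{(ga^{-1})}=O_A^{(b^{\pm1}a^k g'a^{-1})}$ in terms of $O_A^{(g')}$, $O_A^{(g'a^{-1})}$, the product $O_B^{(1)}O_B^{(1)}$, and the constants $c_\pm$. The two crucial bridges are: first, the transformation rule (\ref{eq:cpcmRightMultiplication}), which shows that the constant attached to $O_A^{(b^{\pm1}a^kg'a^{-1})}$ is the $\widehat\delta_{(p,s)\mapsto(p/s,s)}$-shift of the one attached to $O_A^{(b^{\pm1}a^kg')}$, precisely the shift produced by conjugating by $\widehat D_A=D_A\circ\widehat\delta_{(p,s)\mapsto(ps,s)}$; and second, the inductive hypothesis $-(Q^2-Q^{-2})^{-2}O_A^{(1)}O_A^{(g'a^{-1})}O_A^{(1)}=\widehat D_A^{-1}O_A^{(g')}\widehat D_A$, which matches the leading terms of the two sides. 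The remaining terms, carrying the factor $O_B^{(1)}O_B^{(1)}$, are reconciled using the matrix relations of Lemma \ref{lemm:MatrixRelations234567Forg1} (the cubic and $q$-Serre relations) together with the fact that $O_A^{(1)}$ is diagonal with a single zero on the diagonal, so that the sandwich $O_A^{(1)}(\cdots)O_A^{(1)}$ only probes the four corner entries; the matrix $\widehat D_A^{-1}O_B^{(1)}O_B^{(1)}\widehat D_A$ entering here is already known from the base case Lemma \ref{lemm:MCGEquivarianceBaseOfInduction}. The identity for $O_B^{(g)}$ is established by the same induction applied to the dual recursion (\ref{eq:RecursiveRelationOb}), with the roles of $A$ and $B$ interchanged.

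For the last identity, the $\mathbf s$-equivariance of $O_B^{(g)}$, note that $\Psi(\mathbf s(\Ob^{(g)}))=O_A^{(\sigma(g))}$ while $\widehat S^{-1}O_B^{(g)}\widehat S=\widehat S^{-2}O_A^{(\sigma(g))}\widehat S^{2}$ by (\ref{eq:MatrixOBgDef}); thus the identity is equivalent to the clean standalone claim that conjugation by $\widehat S^{2}$ fixes every $O_A^{(g)}$. I would prove this by the same induction. The base case is (\ref{eq:SHat2ActsTriviallyOnOa1}); since $\mathrm{Ad}_{\widehat S^{-2}}$ is an algebra automorphism of $\mathrm{Mat}_{3\times3}(\mathbf R)$ that commutes with $\mathrm{Ad}_{\widehat S^{-1}}$, it also fixes $O_B^{(1)}=\widehat S^{-1}O_A^{(1)}\widehat S$; and by (\ref{eq:SHat2Explicit}) it acts on the scalar constants $c_\pm$ through the substitution $(p,s)\mapsto(p^{-1},s^{-1})$. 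A direct check on (\ref{eq:cpluscminus}) shows these constants are invariant under that substitution: the three monomials in each denominator multiply to $1$ (this follows from the explicit form of $\varphi(b^{\pm1}a^k)$), so every factor $1+X$ is sent to $1+X^{-1}=X^{-1}(1+X)$ and the stray monomials cancel. Feeding these three facts into the recursion (\ref{eq:RecursiveRelationOa}) completes the induction.

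The main obstacle I anticipate is the inductive step of the $\mathbf a$-equivariance: it requires coordinating the left-peeling recursion that defines the matrices $O_A^{(g)}$ with the right translation $g\mapsto ga^{-1}$ that governs the automorphism $\mathbf a$, and then pushing a sizable amount of $q$-Serre and idempotent bookkeeping through the sandwiched products while tracking the difference-operator part of $\widehat D_A$. The conceptual content is modest -- everything is forced by (\ref{eq:cpcmRightMultiplication}) and Lemma \ref{lemm:MatrixRelations234567Forg1} -- but verifying that the $O_B^{(1)}O_B^{(1)}$-terms match is the one place where a genuine computation in $\mathbf R$, rather than a purely formal manipulation, seems unavoidable.
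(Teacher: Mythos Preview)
Your proposal is correct and follows essentially the same route as the paper: the $\mathbf s$-equivariance of $O_A^{(g)}$ is Definition~\ref{def:Obg}, the $\mathbf s$-equivariance of $O_B^{(g)}$ is reduced to $\widehat S^{-2}O_A^{(g)}\widehat S^2=O_A^{(g)}$ and proved by induction using the symmetry $c_\pm^{(b^{\pm1}a^k,f)}(p,s)=c_\pm^{(b^{\pm1}a^k,f)}(p^{-1},s^{-1})$, and the two $\mathbf a$-identities are proved by induction via the recursions (\ref{eq:RecursiveRelationOa}),(\ref{eq:RecursiveRelationOb}) and the shift rule (\ref{eq:cpcmRightMultiplication}). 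The one genuine computation you anticipate for the $O_B^{(1)}O_B^{(1)}$-terms is exactly the identity the paper isolates as (\ref{eq:DehnTwistActionIdempotentObMatrices}), namely $D_A^{-1}O_B^{(1)}(p/s,s)O_B^{(1)}(p/s,s)D_A\,O_A^{(1)}=O_A^{(1)}O_B^{(1)}(p,s)O_B^{(1)}(p,s)$; note also that the paper enlarges the base case for (\ref{eq:MatrixIdentitiesMCGEquivarianceOa}) to include $g=b^{\pm1}a^k$, since for $g'=1$ the word $g'a^{-1}=a^{-1}$ is not literally of the form (\ref{eq:BReducedWordFree2}) and one must instead shift the exponent $k$ and invoke (\ref{eq:AuxiliaryConstantsRelationIdentity1Casef1}).
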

\begin{proof}
The right two identities can equivalently be written as
\begin{equation}
O_B^{(\sigma(g))}=\widehat S^{-1}O_A^{(g)}\widehat S,\qquad O_A^{(\sigma(g))}=\widehat S^{-1}O_B^{(g)}\widehat S\qquad\textrm{for all}\quad g\in Free_2.
\label{eq:SHatConjugationFlipsAB}
\end{equation}
The first one is precisely Definition \ref{def:Obg}, while the second one is equivalent to the following
\begin{equation}
O_A^{(g)}=\widehat S^{-2}O_A^{(g)}\widehat S^2\qquad\textrm{for all}\quad g\in Free_2.
\label{eq:SHat2ActsTriviallyOnOag}
\end{equation}
We will prove the latter by induction in $g$. The base case $g=1$ is given by (\ref{eq:SHat2ActsTriviallyOnOa1}). To show the step of induction, assume that (\ref{eq:SHat2ActsTriviallyOnOag}) is satisfied for all reduced words of length strictly less than $n$ and let $g$ be a reduced word of length $n$. In light of (\ref{eq:MatrixOaLeftInvariance}), we can assume without loss of generality that $g=b^{\pm1}a^kf,$ where $f$ is a reduced word of a special type (\ref{eq:BReducedWordFree2}). We will utilize recursive relation (\ref{eq:RecursiveRelationOa}) to reduce the identity (\ref{eq:SHat2ActsTriviallyOnOag}) for $g$ to the identity for $f$. To this end, first note that formula (\ref{eq:cpRelevantFirstArgumentExplicitFormula}) implies the following symmetry of relevant auxiliary constants
\begin{equation}
c_{\pm}^{(b^{\pm1}a^k,f)}(p,s)=c_{\pm}^{(b^{\pm1}a^k,f)}(1/p,1/s)\qquad\textrm{for all}\quad f\in Free_2.
\label{eq:cpcmZ2Symmetry}
\end{equation}
We then get
\begin{align*}
O_A^{(b^{\pm1}a^kf)}\quad\overset{(\ref{eq:RecursiveRelationOa})}{=}\quad\;\;\,&c_{\pm}^{(b^{\pm1}a^k,f)}(p,s)O_A^{(f)}+\frac{c_{\pm}^{(b^{\pm1}a^kf)}(p,s)+\left(c_{\pm}^{(b^{\pm1}a^kf)}(p,s)\right)^{-1}}{\big(Q^2-Q^{-2}\big)^2}O_B^{(1)}O_B^{(1)}O_A^{(f)}\\
\overset{(\ref{eq:SHat2ActsTriviallyOnOag})}{\underset{\textrm{for}\;g=f}{=}}\quad&c_{\pm}^{(b^{\pm1}a^k,f)}(p,s)\widehat S^{-2}O_A^{(f)}\widehat S^2+\frac{c_{\pm}^{(b^{\pm1}a^k,f)}(p,s)+\left(c_{\pm}^{(b^{\pm1}a^k,f)}(p,s)\right)^{-1}}{\big(Q^2-Q^{-2}\big)^2}O_B^{(1)}O_B^{(1)}\widehat S^{-2}O_A^{(f)}\widehat S^2\\
\overset{(\ref{eq:HomomorphismPropertyg1}),(\ref{eq:cpcmZ2Symmetry})}{\underset{(\ref{eq:RecursiveRelationOa})}{=}}\;\;\,&\widehat S^{-2}O_A^{(b^{\pm1}s^kf)}\widehat S^2
\end{align*}

Now we have to prove the two left identities of (\ref{eq:PsiEquivariance}). First, we apply automorphism $\mathbf a$ followed by a homomorphism $\Psi$ to obtain an equivalent form of the above identities. In terms of matrices in $\mathrm{Mat_{3\times3}}(\mathbf R)$ we get
\begin{subequations}
\begin{align}
-\frac{O_A^{(1)}O_A^{(ga^{-1})}O_A^{(1)}}{\big(Q^2-Q^{-2}\big)^2}&=D_A^{-1}O_A^{(g)}(p/s,s)D_A,
\label{eq:MatrixIdentitiesMCGEquivarianceOa}\\
\frac{QO_A^{(1)}O_B^{(ga^{-1})}-Q^{-1}O_B^{(ga^{-1})}O_A^{(1)}}{Q^2-Q^{-2}}&=D_A^{-1}O_B^{(g)}(p/s,s)D_A.
\label{eq:MatrixIdentitiesMCGEquivarianceOb}
\end{align}
\label{eq:MatrixIdentitiesMCGEquivariance}
\end{subequations}
Note that by (\ref{eq:MatrixOaLeftInvariance}) it is enough to prove (\ref{eq:MatrixIdentitiesMCGEquivarianceOa}) only for reduced words $g$ of the special type (\ref{eq:BReducedWordFree2}). Similarly, by (\ref{eq:MatrixObLeftInvariance}), it is enough to prove (\ref{eq:MatrixIdentitiesMCGEquivarianceOb}) only for reduced words $g$ of the special type (\ref{eq:AReducedWordFree2}). This will allow us to prove each of the identities (\ref{eq:MatrixIdentitiesMCGEquivariance}) by induction in $g$ utilizing recursive relations (\ref{eq:RecursiveRelationOa}) and (\ref{eq:RecursiveRelationOb}) respectively.

For (\ref{eq:MatrixIdentitiesMCGEquivarianceOa}) our base of induction will be the statement that (\ref{eq:MatrixIdentitiesMCGEquivarianceOa}) holds for $g=1$ as well as for all $g=b^{\pm1}a^k,\;k\in\mathbb Z$. Lemma \ref{lemm:MCGEquivarianceBaseOfInduction} already covers the case $g=1$. We can reduce the case of $g=b^{\pm1}a^k$ to $g=1$ by observing relation on auxiliary constants
\begin{subequations}
\begin{align}
c_+^{(ba^{k-1},1)}(p,s)=&\,\mathrm{pexp}\left(\frac{Q^8-Q^{-8}}{(1+s^{-1})(1+p^{-1}s^{2-k})(1+ps^{k-1})}\right)=c_+^{(ba^k,1)}(p/s,s),\\
c_-^{(b^{-1}a^{k-1},1)}(p,s)=&\,\mathrm{pexp}\left(-\frac{Q^8-Q^{-8}}{(1+s^{-1})(1+p^{-1}s^{1-k})(1+ps^k)}\right)=c_-^{(b^{-1}a^k,1)}(p/s,s),
\end{align}
\label{eq:AuxiliaryConstantsRelationIdentity1Casef1}
\end{subequations}
which gives
\begin{align*}
-\frac{O_A^{(1)}O_A^{(b^{\pm}a^{k-1})}(p,s)O_A^{(1)}}{\big(Q^2-Q^{-2}\big)^2}\quad\overset{(\ref{eq:RecursiveRelationOa})}{=}\quad&-\frac{c_{\pm}^{(b^{\pm1}a^{k-1},1)}(p,s)}{\big(Q^2-Q^{-2}\big)^2}O_A^{(1)}O_A^{(1)}O_A^{(1)}-\frac{c_{\pm}^{(b^{\pm1}a^{k-1},1)}(p,s)-\left(c_{\pm}^{(b^{\pm1}a^{k-1},1)}(p,s)\right)^{-1}}{\big(Q^2-Q^{-2}\big)^4}\\[0.25em]
&\qquad\qquad\times O_A^{(1)}O_B^{(1)}(p,s)O_B^{(1)}(p,s)O_A^{(1)}O_A^{(1)}\\[0.25em]
\quad\overset{(\ref{eq:DehnTwistActionIdempotentObMatrices}),(\ref{eq:AuxiliaryConstantsRelationIdentity1Casef1})}{=}&-\frac{c_{\pm}^{(b^{\pm1}a^k,1)}(p/s,s)}{\big(Q^2-Q^{-2}\big)^2}O_A^{(1)}O_A^{(1)}O_A^{(1)}-\frac{c_{\pm}^{(b^{\pm1}a^k,1)}(p/s,s)-\left(c_{\pm}^{(b^{\pm1}a^k,1)}(p/s,s)\right)^{-1}}{\big(Q^2-Q^{-2}\big)^4}\\[0.25em]
&\qquad\qquad\times D_A^{-1}O_B^{(1)}(p/s,s)O_B^{(1)}(p/s,s)D_AO_A^{(1)}O_A^{(1)}O_A^{(1)}\\[0.25em]
\overset{(\ref{eq:MatrixIdentitiesMCGEquivarianceOa})}{\underset{\textrm{for}\,g=1}{=}}\;&D_A^{-1}\Big(c_{\pm}^{(b^{\pm1}a^k,1)}(p/s,s)O_A^{(1)}+\frac{c_{\pm}^{(b^{\pm1}a^k,1)}(p/s,s)-\left(c_{\pm}^{(b^{\pm1}a^k,1)}(p/s,s)\right)^{-1}}{\big(Q^2-Q^{-2}\big)^2}\\[0.25em]
&\qquad\qquad\times O_B^{(1)}(p/s,s)O_B^{(1)}(p/s,s)O_A^{(1)}\Big)D_A\\[0.25em]
\overset{(\ref{eq:RecursiveRelationOa})}{=}\quad&D_A^{-1}O_A^{(b^{\pm1}a^k)}(p/s,s)D_A.
\end{align*}

To show the step of induction for (\ref{eq:MatrixIdentitiesMCGEquivarianceOa}) we assume that (\ref{eq:MatrixIdentitiesMCGEquivarianceOa}) holds for all reduced words (\ref{eq:BReducedWordFree2}) with length strictly less than $N$. Now let $g$ be a reduced word of type (\ref{eq:BReducedWordFree2}) with of length $N$, hence $g=b^{\pm1}a^kf$, where $f$ is some reduced word of a special type (\ref{eq:BReducedWordFree2}). Note that by our base of induction, we can assume without loss of generality that $f$ is nontrivial reduced word of the special type (\ref{eq:BReducedWordFree2}) and so $fa^{-1}$ must also be of the special type (\ref{eq:BReducedWordFree2}).

The following identity in $Mat_{3\times3}(\mathbf R)$ can be verified by direct computation in $\mathbf R$ using exponential properties (\ref{eq:ExponentialPropertyFormalFractions})
\begin{equation}
D_A^{-1}O_B^{(1)}(p/s,s)O_B^{(1)}(p/s,s)D_AO_A^{(1)}=O_A^{(1)}O_B^{(1)}(p,s)O_B^{(1)}(p,s)
\label{eq:DehnTwistActionIdempotentObMatrices}
\end{equation} With (\ref{eq:DehnTwistActionIdempotentObMatrices}) we can now deduce (\ref{eq:MatrixIdentitiesMCGEquivarianceOa}) for $g$ from the same identitiy for $f$ by expanding both elements $O_A^{(ga)}=O_A^{(b^{\pm1}a^kfa^{-1})}$ and $O_A^{(g)}=O_A^{(b^{\pm1}a^kf)}$ via recursive relations (\ref{eq:RecursiveRelationOa}). We get
\begin{align*}
-\frac{O_A^{(1)}O_A^{(b^{\pm}a^kfa^{-1})}(p,s)O_A^{(1)}}{\big(Q^2-Q^{-2}\big)^2}&\quad\overset{(\ref{eq:RecursiveRelationOa})}{=}\quad-\frac{c_{\pm}^{(b^{\pm1}a^k,fa^{-1})}(p,s)}{\big(Q^2-Q^{-2}\big)^2}O_A^{(1)}O_A^{(fa^{-1})}(p,s)O_A^{(1)}\\
&-\frac{c_{\pm}^{(b^{\pm1}a^k,fa^{-1})}(p,s)-\left(c_{\pm}^{(b^{\pm1}a^k,fa^{-1})}(p,s)\right)^{-1}}{\big(Q^2-Q^{-2}\big)^4} O_A^{(1)}O_B^{(1)}(p,s)O_B^{(1)}(p,s)O_A^{(fa^{-1})}(p,s)O_A^{(1)}\\
\quad\overset{(\ref{eq:cpcmRightMultiplication}),(\ref{eq:DehnTwistActionIdempotentObMatrices})}{=}&-\frac{c_{\pm}^{(b^{\pm1}a^k,f)}(p/s,s)}{\big(Q^2-Q^{-2}\big)^2}O_A^{(1)}O_A^{(fa^{-1})}O_A^{(1)}-\frac{c_{\pm}^{(b^{\pm1}a^k,f)}(p/s,s)-\left(c_{\pm}^{(b^{\pm1}a^k,f)}(p/s,s)\right)^{-1}}{\big(Q^2-Q^{-2}\big)^4}\\[0.25em]
&\qquad\qquad\times D_A^{-1}O_B^{(1)}(p/s,s)O_B^{(1)}(p/s,s)D_AO_A^{(1)}O_A^{(fa^{-1})}O_A^{(1)}\\[0.25em]
\overset{(\ref{eq:MatrixIdentitiesMCGEquivarianceOa})}{\underset{\textrm{for}\,g=f}{=}}\;&D_A^{-1}\Big(c_{\pm}^{(b^{\pm1}a^k,f)}(p/s,s)O_A^{(f)}\\
&\qquad+\frac{c_{\pm}^{(b^{\pm1}a^k,f)}(p/s,s)-\left(c_{\pm}^{(b^{\pm1}a^k,f)}(p/s,s)\right)^{-1}}{\big(Q^2-Q^{-2}\big)^2}O_B^{(1)}(p/s,s)O_B^{(1)}(p/s,s)O_A^{(f)}\Big)D_A\\
\overset{(\ref{eq:RecursiveRelationOa})}{=}\quad&D_A^{-1}O_A^{(b^{\pm1}a^kf)}(p/s,s)D_A.
\end{align*}

To prove identity (\ref{eq:MatrixIdentitiesMCGEquivarianceOb}) our base of induction will be the case $g=1$ given by Lemma \ref{lemm:MCGEquivarianceBaseOfInduction}. To show the step of induction we assume that (\ref{eq:MatrixIdentitiesMCGEquivarianceOb}) holds for all reduced words of the dual special type (\ref{eq:AReducedWordFree2}) of length strictly less that $N$ and now let $g$ be a reduced word (\ref{eq:AReducedWordFree2}) of length $N$. Then $g=a^{\pm1}b^kf$, where $f$ is a reduced word of the dual special type (\ref{eq:AReducedWordFree2}).

Combining (\ref{eq:ctildeDef}) with (\ref{eq:cpcmRightMultiplication}) we obtain identity on relevant auxiliary constants which holds for all $h,g\in Free_2$:
\begin{equation}
\widetilde c_{\pm}^{(h,ga^{-1})}(p,s)\overset{(\ref{eq:ctildeDef})}{=}c_{\pm}^{(\sigma(h),\sigma(ga^{-1}))}(s,p^{-1})=c_{\pm}^{(\sigma(h),\sigma(g)b^{-1})}(s,p^{-1})\overset{(\ref{eq:cpcmRightMultiplication})}{=}c_{\pm}^{(\sigma(h),\sigma(g))}(s,s/p)\overset{(\ref{eq:ctildeDef})}{=}\widetilde c_{\pm}^{(h,g)}(p/s,s).
\label{eq:cpcmTildeRightMultiplicationa}
\end{equation}
We get
\begin{align*}
&\frac{QO_A^{(1)}O_B^{(a^{\pm1}b^kfa^{-1})}(p,s)-Q^{-1}O_B^{(a^{\pm1}b^kfa^{-1})}(p,s)O_A^{(1)}}{\big(Q^2-Q^{-2}\big)^2}\\
&\quad\overset{(\ref{eq:RecursiveRelationOb})}{=}\frac{Q}{\big(Q^2-Q^{-2}\big)^2}O_A^{(1)}\Big(\widetilde c_{\pm}^{(a^{\pm1}b^k,fa^{-1})}(p,s) O_B^{(fa^{-1})}(p,s)\\
&\qquad\qquad\quad+\frac{\widetilde c_{\pm}^{(a^{\pm1}b^k,fa^{-1})}(p,s)-\left(\widetilde c_{\pm}^{(a^{\pm1}b^k,fa^{-1})}(p,s)\right)^{-1}}{\big(Q^2-Q^{-2}\big)^2}O_A^{(1)}O_A^{(1)}O_B^{(fa^{-1})}(p,s)\Big)\\
&\qquad\quad-\frac{Q^{-1}}{\big(Q^2-Q^{-2}\big)^2}\Big(\widetilde c_{\pm}^{(a^{\pm1}b^k,fa^{-1})}(p,s) O_B^{(fa^{-1})}(p,s)\\
&\qquad\qquad\quad+\frac{\widetilde c_{\pm}^{(a^{\pm1}b^k,fa^{-1})}(p,s)-\left(\widetilde c_{\pm}^{(a^{\pm1}b^k,fa^{-1})}(p,s)\right)^{-1}}{\big(Q^2-Q^{-2}\big)^2}O_A^{(1)}O_A^{(1)}O_B^{(fa^{-1})}(p,s)\Big)O_A^{(1)}\\
&\;\overset{(\ref{eq:MatrixIdentitiesMCGEquivarianceOb})}{\underset{\textrm{for}\;g=f}{=}}D_A^{-1}\Big(\widetilde c_{\pm}^{(a^{\pm1}b^k,fa^{-1})}(p,s) O_B^{(f)}(p/s,s)\\
&\qquad\qquad\quad+\frac{\widetilde c_{\pm}^{(a^{\pm1}b^k,fa^{-1})}(p,s)-\left(\widetilde c_{\pm}^{(a^{\pm1}b^k,fa^{-1})}(p,s)\right)^{-1}}{\big(Q^2-Q^{-2}\big)^2}O_A^{(1)}O_A^{(1)}O_B^{(f)}(p/s,s)\Big)D_A\\
&\;\overset{(\ref{eq:cpcmTildeRightMultiplicationa}),(\ref{eq:RecursiveRelationOb})}{=}D_A^{-1}O_B^{(a^{\pm1}b^kf)}(p/s,s)D_A.
\end{align*}
\end{proof}

\begin{proposition}
    Relators (\ref{eq:Relation0Xg})--(\ref{eq:Relation7g}) are annihilated by the homomorphism $\Psi:\mathcal F\rightarrow\mathrm{Mat}_{3\times3}(\mathbf R)$.
\label{prop:SevenRelationsAnnihilated}
\end{proposition}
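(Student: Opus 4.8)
The plan is to reduce every relation among the matrices $O_A^{(g)},O_B^{(g)}$ to the finite list of $g=1$ identities already secured in Lemma \ref{lemm:MatrixRelations234567Forg1}, exploiting two structural features. First, the relators $\R_{0,A}^{(g)}$ and $\R_{0,B}^{(g)}$ are nothing but the left-invariance (\ref{eq:MatrixOaLeftInvariance}) and (\ref{eq:MatrixObLeftInvariance}), so they are annihilated automatically and we may always replace $g$ by its reduced representative of the special form (\ref{eq:BReducedWordFree2}) or (\ref{eq:AReducedWordFree2}). Second, the $\widehat S$-conjugation identities (\ref{eq:SHatConjugationFlipsAB}) from Proposition \ref{prop:MCGEquivariancePsi} realize the involution $\mathbf s$ at the matrix level: conjugating a valid $A$-relation by $\widehat S$ turns it into the corresponding $B$-relation with $g\mapsto\sigma(g)$, precisely along the pairing (\ref{eq:Z2SymmetryDefiningIdeal}). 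Since $\sigma$ is a bijection of $Free_2$, it therefore suffices to annihilate one relation from each pair, say $\R_{1,A},\R_3,\R_{4,A},\R_{5,A},\R_6$; the partners $\R_{1,B},\R_2,\R_{4,B},\R_{5,B},\R_7$ then follow by conjugation with $\widehat S$.

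The key technical step is a structural lemma extracted from the recursion (\ref{eq:RecursiveRelationOa}). Writing $E_B:=-\big(Q^2-Q^{-2}\big)^{-2}O_B^{(1)}O_B^{(1)}$, relation (\ref{eq:MatrixRelations45Forg1}) gives $E_B^2=E_B$, and a single recursion step is rewritten as left multiplication by the operator $c_{\pm}(1-E_B)+c_{\pm}^{-1}E_B$. Because these operators lie in the commutative algebra $\mathbf R[E_B]$ and multiply telescopically, induction on word length yields
\begin{equation*}
O_A^{(g)}=\Gamma_g(1-E_B)O_A^{(1)}+\Gamma_g^{-1}E_B\,O_A^{(1)},\qquad \Gamma_g\in\mathbf R^\times,
\end{equation*}
and dually $O_B^{(g)}=\Gamma'_g(1-E_A)O_B^{(1)}+(\Gamma'_g)^{-1}E_A\,O_B^{(1)}$ with $E_A:=-\big(Q^2-Q^{-2}\big)^{-2}O_A^{(1)}O_A^{(1)}$. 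In particular every $O_A^{(g)}$ lies in the $\mathbf R$-span of $O_A^{(1)}$ and $O_B^{(1)}O_B^{(1)}O_A^{(1)}$, and every $O_B^{(g)}$ in the span of $O_B^{(1)}$ and $O_A^{(1)}O_A^{(1)}O_B^{(1)}$.

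With this form in hand each surviving relation collapses to the $g=1$ data. Setting $u:=(1-E_B)O_A^{(1)}$ and $v:=E_B O_A^{(1)}$, a direct check from the cubing identities (\ref{eq:MatrixRelations45Forg1}) and (\ref{eq:MatrixRelations67Forg1}) together with $E_B^2=E_B$ gives $u^2=v^2=0$; expanding $O_A^{(g)}=\Gamma_g u+\Gamma_g^{-1}v$ then reads off $O_A^{(g)}O_A^{(g)}=uv+vu=O_A^{(1)}O_A^{(1)}$, which is $\R_{1,A}$. Multiplying the cubing identity $(u+v)^3=-\big(Q^2-Q^{-2}\big)^2(u+v)$ on the left by $E_B$ (and using $E_Bu=0$, $E_Bv=v$) separates it into $vuv=-\big(Q^2-Q^{-2}\big)^2v$ and $uvu=-\big(Q^2-Q^{-2}\big)^2u$, which turn both $\R_{4,A}$ and $\R_{5,A}$ into identities. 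The dual computation $(1-E_A)O_B^{(1)}(1-E_A)=0=E_A O_B^{(1)}E_A$ yields $\R_6$. Finally $\R_3^{(g_1,g_2,g_3)}$ is handled by expanding the three factors in the above spans: each of the eight resulting words in $O_A^{(1)},O_B^{(1)}$, after collapsing a cube $O_A^{(1)}O_A^{(1)}O_A^{(1)}$ or $O_B^{(1)}O_B^{(1)}O_B^{(1)}$ via (\ref{eq:MatrixRelations45Forg1}), contains a factor $O_A^{(1)}O_B^{(1)}O_A^{(1)}$ or $O_B^{(1)}O_A^{(1)}O_B^{(1)}$ and hence vanishes by (\ref{eq:MatrixRelations23Forg1}).

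I expect the main obstacle to be organizational rather than conceptual: establishing the telescoping structural lemma cleanly, so that the entire $g$-dependence is absorbed into the scalar unit $\Gamma_g$, is what makes the infinite family of relations reduce to the finite base-case list, and once this is in place the remaining verifications are the routine $\mathbf R$-linear computations with $O_A^{(1)},O_B^{(1)}$ licensed by Lemma \ref{lemm:MatrixRelations234567Forg1}. The one place demanding genuine care is $\R_3$ (equivalently $\R_2$), where three independent generators appear and one must confirm that \emph{every} term of the eightfold expansion, not merely a special combination, is individually annihilated.
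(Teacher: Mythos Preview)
Your proof is correct and takes a genuinely different, cleaner route than the paper. Both arguments begin the same way: $\R_{0,X}$ is the built-in left invariance, and $\widehat S$-equivariance from Proposition~\ref{prop:MCGEquivariancePsi} halves the workload via the pairing (\ref{eq:Z2SymmetryDefiningIdeal}). After that the paper proceeds by a separate length-induction for each surviving relator, repeatedly expanding $O_A^{(b^{\pm1}a^kf)}$ via the recursion and rewriting the result as an $\mathbf R$-combination of relators with shorter labels---seven essentially independent computations. You instead observe once that a single recursion step is left multiplication by $c_\pm(1-E_B)+c_\pm^{-1}E_B$, and since $E_B^2=E_B$ these operators telescope in the commutative ring $\mathbf R[E_B]$; this yields the closed form $O_A^{(g)}=\Gamma_g u+\Gamma_g^{-1}v$ with $u=(1-E_B)O_A^{(1)}$, $v=E_BO_A^{(1)}$, $\Gamma_g\in\mathbf R^\times$, and its $\widehat S$-conjugate for $O_B^{(g)}$. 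All five remaining relators then collapse to short identities among $u,v$ (and their duals $u',v'$) that follow from Lemma~\ref{lemm:MatrixRelations234567Forg1}: $u^2=v^2=0$ from $E_BO_A^{(1)}E_B=0$ and $(1-E_B)O_A^{(1)}(1-E_B)=0$, the splitting $uvu=-(Q^2-Q^{-2})^2u$, $vuv=-(Q^2-Q^{-2})^2v$ from the $g=1$ cube, and the eight-term expansion for $\R_3$ each of whose summands, after one cube collapse, contains an $ABA$ or $BAB$ factor. The advantage of your approach is that the entire $g$-dependence is absorbed into a single unit $\Gamma_g$, so no per-relation induction is needed; the paper's approach has the minor advantage of never isolating the closed form, working instead purely with the inductive structure already set up in Definition~\ref{def:Oag}.
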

\begin{proof}
By Proposition \ref{prop:MCGEquivariancePsi}, homomorphism $\Psi:\mathcal F\rightarrow\mathrm{Mat}_{3\times3}(\mathbf R)$ is equivariant with respect to $\mathbb Z_2$-action given by automorphism $\mathbf s$ on $\mathcal F$ and $\mathrm{Ad}_{\widehat S}$ on $\Psi(\mathcal F)$. Hence it will be enough to prove that only the left half of relators in (\ref{eq:Z2SymmetryDefiningIdeal}) are annihilated by $\Psi$.

For relators of the first type we get
\begin{equation*}
\Psi\big(\R_{0,A}^{(g)}\big)=O_A^{(ag)}-O_A^{(g)}\overset{(\ref{eq:MatrixOaLeftInvariance})}{=}0.
\end{equation*}

Now we can prove that remaining relators belong to $\ker\Psi$ by induction in $g$. The base case $g=1$ is provided by Lemma \ref{lemm:MatrixRelations234567Forg1}. For the step of induction we assume that all relators (\ref{eq:Relation0Xg})--(\ref{eq:Relation7g}) belong to $\ker\Psi$ for all tuples of words $g,g_1,g_2,g_3\in Free_2$ of the total length strictly less than some $N\in\mathbb Z_{\geq0}$ and then show that each of the relations must hold for tuples of the total length $N$.

Note that due to (\ref{eq:MatrixOaLeftInvariance}) and (\ref{eq:MatrixObLeftInvariance}) we can assume without loss of generality that words $g,g_1,g_2,g_3$ entering solely as an argument of $O_A$ are of the special type (\ref{eq:BReducedWordFree2}), while words entering solely as an argument of $O_B$ are of the dual special type (\ref{eq:AReducedWordFree2}). For each of the calculations below we assume that one of the words is nontrivial and of the type specified above. We then use recursive relation (\ref{eq:RecursiveRelationOa}) or (\ref{eq:RecursiveRelationOb}) to prove the step of induction.
\begin{enumerate}[\qquad1.]
\item Relation (\ref{eq:Relation1Xg}) for $X=A$. Let $g=b^{\pm1}a^kf$ and $c_1:=c_{\pm}^{(b^{\pm1}a^k,f)}$, we get
\begin{align*}
\Psi\big(\R_{1,A}^{(b^{\pm1}a^kf)}\big)=&O_A^{(b^{\pm1}a^kf)}O_A^{(b^{\pm1}a^kf)}-O_A^{(1)}O_A^{(1)}\\
=&c_1^2O_A^{(f)}O_A^{(f)}-O_A^{(1)}O_A^{(1)}\\
&+\frac1{\big(Q^2-Q^{-2}\big)^2}\Big((c_1^2-1)O_A^{(f)}O_B^{(1)}O_B^{(1)}O_A^{(f)}+(c_1^2-1)O_B^{(1)}O_B^{(1)}O_A^{(f)}O_A^{(f)}\Big)\\
&+\frac1{\big(Q^2-Q^{-2}\big)^4}\Big(\frac{(c_1^2-1)^2}{c_1^2}O_B^{(1)}O_B^{(1)}O_A^{(f)}O_B^{(1)}O_B^{(1)}O_A^{(f)}\Big)\\
=&\Psi\big(\R_{1,A}^{(f)}\big)+\frac1{\big(Q^2-Q^{-2}\big)^2}\Big((c_1^2-1)\Psi\big(\R_{7}^{(f)}\big)O_A^{(f)}-\frac{(c_1^2-1)^2}{c_1^2}O_B^{(1)}O_B^{(1)}\Psi\big(\R_{1,A}^{(f)}\big)\Big)\\
&+\frac1{\big(Q^2-Q^{-2}\big)^4}\Big(\frac{(c_1^2-1)^2}{c_1^2}O_B^{(1)}O_B^{(1)}\Psi\big(\R_{7}^{(f)}\big)O_A^{(f)}-\frac{(c_1^2-1)^2}{c_1^2}O_B^{(1)}\Psi\big(\R_{4,B}^{(1)}\big)O_A^{(1)}O_A^{(1)}\\
&\qquad-\frac{(c_1^2-1)^2}{c_1^2}O_B^{(1)}O_B^{(1)}O_B^{(1)}O_B^{(1)}\Psi\big(\R_{1,A}^{(f)}\big)\Big)=0.
\end{align*}
\item Relation (\ref{eq:Relation2g1g2g3}), first argument. Let $g_1=a^{\pm1}b^kf$ and $c_2:=\widetilde c_{\pm}^{(a^{\pm1}b^k,f)}$, we get
\begin{align*}
\Psi\big(\R_{2}^{(a^{\pm1}b^kf,g_2,g_3)}\big)=&O_B^{(a^{\pm1}b^kf)}O_A^{(g_2)}O_B^{(g_3)}\\
=&c_2O_B^{(f)}O_A^{(g_2)}O_B^{(g_3)}+\frac1{\big(Q^2-Q^{-2}\big)^2}\Big((c_2-c_2^{-1})O_A^{(1)}O_A^{(1)}O_B^{(f)}O_A^{(g_2)}O_B^{(g_3)}\Big)\\
=&c_2\Psi\big(\R_{2}^{(f,g_2,g_3)}\big)+\frac1{\big(Q^2-Q^{-2}\big)^2}\Big((c_2-c_2^{-1})O_A^{(1)}O_A^{(1)}\Psi\big(\R_{2}^{(f,g_2,g_3)}\big)\Big)=0.
\end{align*}
\item Relation (\ref{eq:Relation2g1g2g3}), second argument. Let $g_2=b^{\pm1}a^kf$ and $c_3:=c_{\pm}^{(b^{\pm1}a^k,f)}$, we get
\begin{align*}
\Psi\big(\R_{2}^{(g_1,g_2,g_3)}\big)=&O_B^{(g_1)}O_A^{(b^{\pm1}a^kf)}O_B^{(g_3)}\\
=&c_3O_B^{(g_1)}O_A^{(f)}O_B^{(g_3)}+\frac1{\big(Q^2-Q^{-2}\big)^2}\Big((c_3-c_3^{-1})O_B^{(g_1)}O_B^{(1)}O_B^{(1)}O_A^{(f)}O_B^{(g_3)}\Big)\\
=&c_3\Psi\big(\R_{2}^{(g_1,f,g_3)}\big)+\frac1{\big(Q^2-Q^{-2}\big)^2}\Big((c_3-c_3^{-1})O_B^{(g_1)}O_B^{(1)}\Psi\big(\R_{2}^{(1,f,g_3)}\big)\Big)=0.
\end{align*}
\item Relation (\ref{eq:Relation2g1g2g3}), third argument. Let $g_3=a^{\pm1}b^kf$ and $c_4:=\widetilde c_{\pm}^{(a^{\pm1}b^k,f)}$, we get
\begin{align*}
\Psi\big(\R_{2}^{(g_1,g_2,g_3)}\big)=&O_B^{(g_1)}O_A^{(g_2)}O_B^{(a^{\pm1}b^kf)}\\
=&c_4O_B^{(g_1)}O_A^{(g_2)}O_B^{(f)}+\frac1{\big(Q^2-Q^{-2}\big)^2}\Big((c_4-c_4^{-1})O_B^{(g_1)}O_A^{(g_2)}O_A^{(1)}O_A^{(1)}O_B^{(f)}\Big)\\
=&\frac{1}{c_4}\Psi\big(\R_{2}^{(g_1,g_2,f)}\big)+\frac1{\big(Q^2-Q^{-2}\big)^2}\Big((c_4-c_4^{-1})O_B^{(g_1)}O_A^{(g_2)}\Psi\big(\R_{6}^{(f)}\big)\\
&\qquad-(c_4-c_4^{-1})O_B^{(g_1)}\Psi\big(\R_{3}^{(g_2,f,1)}\big)O_A^{(1)}\Big)=0
\end{align*}
\item Relation (\ref{eq:Relation4Xg}) for $X=A$. Let $g=b^{\pm1}a^kf$ and $c_5:=c_{\pm}^{(b^{\pm1}a^k,f)}$, we get
\begin{align*}
\Psi\big(\R_{4,A}^{(b^{\pm1}a^kf)}\big)=&O_A^{(1)}O_A^{(1)}O_A^{(b^{\pm1}a^kf)}+\big(Q^2-Q^{-2}\big)^2O_A^{(b^{\pm1}a^kf)}\\
=&\big(Q^2-Q^{-2}\big)^2c_5O_A^{(f)}+\Big((c_5-c_5^{-1})O_B^{(1)}O_B^{(1)}O_A^{(f)}+c_5O_A^{(1)}O_A^{(1)}O_A^{(f)}\Big)\\
&+\frac1{\big(Q^2-Q^{-2}\big)^2}\Big((c_5-c_5^{-1})O_A^{(1)}O_A^{(1)}O_B^{(1)}O_B^{(1)}O_A^{(f)}\Big)\\
=&c_5\Psi\big(\R_{4,A}^{(f)}\big)+\frac1{\big(Q^2-Q^{-2}\big)^2}\Big((c_5-c_5^{-1})O_A^{(1)}\Psi\big(\R_{7}^{(1)}\big)O_A^{(f)}\\
&\qquad+(c_5-c_5^{-1})O_B^{(1)}O_B^{(1)}\Psi\big(\R_{4,A}^{(f)}\big)-(c_5-c_5^{-1})\Psi\big(\R_{7}^{(1)}\big)O_A^{(1)}O_A^{(f)}\Big)=0.
\end{align*}
\item Relation (\ref{eq:Relation5Xg}) for $X=A$. Let $g=b^{\pm1}a^kf$ and $c_6:=c_{\pm}^{(b^{\pm1}a^k,f)}$, we get
\begin{align*}
\Psi\big(\R_{5,A}^{(b^{\pm1}a^kf)}\big)=&O_A^{(b^{\pm1}a^kf)}O_A^{(1)}O_A^{(1)}+\big(Q^2-Q^{-2}\big)^2O_A^{(b^{\pm1}a^kf)}\\
=&\big(Q^2-Q^{-2}\big)^2c_6O_A^{(f)}+\Big((c_6-c_6^{-1})O_B^{(1)}O_B^{(1)}O_A^{(f)}+c_6O_A^{(f)}O_A^{(1)}O_A^{(1)}\Big)\\
&+\frac1{\big(Q^2-Q^{-2}\big)^2}\Big((c_6-c_6^{-1})O_B^{(1)}O_B^{(1)}O_A^{(f)}O_A^{(1)}O_A^{(1)}\Big)\\
=&c_6\Psi\big(\R_{5,A}^{(f)}\big)+\frac1{\big(Q^2-Q^{-2}\big)^2}\Big((c_6-c_6^{-1})O_B^{(1)}O_B^{(1)}\Psi\big(\R_{5,A}^{(f)}\big)\Big)=0.
\end{align*}
\item Relation (\ref{eq:Relation6g}). Let $g=a^{\pm1}b^kf$ and $c_7:=\widetilde c_{\pm}^{(a^{\pm1}b^k,f)}$
\begin{align*}
\Psi\big(\R_{6}^{(a^{\pm1}b^kf)}\big)=&O_A^{(1)}O_A^{(1)}O_B^{(a^{\pm1}b^kf)}+O_B^{(a^{\pm1}b^kf)}O_A^{(1)}O_A^{(1)}+\big(Q^2-Q^{-2}\big)^2O_B^{(a^{\pm1}b^kf)}\\
=&\big(Q^2-Q^{-2}\big)^2c_7O_B^{(f)}+\Big((2 c_7-c_7^{-1})O_A^{(1)}O_A^{(1)}O_B^{(f)}+c_7O_B^{(f)}O_A^{(1)}O_A^{(1)}\Big)\\
&+\frac1{\big(Q^2-Q^{-2}\big)^2}\Big((c_7-c_7^{-1})O_A^{(1)}O_A^{(1)}O_A^{(1)}O_A^{(1)}O_B^{(f)}+(c_7-c_7^{-1})O_A^{(1)}O_A^{(1)}O_B^{(f)}O_A^{(1)}O_A^{(1)}\Big)\\
=&c_7\Psi\big(\R_{6}^{(f)}\big)+\frac1{\big(Q^2-Q^{-2}\big)^2}\Big((c_7-c_7^{-1})O_A^{(1)}O_A^{(1)}\Psi\big(\R_{6}^{(f)}\big)\Big)=0.
\end{align*}
\end{enumerate}
\end{proof}

\subsection{Action of the shift operators}

We start this subsection by a simple observation that images of idempotents (\ref{eq:IdempotentsDef}) are idependent of $p,s$
\begin{align*}
\Psi(e_A)=-\frac1{\big(Q^2-Q^{-2}\big)^2}O_A^{(1)}O_A^{(1)}=&\left(\begin{array}{ccc}
1&0&0\\
0&0&0\\
0&0&1
\end{array}\right),\\[0.5em]
\Psi(e_B)=-\frac1{\big(Q^2-Q^{-2}\big)^2}O_B^{(1)}O_B^{(1)}=&
    \left(\begin{array}{ccc}
    \frac{1}{2}&0&-\big(Q^2-Q^{-2}\big)^{-2}\\
    0&1&0\\
    -\frac14\big(Q^2-Q^{-2}\big)^2&0&\frac{1}{2}
    \end{array}\right).
\end{align*}
Combining that with Proposition \ref{prop:SevenRelationsAnnihilated} we get 
\begin{lemma}
The following are constant matrices for all $g\in Free_2$:
\begin{equation*}
O_A^{(g)}O_A^{(g)}=\Psi(e_A),\quad O_B^{(g)}O_B^{(g)}=\Psi(e_B)\quad \in\quad\mathrm{Mat}_{3\times 3}(\mathbf k).
\end{equation*}
\end{lemma}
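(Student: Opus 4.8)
The plan is to obtain the identity directly from the annihilation of the relators $\R_{1,A}^{(g)}$ and $\R_{1,B}^{(g)}$ established in Proposition \ref{prop:SevenRelationsAnnihilated}, combined with the explicit evaluation of $\Psi(e_A)$ and $\Psi(e_B)$ displayed immediately above the statement. The substance of the lemma is that the squares $O_X^{(g)}O_X^{(g)}$ are \emph{constant} matrices: independent both of the label $g\in Free_2$ and of the elliptic parameters $p,s$, hence elements of $\mathrm{Mat}_{3\times 3}(\mathbf k)$ with $\mathbf k=\mathbb C(Q)$.

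First I would recall that by the defining relations (\ref{eq:Relation1Xg}) one has $\R_{1,X}^{(g)}=\mathcal O_X^{(g)}\mathcal O_X^{(g)}-\mathcal O_X^{(1)}\mathcal O_X^{(1)}$ for $X\in\{A,B\}$. Since Proposition \ref{prop:SevenRelationsAnnihilated} gives $\R_{1,A}^{(g)},\R_{1,B}^{(g)}\in\ker\Psi$ for every $g$, applying the homomorphism $\Psi$ yields
\[
O_A^{(g)}O_A^{(g)}=O_A^{(1)}O_A^{(1)},\qquad O_B^{(g)}O_B^{(g)}=O_B^{(1)}O_B^{(1)}
\]
as identities in $\mathrm{Mat}_{3\times 3}(\mathbf R)$, establishing at once that both squares are independent of $g$. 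It then remains only to identify the right-hand sides as constant matrices over $\mathbf k$. For $X=A$ this is immediate, since $O_A^{(1)}$ in (\ref{eq:Oa1}) is diagonal with entries in $\mathbf k$, so $O_A^{(1)}O_A^{(1)}=-(Q^2-Q^{-2})^2\,\Psi(e_A)$ is visibly constant. For $X=B$ the entries of $O_B^{(1)}$ in (\ref{eq:Ob1}) carry nontrivial $\mathrm{pexp}$ factors, but the evaluation of $\Psi(e_B)$ recorded just before the statement shows that all $p,s$-dependence cancels by the exponential property (\ref{eq:ExponentialPropertyFormalFractions}), leaving $O_B^{(1)}O_B^{(1)}=-(Q^2-Q^{-2})^2\,\Psi(e_B)\in\mathrm{Mat}_{3\times 3}(\mathbf k)$.

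There is essentially no genuine obstacle: the $g$-independence is already contained in Proposition \ref{prop:SevenRelationsAnnihilated}, and constancy in $p,s$ is contained in the explicit computation of $\Psi(e_A),\Psi(e_B)$. The only point requiring a moment's care is the cancellation of the $\mathrm{pexp}$ factors in the product $O_B^{(1)}O_B^{(1)}$, which is precisely the computation carried out in the display preceding the lemma; combining these two ingredients proves the claim.
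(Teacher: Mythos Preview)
Your argument is correct and matches the paper's own reasoning: the paper presents this lemma as an immediate consequence of the preceding explicit evaluation of $\Psi(e_A),\Psi(e_B)$ together with Proposition~\ref{prop:SevenRelationsAnnihilated}, and you invoke exactly these two ingredients in the same way. (You also implicitly correct the missing scalar $-(Q^2-Q^{-2})^2$ in the displayed identity, which is consistent with the paper's own computation of $\Psi(e_A),\Psi(e_B)$.)
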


\begin{proposition}
For all $g\in Free_2$ we have
\begin{equation}
O_A^{(ga)}(p,s)=O_A^{(g)}(ps,s),\qquad O_B^{(ga)}(p,s)=O_B^{(g)}(ps,s).
\label{eq:aRightActionOnMatricesIsShift}
\end{equation}
\end{proposition}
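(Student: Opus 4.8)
The plan is to prove the two identities separately by induction, in each case peeling one syllable off the left via the recursive relations (\ref{eq:RecursiveRelationOa}) and (\ref{eq:RecursiveRelationOb}) and exploiting that right multiplication by $a$ acts on the auxiliary constants as exactly the shift $(p,s)\mapsto(ps,s)$ recorded in (\ref{eq:cpcmRightMultiplication}). The structural reason the recursion closes is that the idempotent images $O_B^{(1)}O_B^{(1)}=\Psi(e_B)$ and $O_A^{(1)}O_A^{(1)}=\Psi(e_A)$ occurring in the two recursions are \emph{constant} matrices, independent of $p,s$, by the Lemma preceding this Proposition; hence applying $\widehat\delta_{(p,s)\mapsto(ps,s)}$ entrywise leaves them fixed. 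By the left-invariance properties (\ref{eq:MatrixOaLeftInvariance}) and (\ref{eq:MatrixObLeftInvariance}) it suffices to treat $g$ of the special form (\ref{eq:BReducedWordFree2}) for $O_A^{(ga)}$ and of the dual special form (\ref{eq:AReducedWordFree2}) for $O_B^{(ga)}$, since right multiplication by $a$ commutes with stripping leading $a$-, respectively $b$-, powers.

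For the first identity I would induct on the number $n$ of $b$-blocks of $g$. The base case $n=0$ is immediate: then $g=1$ and $O_A^{(a)}=O_A^{(1)}$ by (\ref{eq:MatrixOaLeftInvariance}), a constant matrix, so $O_A^{(a)}(p,s)=O_A^{(1)}=O_A^{(1)}(ps,s)$. For the step, write $g=b^{\pm1}a^k f$ with $f$ of shorter special form, so that $ga=b^{\pm1}a^k(fa)$; expanding both $O_A^{(ga)}$ and $O_A^{(g)}$ through (\ref{eq:RecursiveRelationOa}) and substituting the induction hypothesis $O_A^{(fa)}(p,s)=O_A^{(f)}(ps,s)$, the coefficient identity $c_\pm^{(b^{\pm1}a^k,fa)}(p,s)=c_\pm^{(b^{\pm1}a^k,f)}(ps,s)$ from (\ref{eq:cpcmRightMultiplication}), and the $p,s$-independence of $O_B^{(1)}O_B^{(1)}$, the two sides coincide term by term.

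The second identity proceeds along the same lines once the dual shift relation $\widetilde c_\pm^{(h,ga)}(p,s)=\widetilde c_\pm^{(h,g)}(ps,s)$ is recorded; this follows by combining the definition (\ref{eq:ctildeDef}) with (\ref{eq:cpcmRightMultiplication}) exactly as in the derivation of (\ref{eq:cpcmTildeRightMultiplicationa}), now using $\sigma(ga)=\sigma(g)b$. Inducting on the number of $a$-blocks of $g$ and peeling the leftmost $a^{\pm1}b^k$ via (\ref{eq:RecursiveRelationOb}), the inductive step is entirely analogous to the $O_A$ case, the constancy of $O_A^{(1)}O_A^{(1)}$ playing the role previously played by that of $O_B^{(1)}O_B^{(1)}$. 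The one genuinely different ingredient — and the main obstacle — is the base case, which now reads $O_B^{(a)}(p,s)=O_B^{(1)}(ps,s)$ and is not automatic, since $O_B^{(1)}$ genuinely depends on $p,s$.

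I would dispose of this base case not by a $\mathrm{pexp}$ computation but by invoking the already established equivariance identity (\ref{eq:MatrixIdentitiesMCGEquivarianceOb}) at $g=a$ (so that $ga^{-1}=1$): it gives $O_B^{(a)}(p/s,s)=D_A\big(QO_A^{(1)}O_B^{(1)}-Q^{-1}O_B^{(1)}O_A^{(1)}\big)D_A^{-1}/(Q^2-Q^{-2})$, and after the substitution $p\mapsto ps$ the claim $O_B^{(a)}(p,s)=O_B^{(1)}(ps,s)$ collapses to the purely linear matrix identity $(Q^2-Q^{-2})\,D_A^{-1}MD_A=QO_A^{(1)}M-Q^{-1}MO_A^{(1)}$ with $M=O_B^{(1)}(ps,s)$. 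Since $O_A^{(1)}$ and $D_A$ are diagonal by (\ref{eq:Oa1}) while $M$ is supported on the $(1,2),(2,1),(2,3),(3,2)$ entries by (\ref{eq:Ob1}), this is verified one matrix position at a time and holds identically, the $\mathrm{pexp}$ factors of $M$ being merely carried along. This removes the only apparent difficulty and completes both inductions.
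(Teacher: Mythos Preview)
Your argument follows essentially the same inductive scheme as the paper's, and the substantive ideas are all correct. Two points of comparison are worth making. First, in your $O_A$-induction the step ``write $ga=b^{\pm1}a^k(fa)$ and expand via (\ref{eq:RecursiveRelationOa}) with coefficient $c_\pm^{(b^{\pm1}a^k,\,fa)}$'' is not literally licensed by Definition~\ref{def:Oag} when $f=1$, since then $fa=a$ is not of the special form (\ref{eq:BReducedWordFree2}); the legitimate expansion of $O_A^{(b^{\pm1}a^{k+1})}$ uses the tail $1$ and coefficient $c_\pm^{(b^{\pm1}a^{k+1},\,1)}$. Your formula is still correct because $c_\pm^{(b^{\pm1}a^{k+1},\,1)}=c_\pm^{(b^{\pm1}a^k,\,a)}$ (both depend only on $\varphi(g)_{2,*}$ and $\varphi(hg)$, which coincide here), but you should say this; the paper sidesteps the issue by treating all $g=b^{\pm1}a^k$ as a separate base case via the explicit identity (\ref{eq:AuxiliaryConstantsRelationIdentity1Casef1}). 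Second, for the $O_B$ base case $O_B^{(a)}(p,s)=O_B^{(1)}(ps,s)$ the paper simply computes $O_B^{(a)}$ from the recursion (\ref{eq:RecursiveRelationOb}) and compares entries, whereas you invoke the already-established equivariance (\ref{eq:MatrixIdentitiesMCGEquivarianceOb}) at $g=a$ to reduce to the purely diagonal-vs-tridiagonal identity $(Q^2-Q^{-2})D_A^{-1}MD_A=Q\,O_A^{(1)}M-Q^{-1}MO_A^{(1)}$; this is a clean shortcut that avoids any $\mathrm{pexp}$ manipulation, and there is no circularity since Proposition~\ref{prop:MCGEquivariancePsi} is proved earlier without reference to the present statement.
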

\begin{proof}
We will prove both identities by induction in the length of $g$.

For the first identity, our base of induction will be the statement that identity holds for $g=1$ as well as for all $g=b^{\pm}a^k,\;k\in\mathbb Z$. The case $g=1$ is given by the fact that $O_A^{(a)}\overset{(\ref{eq:MatrixOaLeftInvariance})}{=}O_A^{(1)}$ and the fact that $O_A^{(1)}$ is a constant matrix which is independent of $p,s$. While the case $g=b^{\pm1}a^k$ follows from relation (\ref{eq:AuxiliaryConstantsRelationIdentity1Casef1}) on auxiliary constants. Indeed,
\begin{align*}
O_A^{(b^{\pm1}a^{k+1})}(p,s)\overset{(\ref{eq:RecursiveRelationOa})}{=}&c_{\pm}^{(b^{\pm1}a^{k+1})}(p,s)O_A^{(1)}-\left(c_{\pm}^{(b^{\pm1}a^{k+1},f)}(p,s)-\left(c_{\pm}^{(b^{\pm1}a^{k+1},f)}(p,s)\right)^{-1}\right)\Psi(e_B)O_A^{(1)}\\
\overset{(\ref{eq:AuxiliaryConstantsRelationIdentity1Casef1})}{=}&c_{\pm}^{(b^{\pm1}a^{k})}(ps,s)O_A^{(1)}-\left(c_{\pm}^{(b^{\pm1}a^k,f)}(ps,s)-\left(c_{\pm}^{(b^{\pm1}a^k,f)}(ps,s)\right)^{-1}\right)\Psi(e_B)O_A^{(1)}\\
\overset{(\ref{eq:RecursiveRelationOa})}{=}&O_A^{(b^{\pm1}a^k)}(ps,s).
\end{align*}

Now for the step of induction, we assume that left identity in (\ref{eq:aRightActionOnMatricesIsShift}) holds for all $g$ of length strictly less than $N$. Now let $g$ be a reduced word of length $N$, by (\ref{eq:MatrixOaLeftInvariance}) we can assume without loss of generality that $g=b^{\pm1}a^kf$, where $f$ is a reduced word of the special type (\ref{eq:BReducedWordFree2}). Note that by our base of induction we can assume that $f$ is also nontrivial reduced word of special type (\ref{eq:BReducedWordFree2}), hence so is $fa$. We get
\begin{align*}
O_A^{(b^{\pm1}a^kfa)}(p,s)
\overset{(\ref{eq:RecursiveRelationOa}),(\ref{eq:cpcmRightMultiplication})}{=}&c_{\pm}^{(b^{\pm1}a^k,f)}(ps,s)O_A^{(fa)}(p,s)-\left(c_{\pm}^{(b^{\pm1}a^k,f)}(ps,s)-\left(c_{\pm}^{(b^{\pm1}a^k,f)}(ps,s)\right)^{-1}\right)\Psi(e_B)O_A^{(fa)}(p,s)\\
\overset{(\ref{eq:aRightActionOnMatricesIsShift})}{\underset{\textrm{for}\;g=f}{=}}&c_{\pm}^{(b^{\pm1}a^k,f)}(ps,s)O_A^{(f)}(ps,s)-\left(c_{\pm}^{(b^{\pm1}a^k,f)}(ps,s)-\left(c_{\pm}^{(b^{\pm1}a^k,f)}(ps,s)\right)^{-1}\right)\Psi(e_B)O_A^{(fa)}(ps,s)\\[0.3em]
\overset{(\ref{eq:RecursiveRelationOa})}{=}\;\;&O_A^{(b^{\pm1}a^kf)}(ps,s).
\end{align*}

Finally, we prove the right identity in (\ref{eq:aRightActionOnMatricesIsShift}). In order to show the base case $g=1$ one computes the $O_B^{(a)}(ps,s)$ via recursive relation (\ref{eq:RecursiveRelationOb}) and compares the result with $O_B^{(1)}(ps,s)$. Now suppose that the right identity in (\ref{eq:aRightActionOnMatricesIsShift}) holds for all words of length strictly less than $N$ and let $g$ be a reduced word of length $N$. By (\ref{eq:MatrixObLeftInvariance}), we can assume without loss of generality that $g=a^{\pm1}b^kf$, where $f$ is a reduced word of the dual special type (\ref{eq:AReducedWordFree2}) and so is $fa$. The step of induction then follows from the following relation on auxiliary constants
\begin{equation}
\widetilde c_{\pm}^{(a^{\pm1}b^k,fa)}(p,s)\overset{(\ref{eq:ctildeDef})}{=}c_{\pm}^{(b^{\pm1}a^k,\sigma(f)b)}(s,p^{-1})\overset{(\ref{eq:cpcmRightMultiplication})}{=}c_{\pm}^{(b^{\pm1}a^k,\sigma(f))}(s,p^{-1}s^{-1})\overset{(\ref{eq:ctildeDef})}{=}\widetilde c_{\pm}^{(a^{\pm1}b^k,f)}(ps,s).
\label{eq:cTildeaRightMultiplicationIsShift}
\end{equation}
This gives
\begin{align*}
O_B^{(a^{\pm1}b^kfa)}(p,s)\overset{(\ref{eq:RecursiveRelationOb}),(\ref{eq:cTildeaRightMultiplicationIsShift})}{=}&\widetilde c_{\pm}^{(a^{\pm1}b^k,f)}(ps,s)O_B^{(fa)}(p,s)+\left(\widetilde c_{\pm}^{(a^{\pm1}b^k,f)}(ps,s)+\left(c_{\pm}^{(a^{\pm1}b^k,f)}(ps,s)\right)^{-1}\right)\Psi(e_A)O_B^{(fa)}(p,s)\\
\overset{(\ref{eq:aRightActionOnMatricesIsShift})}{\underset{\textrm{for}\;g=f}{=}}\;&\widetilde c_{\pm}^{(a^{\pm1}b^k,f)}(ps,s)O_B^{(f)}(ps,s)+\left(\widetilde c_{\pm}^{(a^{\pm1}b^k,f)}(ps,s)+\left(c_{\pm}^{(a^{\pm1}b^k,f)}(ps,s)\right)^{-1}\right)\Psi(e_A)O_B^{(f)}(ps,s)\\[0.3em]
\overset{(\ref{eq:RecursiveRelationOb})}{=}\quad&O_B^{(a^{\pm1}b^kf)}(ps,s).
\end{align*}
\end{proof}

\begin{corollary}
For all $g\in Free_2$ we have
\begin{equation}
O_A^{(gb)}(p,s)=\left(M(p,s)\right)^{-1}O_A^{(g)}(p,s/p)M(p,s),\qquad O_B^{(gb)}(p,s)=\left(M(p,s)\right)^{-1}O_B^{(g)}(p,s/p)M(p,s),
\label{eq:bRightActionOnMatricesIsConjugationWithShift}
\end{equation}
where
\begin{equation*}
M(p,s):=\left(S(p/s,p)\right)^{-1}S(1/s,p).
\end{equation*}
\end{corollary}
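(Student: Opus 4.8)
The plan is to deduce the right-$b$ action from the already-established right-$a$ action (\ref{eq:aRightActionOnMatricesIsShift}) by transporting it through the involution $\sigma$, which interchanges $a$ and $b$ and which, at the level of matrices, is realized by conjugation with $\widehat S$ (Definition \ref{def:Obg} together with the equivariance relations (\ref{eq:SHatConjugationFlipsAB})). The crucial group-theoretic input is the identity $\sigma(gb)=\sigma(g)a$ in $Free_2$, which converts a right multiplication by $b$ into a right multiplication by $a$ at the cost of two $\widehat S$-conjugations. Concretely, using (\ref{eq:SHatConjugationFlipsAB}) with $g\mapsto gb$ and $\sigma^2=\mathrm{id}$ I would first write
\[
O_A^{(gb)}=\widehat S^{-1}O_B^{(\sigma(g)a)}\widehat S ,
\]
then apply the right-$a$ shift (\ref{eq:aRightActionOnMatricesIsShift}) to the matrix $O_B^{(\sigma(g)a)}$, and finally insert $O_B^{(\sigma(g))}=\widehat S^{-1}O_A^{(g)}\widehat S$, so that $O_A^{(gb)}$ is expressed as a twofold $\widehat S$-conjugate of $O_A^{(g)}$ with one intervening argument shift.

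Since $\widehat S=S(p,s)\circ\widehat\delta_{(p,s)\mapsto(s,p^{-1})}$ (Lemma \ref{lemm:SConjugationg1}), conjugating a multiplication-by-matrix operator by $\widehat S$ again yields a multiplication operator: the substitution parts compose to the coordinate change $(p,s)\mapsto(1/s,p)$ and its iterate, while the matrix parts contribute the factor $S(1/s,p)$ already met in (\ref{eq:Ob1ConsitencyMatrixCheck}) and, from the inner conjugation, a factor $S$ evaluated at $(1/p,p/s)$. Tracking the composed substitutions shows that the inner argument of $O_A^{(g)}$ becomes $(1/p,p/s)$, and here I would invoke the symmetry $O_A^{(g)}(p,s)=O_A^{(g)}(1/p,1/s)$ to rewrite it as $O_A^{(g)}(p,s/p)$, producing exactly the shift $(p,s)\mapsto(p,s/p)$ demanded by the statement. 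This symmetry is not recorded above, so I would establish it first as a short lemma by induction: it holds for $g=1$ because $O_A^{(1)}$ is constant, and it is preserved by the recursion (\ref{eq:RecursiveRelationOa}) since $O_B^{(1)}O_B^{(1)}$ is a constant matrix and the auxiliary constants are invariant under $(p,s)\mapsto(1/p,1/s)$ by (\ref{eq:cpcmZ2Symmetry}). As a useful structural byproduct, the same induction shows that every $O_A^{(g)}$ lies in the two-dimensional span of $O_A^{(1)}$ and $\Psi(e_B)O_A^{(1)}$.

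With the shift in place, what remains is to identify the conjugating matrix. The composition of the two $\widehat S$-conjugations leaves a product of $S$-matrices flanking $O_A^{(g)}(p,s/p)$, and I would simplify it to $M(p,s)=\big(S(p/s,p)\big)^{-1}S(1/s,p)$ using the explicit form of $\widehat S^2$ in (\ref{eq:SHat2Explicit}): squaring $\widehat S=S\circ\widehat\delta_{(p,s)\mapsto(s,p^{-1})}$ gives the relation $S(p,s)\,S(s,p^{-1})=\Lambda(p,s)$ with $\Lambda$ the diagonal matrix of (\ref{eq:SHat2Explicit}), which is precisely what converts the inner factor $S(1/p,p/s)$ into an inverse of $S(p/s,p)$, while the Pochhammer inversion formula (\ref{eq:PochhammerParameterInversion}) handles the $\mathrm{pexp}$-entries. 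I expect this last normalization to be the main obstacle: the relation supplies $S$ at the reflected argument only up to the diagonal factor $\Lambda$, so one must check carefully that this factor is compatible with the conjugation — exploiting the span description of $O_A^{(g)}$ above — and that the bare product of $S$-matrices collapses to precisely the stated $M(p,s)$. Once the $O_A$ identity is secured, the companion identity for $O_B^{(gb)}$ follows from the parallel computation with $A$ and $B$ interchanged (equivalently, by conjugating the $O_A$ identity by $\widehat S$ and applying (\ref{eq:SHatConjugationFlipsAB}) once more), and it produces the \emph{same} matrix $M(p,s)$ because the substitution and matrix data of $\widehat S$ entering both cases are identical.
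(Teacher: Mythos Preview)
Your overall plan coincides with the paper's: use $\sigma(gb)=\sigma(g)a$ to rewrite $O_A^{(gb)}=\widehat S^{-1}O_B^{(\sigma(g)a)}\widehat S$, apply the already-established right-$a$ shift (\ref{eq:aRightActionOnMatricesIsShift}), and then undo the $\widehat S$-conjugation. The one place you diverge is the inner step, where you insert $O_B^{(\sigma(g))}=\widehat S^{-1}O_A^{(g)}\widehat S$; this pushes the argument of $O_A^{(g)}$ to $(1/p,p/s)$ and forces you to invoke an extra $(p,s)\mapsto(1/p,1/s)$ symmetry lemma and then absorb the diagonal factor $\Lambda$ from $\widehat S^2$. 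The paper avoids all of that by using the \emph{other} direction of (\ref{eq:SHatConjugationFlipsAB}), namely $O_A^{(g)}=\widehat S^{-1}O_B^{(\sigma(g))}\widehat S$, i.e.\ $O_B^{(\sigma(g))}=\widehat S\,O_A^{(g)}\,\widehat S^{-1}$. Evaluated at $(p/s,p)$ this gives directly
\[
O_B^{(\sigma(g))}(p/s,p)=S(p/s,p)\,O_A^{(g)}(p,s/p)\,S(p/s,p)^{-1},
\]
so the target shift $(p,s)\mapsto(p,s/p)$ and the conjugating factors $S(p/s,p)^{\pm1}$ fall out on the nose, and $M(p,s)=S(p/s,p)^{-1}S(1/s,p)$ appears with no residual normalization.

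Your route is not incorrect, just longer: the bare symmetry $O_A^{(g)}(p,s)=O_A^{(g)}(1/p,1/s)$ does follow by the induction you sketch, and combined with (\ref{eq:SHat2ActsTriviallyOnOag}) it yields exactly the commutation of $\Lambda$ with $O_A^{(g)}$ needed to discard the extra diagonal factor. But all of this is bypassed by choosing the opposite $\widehat S$-conjugation for the inner substitution. The $O_B$-identity then follows, as you say and as the paper notes, by swapping $A\leftrightarrow B$ in the same computation.
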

\begin{proof}
For the elements $O_A^{(gb)}$ we get
\begin{equation*}
\begin{aligned}
O_A^{(gb)}(p,s)\overset{(\ref{eq:SHatConjugationFlipsAB})}{=}&\left(\widehat S^{-1}O_B^{(\sigma(g)a)}\widehat S\right)(p,s)=\left(S(1/s,p)\right)^{-1}O_B^{(\sigma(g)a)}(1/s,p)S(1/s,p)\\
\overset{(\ref{eq:aRightActionOnMatricesIsShift})}{=}&\left(S(1/s,p)\right)^{-1}O_B^{(\sigma(g))}(p/s,p)S(1/s,p)\\
\overset{(\ref{eq:SHatConjugationFlipsAB})}{=}&\left(S(1/s,p)\right)^{-1}S(p/s,p)O_A^{(g)}(p,s/p)\left(S(p/s,p)\right)^{-1}S(1/s,p).
\end{aligned}
\end{equation*}
Exchanging $A$ with $B$ in the calculation above we similarly prove the second identity.
\end{proof}

\begin{remark}
While the right hand sides of (\ref{eq:aRightActionOnMatricesIsShift}) represent the action of the shift operator $\widehat\delta_{(p,s)\mapsto(ps,s)}$ on matrix elements, the right hand sides of (\ref{eq:bRightActionOnMatricesIsConjugationWithShift}) can be interpreted as the action of the shift operator $\widehat\delta_{(p,s)\mapsto(p,s/p)}$ on coefficients of certain discrete difference operator. The details of such interpretation, however, are beyond the scope of this paper.
\end{remark}

\begin{corollary}
Let 
\begin{equation*}
P\big(\Oa^{(g_1)},\dots,\Oa^{(g_n)},\Ob^{(f_1)},\dots,\Ob^{(f_m)}\big)\quad\in\quad\ker\Psi\quad\subset\quad\mathcal F
\end{equation*}
be a noncommutative polynomial in generators which is annihilated by $\Psi$, then
\begin{equation*}
P\big(\Oa^{(g_1h)},\dots,\Oa^{(g_nh)},\Ob^{(f_1h)},\dots,\Ob^{(f_mh)}\big)\quad\in\quad\ker\Psi,\qquad\textrm{for all}\quad h\in Free_2.
\end{equation*}
\label{cor:RightActionPreservesKernelOfPsi}
\end{corollary}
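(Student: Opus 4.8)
The plan is to realize right multiplication by a fixed $h\in Free_2$ simultaneously as an automorphism of $\mathcal F$ and as a ring endomorphism of $\mathrm{Mat}_{3\times3}(\mathbf R)$, compatible with $\Psi$, and then simply apply the latter to the hypothesis $\Psi(P)=0$. First I would record that right translation $\tau_h:\mathcal F\to\mathcal F$, determined on generators by $\Oa^{(g)}\mapsto\Oa^{(gh)}$ and $\Ob^{(g)}\mapsto\Ob^{(gh)}$, is a $\mathbf k$-algebra automorphism, since $g\mapsto gh$ is a bijection of $Free_2$ and $\mathcal F$ is free on the symbols $\Oa^{(g)},\Ob^{(g)}$. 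Consequently $\tau_h\big(P(\Oa^{(g_1)},\dots,\Ob^{(f_m)})\big)=P(\Oa^{(g_1h)},\dots,\Ob^{(f_mh)})$, so it suffices to produce, for each $h$, a ring homomorphism $\Xi_h$ of $\mathrm{Mat}_{3\times3}(\mathbf R)$ with $\Psi\circ\tau_h=\Xi_h\circ\Psi$; then $\Psi(\tau_h(P))=\Xi_h(\Psi(P))=\Xi_h(0)=0$, which is the claim.

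Next, for the two generators I would build $\Xi_a,\Xi_b$ directly from (\ref{eq:aRightActionOnMatricesIsShift}) and (\ref{eq:bRightActionOnMatricesIsConjugationWithShift}). Define $\Xi_a$ to be the entrywise application of the ring automorphism $\widehat\delta_{(p,s)\mapsto(ps,s)}$ of $\mathbf R$, and $\Xi_b(X):=M(p,s)^{-1}\big(\widehat\delta_{(p,s)\mapsto(p,s/p)}X\big)M(p,s)$. Both are ring homomorphisms of $\mathrm{Mat}_{3\times3}(\mathbf R)$: entrywise application of a ring automorphism commutes with matrix multiplication, while for $\Xi_b$ the unshifted conjugating matrix telescopes, $\Xi_b(XY)=M^{-1}\widehat\delta(X)\widehat\delta(Y)M=M^{-1}\widehat\delta(X)M\,M^{-1}\widehat\delta(Y)M=\Xi_b(X)\Xi_b(Y)$. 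They fix the scalar matrices coming from $\mathbf k=\mathbb C(Q)$, since the shifts act only on $p,s$ and scalars commute with $M$, so they are $\mathbf k$-algebra homomorphisms; they are moreover invertible, with the inverse shifts and inverse conjugation providing $\Xi_a^{-1},\Xi_b^{-1}$. By (\ref{eq:aRightActionOnMatricesIsShift}) and (\ref{eq:bRightActionOnMatricesIsConjugationWithShift}) we have $\Xi_x(O_A^{(g)})=O_A^{(gx)}$ and $\Xi_x(O_B^{(g)})=O_B^{(gx)}$ for $x\in\{a,b\}$ and all $g$; substituting $g\mapsto gx^{-1}$ shows that $\Xi_x^{-1}$ realizes right multiplication by $x^{-1}$ in the same way. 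Since $\Psi$, $\tau_x$ and $\Xi_x$ are all $\mathbf k$-algebra homomorphisms and $\Xi_x\circ\Psi$ and $\Psi\circ\tau_x$ agree on the generators $\Oa^{(g)},\Ob^{(g)}$, they agree on all of $\mathcal F$, giving $\Xi_x\circ\Psi=\Psi\circ\tau_x$ for every $x\in\{a^{\pm1},b^{\pm1}\}$.

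Finally, writing $h=x_1\cdots x_\ell$ as a word in $a^{\pm1},b^{\pm1}$ and setting $\Xi_h:=\Xi_{x_\ell}\circ\cdots\circ\Xi_{x_1}$, one has $\tau_h=\tau_{x_\ell}\circ\cdots\circ\tau_{x_1}$ on generators, hence as automorphisms of $\mathcal F$, and iterating the single-generator identity yields $\Psi\circ\tau_h=\Xi_h\circ\Psi$. Applying this to $P$ completes the argument. The only point that needs genuine care — the main, and quite mild, obstacle — is verifying that $\Xi_b$ is multiplicative: one must notice that the conjugating factor $M(p,s)$ in (\ref{eq:bRightActionOnMatricesIsConjugationWithShift}) is the \emph{same} unshifted matrix on both sides, so that it cancels across a product. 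This is precisely what makes the composition of a shift with a conjugation a ring homomorphism rather than merely an additive map, and everything else is formal.
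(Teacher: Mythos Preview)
Your argument is correct and is precisely the content of the paper's proof, which reads in full: ``By induction in length of $h$. The base case $h=1$ is a tautology, whereas the step of induction follows from (\ref{eq:aRightActionOnMatricesIsShift}) for $a^{\pm1}$ and from (\ref{eq:bRightActionOnMatricesIsConjugationWithShift}) for $b^{\pm1}$ respectively.'' You have simply unpacked what this induction step means---namely that right multiplication by each generator is realized on the matrix side by a $\mathbf k$-algebra endomorphism of $\mathrm{Mat}_{3\times3}(\mathbf R)$ (an entrywise parameter shift for $a^{\pm1}$, a shift composed with conjugation by $M(p,s)$ for $b^{\pm1}$) that intertwines with $\Psi$, and hence preserves $\ker\Psi$.
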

\begin{proof}
By induction in length of $h$. The base case $h=1$ is a tautology, whereas the step of induction follows from (\ref{eq:aRightActionOnMatricesIsShift}) for $a^{\pm1}$ and from  (\ref{eq:bRightActionOnMatricesIsConjugationWithShift}) for $b^{\pm1}$ respectively.
\end{proof}

\subsection{Homomorphism property and $PSL(2,\mathbb Z)$-equivariance}

We are now ready to prove the main theorem of this section which links the abstract algebra $\mathcal A$ over $\mathbf k=\mathbb C(Q)$ defined in Section \ref{sec:AlgebraK2} with its matrix representation.

\begin{theorem}
Homomorphism $\Psi$ descends to the quotient algebra $\mathcal A=\mathcal F/\mathcal I$. In other words
\begin{equation*}
\Psi:\mathcal A\rightarrow\mathrm{Mat}_{3\times 3}(\mathbf R),\qquad \Psi(\mathcal O_A^{(g)})=O_A^{(g)},\qquad \Psi(\mathcal O_B^{(g)})=O_B^{(g)},\qquad \textrm{for all}\quad g\in Free_2
\end{equation*}
defines a $PSL(2,\mathbb Z)$-equivariant homomorphism of algebras over the field $\mathbf k=\mathbb C(Q)$.
\label{th:HomomorphismToMatrixRepresentation}
\end{theorem}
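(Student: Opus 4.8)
The plan is to establish the two assertions separately: that $\Psi$ kills the ideal $\mathcal I$ (so it descends to $\mathcal A$), and that the descended map intertwines the two $PSL(2,\mathbb Z)$-actions. The equivariance is essentially free once descent is known. Writing $C_X(M):=X^{-1}MX$, Proposition \ref{prop:MCGEquivariancePsi} gives $\Psi\circ\mathbf a=C_{\widehat D_A}\circ\Psi$ and $\Psi\circ\mathbf s=C_{\widehat S}\circ\Psi$ on generators; since all four maps are algebra homomorphisms and $\mathbf a,\mathbf s$ are automorphisms of $\mathcal A$ (Proposition \ref{prop:ATwistAlgebrAutomorphism}), these identities extend to all of $\mathcal A$. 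The conjugations $C_{\widehat D_A},C_{\widehat D_B}$ furnish the $SL(2,\mathbb Z)$-action $\varphi$ of the Corollary to Theorem \ref{th:SL2ZRelationsHatD}; it restricts to a genuine $PSL(2,\mathbb Z)$-action on the subalgebra $\Psi(\mathcal A)$ because the image $\widehat S^2$ of $-\mathrm{Id}$ acts trivially there, by (\ref{eq:SHat2ActsTriviallyOnOag}) for $O_A^{(g)}$ and its $\widehat S$-conjugate for $O_B^{(g)}$. Hence the descended $\Psi$ is $PSL(2,\mathbb Z)$-equivariant.

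For descent it remains, after Proposition \ref{prop:SevenRelationsAnnihilated} disposes of (\ref{eq:Relation0Xg})--(\ref{eq:Relation7g}), to show that $\R_8,\R_9,\R_{10},\R_{11}$ lie in $\ker\Psi$. The $\mathbf s$-symmetry (\ref{eq:Z2SymmetryDefiningIdeal}) pairs $\R_8\leftrightarrow\R_9$ and $\R_{10}\leftrightarrow\R_{11}$, and $\Psi$ is $\mathbf s$-equivariant, so it suffices to treat $\R_8$ and $\R_{10}$. Moreover each $\R_i^{(g_1,g_2)}$ is the right-translate by $g_2$ of $\R_i^{(g_1,1)}$, so by Corollary \ref{cor:RightActionPreservesKernelOfPsi} I may further reduce to the second argument being trivial.

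The engine is the computation already carried out inside the proof of Theorem \ref{th:AlgebraPSL2ZActionSA}. Formulas (\ref{eq:asasOAg})--(\ref{eq:R10ToProveMCG}) show, modulo the subideal generated by $\R_0$--$\R_7$, that
\[
\mathbf a\mathbf s\mathbf a\mathbf s\big(\Oa^{(g)}\big)-\mathbf s\mathbf a^{-1}\big(\Oa^{(g)}\big)\equiv-\tfrac{1}{(Q^2-Q^{-2})^2}\,\R_{10}^{(ga,1)}.
\]
Applying $\Psi$ and using Proposition \ref{prop:SevenRelationsAnnihilated} to annihilate the correction terms, the left side becomes $\big(C_{(\widehat S\widehat D_A)^2}-C_{\widehat D_A^{-1}\widehat S}\big)(O_A^{(g)})$. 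Now the $SL(2,\mathbb Z)$-identity $(\widehat S\widehat D_A)^2\propto\widehat S^2\widehat D_A^{-1}\widehat S$, a projective consequence of Theorem \ref{th:SL2ZRelationsHatD} in which $-\mathrm{Id}$ is represented by $\widehat S^2$, together with the trivial action of $\widehat S^2$ on $O_A^{(g)}$ from (\ref{eq:SHat2ActsTriviallyOnOag}), makes the two conjugations agree on $O_A^{(g)}$. Hence $\Psi(\R_{10}^{(ga,1)})=0$ for all $g$, which yields $\R_{10}$ (and then $\R_{11}$ by $\mathbf s$-symmetry).

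The hard part will be $\R_8$ (and symmetrically $\R_9$). Running the same scheme on $\Ob^{(g)}$, the parallel computation in the proof of Theorem \ref{th:AlgebraPSL2ZActionSA} produces $\R_8$ not on its own but \emph{dressed} by idempotent factors: after the main difference $\Psi(\mathbf a\mathbf s\mathbf a\mathbf s(\Ob^{(g)}))-\Psi(\mathbf s\mathbf a^{-1}(\Ob^{(g)}))$ cancels by exactly the argument above (now using the trivial $\widehat S^2$-action on $O_B^{(g)}$), one is left only with
\[
-Q\,O_A^{(1)}\,\Psi\big(\R_8^{(\sigma(g)a^{-1},a^{-1})}\big)+Q^{-1}\,\Psi\big(\R_8^{(\sigma(g)a^{-1},a^{-1})}\big)\,O_A^{(1)}=0.
\]
Since $O_A^{(1)}=\mathrm{diag}\big(-\mathrm i(Q^2-Q^{-2}),0,\mathrm i(Q^2-Q^{-2})\big)$, this twisted commutation forces $\Psi(\R_8)$ to vanish in every entry except possibly $(2,2)$. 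I therefore expect the one genuine obstacle to be the vanishing of that single residual entry, which I would settle by a direct evaluation of the $(2,2)$-component of $\Psi(\R_8^{(g_1,1)})$ through the recursions (\ref{eq:RecursiveRelationOa})--(\ref{eq:RecursiveRelationOb}) and Lemma \ref{lemm:MatrixRelations234567Forg1} (alternatively, by a uniform induction on word length annihilating $\R_8$--$\R_{11}$ directly, in the style of Proposition \ref{prop:SevenRelationsAnnihilated}). With that entry killed, $\R_8$ and $\R_9$ follow, completing the descent and with it the theorem.
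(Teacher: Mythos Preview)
Your reduction strategy (use $\mathbf s$-equivariance to pair $\R_8\!\leftrightarrow\!\R_9$ and $\R_{10}\!\leftrightarrow\!\R_{11}$, then Corollary~\ref{cor:RightActionPreservesKernelOfPsi} to set $g_2=1$) matches the paper exactly, and your treatment of $\R_{10}$ via the congruence (\ref{eq:R10ToProveMCG}) modulo $\mathcal I_7$ together with the projective $SL(2,\mathbb Z)$ relations and (\ref{eq:SHat2ActsTriviallyOnOag}) is correct and essentially the paper's argument.

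The gap is in your handling of $\R_8$. You correctly extract from the $\Ob^{(g)}$-calculation of Theorem~\ref{th:AlgebraPSL2ZActionSA} the twisted commutation $Q\,O_A^{(1)}\Psi(\R_8)-Q^{-1}\Psi(\R_8)\,O_A^{(1)}=0$, and your diagonal analysis leaving only the $(2,2)$ entry is right. But you then defer that entry to a ``direct evaluation through the recursions'' or an induction in the style of Proposition~\ref{prop:SevenRelationsAnnihilated}. Neither is straightforward: $\R_8^{(g_1,1)}$ involves $O_A^{(g_1aba)}$ and $O_B^{(\sigma(g_1))}$ for arbitrary $g_1$, so there is no single-step recursion that peels off a letter uniformly, and you have not indicated how the induction hypothesis would close.

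The paper avoids this obstacle entirely by choosing a \emph{different} $PSL(2,\mathbb Z)$ word. Instead of reusing the $\Ob^{(g)}$-computation (which produces $\R_8$ dressed by $O_A^{(1)}$), it runs the mirror identity $\mathbf a^{-1}\mathbf s\mathbf a^{-1}\mathbf s\simeq\mathbf s\mathbf a$ on $\Oa^{(g)}$. A short calculation parallel to (\ref{eq:asasOAg})--(\ref{eq:R10ToProveMCG}) (carried out in (\ref{eq:aInversesaInversesOag})--(\ref{eq:R8FromMCG})) yields
\[
\mathbf a^{-1}\mathbf s\mathbf a^{-1}\mathbf s\big(\Oa^{(g)}\big)-\mathbf s\mathbf a\big(\Oa^{(g)}\big)\equiv-\tfrac{1}{(Q^2-Q^{-2})^2}\,\R_8^{(ga^{-1},1)}\pmod{\mathcal I_7},
\]
i.e.\ $\R_8$ appears \emph{undressed}. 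Then the same equivariance argument you used for $\R_{10}$ (now with $\mathbf a^{-1}$ in place of $\mathbf a$, still covered by Proposition~\ref{prop:MCGEquivariancePsi} and Theorem~\ref{th:SL2ZRelationsHatD}) kills it outright, with no residual entry to chase. That single change of word is the missing idea.
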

\begin{proof}
We have to show that $\mathcal I\subset\ker\Psi$. Note that by Proposition \ref{prop:SevenRelationsAnnihilated} we already know that the subideal $\mathcal I_7\subset\mathcal I$ generated by relations (\ref{eq:Relation0Xg})--(\ref{eq:Relation7g}) is annihilated by $\Psi:\mathcal F\rightarrow\mathrm{Mat}_{3\times3}(\mathbf R)$. At the same time, by Proposition \ref{prop:MCGEquivariancePsi}, we know that $\ker\Psi$ must be invariant under the action of the order two automorphism $\mathbf s:\mathcal F\rightarrow\mathcal F$. Hence, it will be enough for us to prove that only two out four remaining types of relators are annihilated by $\Psi$, say $\R_8^{(g_1,g_2)}$ and $\R_{10}^{(g_1,g_2)}$. Moreover, by Corollary \ref{cor:RightActionPreservesKernelOfPsi} we can assume without loss of generality that $g_2=1$.

A closer look on (\ref{eq:asasOAg}),(\ref{eq:saInverseOAg}), and (\ref{eq:R10ToProveMCG}) suggests that a slightly stronger statement can be made, namely
\begin{equation}
\mathbf a\mathbf s\mathbf a\mathbf s(\Oa^{(g)})-\mathbf s\mathbf a^{-1}(\Oa^{(g)})=-\frac1{\big(Q^2-Q^{-2}\big)^2}\R_{10}^{(ga,1)}\qquad\bmod\;\mathcal I_7;
\label{eq:R10FromMCGUsingI7}
\end{equation}
as only relators from $\mathcal I_7$ were utilized on the first step of the calculation. On the other hand, by Proposition \ref{prop:MCGEquivariancePsi} combined with Theorem \ref{th:SL2ZRelationsHatD} we conclude that the left hand side of (\ref{eq:R10FromMCGUsingI7}) must be annihilated by $\Psi$. Hence $\R_{10}^{(g_1,1)}\in\ker\Psi$ for all $g_1\in Free_2$.

Next, to prove that $\R_8^{(g_1,1)}$ is annihilated by $\Psi$ we repeat the calculations similar to (\ref{eq:asasOAg}),(\ref{eq:saInverseOAg}), and (\ref{eq:R10ToProveMCG}) with the only difference that now we compute the action of the following two words representing the same element of $PSL(2,\mathbb Z)$:
\begin{equation}
\mathbf a^{-1}\mathbf s\mathbf a^{-1}\mathbf s\simeq\mathbf s\mathbf a\qquad \textrm{in}\quad PSL(2,\mathbb Z).
\label{eq:PairOfEquivalentPSL2ZElementsToObtainR8}
\end{equation}
Applying to generator $\Oa^{(g)}$ the sequence of homomorphisms of $\mathcal F$ represented by the word on the left of (\ref{eq:PairOfEquivalentPSL2ZElementsToObtainR8}) we get
\begin{equation}
\begin{aligned}
\mathbf a^{-1}\big(\mathbf s\big(\mathbf a^{-1}\big(\mathbf s\big(\Oa^{(g)}\big)\big)\big)\big)=&\mathbf a^{-1}\big(\mathbf s\big(\mathbf a^{-1}\big(\Ob^{(\sigma(g))}\big)\big)\big)\\
=&\frac1{Q^2-Q^{-2}}\mathbf a^{-1}\Big(\mathbf s\Big(Q\Ob^{(\sigma(g)a)}\Oa^{(1)}-Q^{-1}\Oa^{(1)}\Ob^{(\sigma(g)a)}\Big)\Big)\\
=&\frac1{Q^2-Q^{-2}}\mathbf a^{-1}\Big(Q\Oa^{(gb)}\Ob^{(1)}-Q^{-1}\Ob^{(1)}\Oa^{(gb)}\Big)\\
=&\frac1{\big(Q^2-Q^{-2}\big)^4}\Big(\Oa^{(1)}\Oa^{(gba)}\Oa^{(1)}\Oa^{(1)}\Ob^{(a)}-Q^2\Oa^{(1)}\Oa^{(gba)}\Oa^{(1)}\Ob^{(a)}\Oa^{(1)}\\
&\qquad-Q^{-2}\Oa^{(1)}\Ob^{(a)}\Oa^{(1)}\Oa^{(gba)}\Oa^{(1)}+\Ob^{(a)}\Oa^{(1)}\Oa^{(1)}\Oa^{(gba)}\Oa^{(1)}\Big)\\
=&\frac1{\big(Q^2-Q^{-2}\big)^2}\Big(-\Oa^{(1)}\Oa^{(gba)}\Ob^{(a)}-\Ob^{(a)}\Oa^{(gba)}\Oa^{(1)}\Big)\\
&+\frac1{\big(Q^2-Q^{-2}\big)^4}\Big(\Oa^{(1)}\Oa^{(gba)}\R_{6}^{(a)}-Q^2\Oa^{(1)}\Oa^{(gba)}\R_{3}^{(1,a,1)}-\Oa^{(1)}\R_{3}^{(gba,a,1)}\Oa^{(1)}\\
&\qquad+\Ob^{(a)}\R_{4,A}^{(gba)}\Oa^{(1)}-Q^{-2}\R_{3}^{(1,a,1)}\Oa^{(gba)}\Oa^{(1)}\Big)\\
=&\frac1{\big(Q^2-Q^{-2}\big)^2}\Big(-\Oa^{(1)}\Oa^{(gba)}\Ob^{(a)}-\Ob^{(a)}\Oa^{(gba)}\Oa^{(1)}\Big)\qquad\bmod\;\mathcal I_7.
\end{aligned}
\label{eq:aInversesaInversesOag}
\end{equation}
At the same time, for the word on the right of (\ref{eq:PairOfEquivalentPSL2ZElementsToObtainR8}) we get
\begin{equation}
\begin{aligned}
\mathbf s\big(\mathbf a\big(\Oa^{(g)}\big)\big)=&-\frac1{\big(Q^2-Q^{-2}\big)^2}\mathbf s\Big(\Oa^{(1)}\Oa^{(ga^{-1})}\Oa^{(1)}\Big)=-\frac1{\big(Q^2-Q^{-2}\big)^2}\Ob^{(1)}\Ob^{(\sigma(g)b^{-1})}\Ob^{(1)}.
\end{aligned}
\label{eq:saOag}
\end{equation}
Combining (\ref{eq:aInversesaInversesOag}) with (\ref{eq:saOag}) we have
\begin{equation}
\begin{aligned}
\mathbf a^{-1}\big(\mathbf s\big(\mathbf a^{-1}\big(\mathbf s\big(\Oa^{(g)}\big)\big)\big)\big)-\mathbf s\big(\mathbf a\big(&\Oa^{(g)}\big)\big)\\
\equiv&-\frac1{\big(Q^2-Q^{-2}\big)^2}\Big(\Oa^{(1)}\Oa^{(gba)}\Ob^{(a)}+-\Ob^{(a)}\Oa^{(gba)}\Oa^{(1)}-\Ob^{(1)}\Ob^{(\sigma(g)b^{-1})}\Ob^{(1)}\Big)\\
\equiv&-\frac1{\big(Q^2-Q^{-2}\big)^2}\R_8^{(ga^{-1},1)}\qquad\bmod\;\mathcal I_7.
\end{aligned}
\label{eq:R8FromMCG}
\end{equation}
To conclude the proof we note that by Proposition \ref{prop:MCGEquivariancePsi} combined with Theorem \ref{th:SL2ZRelationsHatD}, the left hand side of (\ref{eq:R8FromMCG}) must be annihilated by $\Psi$. Hence $\R_8^{(g_1,1)}\in\ker\Psi$ for all $g_1\in Free_2$.
\end{proof}

\section{Relation to affine Laumon space}
\label{sec:RelationToLaumon}

In this section, we suggest a relation between the construction of current paper and affine Laumon space. To state this relation, we define functions 

\begin{align*}
\psi_{11} =& \mathrm{pexp}\left( -\dfrac{(Q^4-Q^{-4})(Q^4 + s^2 Q^{-4})p}{(1-p^2)(1-s^2)} \right) \mathrm{pexp}\left( \dfrac{(-(s+2)Q^4 + (2s^2 + s)Q^{-4} ) p^2}{1-s^2} \right) \\
\psi_{12} =& - i (Q^2-Q^{-2}) \mathrm{pexp}\left( -\dfrac{(Q^4-Q^{-4})(Q^4 + s Q^{-4})p}{(1-p^2)(1-s)} \right) \mathrm{pexp}\left( \dfrac{-2(Q^4 - s Q^{-4})(s Q^4 + Q^{-4}) p^2}{1-s^2} \right) \\
\psi_{13} =& - \dfrac{(Q^2-Q^{-2})^2}{2} \mathrm{pexp}\left( -\dfrac{(Q^4-Q^{-4})(Q^4 + s^2 Q^{-4})p}{(1-p^2)(1-s^2)} \right) \mathrm{pexp}\left( \dfrac{(-(s+2)Q^4 + (2s^2 + s)Q^{-4} ) p^2}{1-s^2} \right) \\
\psi_{21} =& \mathrm{pexp}\left( -\dfrac{(Q^8-Q^{-8})sp}{(1-p^2)(1-s^2)} \right) \mathrm{pexp}\left( \dfrac{(-(s+2)Q^4 + (2s^2 + s)Q^{-4} ) p^2}{1-s^2} \right) \\
\psi_{22} =& 0 \\
\psi_{23} =& \dfrac{(Q^2-Q^{-2})^2}{2} \mathrm{pexp}\left( -\dfrac{(Q^8-Q^{-8})sp}{(1-p^2)(1-s^2)} \right) \mathrm{pexp}\left( \dfrac{(-(s+2)Q^4 + (2s^2 + s)Q^{-4} ) p^2}{1-s^2} \right) \\
\psi_{31} =& \mathrm{pexp}\left( -\dfrac{(Q^4-Q^{-4})(Q^4 + s^2 Q^{-4})p}{(1-p^2)(1-s^2)} \right) \mathrm{pexp}\left( \dfrac{(-(s+2)Q^4 + (2s^2 + s)Q^{-4} ) p^2}{1-s^2} \right) \\
\psi_{32} =& i (Q^2-Q^{-2}) \mathrm{pexp}\left( -\dfrac{(Q^4-Q^{-4})(Q^4 + s Q^{-4})p}{(1-p^2)(1-s)} \right) \mathrm{pexp}\left( \dfrac{-2(Q^4 - s Q^{-4})(s Q^4 + Q^{-4}) p^2}{1-s^2} \right) \\
\psi_{33} =& - \dfrac{(Q^2-Q^{-2})^2}{2} \mathrm{pexp}\left( -\dfrac{(Q^4-Q^{-4})(Q^4 + s^2 Q^{-4})p}{(1-p^2)(1-s^2)} \right) \mathrm{pexp}\left( \dfrac{(-(s+2)Q^4 + (2s^2 + s)Q^{-4} ) p^2}{1-s^2} \right)
\end{align*}
\smallskip\\
The so defined functions are related to the operator $O^{(1)}_B$ via

\begin{align*}
\sum\limits_{j=1}^{3} (O^{(1)}_B)_{ij} \psi_{kj} = (X_k+X_k^{-1}) \psi_{ki}    
\end{align*}
\smallskip\\
where $X_k = i Q^{k-2}$. At the same time they are related to the affine Laumon space via the following special functions, which are well-known \cite{Shiraishi'2019} to be characters (\cite{Negut'2011} eqs. (1.10), (1.11)) in K-theory of the affine Laumon space:

\begin{align*}
f^{{\widehat gl}_N} \big( x_1, \ldots, x_N, p \big\vert y_1, \ldots, y_N, s \big| q,t \big) = \sum\limits_{{\vec \lambda}} \ \prod\limits_{i,j = 1}^{N} \frac{{\mathcal N}_{\lambda^{(i)}, \lambda^{(j)}}^{(j-i)}\big( q/t \ y_j / y_i \vert q, s \big)}{{\mathcal N}_{\lambda^{(i)}, \lambda^{(j)}}^{(j-i)}\big( y_j / y_i \vert q,s \big)} \ \prod\limits_{\beta = 1}^{N} \prod\limits_{\alpha \geq 1} \left( \frac{p t x_{\alpha+\beta}}{q x_{\alpha+\beta-1}} \right)^{\lambda^{(\beta)}_\alpha}
\end{align*}
\smallskip\\
which is a sum over all $N$-tuples ${\vec \lambda} = \big\{ \lambda^{(i)} \big\}$ of integer partitions $\lambda^{(i)} = (\lambda^{(i)}_1 \geq \lambda^{(i)}_2 \geq \ldots \geq 0)$ for $i = 1, \ldots, N,$ where

\begin{align*}
{\mathcal N}_{\lambda, \mu}^{(k)}\big( u \vert q, s \big) =
\prod\limits_{b \geq a \geq 1} \ \big( 1 - u q^{-\mu_a + \lambda_{b+1}} s^{-a+b} \big)^{\delta_{a - b - k \vert N}} \ \big( 1 - u q^{\lambda_{a} - \mu_b} s^{a-b-1} \big)^{\delta_{a - b + k + 1 \vert N}}
\end{align*}
\smallskip\\
with $\delta_{n \vert N} = 1$ if $n \bmod N = 0$ and $\delta_{n \vert N} = 0$ otherwise. By convention $x_{n + N} = x_{n}$.

\begin{conjecture}
The functions $\psi_{ij}$ are specializations of the above K-theory character of affine Laumon space,

\begin{align*}
\psi_{ij} = f^{{\widehat gl}_2} \big( X_i p^{-1/2}, X_i^{-1}, p^{1/2} \big\vert X_j s^{-1/2}, X_j^{-1}, s^{1/2} \big| Q^4, -Q^{-4} \big)
\end{align*}
\smallskip\\
for $i,j \in \{0,1,2\}$.

\end{conjecture}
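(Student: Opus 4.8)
The plan is to prove the identity by exhibiting both sides as the essentially unique eigenfunction of a common difference operator, and to reduce the all-orders statement to a finite combinatorial resummation. First I would substitute the specialization $N=2$, $q=Q^4$, $t=-Q^{-4}$ (so $q/t=-Q^8$), $x_1=X_ip^{-1/2}$, $x_2=X_i^{-1}$, $y_1=X_js^{-1/2}$, $y_2=X_j^{-1}$ with $X_k=iQ^{k-2}$ into the character $f^{\widehat{gl}_2}$. The crucial preliminary observation is that the Coulomb factor $\prod_\beta\prod_\alpha\big(pt\,x_{\alpha+\beta}/q\,x_{\alpha+\beta-1}\big)^{\lambda^{(\beta)}_\alpha}$ endows the sum with a filtration in which the power of $p$ attached to a tuple $\vec\lambda$ grows with the total number of boxes $\sum_{\beta,\alpha}\lambda^{(\beta)}_\alpha$ (the half-integer shifts carried by $x_1$ recombining into integer powers). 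Hence after specialization $f^{\widehat{gl}_2}$ is a well-defined formal series in $p$ with coefficients rational in $Q$ and $s$, and the claim becomes an equality of two such series, the right-hand $\psi_{ij}$ being an explicit finite product of plethystic exponentials.

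The core of the argument I would route through eigenfunctions. The relation stated just above the conjecture, $\sum_{j}(O_B^{(1)})_{ij}\,\psi_{kj}=(X_k+X_k^{-1})\,\psi_{ki}$, can be verified directly from the explicit formulas using the exponential properties of $\mathrm{pexp}$; this shows that the columns of $(\psi_{ki})$ diagonalize $O_B^{(1)}$ with eigenvalues $X_k+X_k^{-1}$. On the other side, the characters $f^{\widehat{gl}_N}$ are, by the integrability of the affine Laumon space (the results of Negut and Shiraishi cited in the excerpt), joint eigenfunctions of a commuting family of elliptic Macdonald/Ruijsenaars-type difference operators. I would therefore identify the specialization of one such operator at $t=-q^{-K/2}$, $K=2$, with the difference-operator incarnation of $O_B^{(1)}$, matching the eigenvalues $X_k+X_k^{-1}$. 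Once both families are shown to solve the same eigenvalue problem with the same spectrum, a uniqueness argument for the relevant eigenspace together with a comparison of the leading ($p^0$) coefficients forces the two sides to coincide.

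Alternatively, and as the decisive technical step either way, I would attempt a direct resummation. At the resonant value $q/t=-Q^8$ the Nekrasov factors $\mathcal N^{(k)}_{\lambda,\mu}(q/t\cdot y_j/y_i\,|\,q,s)$ in the numerator develop zeros that truncate the allowed partitions $\lambda^{(i)}$ to a controlled combinatorial set, reflecting the finite-dimensionality of the $K=2$ module established in Section~\ref{sec:AlgebraK2}. The surviving terms should telescope through the ratio of $\mathcal N$-factors so that the sum over $\vec\lambda$ collapses into an infinite product in $p,s$; re-expressing this product through iterated $q$-Pochhammer symbols via (\ref{eq:GeneralPochhammerExplicit}) should reproduce exactly the two-factor $\mathrm{pexp}$ form of $\psi_{ij}$.

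The main obstacle is precisely this resummation: verifying the identity to any finite order in $p$ is routine, but proving the all-orders equality requires controlling the cancellations among the $\mathcal N$-factors at $q/t=-Q^8$ and showing that the truncated sum assembles into the specific plethystic exponential, rather than merely agreeing order by order. Establishing the uniqueness of the eigenfunction with prescribed eigenvalue and normalization — so that the eigenfunction route closes — is the cleanest way around this, but it in turn demands a careful analysis of the spectrum of the specialized difference operator, which is where I expect the real work to lie.
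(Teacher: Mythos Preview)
The paper does not prove this statement: it is explicitly labeled a \emph{conjecture}, and the authors state only that they ``have verified this conjecture in power series expansion in $p$ up to a few orders, but a full proof will not be given in this paper.'' So there is no proof in the paper to compare against; what you have written is a proposed strategy for an open problem.

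As a strategy, your outline is reasonable in spirit but does not close. The eigenfunction route requires you to (i) actually identify a difference operator for which the specialized $f^{\widehat{gl}_2}$ are eigenfunctions at $t=-q^{-1}$, (ii) show that this operator coincides (in the appropriate sense) with $O_B^{(1)}$, and (iii) prove a uniqueness statement for the eigenspaces with the given normalization. None of these three steps is carried out: step~(i) is asserted by appeal to ``integrability'' results of Negu\c{t} and Shiraishi without specifying which operator and without checking that the specialization $t=-q^{-1}$ is compatible with those results; step~(ii) is not even formulated precisely (the matrix $O_B^{(1)}$ acts on a three-dimensional space, not on functions); and step~(iii) is exactly the hard analytic point you yourself flag. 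The alternative resummation route is likewise left as a hope that the Nekrasov factors truncate and telescope, which you correctly identify as the main obstacle but do not resolve. In short, you have restated the problem and named two plausible avenues, but neither is a proof; this is consistent with the paper's own assessment that the statement remains conjectural.
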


We have verified this conjecture in power series expansion in $p$ up to a few orders, but a full proof will not be given in this paper.

\section{Conclusion}
\label{sec:Conclusion}

In this paper, we presented an algebra which generalizes the spherical $A_1$ DAHA by replacing the two generators $\Oa$ and $\Ob$ with two infinite families of generators parametrized by elements of $Free_2$. We proved that $SL(2,{\mathbb Z})$ acts by automorphisms of this algebra, and also presented a representation of both the algebra elements and automorphisms by $3 \times 3$ matrices. 
\smallskip\\

\paragraph{\textbf{Relation to Felder-Varchenko functions.}} In \cite{FelderVarchenko'2001}, G. Felder and A. Varchenko studied solutions of q-KZB equations on tori, introducing as a result a class of special functions with interesting $SL(3,{\mathbb Z})$ transformation properties. We note that the $SL(2,{\mathbb Z})$ part of these transformations is reminiscent of the $SL(2,{\mathbb Z})$ action that we study in current paper. For example, the heat equation (10) in \cite{FelderVarchenko'2001} is reminiscent of our (\ref{eq:usefulrelation}) by the way parameters $p,s$ are shifted. The major difference is that the r.h.s. of the heat equation is not a finite sum of matrices but rather an integral. It would be interesting to further investigate this relation, especially to find out whether it is possible to establish a $SL(3,{\mathbb Z})$ group action on our algebra.

In view of this it is interesting to note that both Felder-Varchenko functions and the affine Laumon space are related to representation theory of $U_q( \widehat{gl}_n )$, albeit in a somewhat different way. The first relation has been established in \cite{Sun'2015} where Felder-Varchenko functions have been connected to traces of intertwiners of $U_q( {\hat gl}_n )$ (only for $n=2$ though). The second relation has been established in \cite{Negut'2011} where K-theory character of the affine Laumon space has been connected to traces of intertwiners of $U_q( {\hat gl}_n )$ twisted in a certain way (we thank Andrei Negut for pointing this out). Despite apparent similarity, the details of the two relations to quantum affine groups appear to differ. It would be interesting to find a precise comparison between the first and the second.

\paragraph{\textbf{Relation to knot theory.}} In \cite{AganagicShakirov'2015}, M. Aganagic and one of the authors studied a one-parameter deformation of Chern-Simons theory on a restricted class of manifolds admitting an $S^1$ action. This deformation, known as refined Chern-Simons theory, also allows to compute certain knot invariants, although in a restricted setting of torus knots. We note that $SL(2,{\mathbb Z})$ action that we introduce and study in current paper is a further elliptic 2-parameter deformation of refined Chern-Simons theory at level $K=2$ (which corresponds to constraint $t=-q^{-1}$ i.e. three-dimensional representations). We also note that the partition function of an $(n,m)$ torus knot, defined in the same way as in \cite{AganagicShakirov'2015} but with our elliptic representation of $SL(2,Z)$, is invariant under topological symmetry $(n,m) \leftrightarrow (m,n)$ in numerous cases we checked. One therefore obtains elliptic functions associated to torus knots, and it would be interesting to see if they can be related to any homological knot invariants such as Khovanov-Rozansky homology.

\appendix

\section{Calculations for Proposition \ref{prop:ATwistAlgebrAutomorphism}}.
\label{sec:IdealIInvariateAutomorphismA}

Here we present calculations omitted from the proof of Proposition \ref{prop:ATwistAlgebrAutomorphism} in the main text. Namely, we show that ideal $\mathcal I\subset\mathcal F$ is invariant under the action of (\ref{eq:aActionK2Algebra}) by computing the action of $\mathbf a$ on each of the generators. For the first seven relators we get:
\begin{align*}
\mathbf a\big(\R_{1,A}^{(g)}\big)=&\frac1{\big(Q^2-Q^{-2}\big)^4}\Big(\Oa^{(1)}\Oa^{(ga^{-1})}\Oa^{(1)}\Oa^{(1)}\Oa^{(ga^{-1})}\Oa^{(1)}-\Oa^{(1)}\Oa^{(1)}\Oa^{(1)}\Oa^{(1)}\Oa^{(1)}\Oa^{(1)}\Big)\\
=&-\frac1{\big(Q^2-Q^{-2}\big)^2}\Oa^{(1)}\R_{1,A}^{(ga^{-1})}\Oa^{(1)}\\
&+\frac1{\big(Q^2-Q^{-2}\big)^4}\Big(-\Oa^{(1)}\Oa^{(1)}\Oa^{(1)}\R_{4,A}^{(1)}+\Oa^{(1)}\Oa^{(ga^{-1})}\R_{4,A}^{(ga^{-1})}\Oa^{(1)}\Big)\quad\in\quad\mathcal I,\\[1em]
\mathbf a\big(\R_{1,B}^{(g)}\big)=&\frac1{\big(Q^2-Q^{-2}\big)^2}\Big(-Q^2\Oa^{(1)}\Ob^{(a^{-1})}\Oa^{(1)}\Ob^{(a^{-1})}+\Oa^{(1)}\Ob^{(a^{-1})}\Ob^{(a^{-1})}\Oa^{(1)}+Q^2\Oa^{(1)}\Ob^{(ga^{-1})}\Oa^{(1)}\Ob^{(ga^{-1})}\\
&\qquad-\Oa^{(1)}\Ob^{(ga^{-1})}\Ob^{(ga^{-1})}\Oa^{(1)}+\Ob^{(a^{-1})}\Oa^{(1)}\Oa^{(1)}\Ob^{(a^{-1})}-Q^{-2}\Ob^{(a^{-1})}\Oa^{(1)}\Ob^{(a^{-1})}\Oa^{(1)}\\
&\qquad-\Ob^{(ga^{-1})}\Oa^{(1)}\Oa^{(1)}\Ob^{(ga^{-1})}+Q^{-2}\Ob^{(ga^{-1})}\Oa^{(1)}\Ob^{(ga^{-1})}\Oa^{(1)}\Big)\\
=&\R_{1,B}^{(ga^{-1})}-\R_{1,B}^{(a^{-1})}+\frac1{\big(Q^2-Q^{-2}\big)^2}\Big(-Q^2\Oa^{(1)}\R_{2}^{(a^{-1},1,a^{-1})}+Q^2\Oa^{(1)}\R_{2}^{(ga^{-1},1,ga^{-1})}\\
&\qquad+\Ob^{(a^{-1})}\R_{6}^{(a^{-1})}-Q^{-2}\Ob^{(a^{-1})}\R_{3}^{(1,a^{-1},1)}-\Ob^{(ga^{-1})}\R_{6}^{(ga^{-1})}+Q^{-2}\Ob^{(ga^{-1})}\R_{3}^{(1,ga^{-1},1)}\\
&\qquad+\Oa^{(1)}\R_{1,B}^{(a^{-1})}\Oa^{(1)}-\Oa^{(1)}\R_{1,B}^{(ga^{-1})}\Oa^{(1)}-\R_{1,B}^{(a^{-1})}\Oa^{(1)}\Oa^{(1)}+\R_{1,B}^{(ga^{-1})}\Oa^{(1)}\Oa^{(1)}\Big)\qquad\in\quad\mathcal I,\\[1em]
\mathbf a\big(\R_2^{(g_1,g_2,g_3)}\big)=&\frac1{\big(Q^2-Q^{-2}\big)^4}\Big(-Q^2\Oa^{(1)}\Ob^{(g_1a^{-1})}\Oa^{(1)}\Oa^{(g_2a^{-1})}\Oa^{(1)}\Oa^{(1)}\Ob^{(g_3a^{-1})}\\
&\qquad+\Oa^{(1)}\Ob^{(g_1a^{-1})}\Oa^{(1)}\Oa^{(g_2a^{-1})}\Oa^{(1)}\Ob^{(g_3a^{-1})}\Oa^{(1)}+\Ob^{(g_1a^{-1})}\Oa^{(1)}\Oa^{(1)}\Oa^{(g_2a^{-1})}\Oa^{(1)}\Oa^{(1)}\Ob^{(g_3a^{-1})}\\
&\qquad-Q^{-2}\Ob^{(g_1a^{-1})}\Oa^{(1)}\Oa^{(1)}\Oa^{(g_2a^{-1})}\Oa^{(1)}\Ob^{(g_3a^{-1})}\Oa^{(1)}\Big)\\
=&\R_{2}^{(g_1a^{-1},g_2a^{-1},g_3a^{-1})}+\frac1{\big(Q^2-Q^{-2}\big)^2}\Big(-\Ob^{(g_1a^{-1})}\R_{4,A}^{(g_2a^{-1})}\Ob^{(g_3a^{-1})}+Q^2\R_{3}^{(1,g_1a^{-1},1)}\Oa^{(g_2a^{-1})}\Ob^{(g_3a^{-1})}\Big)\\
&+\frac1{\big(Q^2-Q^{-2}\big)^4}\Big(-Q^2\Oa^{(1)}\Ob^{(g_1a^{-1})}\Oa^{(1)}\Oa^{(g_2a^{-1})}\R_{6}^{(g_3a^{-1})}+\Oa^{(1)}\Ob^{(g_1a^{-1})}\Oa^{(1)}\Oa^{(g_2a^{-1})}\R_{3}^{(1,g_3a^{-1},1)}\\
&\qquad+Q^2\Oa^{(1)}\Ob^{(g_1a^{-1})}\Oa^{(1)}\R_{3}^{(g_2a^{-1},g_3a^{-1},1)}\Oa^{(1)}+\Ob^{(g_1a^{-1})}\Oa^{(1)}\Oa^{(1)}\Oa^{(g_2a^{-1})}\R_{6}^{(g_3a^{-1})}\\
&\qquad-Q^{-2}\Ob^{(g_1a^{-1})}\Oa^{(1)}\Oa^{(1)}\Oa^{(g_2a^{-1})}\R_{3}^{(1,g_3a^{-1},1)}-\Ob^{(g_1a^{-1})}\Oa^{(1)}\Oa^{(1)}\R_{3}^{(g_2a^{-1},g_3a^{-1},1)}\Oa^{(1)}\Big)\qquad\in\quad\mathcal I,\\[1em]
\mathbf a\big(\R_3^{(g_1,g_2,g_3)}\big)=&\frac1{\big(Q^2-Q^{-2}\big)^5}\Big(Q\Oa^{(1)}\Oa^{(g_1a^{-1})}\Oa^{(1)}\Oa^{(1)}\Ob^{(g_2a^{-1})}\Oa^{(1)}\Oa^{(g_3a^{-1})}\Oa^{(1)}\\
&\qquad-Q^{-1}\Oa^{(1)}\Oa^{(g_1a^{-1})}\Oa^{(1)}\Ob^{(g_2a^{-1})}\Oa^{(1)}\Oa^{(1)}\Oa^{(g_3a^{-1})}\Oa^{(1)}\Big)\\
=&\frac1{\big(Q^2-Q^{-2}\big)^3}Q^{-1}\Oa^{(1)}\Oa^{(g_1a^{-1})}\R_{3}^{(1,g_2a^{-1},g_3a^{-1})}\Oa^{(1)}\\
&+\frac1{\big(Q^2-Q^{-2}\big)^5}\Big(-Q^{-1}\Oa^{(1)}\Oa^{(g_1a^{-1})}\Oa^{(1)}\Ob^{(g_2a^{-1})}\R_{4,A}^{(g_3a^{-1})}\Oa^{(1)}\\
&\qquad+Q\Oa^{(1)}\Oa^{(g_1a^{-1})}\Oa^{(1)}\R_{3}^{(1,g_2a^{-1},1)}\Oa^{(g_3a^{-1})}\Oa^{(1)}\Big)\qquad\in\quad\mathcal I,\\[1em]
\mathbf a\big(\R_{4,A}^{(g)}\big)=&-\Oa^{(1)}\Oa^{(ga^{-1})}\Oa^{(1)}-\frac1{\big(Q^2-Q^{-2}\big)^6}\Oa^{(1)}\Oa^{(1)}\Oa^{(1)}\Oa^{(1)}\Oa^{(1)}\Oa^{(1)}\Oa^{(1)}\Oa^{(ga^{-1})}\Oa^{(1)}\\
=&-\frac1{\big(Q^2-Q^{-2}\big)^2}\Oa^{(1)}\R_{4,A}^{(ga^{-1})}\Oa^{(1)}-\frac1{\big(Q^2-Q^{-2}\big)^4}\Oa^{(1)}\Oa^{(1)}\Oa^{(1)}\R_{4,A}^{(ga^{-1})}\Oa^{(1)}\\
&-\frac1{\big(Q^2-Q^{-2}\big)^6}\Oa^{(1)}\Oa^{(1)}\Oa^{(1)}\Oa^{(1)}\Oa^{(1)}\R_{4,A}^{(ga^{-1})}\Oa^{(1)}\qquad\in\quad\mathcal I,\\[1em]
\mathbf a\big(\R_{4,B}^{(g)}\big)=&\big(Q^2-Q^{-2}\big)\Big(Q\Oa^{(1)}\Ob^{(ga^{-1})}-Q^{-1}\Ob^{(ga^{-1})}\Oa^{(1)}\Big)+\frac1{\big(Q^2-Q^{-2}\big)^3}\Big(Q^3\Oa^{(1)}\Ob^{(a^{-1})}\Oa^{(1)}\Ob^{(a^{-1})}\Oa^{(1)}\Ob^{(ga^{-1})}\\
&\qquad-Q\Oa^{(1)}\Ob^{(a^{-1})}\Oa^{(1)}\Ob^{(a^{-1})}\Ob^{(ga^{-1})}\Oa^{(1)}-Q\Oa^{(1)}\Ob^{(a^{-1})}\Ob^{(a^{-1})}\Oa^{(1)}\Oa^{(1)}\Ob^{(ga^{-1})}\\
&\qquad+Q^{-1}\Oa^{(1)}\Ob^{(a^{-1})}\Ob^{(a^{-1})}\Oa^{(1)}\Ob^{(ga^{-1})}\Oa^{(1)}-Q\Ob^{(a^{-1})}\Oa^{(1)}\Oa^{(1)}\Ob^{(a^{-1})}\Oa^{(1)}\Ob^{(ga^{-1})}\\
&\qquad+Q^{-1}\Ob^{(a^{-1})}\Oa^{(1)}\Oa^{(1)}\Ob^{(a^{-1})}\Ob^{(ga^{-1})}\Oa^{(1)}+Q^{-1}\Ob^{(a^{-1})}\Oa^{(1)}\Ob^{(a^{-1})}\Oa^{(1)}\Oa^{(1)}\Ob^{(ga^{-1})}\\
&\qquad-Q^{-3}\Ob^{(a^{-1})}\Oa^{(1)}\Ob^{(a^{-1})}\Oa^{(1)}\Ob^{(ga^{-1})}\Oa^{(1)}\Big)\\
=&\frac1{Q^2-Q^{-2}}\Big(Q\Oa^{(1)}\R_{4,B}^{(ga^{-1})}-Q^{-1}\R_{4,B}^{(ga^{-1})}\Oa^{(1)}-Q^{-1}\R_{2}^{(a^{-1},1,a^{-1})}\Ob^{(ga^{-1})}-Q\R_{3}^{(1,ga^{-1},1)}\Oa^{(1)}\\
&\qquad+Q\Oa^{(1)}\R_{1,B}^{(a^{-1})}\Ob^{(ga^{-1})}-Q^{-1}\R_{1,B}^{(a^{-1})}\Ob^{(ga^{-1})}\Oa^{(1)}\Big)\\
&+\frac1{\big(Q^2-Q^{-2}\big)^3}\Big(Q^3\Oa^{(1)}\Ob^{(a^{-1})}\Oa^{(1)}\R_{2}^{(a^{-1},1,ga^{-1})}-Q\Oa^{(1)}\Ob^{(a^{-1})}\Ob^{(a^{-1})}\R_{6}^{(ga^{-1})}\\
&\qquad+Q^{-1}\Oa^{(1)}\Ob^{(a^{-1})}\Ob^{(a^{-1})}\R_{3}^{(1,ga^{-1},1)}+Q\Oa^{(1)}\R_{4,B}^{(ga^{-1})}\Oa^{(1)}\Oa^{(1)}\\
&\qquad-Q\Oa^{(1)}\R_{2}^{(a^{-1},1,a^{-1})}\Ob^{(ga^{-1})}\Oa^{(1)}
-Q\Ob^{(a^{-1})}\Oa^{(1)}\Oa^{(1)}\R_{2}^{(a^{-1},1,ga^{-1})}\\
&\qquad+Q^{-1}\Ob^{(a^{-1})}\Oa^{(1)}\Ob^{(a^{-1})}\R_{6}^{(ga^{-1})}-Q^{-3}\Ob^{(a^{-1})}\Oa^{(1)}\Ob^{(a^{-1})}\R_{3}^{(1,ga^{-1},1)}\\
&\qquad-Q^{-1}\Ob^{(a^{-1})}\Ob^{(a^{-1})}\Oa^{(1)}\R_{3}^{(1,ga^{-1},1)}+Q^{-1}\Ob^{(a^{-1})}\R_{6}^{(a^{-1})}\Ob^{(ga^{-1})}\Oa^{(1)}\\
&\qquad-Q^{-1}\R_{2}^{(a^{-1},1,a^{-1})}\Ob^{(ga^{-1})}\Oa^{(1)}\Oa^{(1)}+Q\Oa^{(1)}\R_{1,B}^{(a^{-1})}\Ob^{(ga^{-1})}\Oa^{(1)}\Oa^{(1)}\Big)\qquad\in\quad\mathcal I,\\[1em]
\mathbf a\big(\R_{5,A}^{(g)}\big)=&--\Oa^{(1)}\Oa^{(ga^{-1})}\Oa^{(1)}-\frac1{\big(Q^2-Q^{-2}\big)^6}\Oa^{(1)}\Oa^{(ga^{-1})}\Oa^{(1)}\Oa^{(1)}\Oa^{(1)}\Oa^{(1)}\Oa^{(1)}\Oa^{(1)}\Oa^{(1)}\\
=&-\frac1{\big(Q^2-Q^{-2}\big)^2}\Oa^{(1)}\Oa^{(ga^{-1})}\R_{4,A}^{(1)}-\frac1{\big(Q^2-Q^{-2}\big)^4}\Oa^{(1)}\Oa^{(ga^{-1})}\Oa^{(1)}\Oa^{(1)}\R_{4,A}^{(1)}\\
&\qquad-\frac1{\big(Q^2-Q^{-2}\big)^6}\Oa^{(1)}\Oa^{(ga^{-1})}\Oa^{(1)}\Oa^{(1)}\Oa^{(1)}\Oa^{(1)}\R_{4,A}^{(1)}\qquad\in\quad\mathcal I,\\[1em]
\mathbf a\big(\R_{5,B}^{(g)}\big)=&\big(Q^2-Q^{-2}\big)\Big(Q\Oa^{(1)}\Ob^{(ga^{-1})}-Q^{-1}\Ob^{(ga^{-1})}\Oa^{(1)}\Big)\\
&+\frac1{\big(Q^2-Q^{-2}\big)^3}\Big(Q^3\Oa^{(1)}\Ob^{(ga^{-1})}\Oa^{(1)}\Ob^{(a^{-1})}\Oa^{(1)}\Ob^{(a^{-1})}-Q\Oa^{(1)}\Ob^{(ga^{-1})}\Oa^{(1)}\Ob^{(a^{-1})}\Ob^{(a^{-1})}\Oa^{(1)}\\
&\qquad-Q\Oa^{(1)}\Ob^{(ga^{-1})}\Ob^{(a^{-1})}\Oa^{(1)}\Oa^{(1)}\Ob^{(a^{-1})}+Q^{-1}\Oa^{(1)}\Ob^{(ga^{-1})}\Ob^{(a^{-1})}\Oa^{(1)}\Ob^{(a^{-1})}\Oa^{(1)}\\
&\qquad-Q\Ob^{(ga^{-1})}\Oa^{(1)}\Oa^{(1)}\Ob^{(a^{-1})}\Oa^{(1)}\Ob^{(a^{-1})}+Q^{-1}\Ob^{(ga^{-1})}\Oa^{(1)}\Oa^{(1)}\Ob^{(a^{-1})}\Ob^{(a^{-1})}\Oa^{(1)}\\
&\qquad+Q^{-1}\Ob^{(ga^{-1})}\Oa^{(1)}\Ob^{(a^{-1})}\Oa^{(1)}\Oa^{(1)}\Ob^{(a^{-1})}-Q^{-3}\Ob^{(ga^{-1})}\Oa^{(1)}\Ob^{(a^{-1})}\Oa^{(1)}\Ob^{(a^{-1})}\Oa^{(1)}\Big)\\
=&\frac1{Q^2-Q^{-2}}\Big(Q\Oa^{(1)}\R_{5,B}^{(ga^{-1})}-Q^{-1}\Ob^{(ga^{-1})}\R_{7}^{(1)}+Q^{-1}\Ob^{(ga^{-1})}\R_{4,A}^{(1)}-Q^{-1}\R_{5,B}^{(ga^{-1})}\Oa^{(1)}\\
&\qquad-Q\R_{3}^{(1,ga^{-1},1)}\Oa^{(1)}+Q\Oa^{(1)}\Ob^{(ga^{-1})}\R_{1,B}^{(a^{-1})}-Q^{-1}\Ob^{(ga^{-1})}\Oa^{(1)}\R_{1,B}^{(a^{-1})}\Big)\\
&+\frac1{\big(Q^2-Q^{-2}\big)^3}\Big(Q^3\Oa^{(1)}\Ob^{(ga^{-1})}\Oa^{(1)}\R_{2}^{(a^{-1},1,a^{-1})}-Q\Oa^{(1)}\Ob^{(ga^{-1})}\Ob^{(a^{-1})}\R_{6}^{(a^{-1})}\\
&\qquad+Q^{-1}\Oa^{(1)}\Ob^{(ga^{-1})}\Ob^{(a^{-1})}\R_{3}^{(1,a^{-1},1)}-Q\Oa^{(1)}\Ob^{(ga^{-1})}\R_{7}^{(1)}\Oa^{(1)}\\
&\qquad+2 Q\Oa^{(1)}\R_{5,B}^{(ga^{-1})}\Oa^{(1)}\Oa^{(1)}-Q\Ob^{(ga^{-1})}\Oa^{(1)}\Oa^{(1)}\R_{2}^{(a^{-1},1,a^{-1})}\\
&\qquad+Q^{-1}\Ob^{(ga^{-1})}\Oa^{(1)}\Ob^{(a^{-1})}\R_{6}^{(a^{-1})}-Q^{-3}\Ob^{(ga^{-1})}\Oa^{(1)}\Ob^{(a^{-1})}\R_{3}^{(1,a^{-1},1)}\\
&\qquad+Q^{-1}\Ob^{(ga^{-1})}\Oa^{(1)}\R_{7}^{(1)}\Oa^{(1)}+2Q^{-1}\Ob^{(ga^{-1})}\Ob^{(1)}\Ob^{(1)}\R_{4,A}^{(1)}\\
&\qquad-2Q^{-1}\Ob^{(ga^{-1})}\R_{7}^{(1)}\Oa^{(1)}\Oa^{(1)}-Q\Oa^{(1)}\Ob^{(ga^{-1})}\Oa^{(1)}\R_{1,B}^{(a^{-1})}\Oa^{(1)}\\
&\qquad+Q\Oa^{(1)}\Ob^{(ga^{-1})}\R_{1,B}^{(a^{-1})}\Oa^{(1)}\Oa^{(1)}+Q^{-1}\Ob^{(ga^{-1})}\Oa^{(1)}\Oa^{(1)}\R_{1,B}^{(a^{-1})}\Oa^{(1)}\\
&\qquad-Q^{-1}\Ob^{(ga^{-1})}\Oa^{(1)}\R_{1,B}^{(a^{-1})}\Oa^{(1)}\Oa^{(1)}\Big)\qquad\in\quad\mathcal I,\\[1em]
\mathbf a\big(\R_6^{(g)}\big)=&\big(Q^2-Q^{-2}\big)\Big(Q\Oa^{(1)}\Ob^{(ga^{-1})}-Q^{-1}\Ob^{(ga^{-1})}\Oa^{(1)}\Big)\\
&+\frac1{\big(Q^2-Q^{-2}\big)^5}\Big(Q\Oa^{(1)}\Oa^{(1)}\Oa^{(1)}\Oa^{(1)}\Oa^{(1)}\Oa^{(1)}\Oa^{(1)}\Ob^{(ga^{-1})}\\
&\qquad-Q^{-1}\Oa^{(1)}\Oa^{(1)}\Oa^{(1)}\Oa^{(1)}\Oa^{(1)}\Oa^{(1)}\Ob^{(ga^{-1})}\Oa^{(1)}+Q\Oa^{(1)}\Ob^{(ga^{-1})}\Oa^{(1)}\Oa^{(1)}\Oa^{(1)}\Oa^{(1)}\Oa^{(1)}\Oa^{(1)}\\
&\qquad-Q^{-1}\Ob^{(ga^{-1})}\Oa^{(1)}\Oa^{(1)}\Oa^{(1)}\Oa^{(1)}\Oa^{(1)}\Oa^{(1)}\Oa^{(1)}\Big)\\
=&\frac1{Q^2-Q^{-2}}\Big(Q\Oa^{(1)}\R_{6}^{(ga^{-1})}-Q^{-1}\Ob^{(ga^{-1})}\R_{4,A}^{(1)}\Big)+\frac1{\big(Q^2-Q^{-2}\big)^3}\Big(-Q\Oa^{(1)}\Oa^{(1)}\Oa^{(1)}\R_{6}^{(ga^{-1})}\\
&\qquad+Q\Oa^{(1)}\Oa^{(1)}\R_{3}^{(1,ga^{-1},1)}\Oa^{(1)}-Q\Oa^{(1)}\Ob^{(ga^{-1})}\Oa^{(1)}\R_{4,A}^{(1)}+Q^{-1}\Ob^{(ga^{-1})}\Oa^{(1)}\Oa^{(1)}\R_{4,A}^{(1)}\Big)\\
&+\frac1{\big(Q^2-Q^{-2}\big)^5}\Big(Q\Oa^{(1)}\Oa^{(1)}\Oa^{(1)}\Oa^{(1)}\Oa^{(1)}\R_{6}^{(ga^{-1})}-Q^{-1}\Oa^{(1)}\Oa^{(1)}\Oa^{(1)}\Oa^{(1)}\Oa^{(1)}\R_{3}^{(1,ga^{-1},1)}\\
&\qquad-Q\Oa^{(1)}\Oa^{(1)}\Oa^{(1)}\Oa^{(1)}\R_{3}^{(1,ga^{-1},1)}\Oa^{(1)}+Q\Oa^{(1)}\Ob^{(ga^{-1})}\Oa^{(1)}\Oa^{(1)}\Oa^{(1)}\R_{4,A}^{(1)}\\
&\qquad-Q^{-1}\Ob^{(ga^{-1})}\Oa^{(1)}\Oa^{(1)}\Oa^{(1)}\Oa^{(1)}\R_{4,A}^{(1)}\Big)\qquad\in\quad\mathcal I,\\[1em]
\mathbf a\big(\R_7^{(g)}\big)=&-\Oa^{(1)}\Oa^{(ga^{-1})}\Oa^{(1)}+\frac1{\big(Q^2-Q^{-2}\big)^4}\Big(-Q^2\Oa^{(1)}\Oa^{(ga^{-1})}\Oa^{(1)}\Oa^{(1)}\Ob^{(a^{-1})}\Oa^{(1)}\Ob^{(a^{-1})}\\
&\qquad+\Oa^{(1)}\Oa^{(ga^{-1})}\Oa^{(1)}\Oa^{(1)}\Ob^{(a^{-1})}\Ob^{(a^{-1})}\Oa^{(1)}+\Oa^{(1)}\Oa^{(ga^{-1})}\Oa^{(1)}\Ob^{(a^{-1})}\Oa^{(1)}\Oa^{(1)}\Ob^{(a^{-1})}\\
&\qquad-Q^{-2}\Oa^{(1)}\Oa^{(ga^{-1})}\Oa^{(1)}\Ob^{(a^{-1})}\Oa^{(1)}\Ob^{(a^{-1})}\Oa^{(1)}-Q^2\Oa^{(1)}\Ob^{(a^{-1})}\Oa^{(1)}\Ob^{(a^{-1})}\Oa^{(1)}\Oa^{(ga^{-1})}\Oa^{(1)}\\
&\qquad+\Oa^{(1)}\Ob^{(a^{-1})}\Ob^{(a^{-1})}\Oa^{(1)}\Oa^{(1)}\Oa^{(ga^{-1})}\Oa^{(1)}+\Ob^{(a^{-1})}\Oa^{(1)}\Oa^{(1)}\Ob^{(a^{-1})}\Oa^{(1)}\Oa^{(ga^{-1})}\Oa^{(1)}\\
&\qquad-Q^{-2}\Ob^{(a^{-1})}\Oa^{(1)}\Ob^{(a^{-1})}\Oa^{(1)}\Oa^{(1)}\Oa^{(ga^{-1})}\Oa^{(1)}\Big)\\
=&\frac1{\big(Q^2-Q^{-2}\big)^2}\Big(-\Oa^{(1)}\Oa^{(ga^{-1})}\R_{7}^{(1)}+\Oa^{(1)}\Oa^{(ga^{-1})}\R_{4,A}^{(1)}-\Oa^{(1)}\R_{7}^{(ga^{-1})}\Oa^{(1)}\\
&\qquad+Q^{-2}\Ob^{(a^{-1})}\R_{3}^{(1,a^{-1},ga^{-1})}\Oa^{(1)}-\Oa^{(1)}\Oa^{(ga^{-1})}\Oa^{(1)}\R_{1,B}^{(a^{-1})}-\Oa^{(1)}\R_{1,B}^{(a^{-1})}\Oa^{(ga^{-1})}\Oa^{(1)}\Big)\\
&+\frac1{\big(Q^2-Q^{-2}\big)^4}\Big(-Q^2\Oa^{(1)}\Oa^{(ga^{-1})}\Oa^{(1)}\Oa^{(1)}\R_{2}^{(a^{-1},1,a^{-1})}+\Oa^{(1)}\Oa^{(ga^{-1})}\Oa^{(1)}\Ob^{(a^{-1})}\R_{6}^{(a^{-1})}\\
&\qquad-Q^{-2}\Oa^{(1)}\Oa^{(ga^{-1})}\Oa^{(1)}\Ob^{(a^{-1})}\R_{3}^{(1,a^{-1},1)}+\Oa^{(1)}\Oa^{(ga^{-1})}\Oa^{(1)}\R_{7}^{(1)}\Oa^{(1)}\\
&\qquad+2\Oa^{(1)}\Oa^{(ga^{-1})}\Ob^{(1)}\Ob^{(1)}\R_{4,A}^{(1)}-2\Oa^{(1)}\Oa^{(ga^{-1})}\R_{7}^{(1)}\Oa^{(1)}\Oa^{(1)}\\
&\qquad+\Oa^{(1)}\Ob^{(a^{-1})}\Ob^{(a^{-1})}\R_{4,A}^{(ga^{-1})}\Oa^{(1)}-Q^2\Oa^{(1)}\Ob^{(a^{-1})}\R_{3}^{(1,a^{-1},1)}\Oa^{(ga^{-1})}\Oa^{(1)}\\
&\qquad-Q^{-2}\Ob^{(a^{-1})}\Oa^{(1)}\Ob^{(a^{-1})}\R_{4,A}^{(ga^{-1})}\Oa^{(1)}+\Ob^{(a^{-1})}\Oa^{(1)}\R_{3}^{(1,a^{-1},1)}\Oa^{(ga^{-1})}\Oa^{(1)}\\
&\qquad+\Oa^{(1)}\Oa^{(ga^{-1})}\Oa^{(1)}\Oa^{(1)}\R_{1,B}^{(a^{-1})}\Oa^{(1)}-\Oa^{(1)}\Oa^{(ga^{-1})}\Oa^{(1)}\R_{1,B}^{(a^{-1})}\Oa^{(1)}\Oa^{(1)}\Big)\qquad\in\quad\mathcal I.
\end{align*}

For the remaining four relations we eliminate all occurences of $\sigma(g)$ first by subtracting the appropriate combination of $\R_8,\dots,\R_{10}$ and then show that the difference belongs to the ideal generated by $\R_1,\dots,\R_7$. This gives
\begin{align*}
\mathbf a\big(\R_{8}^{(g_1,g_2)}\big)&+\frac1{Q^2-Q^{-2}}\Big(-Q\Oa^{(1)}\R_{8}^{(g_1,g_2a^{-1})}+Q^{-1}\R_{8}^{(g_1,g_2a^{-1})}\Oa^{(1)}\Big)\\
=&\frac1{Q^2-Q^{-2}}\Big(-Q\Oa^{(1)}\Oa^{(g_2a^{-1})}\Oa^{(g_1abag_2a^{-1})}\Ob^{(ag_2a^{-1})}+Q\Oa^{(1)}\Ob^{(g_2a^{-1})}\Ob^{(\sigma(g_1)g_2a^{-1})}\Ob^{(g_2a^{-1})}\\
&\qquad-Q\Oa^{(1)}\Ob^{(ag_2a^{-1})}\Oa^{(g_1abag_2a^{-1})}\Oa^{(g_2a^{-1})}+Q^{-1}\Oa^{(g_2a^{-1})}\Oa^{(g_1abag_2a^{-1})}\Ob^{(ag_2a^{-1})}\Oa^{(1)}\\
&\qquad-Q^{-1}\Ob^{(g_2a^{-1})}\Ob^{(\sigma(g_1)g_2a^{-1})}\Ob^{(g_2a^{-1})}\Oa^{(1)}+Q^{-1}\Ob^{(ag_2a^{-1})}\Oa^{(g_1abag_2a^{-1})}\Oa^{(g_2a^{-1})}\Oa^{(1)}\Big)\\
&+\frac1{\big(Q^2-Q^{-2}\big)^3}\Big(-Q^3\Oa^{(1)}\Ob^{(g_2a^{-1})}\Oa^{(1)}\Ob^{(\sigma(g_1)g_2a^{-1})}\Oa^{(1)}\Ob^{(g_2a^{-1})}\\
&\qquad+Q\Oa^{(1)}\Ob^{(g_2a^{-1})}\Oa^{(1)}\Ob^{(\sigma(g_1)g_2a^{-1})}\Ob^{(g_2a^{-1})}\Oa^{(1)}+Q\Oa^{(1)}\Ob^{(g_2a^{-1})}\Ob^{(\sigma(g_1)g_2a^{-1})}\Oa^{(1)}\Oa^{(1)}\Ob^{(g_2a^{-1})}\\
&\qquad-Q^{-1}\Oa^{(1)}\Ob^{(g_2a^{-1})}\Ob^{(\sigma(g_1)g_2a^{-1})}\Oa^{(1)}\Ob^{(g_2a^{-1})}\Oa^{(1)}+Q\Ob^{(g_2a^{-1})}\Oa^{(1)}\Oa^{(1)}\Ob^{(\sigma(g_1)g_2a^{-1})}\Oa^{(1)}\Ob^{(g_2a^{-1})}\\
&\qquad-Q^{-1}\Ob^{(g_2a^{-1})}\Oa^{(1)}\Oa^{(1)}\Ob^{(\sigma(g_1)g_2a^{-1})}\Ob^{(g_2a^{-1})}\Oa^{(1)}-Q^{-1}\Ob^{(g_2a^{-1})}\Oa^{(1)}\Ob^{(\sigma(g_1)g_2a^{-1})}\Oa^{(1)}\Oa^{(1)}\Ob^{(g_2a^{-1})}\\
&\qquad+Q^{-3}\Ob^{(g_2a^{-1})}\Oa^{(1)}\Ob^{(\sigma(g_1)g_2a^{-1})}\Oa^{(1)}\Ob^{(g_2a^{-1})}\Oa^{(1)}\Big)\\
&+\frac1{\big(Q^2-Q^{-2}\big)^5}\Big(Q\Oa^{(1)}\Oa^{(g_2a^{-1})}\Oa^{(1)}\Oa^{(1)}\Oa^{(g_1abag_2a^{-1})}\Oa^{(1)}\Oa^{(1)}\Ob^{(ag_2a^{-1})}\\
&\qquad-Q^{-1}\Oa^{(1)}\Oa^{(g_2a^{-1})}\Oa^{(1)}\Oa^{(1)}\Oa^{(g_1abag_2a^{-1})}\Oa^{(1)}\Ob^{(ag_2a^{-1})}\Oa^{(1)}\\
&\qquad+Q\Oa^{(1)}\Ob^{(ag_2a^{-1})}\Oa^{(1)}\Oa^{(g_1abag_2a^{-1})}\Oa^{(1)}\Oa^{(1)}\Oa^{(g_2a^{-1})}\Oa^{(1)}\\
&\qquad-Q^{-1}\Ob^{(ag_2a^{-1})}\Oa^{(1)}\Oa^{(1)}\Oa^{(g_1abag_2a^{-1})}\Oa^{(1)}\Oa^{(1)}\Oa^{(g_2a^{-1})}\Oa^{(1)}\Big)\\
=&\frac1{Q^2-Q^{-2}}\Big(Q^{-1}\Oa^{(g_2a^{-1})}\R_{3}^{(g_1abag_2a^{-1},ag_2a^{-1},1)}+Q^{-1}\R_{2}^{(g_2a^{-1},1,\sigma(g_1)g_2a^{-1})}\Ob^{(g_2a^{-1})}\\
&\qquad-Q\R_{3}^{(1,ag_2a^{-1},g_1abag_2a^{-1})}\Oa^{(g_2a^{-1})}\Big)\\
&+\frac1{\big(Q^2-Q^{-2}\big)^3}\Big(-Q\Oa^{(1)}\Oa^{(g_2a^{-1})}\R_{4,A}^{(g_1abag_2a^{-1})}\Ob^{(ag_2a^{-1})}-Q^3\Oa^{(1)}\Ob^{(g_2a^{-1})}\Oa^{(1)}\R_{2}^{(\sigma(g_1)g_2a^{-1},1,g_2a^{-1})}\\
&\qquad+Q\Oa^{(1)}\Ob^{(g_2a^{-1})}\Ob^{(\sigma(g_1)g_2a^{-1})}\R_{6}^{(g_2a^{-1})}-Q^{-1}\Oa^{(1)}\Ob^{(g_2a^{-1})}\Ob^{(\sigma(g_1)g_2a^{-1})}\R_{3}^{(1,g_2a^{-1},1)}\\
&\qquad+Q\Oa^{(1)}\R_{2}^{(g_2a^{-1},1,\sigma(g_1)g_2a^{-1})}\Ob^{(g_2a^{-1})}\Oa^{(1)}+Q\Ob^{(g_2a^{-1})}\Oa^{(1)}\Oa^{(1)}\R_{2}^{(\sigma(g_1)g_2a^{-1},1,g_2a^{-1})}\\
&\qquad-Q^{-1}\Ob^{(g_2a^{-1})}\Oa^{(1)}\Ob^{(\sigma(g_1)g_2a^{-1})}\R_{6}^{(g_2a^{-1})}+Q^{-3}\Ob^{(g_2a^{-1})}\Oa^{(1)}\Ob^{(\sigma(g_1)g_2a^{-1})}\R_{3}^{(1,g_2a^{-1},1)}\\
&\qquad+Q^{-1}\Ob^{(g_2a^{-1})}\Ob^{(\sigma(g_1)g_2a^{-1})}\Oa^{(1)}\R_{3}^{(1,g_2a^{-1},1)}-Q^{-1}\Ob^{(g_2a^{-1})}\R_{6}^{(\sigma(g_1)g_2a^{-1})}\Ob^{(g_2a^{-1})}\Oa^{(1)}\\
&\qquad+Q^{-1}\Ob^{(ag_2a^{-1})}\R_{4,A}^{(g_1abag_2a^{-1})}\Oa^{(g_2a^{-1})}\Oa^{(1)}+Q^{-1}\R_{2}^{(g_2a^{-1},1,\sigma(g_1)g_2a^{-1})}\Ob^{(g_2a^{-1})}\Oa^{(1)}\Oa^{(1)}\\
&\qquad-Q\R_{3}^{(1,ag_2a^{-1},1)}\Oa^{(g_1abag_2a^{-1})}\Oa^{(g_2a^{-1})}\Oa^{(1)}-Q\Oa^{(1)}\Ob^{(g_2a^{-1})}\Ob^{(\sigma(g_1)g_2a^{-1})}\Ob^{(g_2a^{-1})}\Oa^{(1)}\Oa^{(1)}\Big)\\
&+\frac1{\big(Q^2-Q^{-2}\big)^5}\Big(Q\Oa^{(1)}\Oa^{(g_2a^{-1})}\Oa^{(1)}\Oa^{(1)}\Oa^{(g_1abag_2a^{-1})}\R_{6}^{(ag_2a^{-1})}\\
&\qquad-Q^{-1}\Oa^{(1)}\Oa^{(g_2a^{-1})}\Oa^{(1)}\Oa^{(1)}\Oa^{(g_1abag_2a^{-1})}\R_{3}^{(1,ag_2a^{-1},1)}\\
&\qquad-Q\Oa^{(1)}\Oa^{(g_2a^{-1})}\Oa^{(1)}\Oa^{(1)}\R_{3}^{(g_1abag_2a^{-1},ag_2a^{-1},1)}\Oa^{(1)}\\
&\qquad+Q\Oa^{(1)}\Ob^{(ag_2a^{-1})}\Oa^{(1)}\Oa^{(g_1abag_2a^{-1})}\R_{4,A}^{(g_2a^{-1})}\Oa^{(1)}\\
&\qquad-Q^{-1}\Ob^{(ag_2a^{-1})}\Oa^{(1)}\Oa^{(1)}\Oa^{(g_1abag_2a^{-1})}\R_{4,A}^{(g_2a^{-1})}\Oa^{(1)}\Big)\\
\equiv&-\frac1{\big(Q^2-Q^{-2}\big)^3}Q\Oa^{(1)}\Ob^{(g_2a^{-1})}\Ob^{(\sigma(g_1)g_2a^{-1})}\Ob^{(g_2a^{-1})}\Oa^{(1)}\Oa^{(1)}\;\;\overset{(\ref{eq:IdempotentARightCancellation})}{\equiv}\;\;0\;\bmod\mathcal I,\\[1em]
\mathbf a\big(\R_{9}^{(g_1,g_2)}\big)&+\frac1{\big(Q^2-Q^{-2}\big)^2}\Oa^{(1)}\R_{9}^{(g_1,g_2a^{-1})}\Oa^{(1)}\\
=&\frac1{\big(Q^2-Q^{-2}\big)^2}\Big(-\Oa^{(1)}\Oa^{(g_2a^{-1})}\Oa^{(\sigma(g_1)g_2a^{-1})}\Oa^{(g_2a^{-1})}\Oa^{(1)}+\Oa^{(1)}\Oa^{(bg_2a^{-1})}\Ob^{(g_1babg_2a^{-1})}\Ob^{(g_2a^{-1})}\Oa^{(1)}\\
&\qquad+\Oa^{(1)}\Ob^{(g_2a^{-1})}\Ob^{(g_1babg_2a^{-1})}\Oa^{(bg_2a^{-1})}\Oa^{(1)}\Big)\\
&+\frac1{\big(Q^2-Q^{-2}\big)^4}\Big(-Q^2\Oa^{(1)}\Oa^{(bg_2a^{-1})}\Oa^{(1)}\Oa^{(1)}\Ob^{(g_1babg_2a^{-1})}\Oa^{(1)}\Ob^{(g_2a^{-1})}\\
&\qquad+\Oa^{(1)}\Oa^{(bg_2a^{-1})}\Oa^{(1)}\Oa^{(1)}\Ob^{(g_1babg_2a^{-1})}\Ob^{(g_2a^{-1})}\Oa^{(1)}\\
&\qquad+\Oa^{(1)}\Oa^{(bg_2a^{-1})}\Oa^{(1)}\Ob^{(g_1babg_2a^{-1})}\Oa^{(1)}\Oa^{(1)}\Ob^{(g_2a^{-1})}\\
&\qquad-Q^{-2}\Oa^{(1)}\Oa^{(bg_2a^{-1})}\Oa^{(1)}\Ob^{(g_1babg_2a^{-1})}\Oa^{(1)}\Ob^{(g_2a^{-1})}\Oa^{(1)}\\
&\qquad-Q^2\Oa^{(1)}\Ob^{(g_2a^{-1})}\Oa^{(1)}\Ob^{(g_1babg_2a^{-1})}\Oa^{(1)}\Oa^{(bg_2a^{-1})}\Oa^{(1)}\\
&\qquad+\Oa^{(1)}\Ob^{(g_2a^{-1})}\Ob^{(g_1babg_2a^{-1})}\Oa^{(1)}\Oa^{(1)}\Oa^{(bg_2a^{-1})}\Oa^{(1)}\\
&\qquad+\Ob^{(g_2a^{-1})}\Oa^{(1)}\Oa^{(1)}\Ob^{(g_1babg_2a^{-1})}\Oa^{(1)}\Oa^{(bg_2a^{-1})}\Oa^{(1)}\\
&\qquad-Q^{-2}\Ob^{(g_2a^{-1})}\Oa^{(1)}\Ob^{(g_1babg_2a^{-1})}\Oa^{(1)}\Oa^{(1)}\Oa^{(bg_2a^{-1})}\Oa^{(1)}\Big)\\
&+\frac1{\big(Q^2-Q^{-2}\big)^6}\Oa^{(1)}\Oa^{(g_2a^{-1})}\Oa^{(1)}\Oa^{(1)}\Oa^{(\sigma(g_1)g_2a^{-1})}\Oa^{(1)}\Oa^{(1)}\Oa^{(g_2a^{-1})}\Oa^{(1)}\\
=&\frac1{\big(Q^2-Q^{-2}\big)^2}Q^{-2}\Ob^{(g_2a^{-1})}\R_{3}^{(1,g_1babg_2a^{-1},bg_2a^{-1})}\Oa^{(1)}\\
&+\frac1{\big(Q^2-Q^{-2}\big)^4}\Big(-\Oa^{(1)}\Oa^{(g_2a^{-1})}\R_{4,A}^{(\sigma(g_1)g_2a^{-1})}\Oa^{(g_2a^{-1})}\Oa^{(1)}\\
&\qquad-Q^2\Oa^{(1)}\Oa^{(bg_2a^{-1})}\Oa^{(1)}\Oa^{(1)}\R_{2}^{(g_1babg_2a^{-1},1,g_2a^{-1})}-Q^{-2}\Oa^{(1)}\Oa^{(bg_2a^{-1})}\Oa^{(1)}\Ob^{(g_1babg_2a^{-1})}\R_{3}^{(1,g_2a^{-1},1)}\\
&\qquad-\Oa^{(1)}\Oa^{(bg_2a^{-1})}\Ob^{(g_1babg_2a^{-1})}\Oa^{(1)}\R_{3}^{(1,g_2a^{-1},1)}+\Oa^{(1)}\Oa^{(bg_2a^{-1})}\R_{6}^{(g_1babg_2a^{-1})}\Ob^{(g_2a^{-1})}\Oa^{(1)}\\
&\qquad+\Oa^{(1)}\Ob^{(g_2a^{-1})}\Ob^{(g_1babg_2a^{-1})}\R_{4,A}^{(bg_2a^{-1})}\Oa^{(1)}-Q^2\Oa^{(1)}\Ob^{(g_2a^{-1})}\R_{3}^{(1,g_1babg_2a^{-1},1)}\Oa^{(bg_2a^{-1})}\Oa^{(1)}\\
&\qquad-Q^{-2}\Ob^{(g_2a^{-1})}\Oa^{(1)}\Ob^{(g_1babg_2a^{-1})}\R_{4,A}^{(bg_2a^{-1})}\Oa^{(1)}+\Ob^{(g_2a^{-1})}\Oa^{(1)}\R_{3}^{(1,g_1babg_2a^{-1},1)}\Oa^{(bg_2a^{-1})}\Oa^{(1)}\Big)\\
&+\frac1{\big(Q^2-Q^{-2}\big)^6}\Oa^{(1)}\Oa^{(g_2a^{-1})}\Oa^{(1)}\Oa^{(1)}\Oa^{(\sigma(g_1)g_2a^{-1})}\R_{4,A}^{(g_2a^{-1})}\Oa^{(1)}\qquad\in\quad\mathcal I,\\[1em]
\mathbf a\big(\R_{10}^{(g_1,g_2)}\big)&+\frac1{Q^2-Q^{-2}}\Big(-Q\Oa^{(1)}\R_{10}^{(g_1,g_2a^{-1})}+Q^{-1}\R_{10}^{(g_1,g_2a^{-1})}\Oa^{(1)}\Big)\\
=&\frac1{Q^2-Q^{-2}}\Big(-Q\Oa^{(1)}\Oa^{(g_2a^{-1})}\Oa^{(g_1a^{-1}b^{-1}a^{-1}g_2a^{-1})}\Ob^{(a^{-1}g_2a^{-1})}+Q\Oa^{(1)}\Ob^{(g_2a^{-1})}\Ob^{(\sigma(g_1)g_2a^{-1})}\Ob^{(g_2a^{-1})}\\
&\qquad-Q\Oa^{(1)}\Ob^{(a^{-1}g_2a^{-1})}\Oa^{(g_1a^{-1}b^{-1}a^{-1}g_2a^{-1})}\Oa^{(g_2a^{-1})}+Q^{-1}\Oa^{(g_2a^{-1})}\Oa^{(g_1a^{-1}b^{-1}a^{-1}g_2a^{-1})}\Ob^{(a^{-1}g_2a^{-1})}\Oa^{(1)}\\
&\qquad-Q^{-1}\Ob^{(g_2a^{-1})}\Ob^{(\sigma(g_1)g_2a^{-1})}\Ob^{(g_2a^{-1})}\Oa^{(1)}+Q^{-1}\Ob^{(a^{-1}g_2a^{-1})}\Oa^{(g_1a^{-1}b^{-1}a^{-1}g_2a^{-1})}\Oa^{(g_2a^{-1})}\Oa^{(1)}\Big)\\
&+\frac1{\big(Q^2-Q^{-2}\big)^3}\Big(-Q^3\Oa^{(1)}\Ob^{(g_2a^{-1})}\Oa^{(1)}\Ob^{(\sigma(g_1)g_2a^{-1})}\Oa^{(1)}\Ob^{(g_2a^{-1})}\\
&\qquad+Q\Oa^{(1)}\Ob^{(g_2a^{-1})}\Oa^{(1)}\Ob^{(\sigma(g_1)g_2a^{-1})}\Ob^{(g_2a^{-1})}\Oa^{(1)}+Q\Oa^{(1)}\Ob^{(g_2a^{-1})}\Ob^{(\sigma(g_1)g_2a^{-1})}\Oa^{(1)}\Oa^{(1)}\Ob^{(g_2a^{-1})}\\
&\qquad-Q^{-1}\Oa^{(1)}\Ob^{(g_2a^{-1})}\Ob^{(\sigma(g_1)g_2a^{-1})}\Oa^{(1)}\Ob^{(g_2a^{-1})}\Oa^{(1)}+Q\Ob^{(g_2a^{-1})}\Oa^{(1)}\Oa^{(1)}\Ob^{(\sigma(g_1)g_2a^{-1})}\Oa^{(1)}\Ob^{(g_2a^{-1})}\\
&\qquad-Q^{-1}\Ob^{(g_2a^{-1})}\Oa^{(1)}\Oa^{(1)}\Ob^{(\sigma(g_1)g_2a^{-1})}\Ob^{(g_2a^{-1})}\Oa^{(1)}-Q^{-1}\Ob^{(g_2a^{-1})}\Oa^{(1)}\Ob^{(\sigma(g_1)g_2a^{-1})}\Oa^{(1)}\Oa^{(1)}\Ob^{(g_2a^{-1})}\\
&\qquad+Q^{-3}\Ob^{(g_2a^{-1})}\Oa^{(1)}\Ob^{(\sigma(g_1)g_2a^{-1})}\Oa^{(1)}\Ob^{(g_2a^{-1})}\Oa^{(1)}\Big)\\
&+\frac1{\big(Q^2-Q^{-2}\big)^5}\Big(Q\Oa^{(1)}\Oa^{(g_2a^{-1})}\Oa^{(1)}\Oa^{(1)}\Oa^{(g_1a^{-1}b^{-1}a^{-1}g_2a^{-1})}\Oa^{(1)}\Oa^{(1)}\Ob^{(a^{-1}g_2a^{-1})}\\
&\qquad-Q^{-1}\Oa^{(1)}\Oa^{(g_2a^{-1})}\Oa^{(1)}\Oa^{(1)}\Oa^{(g_1a^{-1}b^{-1}a^{-1}g_2a^{-1})}\Oa^{(1)}\Ob^{(a^{-1}g_2a^{-1})}\Oa^{(1)}\\
&\qquad+Q\Oa^{(1)}\Ob^{(a^{-1}g_2a^{-1})}\Oa^{(1)}\Oa^{(g_1a^{-1}b^{-1}a^{-1}g_2a^{-1})}\Oa^{(1)}\Oa^{(1)}\Oa^{(g_2a^{-1})}\Oa^{(1)}\\
&\qquad-Q^{-1}\Ob^{(a^{-1}g_2a^{-1})}\Oa^{(1)}\Oa^{(1)}\Oa^{(g_1a^{-1}b^{-1}a^{-1}g_2a^{-1})}\Oa^{(1)}\Oa^{(1)}\Oa^{(g_2a^{-1})}\Oa^{(1)}\Big)\\
=&\frac1{Q^2-Q^{-2}}\Big(Q^{-1}\Oa^{(g_2a^{-1})}\R_{3}^{(g_1a^{-1}b^{-1}a^{-1}g_2a^{-1},a^{-1}g_2a^{-1},1)}+Q^{-1}\R_{2}^{(g_2a^{-1},1,\sigma(g_1)g_2a^{-1})}\Ob^{(g_2a^{-1})}\\
&\qquad-Q\R_{3}^{(1,a^{-1}g_2a^{-1},g_1a^{-1}b^{-1}a^{-1}g_2a^{-1})}\Oa^{(g_2a^{-1})}\Big)\\
&+\frac1{\big(Q^2-Q^{-2}\big)^3}\Big(-Q\Oa^{(1)}\Oa^{(g_2a^{-1})}\R_{4,A}^{(g_1a^{-1}b^{-1}a^{-1}g_2a^{-1})}\Ob^{(a^{-1}g_2a^{-1})}\\
&\qquad-Q^3\Oa^{(1)}\Ob^{(g_2a^{-1})}\Oa^{(1)}\R_{2}^{(\sigma(g_1)g_2a^{-1},1,g_2a^{-1})}+Q\Oa^{(1)}\Ob^{(g_2a^{-1})}\Ob^{(\sigma(g_1)g_2a^{-1})}\R_{6}^{(g_2a^{-1})}\\
&\qquad-Q^{-1}\Oa^{(1)}\Ob^{(g_2a^{-1})}\Ob^{(\sigma(g_1)g_2a^{-1})}\R_{3}^{(1,g_2a^{-1},1)}+Q\Oa^{(1)}\R_{2}^{(g_2a^{-1},1,\sigma(g_1)g_2a^{-1})}\Ob^{(g_2a^{-1})}\Oa^{(1)}\\
&\qquad+Q\Ob^{(g_2a^{-1})}\Oa^{(1)}\Oa^{(1)}\R_{2}^{(\sigma(g_1)g_2a^{-1},1,g_2a^{-1})}-Q^{-1}\Ob^{(g_2a^{-1})}\Oa^{(1)}\Ob^{(\sigma(g_1)g_2a^{-1})}\R_{6}^{(g_2a^{-1})}\\
&\qquad+Q^{-3}\Ob^{(g_2a^{-1})}\Oa^{(1)}\Ob^{(\sigma(g_1)g_2a^{-1})}\R_{3}^{(1,g_2a^{-1},1)}+Q^{-1}\Ob^{(g_2a^{-1})}\Ob^{(\sigma(g_1)g_2a^{-1})}\Oa^{(1)}\R_{3}^{(1,g_2a^{-1},1)}\\
&\qquad-Q^{-1}\Ob^{(g_2a^{-1})}\R_{6}^{(\sigma(g_1)g_2a^{-1})}\Ob^{(g_2a^{-1})}\Oa^{(1)}+Q^{-1}\Ob^{(a^{-1}g_2a^{-1})}\R_{4,A}^{(g_1a^{-1}b^{-1}a^{-1}g_2a^{-1})}\Oa^{(g_2a^{-1})}\Oa^{(1)}\\
&\qquad+Q^{-1}\R_{2}^{(g_2a^{-1},1,\sigma(g_1)g_2a^{-1})}\Ob^{(g_2a^{-1})}\Oa^{(1)}\Oa^{(1)}-Q\R_{3}^{(1,a^{-1}g_2a^{-1},1)}\Oa^{(g_1a^{-1}b^{-1}a^{-1}g_2a^{-1})}\Oa^{(g_2a^{-1})}\Oa^{(1)}\\
&\qquad-Q\Oa^{(1)}\Ob^{(g_2a^{-1})}\Ob^{(\sigma(g_1)g_2a^{-1})}\Ob^{(g_2a^{-1})}\Oa^{(1)}\Oa^{(1)}\Big)\\
&+\frac1{\big(Q^2-Q^{-2}\big)^5}\Big(Q\Oa^{(1)}\Oa^{(g_2a^{-1})}\Oa^{(1)}\Oa^{(1)}\Oa^{(g_1a^{-1}b^{-1}a^{-1}g_2a^{-1})}\R_{6}^{(a^{-1}g_2a^{-1})}\\
&\qquad-Q^{-1}\Oa^{(1)}\Oa^{(g_2a^{-1})}\Oa^{(1)}\Oa^{(1)}\Oa^{(g_1a^{-1}b^{-1}a^{-1}g_2a^{-1})}\R_{3}^{(1,a^{-1}g_2a^{-1},1)}\\
&\qquad-Q\Oa^{(1)}\Oa^{(g_2a^{-1})}\Oa^{(1)}\Oa^{(1)}\R_{3}^{(g_1a^{-1}b^{-1}a^{-1}g_2a^{-1},a^{-1}g_2a^{-1},1)}\Oa^{(1)}\\
&\qquad+Q\Oa^{(1)}\Ob^{(a^{-1}g_2a^{-1})}\Oa^{(1)}\Oa^{(g_1a^{-1}b^{-1}a^{-1}g_2a^{-1})}\R_{4,A}^{(g_2a^{-1})}\Oa^{(1)}\\
&\qquad-Q^{-1}\Ob^{(a^{-1}g_2a^{-1})}\Oa^{(1)}\Oa^{(1)}\Oa^{(g_1a^{-1}b^{-1}a^{-1}g_2a^{-1})}\R_{4,A}^{(g_2a^{-1})}\Oa^{(1)}\Big)\;\;\overset{(\ref{eq:IdempotentARightCancellation})}{\equiv}\;\;0\;\;\bmod\mathcal I,\\[1em]
\mathbf a\big(\R_{11}^{(g_1,g_2)}\big)&+\frac1{\big(Q^2-Q^{-2}\big)^2}\Oa^{(1)}\R_{11}^{(g_1,g_2a^{-1})}\Oa^{(1)}\\
=&\frac1{\big(Q^2-Q^{-2}\big)^2}\Big(-\Oa^{(1)}\Oa^{(g_2a^{-1})}\Oa^{(\sigma(g_1)g_2a^{-1})}\Oa^{(g_2a^{-1})}\Oa^{(1)}+\Oa^{(1)}\Oa^{(b^{-1}g_2a^{-1})}\Ob^{(g_1b^{-1}a^{-1}b^{-1}g_2a^{-1})}\Ob^{(g_2a^{-1})}\Oa^{(1)}\\
&\qquad+\Oa^{(1)}\Ob^{(g_2a^{-1})}\Ob^{(g_1b^{-1}a^{-1}b^{-1}g_2a^{-1})}\Oa^{(b^{-1}g_2a^{-1})}\Oa^{(1)}\Big)\\
&+\frac1{\big(Q^2-Q^{-2}\big)^4}\Big(-Q^2\Oa^{(1)}\Oa^{(b^{-1}g_2a^{-1})}\Oa^{(1)}\Oa^{(1)}\Ob^{(g_1b^{-1}a^{-1}b^{-1}g_2a^{-1})}\Oa^{(1)}\Ob^{(g_2a^{-1})}\\
&\qquad+\Oa^{(1)}\Oa^{(b^{-1}g_2a^{-1})}\Oa^{(1)}\Oa^{(1)}\Ob^{(g_1b^{-1}a^{-1}b^{-1}g_2a^{-1})}\Ob^{(g_2a^{-1})}\Oa^{(1)}\\
&\qquad+\Oa^{(1)}\Oa^{(b^{-1}g_2a^{-1})}\Oa^{(1)}\Ob^{(g_1b^{-1}a^{-1}b^{-1}g_2a^{-1})}\Oa^{(1)}\Oa^{(1)}\Ob^{(g_2a^{-1})}\\
&\qquad-Q^{-2}\Oa^{(1)}\Oa^{(b^{-1}g_2a^{-1})}\Oa^{(1)}\Ob^{(g_1b^{-1}a^{-1}b^{-1}g_2a^{-1})}\Oa^{(1)}\Ob^{(g_2a^{-1})}\Oa^{(1)}\\
&\qquad-Q^2\Oa^{(1)}\Ob^{(g_2a^{-1})}\Oa^{(1)}\Ob^{(g_1b^{-1}a^{-1}b^{-1}g_2a^{-1})}\Oa^{(1)}\Oa^{(b^{-1}g_2a^{-1})}\Oa^{(1)}\\
&\qquad+\Oa^{(1)}\Ob^{(g_2a^{-1})}\Ob^{(g_1b^{-1}a^{-1}b^{-1}g_2a^{-1})}\Oa^{(1)}\Oa^{(1)}\Oa^{(b^{-1}g_2a^{-1})}\Oa^{(1)}\\
&\qquad+\Ob^{(g_2a^{-1})}\Oa^{(1)}\Oa^{(1)}\Ob^{(g_1b^{-1}a^{-1}b^{-1}g_2a^{-1})}\Oa^{(1)}\Oa^{(b^{-1}g_2a^{-1})}\Oa^{(1)}\\
&\qquad-Q^{-2}\Ob^{(g_2a^{-1})}\Oa^{(1)}\Ob^{(g_1b^{-1}a^{-1}b^{-1}g_2a^{-1})}\Oa^{(1)}\Oa^{(1)}\Oa^{(b^{-1}g_2a^{-1})}\Oa^{(1)}\Big)\\
&+\frac1{\big(Q^2-Q^{-2}\big)^6}\Oa^{(1)}\Oa^{(g_2a^{-1})}\Oa^{(1)}\Oa^{(1)}\Oa^{(\sigma(g_1)g_2a^{-1})}\Oa^{(1)}\Oa^{(1)}\Oa^{(g_2a^{-1})}\Oa^{(1)}\\
=&\frac1{\big(Q^2-Q^{-2}\big)^2}Q^{-2}\Ob^{(g_2a^{-1})}\R_{3}^{(1,g_1b^{-1}a^{-1}b^{-1}g_2a^{-1},b^{-1}g_2a^{-1})}\Oa^{(1)}\\
&+\frac1{\big(Q^2-Q^{-2}\big)^4}\Big(-\Oa^{(1)}\Oa^{(g_2a^{-1})}\R_{4,A}^{(\sigma(g_1)g_2a^{-1})}\Oa^{(g_2a^{-1})}\Oa^{(1)}\\
&\qquad-Q^2\Oa^{(1)}\Oa^{(b^{-1}g_2a^{-1})}\Oa^{(1)}\Oa^{(1)}\R_{2}^{(g_1b^{-1}a^{-1}b^{-1}g_2a^{-1},1,g_2a^{-1})}\\
&\qquad-Q^{-2}\Oa^{(1)}\Oa^{(b^{-1}g_2a^{-1})}\Oa^{(1)}\Ob^{(g_1b^{-1}a^{-1}b^{-1}g_2a^{-1})}\R_{3}^{(1,g_2a^{-1},1)}\\
&\qquad-\Oa^{(1)}\Oa^{(b^{-1}g_2a^{-1})}\Ob^{(g_1b^{-1}a^{-1}b^{-1}g_2a^{-1})}\Oa^{(1)}\R_{3}^{(1,g_2a^{-1},1)}\\
&\qquad+\Oa^{(1)}\Oa^{(b^{-1}g_2a^{-1})}\R_{6}^{(g_1b^{-1}a^{-1}b^{-1}g_2a^{-1})}\Ob^{(g_2a^{-1})}\Oa^{(1)}\\
&\qquad+\Oa^{(1)}\Ob^{(g_2a^{-1})}\Ob^{(g_1b^{-1}a^{-1}b^{-1}g_2a^{-1})}\R_{4,A}^{(b^{-1}g_2a^{-1})}\Oa^{(1)}\\
&\qquad-Q^2\Oa^{(1)}\Ob^{(g_2a^{-1})}\R_{3}^{(1,g_1b^{-1}a^{-1}b^{-1}g_2a^{-1},1)}\Oa^{(b^{-1}g_2a^{-1})}\Oa^{(1)}\\
&\qquad-Q^{-2}\Ob^{(g_2a^{-1})}\Oa^{(1)}\Ob^{(g_1b^{-1}a^{-1}b^{-1}g_2a^{-1})}\R_{4,A}^{(b^{-1}g_2a^{-1})}\Oa^{(1)}\\
&\qquad+\Ob^{(g_2a^{-1})}\Oa^{(1)}\R_{3}^{(1,g_1b^{-1}a^{-1}b^{-1}g_2a^{-1},1)}\Oa^{(b^{-1}g_2a^{-1})}\Oa^{(1)}\Big)\\
&+\frac1{\big(Q^2-Q^{-2}\big)^6}\Oa^{(1)}\Oa^{(g_2a^{-1})}\Oa^{(1)}\Oa^{(1)}\Oa^{(\sigma(g_1)g_2a^{-1})}\R_{4,A}^{(g_2a^{-1})}\Oa^{(1)}\qquad\in\quad\mathcal I.
\end{align*}

\bibliographystyle{alpha}
\bibliography{references}

\end{document}